\definecolor{darkred}{rgb}{1,0,0} 
\definecolor{darkgreen}{rgb}{0,0.8,0}
\definecolor{darkblue}{rgb}{0,0,1}
\newcommand{\sbt}{\,\begin{picture}(-1,1)(-1,-1)\circle*{2}\end{picture}\ }
\renewcommand{\dot}[1]{\overset{\sbt}{#1}}
\newtheorem{lemma}{Lemma}[section]
\newtheorem{theorem}[lemma]{Theorem}
\newtheorem{corollary}[lemma]{Corollary}
\newtheorem{proposition}[lemma]{Proposition}
\theoremstyle{definition}
\newtheorem{definition}[lemma]{Definition}
\newtheorem{example}[lemma]{Example}
\newtheorem{remark}[lemma]{Remark}
\theoremstyle{plain}
\newtheorem{mainthm}{Theorem}
\numberwithin{equation}{section}
\newcommand{\deriv}{\left.\frac{\mathrm d}{\mathrm{dt}}\right\vert_{t=0}}
\newcommand{\J}{\mathcal J}
\newcommand{\Id}{\mathrm{Id}}
\newcommand{\cA}{\mathcal{A}}
\newcommand{\cC}{\mathcal{C}}
\newcommand{\cJ}{\mathcal{J}}
\newcommand{\cD}{\mathcal{D}}
\newcommand{\cG}{\mathcal{G}}
\newcommand{\cE}{\mathcal{E}}
\newcommand{\cK}{\mathcal{K}}
\newcommand{\cQ}{\mathcal{Q}}
\newcommand{\bfM}{\mathbf{M}}
\newcommand{\bfY}{\mathbf{Y}}
\newcommand{\bfZ}{\mathbf{Z}}
\newcommand{\bfT}{\mathbf{T}}
\newcommand{\cL}{\mathcal{L}}
\newcommand{\cH}{\mathcal{H}}
\newcommand{\cM}{\mathcal{M}}
\newcommand{\cS}{\mathcal{S}}
\newcommand{\bfW}{\mathbf{W}}
\newcommand{\bfX}{\mathbf{X}}
\newcommand{\bfB}{\mathbf{B}}
\newcommand{\bfC}{\mathbf{C}}
\newcommand{\T}{\mathrm{T}}
\newcommand{\V}{\mathrm{V}}
\newcommand{\rH}{\mathrm{H}}
\newcommand{\rE}{\mathrm{E}}
\newcommand{\rS}{\mathrm{S}}
\newcommand{\sK}{\mathsf{K}}
\newcommand{\sG}{\mathsf{G}}
\newcommand{\sH}{\mathsf{H}}
\newcommand{\sT}{\mathsf{T}}
\newcommand{\sC}{\mathsf{C}}
\newcommand{\sU}{\mathsf{U}}
\newcommand{\sMod}{\mathsf{Mod}}
\newcommand{\sGL}{\mathsf{GL}}
\newcommand{\sSL}{\mathsf{SL}}
\newcommand{\A}{\mathrm{A}}
\newcommand{\B}{\mathrm{B}}
\newcommand{\sSO}{\mathsf{SO}}
\newcommand{\cU}{\mathcal{U}}
\newcommand{\rmC}{\mathrm{C}}
\newcommand{\N}{\mathbb{N}}
\newcommand{\R}{\mathbb{R}}
\newcommand{\C}{\mathbb{C}}
\newcommand{\Z}{\mathbb{Z}}
\newcommand{\rIm}{\mathrm{Im}}
\newcommand{\Sig}{\Sigma}
\newcommand{\I}{\mathrm{I}}
\newcommand{\rd}{\mathrm{d}}
\newcommand{\Bbold}{{\bf B}}
\newcommand{\Cbold}{{\bf C}}
\newcommand{\Ebold}{{\bf E}}
\newcommand{\Mbold}{{\bf M}}
\newcommand{\Tbold}{{\bf T}}
\newcommand{\Wbold}{{\bf W}}
\newcommand{\Xbold}{{\bf X}}
\newcommand{\CBbb}{\mathbb C}
\newcommand{\HBbb}{\mathbb H}
\newcommand{\Bcal}{\mathcal B}
\newcommand{\Ccal}{\mathcal C}
\newcommand{\Ecal}{\mathcal E}
\newcommand{\Gcal}{\mathcal G}
\newcommand{\Jcal}{\mathcal J}
\newcommand{\Lcal}{\mathcal L}
\newcommand{\Pcal}{\mathcal P}
\newcommand{\Scal}{\mathcal S}
\newcommand{\Ucal}{\mathcal U}
\newcommand{\gfrak}{\mathfrak g}
\newcommand{\kfrak}{\mathfrak k}
\newcommand{\Ascr}{\mathscr A}
\newcommand{\Bscr}{\mathscr B}
\newcommand{\G}{\mathsf{G}}
\newcommand{\SL}{\mathsf{SL}}
\newcommand{\GL}{\mathsf{GL}}
\newcommand{\K}{\mathsf{K}}
\newcommand{\SO}{\mathsf{SO}}
\newcommand{\slfrak}{\mathfrak{sl}}
\newcommand{\sofrak}{\mathfrak{so}}
\DeclareMathOperator{\End}{End}
\DeclareMathOperator{\Hom}{Hom}
\DeclareMathOperator{\Aut}{Aut}
\DeclareMathOperator{\aut}{aut}
\DeclareMathOperator{\imag}{im}
\DeclareMathOperator{\Real}{Re}
\DeclareMathOperator{\rank}{rank}
\DeclareMathOperator{\Sym}{Sym}
\DeclareMathOperator{\coker}{coker}
\DeclareMathOperator{\ad}{ad}
\DeclareMathOperator{\Ad}{Ad}
\DeclareMathOperator{\tr}{tr}
\DeclareMathOperator{\Tr}{Tr}
\newcommand{\dbar}{\bar\partial}
\newcommand{\lra}{\longrightarrow}
\newcommand{\Mod}{{\rm Mod}}
\newcommand{\Diff}{{\rm Diff}}
\newcommand{\At}{{\rm At}}
\newcommand{\pr}{{\rm pr}}
\newcommand*\bigcdot{\mathpalette\bigcdot@{.5}}
\newcommand*\bigcdot@[2]{\mathbin{\vcenter{\hbox{\scalebox{#2}{$\m@th#1\bullet$}}}}}
\newcommand{\thickcolon}{\mathpalette\thick@colon\relax}
\newcommand{\thick@colon}[2]{%
  \mspace{1mu}%
  \vbox{%
    \hbox{$\m@th#1\bigcdot$}
    \nointerlineskip
    \kern.15ex
    \hbox{$\m@th#1\bigcdot$}
    \kern-.55ex
  }%
  \mspace{1mu}%
}
\newcommand{\thickcirc}{\mathpalette\thick@circ\relax}
\newcommand{\thick@circ}[2]{%
  \mspace{1mu}%
  \vbox{%
    \hbox{$\m@th#1\circ$}
    \nointerlineskip
    \kern.15ex
    \hbox{$\m@th#1\circ$}
    \kern-.55ex
  }%
  \mspace{1mu}%
}
\newcommand*\bigldot{\mathpalette\bigldot@{.5}}
\newcommand*\bigldot@[2]{\mathbin{\hbox{\scalebox{#2}{$\m@th#1\bullet$}}}}
\newcommand*{\dt}[1]{\overset{\bigldot}{#1}}
\newcommand*{\ddt}[1]{\overset{\bigldot\bigldot}{#1}}
\title{Higgs bundles, isomonodromic leaves and  minimal surfaces}
\author[Collier]{Brian Collier}
\address{Department of Mathematics,
University of California, Riverside,
   Riverside, CA 92521, USA}
    \email{brianc@ucr.edu}
\author[Toulisse]{Jeremy Toulisse}
\address{Universit\'e C\^{o}te d'Azur, CNRS, LJAD, France}
\email{jeremy.toulisse@univ-cotedazur.fr}
\author[Wentworth]{Richard Wentworth}
\address{Department of Mathematics,
   University of Maryland,
   College Park, MD 20742, USA}
\email{raw@umd.edu}
\begin{document}
\begin{abstract} 
In this paper we give a gauge theoretic construction of the joint moduli space of stable $\sG$-Higgs bundles on closed Riemann surfaces, where the Riemann surface structure is allowed to vary in the Teichm\"uller space of the underlying smooth surface. 
This joint moduli space has many interesting structures that
are preserved by the mapping class group of the surface. 
We describe a surprising relationship between four key objects: the isomonodromic foliation, a canonical hermitian form arising from the Atiyah-Bott-Goldman symplectic structure on the character variety, 
  a canonical holomorphic form which vertically lifts vector fields on Teichm\"uller space,  
     and the energy function for equivariant harmonic maps.  One
     consequence of this work is the construction of pseudo-K\"ahler
     metrics on many examples of  components of character varieties which include rank two higher Teichm\"uller spaces. This recovers some of the recent work on the subject by various authors.
\end{abstract}
\maketitle

\tableofcontents

\section{Introduction}

Let $\Sig$ be a connected closed oriented surface of genus $g\geq 2$ and $\sG$ 
a connected  complex semisimple Lie group. 
Associated to this data is a  holomorphic symplectic orbifold $\bfX(\sG),$
the moduli space of irreducible\footnote{By \emph{irreducible} we mean the
representation does not factor through a proper parabolic subgroup.} representations of the fundamental group of $\Sig$ into $\sG$. 
This moduli space is the orbifold locus of the  \emph{$\sG$-character variety of $\Sig$},
and the holomorphic symplectic form is known as the
\emph{Atiyah-Bott-Goldman form}. It is biholomorphic and symplectomorphic
to the moduli space of irreducible flat $\sG$-connections on $\Sig.$ 
One of the many interesting aspects of character varieties is that they carry a natural action of the mapping class group of the surface $\Sig$ which preserves the holomorphic symplectic structure.

For each choice of Riemann surface structure $X=(\Sig,j)$ on $\Sig$, the
moduli space $\bfM_X(\sG)$ of stable $\sG$-Higgs bundles on $X$ is another
interesting holomorphic symplectic orbifold.   For example, the Higgs
bundle moduli space contains the cotangent bundle of the moduli space of
holomorphic stable $\sG$-bundles as an open dense set, and its completion has the
structure of an algebraically complete integrable system
\cite{IntSystemFibration}.
Since its definition depends on the choice of a Riemann surface structure on $\Sig$, $\bfM_X(\sG)$ does not carry a natural action of the mapping class group.

The \emph{nonabelian Hodge correspondence} (proved by Hitchin \cite{selfduality}, Simpson \cite{SimpsonVHS}, Donaldson \cite{harmoicmetric} and Corlette \cite{canonicalmetrics}) provides an identification between $\bfM_X(\sG)$ and $\bfX(\sG)$. More precisely, the \emph{nonabelian Hodge map} 
\[\rH_X:\xymatrix{\bfM_X(\sG) \ar[r]^{\  \cong}& \bfX(\sG)} \]
is a real analytic isomorphism. Moreover, $\rH_X$ extends to a real analytic homeomorphism between the moduli space of polystable $\sG$-Higgs bundles and the $\sG$-character variety. In this paper, we will focus only on the stable loci of these spaces. 
Importantly, $\rH_X$ is neither holomorphic nor symplectic, but the nonisomorphic structures combine to define a hyperK\"ahler structure on $\bfM_X(\sG)$. In particular, pulling back the real part of Atiyah-Bott-Goldman form by $\rH_X$ defines a K\"ahler structure on $\bfM_X(\G)$.

In addition to this rich geometry, the theory of Higgs bundles has been an effective tool for studying various questions concerning character varieties. 
We briefly discuss two such applications relevant to this paper: parameterizing
\emph{higher Teichm\"uller spaces} and establishing (non) uniqueness of
equivariant minimal surfaces in noncompact Riemannian symmetric spaces. 

There are certain 
real forms $\sG^\R$ of $\sG$ for which the $\sG^\R$-character variety has
distinguished components. These components are usually called ``higher
Teichm\"uller spaces,'' as they generalize various features of the Teichm\"uller space of $\Sig$ when it is identified with the connected component of the $\mathsf{PSL}_2\R$-character variety consisting of holonomy representations of hyperbolic structures on $\Sig$. 
The first family of such components is the \emph{Hitchin components}, which exist for the split real form $\sG^\R$ of any semisimple complex Lie group $\sG$, e.g. $\sSL_n\R<\sSL_n\C$. They were first 
introduced  by Hitchin \cite{liegroupsteichmuller} who  parametrized them 
using Higgs bundles. The dynamical significance of these components was
discovered 
by Labourie \cite{AnosovFlowsLabourie} and, from a different perspective,
by Fock-Goncharov \cite{fock_goncharov_2006}.
A generalization of Hitchin's Higgs bundle parameterization was used in \cite{BCGGOgeneralCayley} to enumerate and parametrize all expected higher rank Teichm\"uller spaces predicted by Guichard-Wienhard's theory of $\theta$-positivity \cite{GWPos,GLWPositive,BGLPWcollar}. 
This built on and generalized previous work on higher rank Teichm\"uller spaces \cite{UpqHiggs,MaxRepsAnosov,BIWmaximalToledoAnnals,BGRmaximalToledo,SOpqComponents,CollierSOnn+1components}.

For a fixed representation $\rho\in\bfX(\sG)$, the corresponding Higgs
bundle on a Riemann surface $X$ is constructed from the existence of
an essentially unique $\rho$-equivariant harmonic map from the universal
cover $\widetilde X$ of $X$ to the Riemannian symmetric space $\G/\K$  of $\sG$. 
From Labourie's work on Anosov representations \cite{CrossRatioAnosovProperEnergy}, for representations in higher rank Teichm\"uller spaces
there exists a Riemann surface structure $X$ on $\Sig$ such that the
associated harmonic map is a conformal immersion. Equivalently, its image
is a $\rho(\pi_1\Sigma)$-invariant minimal surface in $\G/\K$.
The uniqueness of such a $\rho$-invariant minimal surface was proven for all representations in higher rank Teichm\"uller spaces when the real form has real rank $2$. There are two families: the case of Hitchin representations was proven by Labourie \cite{cyclicSurfacesRank2}, see also \cite{flatmetriccubicdiff,CubicDifferentialsRP2} for $\sSL_3\R$ and \cite{schoen} for $\sSO(2,2),$ and the case of so called  maximal representations into rank 2 hermitian Lie groups was proven by the first two authors with Tholozan \cite{CTTmaxreps}. 
For Hitchin representations in rank at least 3 (and hence many other higher rank
Teichm\"uller spaces of rank at least 3), Markovi\'c, Sagman and Smillie
\cite{SagmanSmillieLabConjarXiv,Markovic,MSS} recently showed that
uniqueness of the minimal surface fails. All of these results used a
detailed, and different, understanding of the associated Higgs bundles.

Both of the above applications involve understanding some aspect of how the Higgs bundle associated to a fixed representation depends on the choice of Riemann surface structure. It is natural to ask how various features of the nonabelian Hodge correspondence depend on the choice of Riemann surface. For example:
other than components for real forms of $\sG,$ are there interesting subvarieties of $\bfM_X(\sG)$ whose image under the nonabelian Hodge map $\rH_X$ is the same for all Riemann surfaces $X?$ 
For a given representation $\rho\in\bfX(\sG)$, is the set of
$\rho$-invariant (branched) minimal surfaces in  $\G/\K$
discrete or not?

A natural setting for questions of this nature would be to have a
\emph{joint moduli space of $\sG$-Higgs bundles}, where the underlying
Riemann surface is allowed to vary in the Teichm\"uller space $\bfT(\Sig)$
of $\Sig$. In fact, Simpson gave an algebro-geometric construction of a related
space, where the Higgs bundle moduli space is constructed in the relative
case for a family of smooth projective curves over a scheme
of finite type over $\C$ \cite{SimpsonModuli1}. In this context, he proves
that the nonabelian Hodge map is a homeomorphism with the relative
character variety \cite[Theorem 7.18]{Simpson:94b}.  With a small amount of
work, Simpson's construction  produces a complex analytic space that may be
regarded as  a moduli space over $\bfT(\Sig)$ (see \cite[\S 7]{SO23LabourieConj}). 
However, many questions of interest are inaccessible by
algebraic methods.
For example, metric structures, and even the higher regularity of the nonabelian Hodge map in the
joint setting is not transparent.

The starting point for this paper is therefore to give a gauge theoretic construction of the 
joint moduli space of stable $\sG$-Higgs bundles and to
study its mapping class group invariant structure.  
The first result is the following.

\begin{mainthm}\label{MainThmA}
	Let $\Sig$ be a closed oriented surface
    of genus $g\geq 2$ and $\sG$ be a complex semisimple Lie group. Then there is a complex orbifold $\bfM(\sG)$ such that: 
	\begin{enumerate}
		\item $\bfM(\sG)$ fibers holomorphically over the Teichm\"uller space $\bfT(\Sig)$  of complex structures on $\Sig$
	\[\pi:\bfM(\sG)\lra\bfT(\Sig),\] 
	where each fiber $\pi^{-1}(X)$ is biholomorphic to the moduli space of stable $\sG$-Higgs bundles on the Riemann surface $X.$
	\item  The mapping class group of $\Sig$ acts holomorphically on
        $\bfM(\sG)$ covering the standard action on $\bfT(\Sig).$
	\item $\bfM(\sG)$ is equipped with a mapping class group invariant
        closed 2-form $\omega_0$ which is compatible with the complex
            structure and which  restricts to the standard K\"ahler form on the fibers $\pi^{-1}(X)$.
    \item On $\bfM(\sG)$ there is a nonzero holomorphic section $\Theta$ of 
        $\Hom(\pi^\ast(\T\Tbold(\Sigma)), \V\Mbold(\G))$
        that is invariant under the action of the mapping class group.
            Here, $\V\Mbold(\G)$ denotes the vertical holomorphic tangent
            bundle of $\pi$.
	\end{enumerate}   
\end{mainthm}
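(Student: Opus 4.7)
The plan is to realize $\bfM(\sG)$ as a quotient of an infinite-dimensional configuration space of joint data. Fix a smooth principal $\sG$-bundle $P \to \Sig$ together with a reduction $P_\K$ to a maximal compact subgroup $\K \subset \sG$, and let $\cJ(\Sig)$ denote the Fréchet manifold of orientation-compatible almost complex structures on $\Sig$ (automatically integrable in real dimension two). Let $\tilde{\cN}^s \subset \cJ(\Sig) \times \cA(P_\K) \times \Omega^1(\Sig, \ad_\C P)$ be the subspace of triples $(j, A, \Phi)$ for which $\Phi$ is of Hodge type $(1,0)$ with respect to $j$ and $(A, \Phi)$ is a stable solution of Hitchin's equations on $(\Sig, j)$. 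The gauge group $\cG_\K$ of $P_\K$ and the identity component $\Diff_0(\Sig)$ of the diffeomorphism group act in a natural and compatible manner; set $\bfM(\sG) := \tilde{\cN}^s / (\cG_\K \rtimes \Diff_0(\Sig))$. The projection $(j, A, \Phi) \mapsto j$ descends to a map $\pi : \bfM(\sG) \to \cJ(\Sig) / \Diff_0(\Sig) = \bfT(\Sig)$ whose fiber over $X = [\Sig, j]$ is identified, via classical Hitchin-Simpson theory, with the moduli space $\bfM_X(\sG)$ of stable $\sG$-Higgs bundles on $X$.

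\textbf{Complex structure and mapping class group action.} To prove (1), I build local complex analytic charts on $\bfM(\sG)$ by combining the Kuranishi slice for the $\cG_\K$-action at fixed $j$ (which produces the standard complex structure on each fiber $\bfM_X(\sG)$, as in Hitchin's original work) with the Ebin-Palais slice for the $\Diff_0(\Sig)$-action on $\cJ(\Sig)$ (which produces the complex structure on $\bfT(\Sig)$); the two constructions combine into a single complex chart on the combined quotient whose integrability follows from integrability on each factor. Part (2) is then immediate: the residual $\Diff^+(\Sig)$-action on $\tilde{\cN}^s$ by pullback commutes with the whole construction and descends to a holomorphic action of $\Mod(\Sig) = \Diff^+(\Sig) / \Diff_0(\Sig)$ on $\bfM(\sG)$ covering the standard action on $\bfT(\Sig)$.

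\textbf{The form $\omega_0$ and the section $\Theta$.} For $\omega_0$, I take the pullback under the fiberwise nonabelian Hodge map $\rH : \bfM(\sG) \to \bfX(\sG)$ of a suitable real component of the Atiyah-Bott-Goldman holomorphic symplectic form $\omega_{AB}$; closedness and $\Mod(\Sig)$-invariance are inherited from those of $\omega_{AB}$, and the fiber-restriction is the standard Kähler form coming from the hyperKähler structure on each $\bfM_X(\sG)$. Compatibility with the total complex structure reduces on the vertical part to Kählerness of the fibers, and on the remaining mixed and horizontal components can be verified by explicit gauge-theoretic computation, using in particular the holomorphicity of the isomonodromic distribution $\ker d\rH$ with respect to the total complex structure on $\bfM(\sG)$. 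For the section $\Theta$, the key observation is that $T_X \bfT(\Sig) \cong H^1(X, T^{1,0}X)$ and $\V_{(X,(E,\Phi))} \bfM(\sG) \cong \HBbb^1(C^\bullet)$, where $C^\bullet = [\ad E \xrightarrow{[\Phi, \cdot]} \ad E \otimes K_X]$ is the Higgs deformation complex; contracting a Beltrami class $\mu$ with the Higgs field $\Phi$ produces a Dolbeault cocycle $\mu \cdot \Phi \in \Omega^{0,1}(X, \ad E)$, whose class projects to $\Theta(\mu) := [(\mu \cdot \Phi, 0)] \in \HBbb^1(C^\bullet)$. This assignment is $\C$-linear, varies holomorphically with $(X, (E, \Phi))$, is natural under orientation-preserving diffeomorphisms (hence $\Mod(\Sig)$-invariant), and is nonzero whenever $\Phi \neq 0$.

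\textbf{Main obstacle.} The principal technical difficulty is the construction of the complex orbifold structure on $\bfM(\sG)$, which requires a careful combination of Kuranishi theory for Higgs bundles with the infinite-dimensional slice theory for the diffeomorphism action on $\cJ(\Sig)$ and the patching of the resulting local charts into a global complex orbifold respecting the fibration $\pi$. A related subtlety is the verification that $\omega_0$ is compatible with the total complex structure globally (not only on fibers), which rests on the holomorphicity of the isomonodromic foliation in the joint setting.
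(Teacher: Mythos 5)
Your overall architecture matches the paper's: a gauge-theoretic configuration space of joint data (your triples $(j,A,\Phi)$ are equivalent, via the Chern--Singer correspondence, to the paper's pairs $(J,\Phi)$ with $J$ a principal complex structure), a combined slice theorem merging Tromba/Ebin--Palais for $\Diff_0(\Sig)$ with Kuranishi/Freed--Uhlenbeck for the gauge group, and the definitions of $\omega_0$ by pulling back the Atiyah--Bott--Goldman form and of $\Theta$ by contracting $\mu$ with $\Phi$ (your hypercohomology description of $\Theta$ is exactly the paper's Appendix C). One small caveat: the group acting is the extension $0\to\cG(P)\to\Aut_0(P)\to\Diff_0(\Sig)\to 0$, which you should not assume splits as a semidirect product $\cG_\K\rtimes\Diff_0(\Sig)$; the paper works with $\Aut_0(P)$ directly and never chooses a splitting.

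The genuine gap is in your treatment of item (3). The pullback $\rH^*\omega_{ABG}$ is \emph{not} compatible with the complex structure $\I$ on the total space: computing it explicitly (the paper's Lemma \ref{lem omega_0 formula} and Equation \eqref{eq pullbackABG}) one finds an $\I$-anti-invariant term $-\tfrac14\int_\Sig\kappa_\gfrak(\theta_1\wedge\theta_2+\theta_1^*\wedge\theta_2^*)$ that is nonzero off the fibers. One must therefore \emph{define} $\omega_0$ as the $(1,1)$-part of the pullback, and then separately justify that this part is still closed; the paper does so by observing that the discarded anti-invariant piece is independent of the base point in $\J(P)\times\Omega^1_b(P,\gfrak)$, hence closed, so its complement is too (Lemma \ref{lem I-inv closed}). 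Your proposed substitute --- verifying compatibility ``using the holomorphicity of the isomonodromic distribution $\ker \rd\rH$'' --- rests on a false premise: the isomonodromic distribution is \emph{not} $\I$-invariant in general. Indeed, Theorem \ref{MainThmB} shows $\cD_x\cap\I(\cD_x)$ equals the kernel of $h_0$, which vanishes on the open dense stratum $\bfM_0$, and Theorem \ref{MainThmE} shows a leaf is holomorphic only in the degenerate case of constant energy. So this step of your argument would fail as written. A second, smaller omission: descent of $\omega_0$ and $h_0$ to the quotient is not automatic, since there is no preferred harmonic representative for $h_0$ (no conformal metric is chosen); the paper handles this via semiharmonic representatives and Proposition \ref{prop:semiharmonic}. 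Finally, your closing claim that $\Theta_x$ is nonzero whenever $\Phi\neq 0$ is false pointwise (there are points of $\bfM_{3g-3}$ with $\Phi\neq 0$ where $\Theta_x\equiv 0$), though it is harmless for the theorem, which only asserts $\Theta$ is a nonzero section.
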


\begin{remark}
	Quotienting by the action of the mapping class group $\sMod(\Sig)$ of
    $\Sig$, gives a holomorphic fibration (in the orbifold sense) over the
    moduli space of genus $g$ curves $\cM_g$:
    \[\bfM(\sG)/ \sMod(\Sig)\lra \cM_g.\]
\end{remark}

We will discuss the technical aspects of the construction of $\bfM(\sG)$
and its important properties in \S  \ref{sec intro techncal}. 
For now, let us mention the following key points: 
the complex orbifold $\bfM(\sG)$ is isomorphic to
the analytification of the coarse moduli space constructed by 
Simpson using algebraic methods, and, as expected,
the nonabelian Hodge map for a fixed Riemann surface extends to a real
analytic  map (see Theorem \ref{thm:realanal}) 
\[\rH:\bfM(\sG)\lra \bfX(\sG)~.\]
The closed 2-form $\omega_0$ is defined by pulling back the  real part of the Atiyah-Bott-Goldman form on $\bfX(\sG)$ by  the  map $\rH$
and taking the part compatible with the complex structure on $\bfM(\sG)$.
Note that, in contrast to the Weil-Petersson symplectic form on Teichm\"uller space, the definition of $\omega_0$ does not require choosing a conformal metric on $\Sig.$

\subsection{Isomonodromic foliation and energy} 
We now describe a surprising interplay between four different objects
defined on the joint moduli space $\bfM(\sG)$ constructed in Theorem
\ref{MainThmA}: the isomonodromic foliation, the hermitian form associated
to $\omega_0$, the holomorphic section $\Theta$, and the energy function. 

Given $\rho \in \bfX(\sG)$, the level set $\cL_\rho:= \rH^{-1}(\rho)$ is called an \emph{isomonodromic leaf}. The isomonodromic leaves fit together to define the \emph{isomonodromic foliation}. Its tangent bundle is called the \emph{isomonodromic distribution} and will be denoted by $\cD$. 

Denote the complex structure on $\bfM(\sG)$ by $\I$ and the hermitian form associated to $\omega_0$ by
\[h_0(\cdot,\cdot)=2(\omega_0(\I\cdot,\cdot)+i\omega_0(\cdot,\cdot)).\]
As above, let $\V\bfM(\sG)$ be the vertical tangent bundle of the fibration
$\pi:\bfM(\sG)\to\bfT(\Sig)$. The $h_0$-perpendicular space will be denoted
by $\cH$ and referred to as the \emph{horizontal distribution}. 
Since  $h_0$ is positive definite on $\V\bfM$, we have an $h_0$-orthogonal splitting
\[\T\bfM(\sG)=\V\bfM(\sG)\oplus \cH.\] 
It turns out that the hermitian form $h_0$ (equivalently, the closed
$2$-form $\omega_0$) is
\emph{not}
nondegenerate everywhere. In fact, the kernel of $h_0$ is intimately
related to the complex tangencies of the 
isomonodromic distribution $\cD$ and the holomorphic section $\Theta$, as
the next result illustrates.
\begin{mainthm}\label{MainThmB}
Let $(h_0, \I)$ denote the hermitian form and complex structure on
    $\bfM(\sG)$. Then $h_0$ is nonpositive on the horizontal distribution $\cH.$ 
In particular, the kernel $\cK_x$ of $h_0$ at $x\in\bfM(\sG)$     is contained in $\cH_x$ and, when $h_0$ is nondegenerate, it has 
    signature $(\dim_\CBbb\bfX(\sG),3g-3)$. 
Moreover,
\[\cK_x= \cD_x\cap \I(\cD_x)=\ker(\Theta_x),\]
where $\Theta$ is the holomorphic section from Theorem \ref{MainThmA}, 
    and in the last equality we have identified the horizontal distribution $\cH$ with $\pi^*\T\bfT(\Sig)$.  
\end{mainthm}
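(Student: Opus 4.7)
The plan is to work relative to the $h_0$-orthogonal splitting $\T\bfM(\sG) = \V\bfM(\sG) \oplus \cH$, together with the $\CBbb$-linear identification $d\pi \colon \cH \isorightarrow \pi^*\T\bfT(\Sig)$ (which is $\CBbb$-linear because $\pi$ is holomorphic). Since $h_0|_{\V\bfM(\sG)}$ is positive definite by Theorem~\ref{MainThmA}(3), any $X$ in the kernel of $h_0$ is orthogonal to every vertical vector and hence has vanishing vertical component; this shows $\cK \subset \cH$ automatically, so the entire statement reduces to an analysis of $h_0|_\cH$ and the identification of its kernel.

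The computational engine will be the \emph{isomonodromic lift}. Because $\rH$ restricts to a diffeomorphism on each fiber of $\pi$, the restriction $d\rH|_{\V\bfM(\sG)}$ is an isomorphism onto $\T\bfX(\sG)$. For any $X \in \cH_p$ with $d\pi(X) = \mu$, the unique isomonodromic lift of $\mu$ at $p$ is of the form $X + \Phi(\mu) \in \cD_p$, where $\Phi(\mu) \in \V\bfM(\sG)_p$ is characterized by $d\rH(X) + d\rH(\Phi(\mu)) = 0$. This assembles into a smooth $\R$-linear bundle map
\[
\Phi\colon \pi^*\T\bfT(\Sig) \lra \V\bfM(\sG),
\]
whose graph is exactly the isomonodromic distribution $\cD$. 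For $X,Y \in \cH$ with $d\pi X = \mu$ and $d\pi Y = \nu$, one then has
\[
\rH^*\omega_{AB}(X, Y) = \omega_{AB}\bigl(-d\rH(\Phi\mu), -d\rH(\Phi\nu)\bigr) = \rH^*\omega_{AB}(\Phi\mu,\Phi\nu),
\]
reducing the computation on $\cH$ to a fiber computation inside $\V\bfM(\sG)$.

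On each fiber, nonabelian Hodge theory identifies $\rH^*\omega_{AB}|_{\V\bfM(\sG)}$ with the holomorphic symplectic form of the de Rham hyperk\"ahler structure, whose real part is (with the orientation that makes $\omega_0|_{\V\bfM(\sG)}$ positive) the K\"ahler form $\omega_\I$ of $(\I,g)$. Decomposing $\Phi = \Phi' + \Phi''$ into its $\CBbb$-linear and $\CBbb$-antilinear parts with respect to $\I$ on both source and target, extracting the $(1,1)$-component of $\rH^*\Real\omega_{AB}$ via $\omega_\I(\I U, \I V) = \omega_\I(U,V)$, and observing that the cross terms between $\Phi'\mu$ and $\Phi''\nu$ cancel, will produce
\[
\omega_0(X,Y)\big|_\cH = \omega_\I(\Phi'\mu,\Phi'\nu) + \omega_\I(\Phi''\mu,\Phi''\nu).
\]
Pairing with $\I X$ and using $\omega_\I(\I U, U) = \|U\|^2$ on the vertical fiber then yields a Hodge-type identity expressing $h_0(X,X)$ as the signed difference of the fiber K\"ahler norms of $\Phi'\mu$ and $\Phi''\mu$.

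The crux is to match this Hodge-type formula with the canonical section $\Theta$ of Theorem~\ref{MainThmA}. By its gauge-theoretic construction as a Kodaira--Spencer-type coupling of a Teichm\"uller Beltrami differential with the Higgs field, $\Theta$ is holomorphic and corresponds to exactly one of the two linearity components of $\Phi$, while the complementary component should vanish identically as a consequence of how the harmonic metric deforms under infinitesimal isomonodromic variation. The surviving summand is then, with the correct sign, $-2\|\Theta(\mu)\|^2$, establishing $h_0(X,X) \le 0$ on $\cH$ and $\cK = \ker\Theta$. The identification $\cK = \cD \cap \I(\cD)$ is then algebraic: a vector $X = X_\cH + X_\V \in \cD$ satisfies $X_\V = \Phi(\mu)$ with $\mu = d\pi X$, and the additional requirement $X \in \I(\cD)$ forces $\Phi(\I\mu) = \I\Phi(\mu)$, i.e.\ $\CBbb$-linearity of $\Phi$ at $\mu$; once the nonvanishing part of $\Phi$ is holomorphic and equal to $\Theta$, both conditions collapse to $\Theta(\mu) = 0$, placing $X$ in $\cH \cap \ker\Theta = \cK$. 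The signature assertion then reads off from combining the positive-definite $h_0|_{\V\bfM(\sG)}$ of complex rank $\dim_{\CBbb}\bfX(\sG)$ with the negative-semidefinite $h_0|_\cH$ of complex rank $3g-3$. The principal obstacle is precisely this last identification: showing that the isomonodromic correction $\Phi$ is of pure linearity type and that the surviving part matches the intrinsically defined $\Theta$ is a gauge-theoretic computation on the variation of the harmonic metric along isomonodromic directions, and it is the step where the hyperk\"ahler geometry of $\V\bfM(\sG)$ interacts most delicately with the nonholomorphic nature of $\rH$.
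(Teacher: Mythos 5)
Your overall architecture is sound and in fact runs parallel to what the paper does: the identity $w_{[\mu]}=\tfrac12(\ell_{[\mu]}-\I\ell_{[i\mu]})$ of Proposition \ref{prop hor and iso dist properties}(2) is exactly the statement that the $\CBbb$-linear part of your isomonodromic correction map vanishes, and Proposition \ref{prop hor and iso dist properties}(1) is your norm identity for the antilinear part. The reduction $\cK\subset\cH$, the reduction of $\rH^\ast\omega_{ABG}$ on $\cH$ to a fiber computation, the cross-term cancellation in the $(1,1)$-projection, and the final signature count are all correct. But the proof as written has a genuine gap precisely at the step you yourself flag as ``the principal obstacle'': you assert, without argument, that the isomonodromic correction is of pure antilinear type and that the surviving summand has norm $\Vert\Theta([\mu])\Vert^2_{h_0}$. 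Neither claim is formal. In the paper these are Lemma \ref{lem key relation of iso} and Lemmas \ref{lem D' and D'' eta}--\ref{lem norm of min norm vect} together with Proposition \ref{prop h0 norm of Theta}, and they require real input: one must characterize horizontal semiharmonic vectors by $(\beta,\psi)=D'\eta$, characterize isomonodromic semiharmonic vectors via a \emph{hermitian} section $\zeta=\zeta^\ast$ solving $\rd\rH(\cdot)=D\zeta$ (Lemma \ref{lem semiharmonic iso gauge hermitian}), and the hermiticity of $\zeta$ in turn rests on the vanishing of the first variation of the harmonic metric along the slice (Theorem \ref{thm:MetricvariationSlice}) together with the K\"ahler identities. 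Without this, nonpositivity of $h_0$ on $\cH$, the identification $\cK_x=\ker(\Theta_x)$, and the inclusion $\cK_x\subset\cD_x$ are all unestablished.

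Two smaller points. First, the antilinear component cannot be ``equal to $\Theta$'': $\Theta$ is $\CBbb$-linear by construction, so the correct statement is only an equality of norms, $\Vert\ell_{[\mu]}-w_{[\mu]}\Vert^2=\Vert\Theta([\mu])\Vert^2_{h_0}$, where the right-hand side is computed from the harmonic projection $\pr_{\ker(D')}\left(\tfrac{1}{2i}\Phi\mu\right)$; conflating the two objects would also break your ``algebraic'' derivation of $\cK_x=\cD_x\cap\I(\cD_x)$ as stated. Second, the inclusion $\cD_x\cap\I(\cD_x)\subset\cK_x$ needs none of this machinery --- it follows in two lines from the definition of $\omega_0$ as the $\I$-invariant part of $\rH^\ast\omega_{ABG}$ and $\cD=\ker(\rd\rH)$ (Lemma \ref{lem I-inv D in K}) --- so you should separate it from the hard reverse inclusion $\cK_x\subset\cD_x$, which again requires the explicit semiharmonic computation.
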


As mentioned above, for a fixed 
representation $\rho\in\bfX(\sG)$, the associated Higgs bundle on a
Riemann surface $X$ is constructed from the unique $\rho$-equivariant
harmonic map $u_\rho:\widetilde X\to\G/\K$.
Taking the Dirichlet energy of $u_\rho$ on a fundamental domain defines a
smooth function $\rE:\bfM(\sG)\to \R$. Restricting $\rE$ to the
isomonodromic leaf of $\rho$ defines a function on Teichm\"uller space
called the \emph{energy function of $\rho$},
\[\cE_\rho:\bfT(\Sig)\lra \R\ .\]
The following theorem relates the complex Hessian of $\cE_\rho$,  the hermitian form $h_0$ on the horizontal distribution $\cH$ and the holomorphic section $\Theta$. 

\begin{mainthm}\label{MainThmC}
For $x\in\bfM(\sG)$, let $X=\pi(x)$ be the associated Riemann surface, and let $\rho=\rH(x)$ be the associated representation. 
    For each tangent vector $[\mu]\in \T_X\bfT(\Sig)$, let $w_\mu\in \cH_x$ be the unique horizontal lift.
    Then the complex Hessian of $\cE_\rho$  at $X$ in the direction $[\mu]$ is given by 
    \begin{equation}\label{eqn:hessian}
        \Delta_\mu \cE_\rho=-8\Vert
        w_\mu\Vert^2_{h_0}=8\Vert\Theta_x([\mu])\Vert_{h_0}^2\ . 
    \end{equation}
\end{mainthm}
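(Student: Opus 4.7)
Locally near $x$, the isomonodromic leaf $\cL_\rho$ is the image of a real-analytic local section $s_\rho: U\subset\bfT(\Sig) \to \bfM(\sG)$ of $\pi$, so that $\cE_\rho = \rE\circ s_\rho$. For $[\mu]\in\T_X\bfT(\Sig)$, the derivative $ds_\rho([\mu])$ lies in $\cD_x$; decompose it as $\tau_\mu + w_\mu$ with $\tau_\mu\in\V_x\bfM(\sG)$ and $w_\mu\in\cH_x$ the horizontal lift, using the $h_0$-orthogonal splitting $\T\bfM(\sG)=\V\bfM(\sG)\oplus\cH$. A short computation using $\cD\subset\ker d\rH$ and the definition of $\omega_0$ as the $(1,1)$-part of $\rH^*\Real\omega_{AB}$ shows that $\omega_0(\tilde\mu,\I\tilde\mu)=0$ for every $\tilde\mu\in\cD$, which by $h_0$-orthogonality of the vertical/horizontal decomposition yields the isomonodromy constraint $h_0(\tau_\mu,\tau_\mu)+h_0(w_\mu,w_\mu)=0$.

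\textbf{Hessian via the first-variation formula.} The starting point is the classical Sampson--Toledo formula for the first variation of the energy along Teichm\"uller directions: $d\cE_\rho([\mu])=-4\Real\int_X\tr(\Phi_\rho^2)\cdot\mu$, where $\Phi_\rho$ is the Higgs field on $X$ associated to $\rho$ and $\tr(\Phi_\rho^2)$ is the Hopf differential of the equivariant harmonic map $u_\rho$. Differentiating this identity in a $\bar\nu$-direction along the isomonodromic leaf, the key input is that the infinitesimal variation of $\Phi_\rho$ as $X$ moves in $\bfT(\Sig)$ with $\rho$ held fixed is precisely the vertical component $\tau_\nu$ of $ds_\rho(\nu)$, up to Kodaira--Spencer and gauge. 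Under the canonical construction of the holomorphic section $\Theta$ of Theorem \ref{MainThmA}, the $(0,1)$-type part of this vertical variation is identified with $\Theta_x([\mu])$. Integration by parts in the base together with the isomonodromy constraint above converts the second variation of the energy into a pure $h_0$-pairing of $w_\mu$ with itself, producing the factor of $-8$.

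\textbf{Conclusion and main obstacle.} Combining the three ingredients --- $h_0$-orthogonality of the vertical and horizontal subbundles, the isomonodromy constraint $h_0(\tau_\mu,\tau_\mu)=-h_0(w_\mu,w_\mu)$, and the identification of the vertical variation with $\Theta_x([\mu])$ --- collapses the Hessian to $\Delta_\mu\cE_\rho=-8\|w_\mu\|^2_{h_0}$. The second equality $-\|w_\mu\|^2_{h_0}=\|\Theta_x([\mu])\|^2_{h_0}$ is a structural property of $\Theta$ established in connection with Theorem \ref{MainThmB}: under the identification $\cH\cong\pi^*\T\bfT(\Sig)$, the section $\Theta$ acts as a sign-reversing hermitian isometry from $\cH/\cK$ into $\V\bfM(\sG)$, so the two expressions in \eqref{eqn:hessian} agree. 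The principal obstacle is precisely this identification of the $\rho$-dependent vertical deformation $\tau_\mu$ with the canonical holomorphic section $\Theta$, and the careful tracking of the numerical constant. At its core, this requires a clean description of how the harmonic metric and Higgs field deform along isomonodromic leaves in terms of the universal Kodaira--Spencer data, which is the analytic input that presumably occupies an earlier section of the paper.
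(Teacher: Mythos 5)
Your setup is sound: the decomposition $\ell_{[\mu]}=\tau_\mu+w_\mu$ into vertical and horizontal parts, the isotropy of $\cD$ for $h_0$ (the paper's Lemma \ref{lem D isotr}), and the resulting identity $h_0(\tau_\mu,\tau_\mu)=-h_0(w_\mu,w_\mu)$ are all correct, as is the citation of $-\Vert w_\mu\Vert^2_{h_0}=\Vert\Theta_x([\mu])\Vert^2_{h_0}$ (Proposition \ref{prop hor and iso dist properties}). But the middle paragraph, which is the actual theorem, is not a proof: it asserts rather than shows that differentiating the first-variation (Hopf differential) formula ``converts the second variation into a pure $h_0$-pairing of $w_\mu$ with itself, producing the factor of $-8$.'' The isomonodromy constraint is an algebraic identity among first-order data; the Hessian is a genuine second derivative, and nothing in your argument explains why the second-order terms organize into exactly that pairing. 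You acknowledge this yourself by calling the identification of $\tau_\mu$ with $\Theta$ ``the principal obstacle,'' but the obstacle is never overcome.

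Concretely, three inputs are missing. First, to differentiate $\rd\cE_\rho([\mu])=2\Real\langle\Phi,(\tfrac{1}{2i}\Phi\mu)^*\rangle$ along the leaf you must control how the harmonic metric and Higgs field vary with $X$ at fixed $\rho$; in the paper this rests on the vanishing of the first variation of the harmonic metric in semiharmonic gauge (Theorem \ref{thm:MetricvariationSlice}) and on the explicit description of isomonodromic tangent vectors as $\rd\rH(\mu,\beta,\psi)=D\zeta$ with $\zeta=\zeta^*$ and $D'\zeta=\tfrac{1}{2i}(\beta+\psi-(\tfrac{1}{2i}\Phi\mu)^*)$ (Lemmas \ref{lem isomon vect on config} and \ref{lem semiharmonic iso gauge hermitian}). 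Without these, ``up to Kodaira--Spencer and gauge'' cannot produce the constant or even the sign. Second, the complex Hessian requires a holomorphic two-parameter family $j(s,t)$, and the second variation of $j$ contributes the term $\dt m_s+\dt m_t=2jm_s^2$, which after integration yields the $+\Vert\tfrac{1}{2i}\Phi\mu\Vert^2$ piece of $\Vert\tfrac{1}{2i}\Phi\mu\Vert^2-\Vert(\beta,\psi)\Vert^2=-\Vert w_\mu\Vert^2_{h_0}$; this term is absent from your sketch, and without it the answer comes out wrong. Third, relating the two isomonodromic lifts $\ell_{[\mu]}$, $\ell_{[i\mu]}$ to the single horizontal lift $w_{[\mu]}$ (the paper's Lemma \ref{lem key relation of iso}, $w_{[\mu]}=\tfrac12(\ell_{[\mu]}-\I\ell_{[i\mu]})$) is what lets the gauge potentials $\zeta_1,\zeta_2$ recombine, via $D'(i\zeta_1+\zeta_2)=(\beta,\psi)$ and $D''(i\zeta_1-\zeta_2)=-\beta^*+\psi^*$, into the $h_0$-norm of $w_\mu$ after Stokes and the K\"ahler identities. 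Your route through the Hopf differential is essentially the Toledo--To\v{s}i\'c approach the paper compares with in Appendix \ref{sec:tosic-comparison}, and it can be made to work, but as written the computation that constitutes the theorem has been replaced by its conclusion.
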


\begin{remark}
    Since $\Vert\Theta([\mu])\Vert_{h_0}^2\geq 0$, Theorem \ref{MainThmC} implies that the energy function $\cE_\rho$ is plurisubharmonic. This  recovers special cases of results of Toledo \cite{Toledo}.
\end{remark}
The kernel of the complex Hessian of $\cE_\rho$ corresponds to the
directions in which $\cE_\rho$ is not strictly plurisubharmonic, and it is identified with a subspace $\cQ_x\subset\cH_x$ of the horizontal distribution by Theorem \ref{MainThmC}. The following corollary is immediate from Theorems B and C.
\begin{corollary} \label{cor:theta}
	Let $x\in\bfM(\sG)$. Using the notation from Theorem \ref{MainThmB}, we have 
	\[\cQ_x=\cK_x = \cD_x \cap\I(\cD_x) =  \ker(\Theta_x).\]
\end{corollary}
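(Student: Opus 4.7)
The plan is to combine Theorems \ref{MainThmB} and \ref{MainThmC} directly, treating the three equalities on the right as the content of Theorem \ref{MainThmB} and reserving the real work for the identification $\cQ_x=\cK_x$.

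First, recall that $\cQ_x\subseteq\cH_x$ is defined as the image, under the horizontal lifting map $[\mu]\mapsto w_\mu$, of the kernel of the complex Hessian of $\cE_\rho$ at $X=\pi(x)$. By Theorem \ref{MainThmC}, we have
\[
    \Delta_\mu\cE_\rho = -8\,\|w_\mu\|^2_{h_0},
\]
so $w_\mu\in\cQ_x$ if and only if $\|w_\mu\|^2_{h_0}=0$. By the first assertion of Theorem \ref{MainThmB}, $h_0$ is nonpositive on $\cH$, so the restricted hermitian form $h_0|_{\cH_x}$ is negative semidefinite. A Cauchy--Schwarz argument for semidefinite hermitian forms then shows that the null cone of $h_0|_{\cH_x}$ coincides with its radical, i.e.\ with $\ker(h_0|_{\cH_x})$. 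Since Theorem \ref{MainThmB} also asserts $\cK_x\subseteq\cH_x$, this radical is precisely $\cK_x$. Therefore $w_\mu\in\cQ_x\iff w_\mu\in\cK_x$, giving $\cQ_x=\cK_x$.

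The remaining equalities $\cK_x=\cD_x\cap\I(\cD_x)=\ker(\Theta_x)$ are exactly the content of the second half of Theorem \ref{MainThmB}, once one uses the identification of $\cH$ with $\pi^\ast\T\bfT(\Sig)$ so that $\Theta_x$ can be regarded as a map on $\cH_x$. Concatenating these identifications yields
\[
    \cQ_x=\cK_x=\cD_x\cap\I(\cD_x)=\ker(\Theta_x),
\]
as claimed.

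The only step that is not purely formal is the passage from $\|w_\mu\|^2_{h_0}=0$ to $w_\mu\in\ker(h_0|_{\cH_x})$, and this is the sole place where one genuinely uses nonpositivity of $h_0$ on $\cH$ rather than just its being hermitian; I expect this to be the main (and only) obstacle, but it is a standard fact about semidefinite forms. No additional analytic input beyond Theorems \ref{MainThmB} and \ref{MainThmC} is required.
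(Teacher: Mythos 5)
Your proposal is correct and follows essentially the same route as the paper, which simply declares the corollary immediate from Theorems B and C; the body version of Theorem B (Theorem \ref{thm Non intro Thm B}) in fact already records the identity $\cK_x=\{w\in\cH\mid \|w\|^2_{h_0}=0\}$, which is exactly the null-cone-equals-radical step you supply via Cauchy--Schwarz for the negative semidefinite form $h_0|_{\cH_x}$. Nothing further is needed.
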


\begin{remark}
	In \cite{Tosicpluri}, To\v si\'c also used Higgs bundles to study the
    complex Hessian of the energy function $\cE_\rho$ for the group
    $\sSL_n\C$ along $1$-parameter families of deformations of $X$.
    In particular, he describes the directions in which the complex Hessian
    of $\cE_\rho$ vanishes, 
    the space $\cQ_x$ above, by certain equations involving the Beltrami differential $\mu$ and the Higgs field $\Phi$.
	These equations are given in \eqref{eq :Tosic} and also play a fundamental role in our analysis since their solutions are equivalent to being in the kernel of the holomorphic section $\Theta$.
	While we do not use To\v si\'c's work to establish the above,
    Theorem \ref{MainThmC} and other results in \S \ref{sec:iso hor and energy} 
     provide a new, 
     more geometric perspective on many of the statements in \cite{Tosicpluri}.  For completeness, in Appendix \ref{sec:tosic-comparison} we indicate why
     Equation
    \eqref{eqn:hessian}  is the same as the formula in  \cite[Theorem 1.10]{Tosicpluri}. 
\end{remark}

One consequence of the approach taken here
is that the rank of the kernel of $h_0$ defines a mapping class group invariant stratification
\begin{equation}
	\label{eq intro strat} 
	\bfM(\sG)=\coprod_{0\leq d\leq 3g-3}\bfM_{d} ~,
\end{equation}
where $\bfM_{d}=\{x\in \bfM(\sG)~|~\dim(\cK_x)=d\}$. 
From the relation with $\Theta$ in Corollary \ref{cor:theta}, it follows
that  
each stratum $\bfM_d$ is a holomorphic subvariety of $\Mbold(\G)$,  and
in addition they are preserved by the 
natural holomorphic $\C^*$-action defined by multiplying the Higgs field by
an element of $\C^*$.
Moreover, $\bfM_0$ is open and dense, while $\bfM_{3g-3}$ is closed. 

Theorems B and C give the following characterization of the open stratum $\bfM_0$.

\begin{corollary}\label{Cor open stratum}
For $x\in\bfM(\sG)$, let $X=\pi(x)$ and $\rho=\rH(x)$ be the associated
    Riemann surface and representation, respectively. Then the following
    are equivalent:
\begin{enumerate}
	\item $x$ is contained in the open stratum $\bfM_0$,
	\item the hermitian form $h_0$ is nondegenerate at $x$ with signature $(\dim\bfX(\sG),3g-3)$, 
    \item the isomonodromic distribution $\cD_x$ is totally real, 
        \item the restriction of $\omega_0$ to the isomonodromic
            distribution is nondegenerate,
	\item the energy function $\cE_\rho$ of $\rho$ is strictly plurisubharmonic in a neighborhood of $X$.
\end{enumerate}
\end{corollary}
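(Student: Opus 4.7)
My plan is to derive all five equivalences from Theorems \ref{MainThmB} and \ref{MainThmC} together with Corollary \ref{cor:theta}, organizing them into three batches.

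The equivalences (1) $\Leftrightarrow$ (2) $\Leftrightarrow$ (3) are immediate from Theorem \ref{MainThmB}. By definition $x \in \bfM_0$ means $\cK_x = 0$; Theorem \ref{MainThmB} simultaneously identifies $\cK_x$ with $\ker(h_0)_x$ and with $\cD_x \cap \I(\cD_x)$, and reads off the signature in (2), making (3) just the definition of being totally real. For (1) $\Leftrightarrow$ (5) I would use Theorem \ref{MainThmC}: the identity $\Delta_\mu \cE_\rho = 8\|\Theta_x([\mu])\|^2_{h_0}$ shows that positive definiteness of the complex Hessian at $X$ is equivalent to injectivity of $\Theta_x$, which by Corollary \ref{cor:theta} is $\cK_x = 0$. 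Since $\bfM_0$ is open (see the remarks following \eqref{eq intro strat}), pointwise positivity at $x$ propagates along $\cL_\rho$ to give strict plurisubharmonicity on a neighborhood of $X$, and the converse is immediate from specialization at $X$.

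For (1) $\Leftrightarrow$ (4), the direction (4) $\Rightarrow$ (1) is easy: any nonzero $v \in \cK_x$ lies in both $\ker(\omega_0)_x$ and $\cD_x$, hence in the radical of $\omega_0|_{\cD_x}$. For the converse I would exploit that $\omega_0$ is the $(1,1)$-part of $\eta := \rH^* \Real \Omega_{AB}$ and that $\eta$ vanishes on $\cD_x = \ker \rH_*$, yielding the identity
\[
\omega_0|_{\cD_x}(v, w) \;=\; \tfrac{1}{2}\, (\Real \Omega_{AB})\bigl(\rH_* \I v,\, \rH_* \I w\bigr), \qquad v, w \in \cD_x.
\]
The map $v \mapsto \rH_* \I v$ on $\cD_x$ has kernel $\cD_x \cap \I \cD_x = \cK_x$, so under (1) it is injective with image a $(6g-6)$-dimensional real subspace $\rH_*(\I\cD_x) \subset \T_\rho \bfX(\sG)$. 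Nondegeneracy of $\omega_0|_{\cD_x}$ then reduces to nondegeneracy of $\Real \Omega_{AB}$ on this image.

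\textbf{Main obstacle.} The remaining step, showing that $\rH_*(\I\cD_x)$ is a symplectic subspace of $(\T_\rho \bfX(\sG), \Real \Omega_{AB})$ whenever $\cK_x = 0$, is where I expect the most work. I would leverage the canonical splitting $\T_x \bfM(\sG) = \V\bfM(\sG)_x \oplus \cD_x$ (coming from the fact that $\rH$ restricts to a diffeomorphism on each fiber of $\pi$), write $\cD_x$ as the graph of a real-linear map $\phi: \cH_x \to \V\bfM(\sG)_x$, and combine this description with Theorem \ref{MainThmC}'s formula $\|w_\mu\|_{h_0}^2 = -\|\Theta_x([\mu])\|_{h_0}^2$. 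The $\C$-linearity of $\Theta_x$, together with the assumption $\ker \Theta_x = 0$, should provide enough rigidity on $\phi$ to force the radical of $\omega_0|_{\cD_x}$ to collapse exactly onto $\cK_x = 0$, closing the circle of equivalences.
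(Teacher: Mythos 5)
Your organization of the five equivalences is right, and four of them go through as you describe: (1)$\Leftrightarrow$(2)$\Leftrightarrow$(3) are indeed immediate from Theorem \ref{MainThmB}, (1)$\Leftrightarrow$(5) follows from Theorem \ref{MainThmC} together with openness of $\bfM_0$, and your reduction of (1)$\Rightarrow$(4) to the statement that $\rH_*(\I\cD_x)$ is a symplectic subspace of $(\T_\rho\bfX(\sG),\omega_{ABG})$ is correct. The problem is that this last statement is exactly the content of (1)$\Rightarrow$(4), and your plan for it --- writing $\cD_x$ as a graph over $\cH_x$ and hoping that the $\C$-linearity of $\Theta$ ``should provide enough rigidity'' --- is not an argument. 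It is not clear how injectivity of $\Theta_x$ alone controls the radical of the pullback of $\omega_{ABG}$ to a $(6g-6)$-dimensional real subspace of $\T_\rho\bfX(\sG)$; a priori a real even-dimensional subspace of a symplectic vector space can fail to be symplectic, so something quantitative is needed and you have not supplied it.

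The missing step closes cleanly if you work with the real part $g_0=2\omega_0(\I\cdot,\cdot)$ on $\T_x\bfM(\sG)$ instead of pushing forward to the character variety. Under (1), Theorem \ref{MainThmB} gives that $g_0$ is nondegenerate of real signature $(2\dim_\CBbb\bfX(\sG),\,6g-6)$, and since $\dim_\CBbb\bfX(\sG)>3g-3$ the negative index is $6g-6$. By Lemma \ref{lem D isotr} both $\cD_x$ and $\I(\cD_x)$ are $g_0$-isotropic of real dimension $6g-6$, hence are \emph{maximal} isotropic subspaces. Now use the elementary fact that if $L_1,L_2$ are maximal isotropic for a nondegenerate symmetric form $g$ and $a\in L_1\cap L_2^{\perp_g}$ with $a\notin L_2$, then $L_2\oplus\langle a\rangle$ is isotropic of too large a dimension; hence $L_1\cap L_2^{\perp_g}\subset L_1\cap L_2$. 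Taking $L_1=\I(\cD_x)$, $L_2=\cD_x$, hypothesis (1) gives $L_1\cap L_2=\cK_x=0$, so $L_1\cap L_2^{\perp_g}=0$. Since $\omega_0(v,w)=-\tfrac12 g_0(\I v,w)$, an element $v$ of the radical of $\omega_0|_{\cD_x}$ has $\I v\in \I(\cD_x)\cap\cD_x^{\perp_{g_0}}=0$, so $v=0$. This is the argument implicit in the paper's assertion that the corollary follows from Theorem \ref{thm Non intro Thm B} together with Lemma \ref{lem D isotr} (the same point underlies Theorem \ref{nonintro Thm D}(2)); you should replace your ``rigidity of $\phi$'' paragraph with it.
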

At the other extreme, we  have the following characterization of the closed
stratum $\bfM_{3g-3}$.

\begin{corollary}\label{cor closed stratum}
    With the same notation as in Corollary \ref{Cor open stratum}, the following are equivalent:
 	\begin{enumerate}
 		\item $x$ is contained in the closed stratum $\bfM_{3g-3}$,
		\item the isomonodromic distribution $\cD_x$ and the kernel $\cK_x$ of $h_0$ are equal,
		\item $\cD_x$ is a complex subspace of $\T_x\bfM(\sG)$,
		 \item the restriction of $\omega_0$ to the isomonodromic
             distribution vanishes,
		\item the complex Hessian of the energy function $\cE_\rho$ vanishes in all directions at $X$.
 	\end{enumerate}
 \end{corollary}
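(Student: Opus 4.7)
The plan is to derive the five equivalences from Theorems \ref{MainThmB} and \ref{MainThmC} together with one crucial preliminary identity. For every $u \in \cD_x$, decomposed according to the $h_0$-orthogonal splitting as $u = w_\mu + \Theta_x([\mu])$ with $[\mu] = d\pi_x(u)$, Theorem \ref{MainThmC} gives $\Vert w_\mu\Vert_{h_0}^2 = -\Vert\Theta_x([\mu])\Vert_{h_0}^2$; combined with the $h_0$-orthogonality of $\cH$ and $\V\bfM(\sG)$, this yields $h_0(u,u) = 0$ on $\cD_x$. Writing $h_0 = 2g + 2i\omega_0$ with $g(\cdot,\cdot) := \omega_0(\I\cdot,\cdot)$ the symmetric (possibly indefinite) bilinear form $\tfrac12 \Real h_0$, polarization of $g$ immediately gives $g|_{\cD_x} \equiv 0$.

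First I would treat $(1)\Leftrightarrow(2)\Leftrightarrow(3)$ as a linear-algebra consequence of Theorem \ref{MainThmB}. Since $\cL_\rho$ is a section of $\pi:\bfM(\sG)\to\bfT(\Sig)$, $\dim_{\RBbb}\cD_x = 6g-6$; and $\cK_x = \cD_x\cap\I\cD_x$ is a complex subspace of $\cD_x$, so $2\dim_{\CBbb}\cK_x \leq 6g-6$ with equality (condition (1)) iff $\cK_x = \cD_x$ (condition (2)), which is in turn equivalent to $\cD_x$ being $\I$-invariant (condition (3)). The equivalence $(1)\Leftrightarrow(5)$ is then immediate: under the identification $\cH_x \cong \T_X\bfT(\Sig)$, Theorem \ref{MainThmB} gives $\cK_x = \ker\Theta_x$ inside the complex $(3g-3)$-dimensional space $\T_X\bfT(\Sig)$, so (1) is equivalent to $\Theta_x = 0$, which by \eqref{eqn:hessian} is equivalent to (5).

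The step $(3)\Leftrightarrow(4)$ is where the preliminary identity $g|_{\cD_x} \equiv 0$ is decisive. For $(3)\Rightarrow(4)$: if $\cD_x$ is complex, then for $u,v \in \cD_x$ we have $\I v \in \cD_x$, hence $\omega_0(u,v) = -g(\I u, v) = 0$. Conversely, for $(4)\Rightarrow(1)$: condition (4) rewritten via $\omega_0(u,v) = -g(\I u, v)$ is the statement $\I\cD_x \perp_g \cD_x$; together with the $\I$-invariance of $g$ (which gives $g|_{\I\cD_x} \equiv 0$), this forces $\cD_x + \I\cD_x$ to be a $g$-isotropic subspace. In the quotient $\T_x\bfM(\sG)/\cK_x$, on which $g$ is nondegenerate of signature $(\dim_{\RBbb}\bfX(\sG),\,6g-6-2\dim_{\CBbb}\cK_x)$, the subspace $\cD_x/\cK_x \oplus \I\cD_x/\cK_x$ is $g$-isotropic of real dimension $2(6g-6-2\dim_{\CBbb}\cK_x)$. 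Using $\dim_{\RBbb}\bfX(\sG) \geq 6g-6$ (valid for semisimple $\sG$ and $g\geq 2$), the smaller signature summand is $6g-6-2\dim_{\CBbb}\cK_x$, so the isotropic bound $2(6g-6-2\dim_{\CBbb}\cK_x) \leq 6g-6-2\dim_{\CBbb}\cK_x$ forces $\dim_{\CBbb}\cK_x = 3g-3$, which is (1).

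The main obstacle is the derivation of the $h_0$-null identity $h_0(u,u) = 0$ on $\cD_x$: this is the one place where the genuine analytic content of Theorem \ref{MainThmC} enters, via the precise cancellation between the nonpositive horizontal norm of $w_\mu$ and the nonnegative vertical norm of $\Theta_x([\mu])$. Once this identity is secured, the remaining implications reduce to signature considerations for Hermitian forms in a quotient of known signature.
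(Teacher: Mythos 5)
Your overall architecture is sound, and in fact more detailed than the paper, which simply asserts that Corollary \ref{cor closed stratum} follows from Theorems \ref{thm Non intro Thm B}, \ref{thm Non intro Thm C} and \ref{thm:stratification}. The chain $(1)\Leftrightarrow(2)\Leftrightarrow(3)$ by dimension count, $(1)\Leftrightarrow(5)$ via $\cK_x=\ker\Theta_x$ and \eqref{eqn:hessian}, and your signature/isotropy argument for $(4)\Rightarrow(1)$ are all correct; the last of these is a genuinely nice way to make explicit an implication the paper leaves to the reader.

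The one flawed step is your derivation of the preliminary identity. You assert that an isomonodromic vector $u\in\cD_x$ decomposes in the $h_0$-orthogonal splitting $\T_x\bfM(\sG)=\V_x\oplus\cH_x$ as $u=w_\mu+\Theta_x([\mu])$. This is not established anywhere and is false in general: what is true is that the vertical component $u-w_\mu$ has the \emph{same $h_0$-norm} as $\Theta_x([\mu])$, not that the two vectors coincide. Indeed, tracing through Lemmas \ref{lem semiharmonic iso gauge hermitian} and \ref{lem D' and D'' eta}, the harmonic representative of $u-w_\mu$ is the harmonic projection of the $(1,0)$-form $\bigl(\tfrac{1}{2i}\Phi\mu\bigr)^{*}$, while that of $\Theta_x([\mu])$ is the harmonic projection of the $(0,1)$-form $\tfrac{1}{2i}\Phi\mu$; these are exchanged by a conjugate-linear involution of $\cH^1(\A^\bullet)$ and are distinct in general. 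Worse, the equality of norms that would rescue your computation of $h_0(u,u)$ is itself equivalent to the statement $h_0(u,u)=0$ you are trying to prove, so the argument as written is circular. The repair is immediate: the identity you actually need, namely $g|_{\cD_x}\equiv 0$ (and hence $g|_{\I\cD_x}\equiv 0$ by $\I$-invariance), is exactly Lemma \ref{lem D isotr}, which follows in one line from $\cD=\ker(\rd\rH)$ together with the definition of $\omega_0$ as the $\I$-invariant part of $\rH^{*}\omega_{ABG}$ --- no appeal to Theorem \ref{thm Non intro Thm C} or to any decomposition of $u$ is required. With that substitution the rest of your proof goes through unchanged.
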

 
Apart from compact representations, 
it would seem very hard to characterize when a general isomonodromic leaf is
entirely contained in a fix stratum.
However, we are able to prove that the isomonodromic leaves in the
so-called Cayley components of \cite{BCGGOgeneralCayley} are contained in
the open stratum $\bfM_0$. Let us first briefly describe the relevant objects.

Guichard-Wienhard \cite{GWPos} classified four families of real forms of
complex simple Lie groups which admit a  $\theta$-positive structure: split real forms, hermitian real forms of tube type, groups locally isomorphic to $\sSO_{p,q}$ with $1<p<q$, and the 
quaternionic real forms of type $\mathsf{E_6}, \mathsf{E_7},\ \mathsf{E_8},$ and $\mathsf{F}_4$. 
For each such real form $\sG^\R$, there is an special class of representations $\rho:\pi_1\Sig\to\sG^\R$ called \emph{$\theta$-positive representations}. 
These representations have a number of interesting geometric and dynamical properties.
Most notably, $\theta$-positive representations define higher rank Teichm\"uller spaces  \cite{BGLPWcollar,GLWPositive}, which, by definition, means they are a union of connected components of the $\sG^\R$-character variety $\bfX(\sG^\R)$ which consist entirely of discrete and faithful representations. 
For split and hermitian families, $\theta$-positive representations coincide with the more well known classes of Hitchin and maximal representations, respectively. 

Reference \cite{BCGGOgeneralCayley} gave a Higgs bundle parameterization of special components, called \emph{Cayley components}, of the $\sG^\R$-character variety for each of
the four families of $\theta$-positive structures. 
It is expected that the set of $\theta$-positive 
representations coincides with the Cayley components.
Indeed, the Cayley components are included in the set of $\theta$-positive representations by \cite{BCGGOgeneralCayley} and \cite{BGLPWcollar}. For Hitchin and maximal representations the equality holds by construction, and for the third family equality follows from the component classification of \cite{SOpqComponents}.

\begin{mainthm}\label{MainThmD}
Let $\rho\in\bfX(\sG)$ be such that it defines a $\theta$-positive representation into some real form $\sG^\R$ of $\sG$ which is in a Cayley component, and let $\cL_\rho$ be its isomonodromic leaf. Then,
\begin{enumerate}
	 	\item $\cL_\rho$ is contained in the open stratum $\bfM_0,$
	 	\item $\cL_\rho$ is a symplectic submanifold of $\bfM_0$ which is totally real and $\omega_0(\I\cdot,\cdot)$-isotropic, and 
	 	\item $h_0$ has signature $(3g-3,3g-3)$ on $\T\cL_\rho\oplus\I(\T\cL_\rho)$.
	 \end{enumerate} 
\end{mainthm}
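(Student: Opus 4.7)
The plan is to reduce (1) to a statement about the injectivity of the holomorphic section $\Theta$, from which (2) and (3) follow by linear algebra together with the structural results already established in the paper. By Corollary \ref{cor:theta}, containment $\cL_\rho\subset\bfM_0$ is equivalent to the vanishing of $\ker(\Theta_x)$ at every $x\in\cL_\rho$; equivalently, by Corollary \ref{Cor open stratum}, to $\cD_x$ being totally real everywhere on the leaf.

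For (1), I would use the explicit Higgs bundle parameterization of Cayley components from \cite{BCGGOgeneralCayley}. The Cayley Higgs bundles in each of the four families (Hitchin, maximal tube-type hermitian, $\sSO_{p,q}$, and quaternionic exceptional) share a common structural feature: the Higgs field $\Phi$ has a distinguished ``Cayley element'' component $\phi_0$ which is a nowhere vanishing holomorphic section of an appropriate bundle (the principal nilpotent piece for Hitchin, the off-diagonal block forced to be of full rank for maximal representations in tube-type hermitian groups, and analogous pieces in the other two families). Using the To\v si\'c-type equations \eqref{eq :Tosic} to characterize $\ker(\Theta_x)$, a Beltrami differential $\mu\in H^1(X,\T X)$ lies in $\ker(\Theta_x)$ only if a certain cocycle involving $\mu$ and $\Phi$ is cohomologically trivial in the Higgs bundle deformation complex. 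Exploiting the block decomposition of $\Phi$ and focusing on the component involving $\phi_0$, one shows that this cocycle condition forces a pointwise identity of the form $\mu\cdot\phi_0=0$; since $\phi_0$ is nowhere vanishing and takes values in an irreducible summand, this forces $\mu\equiv 0$.

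Given (1), assertion (2) is mostly automatic from Corollary \ref{Cor open stratum}: the conditions that $\cD_x$ is totally real and that $\omega_0|_{\cD_x}$ is nondegenerate hold on $\bfM_0$, so $\cL_\rho$ is a symplectic submanifold. The isotropic property for $\omega_0(\I\cdot,\cdot)$ follows directly from the definition of $\omega_0$ as the $(1,1)$-part of $\rH^*(\Real\omega_{AB})$: for $v,w\in\cD_x=\ker d\rH$,
\[
2\omega_0(\I v,w)=\Real\omega_{AB}(d\rH(\I v),d\rH(w))-\Real\omega_{AB}(d\rH(v),d\rH(\I w))=0.
\]

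For (3), the argument is linear algebra on $\cD_x\oplus\I\cD_x$. Set $g_0:=\Real h_0=2\omega_0(\I\cdot,\cdot)$, a real symmetric $\I$-invariant form. The isotropic property from (2) shows that $\cD_x$ is a $g_0$-null subspace of real dimension $6g-6$ inside this real $(12g-12)$-dimensional space. To see $g_0$ is nondegenerate there, suppose $v_1+\I v_2$ with $v_i\in\cD_x$ lies in the radical. Testing against $\I w$ for $w\in\cD_x$ and using $g_0(a,\I b)=2\omega_0(a,b)$ together with $(1,1)$-invariance of $\omega_0$, one deduces $\omega_0(v_i,w)=0$ for all $w\in\cD_x$; nondegeneracy of $\omega_0|_{\cD_x}$ then gives $v_i=0$. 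Hence $g_0$ on $\cD_x\oplus\I\cD_x$ is a nondegenerate symmetric form admitting a maximal isotropic subspace of dimension exactly $6g-6$, forcing its real signature to be $(6g-6,6g-6)$, which corresponds to hermitian signature $(3g-3,3g-3)$ for $h_0$. The main obstacle is the case-by-case verification in (1): while the existence of a nowhere vanishing Cayley element is a uniform feature of the four families, translating this into injectivity of $\Theta$ via To\v si\'c-style equations requires tracking the specific block structure of $\Phi$ family by family, with the quaternionic case likely demanding the most delicate bookkeeping.
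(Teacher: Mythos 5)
Your handling of items (2) and (3) is correct and essentially fills in what the paper leaves implicit: the isotropy of $\omega_0(\I\cdot,\cdot)$ on $\cD$ is exactly Lemma \ref{lem D isotr}, and your linear-algebra argument on $\cD_x\oplus\I(\cD_x)$ (a nondegenerate symmetric form on a $(12g-12)$-dimensional real space containing a $(6g-6)$-dimensional isotropic subspace has split signature) is sound, given (1) and the nondegeneracy of $\omega_0|_{\cD}$ on $\bfM_0$ from Corollary \ref{Cor open stratum}. The problems are in item (1).

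First, the mechanism you describe for killing $\ker(\Theta_x)$ is misstated. If $[\mu]\in\ker(\Theta_x)$ then $\tfrac{1}{2i}\Phi\mu=D''\xi$ for some $\xi$, and projecting onto the distinguished one-dimensional summand gives $\dbar_J\xi_1=\tfrac{1}{2i}\Phi_1\mu$ with $\Phi_1$ a nowhere-vanishing section of a line bundle. This does \emph{not} produce a pointwise identity $\mu\cdot\phi_0=0$, and it cannot force $\mu\equiv 0$ pointwise: for any $\dbar$-exact Beltrami differential $\mu=\dbar_j v\neq 0$ the equation is solved by $\xi_1=\Phi_1(v)$. What the equation does give, after dividing by the nowhere-vanishing section, is $\mu=\dbar_j(2i\,\xi_1\otimes\Phi_1^{-1})$, i.e.\ $[\mu]=0$ in $H^1(X,T_X)$ --- which is exactly what is needed, since $\Theta$ is a map on cohomology classes. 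This is Proposition \ref{lem suff cond open strat}. Second, and more seriously, your argument only applies at points of $\cL_\rho$ whose Higgs bundle carries the Cayley block structure, and you use silently the deepest input to item (1): Theorem \ref{thm:BCGGOmag}, i.e.\ that $\rH(\cS_f^0(X))$ is independent of $X$, so that \emph{every} point of the leaf lies in a Cayley locus over its Riemann surface. Without this you only place a single point of $\cL_\rho$ in $\bfM_0$. Finally, the case-by-case verification over the four families is avoidable: the paper's uniform route is that every Cayley component sits in a Slodowy slice $\cS_f^0(X)$ for a magical $\slfrak_2$-triple, where $\Phi=f+\psi_0+\cdots+\psi_N$ with $f$ a nowhere-vanishing section of the line bundle $P_\sH(\langle f\rangle)\otimes K$, so Proposition \ref{lem suff cond open strat} applies once and for all with no family-by-family bookkeeping.
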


The following corollary recovers results of Slegers \cite{Slegers} for Hitchin representations in $\sSL_n\R$; it is immediate from Theorem \ref{MainThmD} and Corollary \ref{Cor open stratum}.
\begin{corollary}
	Suppose $\rho$ is a $\theta$-positive representation in a Cayley component. Then the  energy function $\cE_\rho$ is strictly plurisubharmonic. 
\end{corollary}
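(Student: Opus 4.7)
The plan is simply to chain the two cited results together, with the observation that strict plurisubharmonicity is a local condition on $\bfT(\Sig)$. Fix an arbitrary $X \in \bfT(\Sig)$, and let $x \in \bfM(\sG)$ be the unique point with $\pi(x) = X$ and $\rH(x) = \rho$, so that $x \in \cL_\rho \cap \pi^{-1}(X)$. By Theorem \ref{MainThmD}(1), $x$ lies in the open stratum $\bfM_0$.

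Now invoke Corollary \ref{Cor open stratum}, specifically the equivalence $(1) \Leftrightarrow (5)$: membership in $\bfM_0$ at the point $x$ is equivalent to the energy function $\cE_\rho$ being strictly plurisubharmonic in a neighborhood of $X = \pi(x)$. Since $X \in \bfT(\Sig)$ was arbitrary and strict plurisubharmonicity is local, this yields strict plurisubharmonicity of $\cE_\rho$ on all of $\bfT(\Sig)$, proving the corollary.

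There is no real obstacle here since all of the substantive content is packaged into Theorem \ref{MainThmD} and Corollary \ref{Cor open stratum}; the only thing to verify is that the isomonodromic leaf $\cL_\rho$ projects onto all of $\bfT(\Sig)$ (so that every $X$ is indeed in the image of $\pi|_{\cL_\rho}$), but this is a standard consequence of the properness of the Hitchin fibration combined with the fact that $\rH_X$ is a homeomorphism for each $X$, so that the fiber of $\pi|_{\cL_\rho}$ over each $X$ is a single point. Thus the corollary is a direct synthesis of the two prior results, with no additional computation required.
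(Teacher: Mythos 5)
Your proof is correct and follows exactly the paper's route: the corollary is stated there as an immediate consequence of Theorem~\ref{MainThmD}(1) together with the equivalence $(1)\Leftrightarrow(5)$ of Corollary~\ref{Cor open stratum}. The only side remark is that surjectivity of $\pi|_{\cL_\rho}$ needs no appeal to properness of the Hitchin fibration --- it follows directly from the fact that $\rH_X$ is a bijection on each fiber, so that $\cL_\rho$ is the image of the section $s_\rho:\bfT(\Sig)\to\bfM(\sG)$ as noted in the paper.
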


Our next theorem gives equivalent characterizations of isomonodromic leaves which are contained in the closed stratum. 

\begin{mainthm}\label{MainThmE}
 	For a representation $\rho\in\bfX(\sG)$ with isomonodromic leaf
    $\cL_\rho$, the following are equivalent: 
\begin{enumerate}
	\item $\cL_\rho$ is contained in the closed stratum $\bfM_{3g-3}$,
	\item $\cL_\rho$ is a holomorphic submanifold of $\bfM(\sG)$,
    \item the energy function $\cE_\rho$ of $\rho$ is constant on
        $\Tbold(\Sigma)$. 
\end{enumerate}
 \end{mainthm}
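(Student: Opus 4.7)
The proof splits naturally into $(1)\Leftrightarrow(2)$ and $(1)\Leftrightarrow(3)$, and both parts build directly on Corollary~\ref{cor closed stratum}. As a preliminary, I would observe that the leaf $\cL_\rho=\rH^{-1}(\rho)$ is a smooth real-analytic global section of $\pi:\bfM(\sG)\to\bfT(\Sig)$ (since $\rH$ is real analytic and restricts to a bijection on each fiber of $\pi$), and its tangent space at $x$ is exactly the isomonodromic distribution $\cD_x$. With this in hand, $(1)\Leftrightarrow(2)$ follows directly from the characterization in Corollary~\ref{cor closed stratum} that $x\in\bfM_{3g-3}$ iff $\cD_x$ is a complex subspace of $\T_x\bfM(\sG)$: the leaf is a holomorphic submanifold iff every tangent space is complex, iff every point lies in $\bfM_{3g-3}$. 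Likewise $(3)\Rightarrow(1)$ is immediate, since constancy of $\cE_\rho$ kills the complex Hessian $\partial\bar\partial\cE_\rho$ identically, placing every $x\in\cL_\rho$ in $\bfM_{3g-3}$ by the same corollary.

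The substantive direction is $(1)\Rightarrow(3)$. Under $(1)$, Corollary~\ref{cor closed stratum} gives $\partial\bar\partial\cE_\rho\equiv 0$ on $\bfT(\Sig)$, so $\cE_\rho$ is pluriharmonic; equivalently, $\partial\cE_\rho$ is a holomorphic $(1,0)$-form on $\bfT(\Sig)$. Via the first variation of the harmonic map energy, this holomorphic form is identified (up to a nonzero scalar) with the Hopf differential $X\mapsto \tfrac12\tr\Phi_X^2$, which depends holomorphically on $X$ through the holomorphic section $\sigma=(\pi|_{\cL_\rho})^{-1}:\bfT(\Sig)\to\bfM(\sG)$ produced in $(2)$. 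Constancy of $\cE_\rho$ is thus equivalent to the vanishing of this Hopf differential at every $X$, i.e.\ to the $\rho$-equivariant harmonic map $\widetilde X\to\G/\K$ being weakly conformal at every complex structure.

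To close the argument I would invoke the principle that a pluriharmonic function on a connected complex manifold which admits a critical point must be constant: locally writing $\cE_\rho=\Real F$ for holomorphic $F$, the vanishing of $dF$ at the critical point forces $F$ to be locally constant, and analytic continuation globalizes this on the connected manifold $\bfT(\Sig)$. It therefore suffices to produce one critical point of $\cE_\rho$, equivalently a $\rho$-equivariant branched minimal immersion $\widetilde X_0\to\G/\K$. Stable Higgs bundles correspond to irreducible (hence reductive) representations, and for such $\rho$ the classical properness of the energy functional modulo the $\sMod(\Sig)$-stabilizer of $\rho$ (Sacks--Uhlenbeck, Schoen--Yau) ensures that $\cE_\rho$ attains its infimum on $\bfT(\Sig)$. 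The main technical obstacle is this last step: coupling pluriharmonicity with the existence of an equivariant minimal surface, and hence appealing to the appropriate properness statement for the energy in the present joint setting.
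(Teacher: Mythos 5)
Your treatment of $(1)\Leftrightarrow(2)$ and of $(3)\Rightarrow(1)$ matches the paper and is fine. The gap is in $(1)\Rightarrow(3)$, and it is twofold. First, the ``principle'' you invoke --- that a pluriharmonic function on a connected complex manifold admitting a critical point must be constant --- is false: $\Real(z^2)=x^2-y^2$ is pluriharmonic on $\C$ with a critical point at the origin. Vanishing of $dF$ at a single point does not make a holomorphic $F$ locally constant; what is true is that a pluriharmonic function attaining an interior \emph{extremum} is constant, but a critical point of $\cE_\rho$ (an equivariant branched minimal surface) need not be an extremum. Second, even the corrected version requires producing a minimizer, and the properness you appeal to fails in exactly the situations the theorem describes: by the Schoen--Yau/Sacks--Uhlenbeck argument (which the paper uses for Corollary \ref{cor:elliptic}), constancy of $\cE_\rho$ forces $\rho$ to be totally elliptic, so representations with holomorphic leaves --- for instance irreducible representations into a compact real form, where $\cE_\rho\equiv 0$ --- have non-proper energy. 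Assuming properness to prove constancy is therefore circular, and you rightly flag this step as the obstacle; it cannot be repaired along these lines.

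The paper's argument for $(1)\Rightarrow(3)$ is first-order rather than second-order and avoids all of this. On the closed stratum one has $\cD_x=\cK_x\subset\cH_x$ (Corollary \ref{cor closed stratum} combined with Theorem \ref{MainThmB}), so the isomonodromic lift $\ell_{[\mu]}$ coincides with the horizontal lift $w_{[\mu]}$; since $\rd\rE(w_{[\mu]})=0$ always (Lemma \ref{lem  1st var hor and iso}), it follows that $d\cE_\rho\equiv 0$ on the connected space $\bfT(\Sigma)$, hence $\cE_\rho$ is constant. Equivalently, in the language of your Hopf-differential paragraph: on $\bfM_{3g-3}$ every Beltrami differential $\mu$ lies in $\ker\Theta$, so $\Phi\mu=\dbar_J\xi_\mu$ with $[\xi_\mu,\Phi]=0$, and Stokes' theorem shows the Hopf differential pairs trivially with every $\mu$, i.e.\ $\partial\cE_\rho\equiv 0$. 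No Hessian, no critical point, and no properness statement are needed.
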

 A representation $\rho:\pi_1\Sigma\to\G$ is called \emph{totally elliptic}
 if the action of $\rho(\gamma)$ on $\G/\K$
 has zero translation length for each essential simple closed curve $\gamma\subset\Sigma$ (see
    \cite{Maret:25}). 
 By a classical argument of Schoen-Yau \cite[Lemma 3.1]{SchoenYau:79} and
 Sacks-Uhlenbeck \cite[Theorem 4.3]{SacksUhlenbeck:82},  we have the
 following immediate consequence of Theorem \ref{MainThmE}.
\begin{corollary} \label{cor:elliptic}
	If $\rho\in\bfX(\sG)$ is a representation such that the isomonodromic
    leaf $\cL_\rho\subset \bfM(\sG)$ is holomorphic, then
    $\rho$ is totally elliptic.
\end{corollary}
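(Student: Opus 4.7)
The plan is to combine Theorem~\ref{MainThmE} with a classical length-energy estimate on collars. By Theorem~\ref{MainThmE}, if $\cL_\rho$ is a holomorphic submanifold of $\bfM(\sG)$, then the energy function $\cE_\rho:\bfT(\Sig)\to\R$ is constant, so it suffices to show that any representation with bounded $\cE_\rho$ is totally elliptic.

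I would argue by contradiction: suppose some essential simple closed curve $\gamma\subset\Sig$ had positive translation length $L:=\ell_\rho(\gamma)>0$.  I would then choose a sequence $X_n\in\bfT(\Sig)$ with hyperbolic length $\ell_{X_n}(\gamma)\to 0$.  By the Collar Lemma, each $X_n$ carries an embedded annular neighborhood $A_n$ of $\gamma$ whose conformal modulus $M_n$ tends to $+\infty$.  Let $u_n:\widetilde X_n\to \G/\K$ denote the $\rho$-equivariant harmonic map whose Dirichlet energy on a fundamental domain equals $\cE_\rho(X_n)$.

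The key step is then to estimate the energy of $u_n$ on $A_n$ from below by $L^2 M_n$.  Since energy is conformally invariant in dimension two, a lift of $A_n$ to $\widetilde X_n$ may be realized as a flat cylinder $[0,T_n]\times S^1_{c_n}$ with $M_n=T_n/c_n$, on which $\rho(\gamma)$ acts by translation around the $S^1$-factor.  Because $\G/\K$ is non-positively curved, the two endpoints of any slice going once around $\gamma$ are displaced by at least $L$, and the Cauchy--Schwarz inequality yields $\int_0^{c_n}|\partial_\theta u_n|^2\,d\theta\geq L^2/c_n$ on each slice.  Integrating over $[0,T_n]$ gives $E(u_n|_{A_n})\geq L^2 M_n$, hence $\cE_\rho(X_n)\geq L^2 M_n\to\infty$, contradicting the constancy of $\cE_\rho$.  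Therefore $\ell_\rho(\gamma)=0$ for every essential simple closed curve $\gamma$, so $\rho$ is totally elliptic.

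The main obstacle is essentially the length-energy estimate on the collar itself, which is the classical Schoen--Yau/Sacks--Uhlenbeck argument; the rest of the proof is an immediate packaging of this estimate together with the characterization of holomorphic isomonodromic leaves by constant energy provided by Theorem~\ref{MainThmE}.
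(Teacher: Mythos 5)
Your proof is correct and follows exactly the route the paper intends: Theorem~\ref{MainThmE} reduces the statement to showing that constant (hence bounded) energy forces zero translation length, and your collar-pinching length--energy estimate is precisely the classical Schoen--Yau/Sacks--Uhlenbeck argument that the paper cites for this step. The only cosmetic remark is that the displacement bound $d(p,\rho(\gamma)p)\geq L$ holds for every $p$ directly from the definition of translation length as an infimum, so the appeal to nonpositive curvature there is unnecessary.
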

\begin{remark}
	Observe that in the examples above, the dynamical behaviors of
    the representations with isomonodromic leaves in $\bfM_0$ or $\bfM_{3g-3}$ are
    opposite: the first are Anosov (in particular, they  are
    quasi-isometric embeddings), while the second behave similarly to compact representations.
    On the other hand, for $\G=\SL_2(\CBbb)$, it follows from
    \cite[Theorem 1]{WentworthWolf:16} that for every
    nonelementary representation $\rho$, the leaf $\Lcal_\rho$   lies in $\Mbold_0$
    over a nonempty open set in $\Tbold(\Sigma)$.
  It would be interesting
to find a dynamical characterization of which strata $\Mbold_d$ an
    isomonodromic leaf $\Lcal_\rho$ can intersect.
\end{remark}

Let us describe four classes of representations with holomorphic isomonodromic leaves:

\begin{itemize}
	\item For representations valued in a compact subgroup of $\G$, the harmonic map is constant, and so has zero energy. In this case, our theorem recovers results of Biswas \cite{Biswas}.
	\item The second class was explained to us by Pierre Godfard. Given a
        modular functor, one obtains a finite set of projective representations $\{r_i\}_{i\in I}$ of the mapping
        class group $\sMod(\Sig_{g,1})$ of the punctured surface $\Sigma_{g,1}$,
        and so representations $\{\rho_i\}_{i\in I}$ of $\pi_1(\Sig_g)$ by restriction of
        $r_i$ in the Birman exact sequence. Godfard proves in
        \cite{GodfardConformalBlocks} that each $r_i$ yields a complex variation
        of Hodge structure on the moduli space $\cM_{g,1}$. Pulling back to
        to the fibers of the universal curve over $\Tbold(\Sigma)$, 
        one sees that $\cL_{\rho_i}$ is fixed pointwise by the $\C^*$-action,
        and hence is holomorphic.
	Note that applying this construction to the  Witten-Reshetihin-Turaev modular functors implies the representations considered by Koberda-Santharoubane in \cite{KoberdaSantharoubaneTQFT} have holomorphic isomonodromic leaves (see also \cite{BiswasHellerHeller}).
	\item There is a class of examples arises for punctured spheres, and so  does not fit in 
        the construction presented in this paper.
        In \cite{SupraMax}, Deroin-Tholozan discovered compact components
        of the relative character variety in $\mathsf{PSL}_2\R$, and the
        underlying representations are monodromies of $\C \mathrm{VHS}$s
        for every choice of Riemann surface structure on the punctured sphere. Analogous construction exist in higher rank by Tholozan and the second author \cite{TTcompactcomp}, Feng-Zhang \cite{ZhangCompactSO2n}, and Wu \cite{wuminenergylocalsystems}.
    \item As pointed out to us by Daniel Litt, there is a set of examples coming from
        Kodaira-Par\v{s}in families, see Example 3.1.10   in
        \cite{Litt:24}.
\end{itemize}

\subsection{Minimal surfaces} We now discuss applications of our construction to minimal surfaces in the symmetric space of $\sG$ which are preserved by an action of the fundamental group of $\Sig.$ 

Associated to the $\rho$-equivariant harmonic map $u_\rho:\widetilde X\to
\G/\K$ there is a holomorphic quadratic
differential on $X$ called the \emph{Hopf differential} of $u_\rho$. 
The Hopf differential of the corresponding harmonic map defines a
holomorphic map from $\bfM_X(\sG)$ to vector space of holomorphic quadratic
differentials on $X$. The preimage of zero is a complex subvariety which we denote
by $\bfW_X(\sG)$. The corresponding harmonic maps are called
\emph{conformal harmonic}. 
 By \cite[Theorem 1.8]{SacksUhlenbeck:81},  branched equivariant  minimal
 immersions (in the sense of \cite{GOR:73})  $\widetilde X\to \G/\K$  are exactly 
the conformal equivariant harmonic maps. Hence, $\bfW_X(\sG)$ is the space of 
equivariant branched minimal surfaces with induced conformal structure $X$. 

The above picture easily generalizes to the joint moduli space. There is a
holomorphic map
\[\varpi^{(2)}:\bfM(\sG)\lra \T^*\bfT(\Sigma),\]
such that preimage of the zero section is a complex subvariety $\bfW(\sG)$
corresponding to equivariant branched minimal immersions.
We call $\bfW(\sG)$ the \emph{moduli space of equivariant minimal surfaces}. 
The space $\bfW(\sG)$ has the same dimension as $\bfX(\sG)$, but is not
smooth in general. However, the space 
\begin{equation}
	\label{eq smooth minimal surf}\bfW_0=\bfW(\sG)\cap \bfM_0,
\end{equation}
is a smooth complex submanifold of $\bfM(\sG)$.  This follows from the following proposition characterizing the strata $\bfM_d$ from \eqref{eq intro strat}.
\begin{proposition}[see Proposition \ref{prop:dimCoker} below]
	A point $x\in\bfM(\sG)$ lies in the stratum $\bfM_d$ if and only if 
	\[d=\dim\coker(\rd_x\varpi^{(2)}).\]	
\end{proposition}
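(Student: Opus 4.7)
The plan is to exploit Corollary \ref{cor:theta}, which identifies $\cK_x = \ker(\Theta_x)$, and then to show $\dim \ker(\Theta_x) = \dim \coker(\rd_x \varpi^{(2)})$ by realizing the cotangent-fiber component of $\rd_x \varpi^{(2)}$ as the transpose of $\Theta_x$ under the natural duality between tangent and cotangent vectors of $\bfT(\Sigma)$.

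First I would reduce to a single linear map targeting the cotangent fiber. Since $\varpi^{(2)}$ covers $\pi$, setting $X = \pi(x)$, the canonical splitting
\[
\T_{\varpi^{(2)}(x)}\T^*\bfT(\Sigma) \;\cong\; \T_X\bfT(\Sigma)\,\oplus\, \T_X^*\bfT(\Sigma)
\]
decomposes $\rd_x\varpi^{(2)}$ into a horizontal component equal to $\rd_x\pi$ and a component $\eta_x$ with values in the cotangent fiber. Because $\rd_x\pi$ is surjective, a routine linear algebra argument, using horizontal lifts via $\cH_x$ to cancel the $\T_X\bfT(\Sigma)$-component, gives
\[
\coker(\rd_x \varpi^{(2)}) \;\cong\; \coker(\eta_x)\,.
\]

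Next I would identify $\eta_x$ with $\Theta_x^*$. Restricted to the vertical tangent space $\V_x\bfM(\sG)$, the fibrewise Hopf differential map sends a Higgs bundle deformation $v = (\dot A, \dot\Phi)$ to $\tr(\Phi\,\dot\Phi) \in H^0(X, K_X^2) = \T_X^*\bfT(\Sigma)$. Pairing this against a Beltrami class $[\mu] \in \T_X\bfT(\Sigma)$ gives $\int_X \mu \cdot \tr(\Phi\,\dot\Phi)$, and by the construction of $\Theta$ in Theorem \ref{MainThmA} this coincides with the Atiyah--Bott--Goldman symplectic pairing of $v$ with $\Theta_x([\mu]) \in \V_x\bfM(\sG)$. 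Using the fibrewise symplectic form to identify $\V_x\bfM(\sG) \cong \V_x^*\bfM(\sG)$, it follows that $\eta_x|_{\V_x\bfM(\sG)}$ coincides with $\Theta_x^*$. I would then verify that horizontal tangent vectors in $\cH_x$ do not enlarge the image of $\eta_x$: their contribution to $\T_X^*\bfT(\Sigma)$ is governed by the same pairing against Beltrami differentials and can be absorbed into the vertical image using the holomorphicity of $\varpi^{(2)}$ together with the defining property of $\cH_x$ as the $h_0$-orthogonal complement of $\V_x\bfM(\sG)$.

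Combining the steps,
\[
\dim \coker(\rd_x\varpi^{(2)}) \;=\; \dim \coker(\eta_x) \;=\; \dim \coker(\Theta_x^*) \;=\; \dim \ker(\Theta_x) \;=\; \dim \cK_x\,,
\]
where the third equality is linear algebra in finite dimensions and the last follows from Corollary \ref{cor:theta}. This quantity equals $d$ exactly when $x \in \bfM_d$. The main obstacle is the rigorous identification $\eta_x|_{\V_x\bfM(\sG)} = \Theta_x^*$: this requires carefully tracking normalization factors among the AB--Goldman, Serre, and Beltrami--quadratic pairings, and, more subtly, verifying that horizontal tangent vectors contribute nothing new to the image of $\eta_x$. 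Once the key pairing formula relating $\tr(\Phi\,\dot\Phi)$ and $\Theta_x$ is in hand --- and this should be essentially built into the construction of $\Theta$ in Theorem \ref{MainThmA} --- the proposition is immediate.
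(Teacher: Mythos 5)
Your proposal is correct and follows essentially the same route as the paper's Proposition \ref{prop:dimCoker}: reduce to the fibrewise map $\rd\varpi^{(2)}_X$ on $\V_x\Mbold(\G)$ using surjectivity of $\rd_x\pi$, then identify that map's cokernel with $\ker\Theta_x$ via the identity $\langle \rd\varpi^{(2)}_X(\beta,\psi),\mu\rangle = 4\,\omega_I^{\CBbb}(\Theta_x(\mu),(\beta,\psi))$ — which the paper phrases as $\Theta_x(\mu)$ being the Hamiltonian vector field of $f_\mu$, and you phrase as $\eta_x|_{\V}=\Theta_x^{*}$ — and finish with Corollary \ref{cor:theta}. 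The only quibble is that the splitting of $\T_{\varpi^{(2)}(x)}\T^{*}\Tbold(\Sigma)$ is not canonical off the zero section; but only the canonical vertical subspace and its quotient are needed for the cokernel reduction, so nothing is lost.
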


Recall that the closed 2-form $\omega_0$ is defined by pulling back the Atiyah-Bott-Goldman form on $\bfX(\sG)$ by the nonabelian Hodge map $\rH$ and taking the $\I$-invariant part. 
It turns out that on the space of minimal surfaces $\bfW(\sG)$, the
pullback of the Atiyah-Bott-Goldman form is already $\I$-invariant.  From
this we obtain the following. 

\begin{mainthm}\label{MainThmF}
	On the space of minimal surfaces $\bfW_0$ defined in \eqref{eq smooth minimal surf}, the 2-form $\omega_0$ is the pullback of the Atiyah-Bott-Goldman symplectic form by the nonabelian Hodge map 
	\begin{equation} \label{eqn:mini-NAH}
	\rH:\bfW_0\lra\bfX(\sG).
    \end{equation} 
    In particular, $\rH$ is a symplectic immersion at $x\in\bfW_0$ if and only if $h_0|_{\bfW_0}$ is nondegenerate at $x$.
\end{mainthm}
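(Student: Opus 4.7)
The plan is to show that at each $x \in \bfW_0$, the pullback $F := \rH^*\text{Re}(\omega_{ABG})$ is already of type $(1,1)$ with respect to $\I$, so that it coincides with its $(1,1)$-component $\omega_0$ on $\bfW_0$. Writing $F = \omega_0 + \beta$ with $\beta$ the $(2,0)+(0,2)$-part, the task reduces to showing $\beta_x = 0$ for every $x \in \bfW_0$; this is a stronger statement than, and implies, the equality $i^*F = \omega_0|_{\bfW_0}$ along the inclusion $i:\bfW_0\hookrightarrow \bfM(\sG)$.

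Using the $h_0$-orthogonal, $\I$-invariant decomposition $T\bfM(\sG) = \V\bfM(\sG) \oplus \cH$ from Theorem \ref{MainThmB}, the form $\beta$ vanishes on vertical-vertical pairs everywhere in $\bfM(\sG)$: the restriction of $F$ to each fiber $\bfM_X(\sG)$ is the standard K\"ahler form from Hitchin's hyperK\"ahler structure, which is of type $(1,1)$ for $I = \I|_{\V\bfM}$. For pairs involving horizontal vectors, I would compare the horizontal lift $w_\mu \in \cH$ of $[\mu]\in T_X\bfT(\Sigma)$ with the isomonodromic lift $\tilde w_\mu \in \cD$ annihilated by $\rd\rH$. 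Their difference $\Xi_x([\mu]) := w_\mu - \tilde w_\mu \in \V_x\bfM$ satisfies $\rd\rH(w_\mu) = \rd\rH_X(\Xi_x([\mu]))$, so that for $u_i = v_i + w_{\mu_i}$ with $v_i$ vertical,
\[
F_x(u_1, u_2) = \omega_I\bigl(v_1 + \Xi_x([\mu_1]),\ v_2 + \Xi_x([\mu_2])\bigr),
\]
where $\omega_I$ is the fiberwise K\"ahler form. Since $\omega_I$ is of type $(1,1)$ for $I = \I|_{\V\bfM}$, the form $F_x$ is $(1,1)$ precisely when the real-linear map $\Xi_x : T_X\bfT \to \V_x\bfM$ is $\I$-linear.

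The heart of the argument is then to identify the anti-$\I$-linear part of $\Xi_x$. Using the explicit formulas for horizontal and isomonodromic lifts---which arise from the first variation of the harmonic metric under a Beltrami perturbation $[\mu]$ and encode the interaction of $\mu$ with the Higgs field $\Phi$---the anti-$\I$-linear component of $\Xi_x$ is controlled by the Hopf differential $q_2 = B(\Phi, \Phi) \in H^0(X, K_X^{\otimes 2})$, where $B$ is the Killing form on $\gfrak$. Since $q_2(x) = 0$ at every $x \in \bfW_0$, this contribution vanishes identically on $\bfW_0$, so $\Xi_x$ is $\I$-linear and hence $\beta_x = 0$. This gives the desired identity $F|_{\bfW_0} = \omega_0|_{\bfW_0}$.

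For the ``in particular'' statement, $\rH|_{\bfW_0}$ is a symplectic immersion at $x$ precisely when $\omega_0|_{\bfW_0}$ is nondegenerate at $x$. Since $\omega_0|_{\bfW_0}$ is a real 2-form of type $(1,1)$ on the complex manifold $\bfW_0$, its nondegeneracy is equivalent by standard linear algebra to the nondegeneracy of its associated Hermitian form $h_0|_{\bfW_0}$. The hard part of the argument is the explicit identification of the anti-$\I$-linear part of $\Xi$ with a Hopf-differential term; this uses the conformality/minimal-surface hypothesis in an essential way and relies on the detailed formulas from the body of the paper for the horizontal lift and the section $\Theta$.
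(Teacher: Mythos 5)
Your reduction to the map $\Xi_x([\mu]) = w_{[\mu]} - \ell_{[\mu]}$ is set up correctly, but the two claims you build on it are false, and the argument collapses there. First, $\Xi_x$ is \emph{never} $\I$-linear on $\bfM_0$: by Lemma \ref{lem key relation of iso} (equivalently Proposition \ref{prop hor and iso dist properties}(2)) one has $w_{[\mu]}=\tfrac{1}{2}(\ell_{[\mu]}-\I\ell_{[i\mu]})$, hence $\Xi([\mu])=-\tfrac{1}{2}(\ell_{[\mu]}+\I\ell_{[i\mu]})$ and $\Xi([i\mu])=-\I\,\Xi([\mu])$. So $\Xi_x$ is purely anti-$\I$-linear at \emph{every} point of $\bfM(\sG)$, with no Hopf-differential correction; and on $\bfM_0$ it is injective (if $w_{[\mu]}=\ell_{[\mu]}$ then this vector is both horizontal and $h_0$-null by Lemma \ref{lem D isotr}, hence lies in $\cK_x=0$, forcing $[\mu]=0$). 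Second, the ``stronger statement'' that the $(2,0)+(0,2)$-part $\beta_x$ of $\rH^*\omega_{ABG}$ vanishes on all of $\T_x\bfM(\sG)$ for $x\in\bfW_0$ is false. Your own reduction shows $\beta_x(u_1,u_2)=\omega_I(v_1,\Xi([\mu_2]))+\omega_I(\Xi([\mu_1]),v_2)$, which is nonzero on mixed vertical--horizontal pairs since $\Xi\neq 0$ and $\omega_I$ is nondegenerate on fibers. Concretely, from \eqref{eq pullbackABG} the $(2,0)$-part is $-\tfrac{1}{8i}\int_\Sig\big(\kappa_\gfrak(\psi_1\wedge\Phi\mu_2)-\kappa_\gfrak(\psi_2\wedge\Phi\mu_1)\big)$, which is the pairing of the quadratic differential $\kappa_\gfrak(\Phi\otimes\psi_a)$ with $[\mu_b]$; its vanishing requires $\kappa_\gfrak(\Phi\otimes\psi_a)=0$, i.e.\ tangency to the fiber of $\varpi^{(2)}$, which is not implied by $x\in\bfW_0$ alone (indeed $\varpi^{(2)}_X$ is a submersion at points of $\bfM_0$, so such $\psi_1$ with $\kappa_\gfrak(\Phi\otimes\psi_1)\neq 0$ exist).

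The theorem only asserts equality of the forms \emph{restricted to} $\T\bfW_0$, and that is where the hypothesis enters: differentiating $\kappa_\gfrak(\Phi\otimes\Phi)=0$ gives $\kappa_\gfrak(\Phi\otimes\theta)=0$ on $\T_x\bfW_0$, and since $\kappa_\gfrak(\Phi\otimes\Phi\mu)=0$ automatically, one gets $\kappa_\gfrak(\Phi\otimes\psi)=0$ pointwise for every tangent vector to $\bfW_0$. Substituting $\theta_a=\psi_a+\tfrac{1}{2i}\Phi\mu_a$ into $\kappa_\gfrak(\theta_1\wedge\theta_2)$, the pure terms vanish for type reasons and the cross terms vanish by this pointwise identity, so the $(2,0)+(0,2)$-part of $\rH^*\omega_{ABG}$ dies on $\T_x\bfW_0\times\T_x\bfW_0$. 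That two-line computation is the entire proof; no comparison of horizontal and isomonodromic lifts is needed, and no statement about the full tangent space of $\bfM(\sG)$ is available. Your final paragraph on the equivalence of nondegeneracy of $\omega_0|_{\bfW_0}$ and of $h_0|_{\bfW_0}$ is fine.
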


Note that
since the hermitian form $h_0$ is indefinite, nondegeneracy does not
automatically pass to complex submanifolds.
However, we show that nondegeneracy of $h_0$ on $\bfM(\sG)$ does imply
nondegeneracy on $\bfW(\sG)$ for points which are fixed by the action of
the subgroup of $k^{th}$-roots of unity in $\CBbb^\ast$ for $k\geq 3$. 
Such fixed points are usually called \emph{$k$-cyclic Higgs bundles}.

\begin{mainthm}\label{MainThmG} Let $k\geq3$ and $x\in\bfM_0$ be a $k$-cyclic Higgs bundle in the open stratum. Then $x\in\bfW_0$  and $h_0$ is nondegenerate on $\T_x\bfW_0$. In particular,
the   nonabelian Hodge map
   \eqref{eqn:mini-NAH}
is a symplectic immersion at $x$. 
\end{mainthm}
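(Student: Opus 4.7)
The plan is to use the $\mu_k\subset\CBbb^\ast$-action fixing $x$ to decompose $\T_x\bfM(\sG)$ into $\zeta_k^j$-eigenspaces $V_j$ and exploit the weight-$2$ equivariance of the Hopf differential map $\varpi^{(2)}$ together with the $U(1)$-invariance of the fiberwise hyperkähler metric to identify $\T_x\bfW_0$ as an $h_0$-orthogonal sum of weight pieces on which $h_0$ is nondegenerate. First, to see that $x\in\bfW_0$, note that $\varpi^{(2)}$ is $\CBbb^\ast$-equivariant of weight $2$ since it is quadratic in the Higgs field; hence $\varpi^{(2)}(x)=\zeta_k^2\varpi^{(2)}(x)$ at the $\mu_k$-fixed point $x$, and since $\zeta_k^2\neq 1$ for $k\geq 3$ we must have $\varpi^{(2)}(x)=0$. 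Combined with the hypothesis $x\in\bfM_0$, this places $x\in\bfW_0$.

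Next, because $\pi$ is $\mu_k$-equivariant with trivial action on $\bfT(\Sig)$, the projection $\rd\pi$ vanishes on every $V_j$ with $j\neq 0$, so $V_j\subset\V_x\bfM(\sG)$ for $j\neq 0$. Weight-$2$ equivariance of $\varpi^{(2)}$ forces $\rd_x\varpi^{(2)}|_{V_j}=0$ for $j\neq 2$, and since $x\in\bfM_0$ implies $\coker\rd_x\varpi^{(2)}=0$ by Proposition~\ref{prop:dimCoker}, the restriction $\rd_x\varpi^{(2)}|_{V_2}\colon V_2\to\T_X^\ast\bfT(\Sig)$ is surjective. Setting $K_2:=V_2\cap\ker\rd_x\varpi^{(2)}$, one obtains
\[
\T_x\bfW_0=\ker\rd_x\varpi^{(2)}=V_0\oplus\Bigl(\bigoplus_{j\neq 0,\,2}V_j\Bigr)\oplus K_2,\qquad K_2\subset\V_x\bfM(\sG).
\]

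To conclude, I would establish that $h_0$ is $\mu_k$-invariant at $x$, so that the above weight decomposition is $h_0$-orthogonal. On the vertical subspace $\V_x\bfM(\sG)$ this invariance is immediate from the $U(1)$-invariance of the fiberwise hyperkähler Kähler form, which coincides with $h_0|_{\V_x\bfM}$ by Theorem~\ref{MainThmB}. For the full tangent space, the key observation is that $\omega_0$ is the $\I$-invariant part of $\rH^\ast\omega_{\mathrm{ABG}}^{\mathrm{Re}}$, and that the nonabelian Hodge map intertwines the $\CBbb^\ast$-action on $\bfM(\sG)$ with a real-analytic action on $\bfX(\sG)$ that rotates the holomorphic symplectic form $\omega_J+i\omega_K$ by $\zeta^2$; extracting the $\I$-invariant part then produces a form that is $\mu_k$-invariant at the fixed point. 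Granting this invariance, the summands above are pairwise $h_0$-orthogonal; each $V_j$ with $j\neq 0$ and $K_2$ sit inside $\V_x\bfM(\sG)$ where $h_0$ is positive definite, and $h_0|_{V_0}$ is nondegenerate by restriction, since its $h_0$-complement $\bigoplus_{j\neq 0}V_j$ is orthogonal to $V_0$ and the global form is nondegenerate at $x\in\bfM_0$. Therefore $h_0|_{\T_x\bfW_0}$ is nondegenerate, and Theorem~\ref{MainThmF} yields that $\rH\colon\bfW_0\to\bfX(\sG)$ is a symplectic immersion at $x$.

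The main obstacle is the $\mu_k$-invariance of $h_0$ at the fixed point. Since the $h_0$-orthogonal splitting $\V_x\bfM\oplus\cH_x$ is not obviously $\mu_k$-invariant (the $\CBbb^\ast$-action only preserves $\V$ and the quotient $\T_x\bfM/\V_x\bfM$), establishing the invariance requires a careful comparison of the $\CBbb^\ast$-rotation on the $(\omega_J,\omega_K)$-plane of hyperkähler structures with the operation of taking the $\I$-invariant part of the pulled-back real ABG form; intuitively, the $\zeta^2$-rotation averages out after symmetrizing with respect to $\I$ precisely at $\mu_k$-fixed points. Once this invariance is in hand, the rest of the argument is a routine accounting of weight spaces and the positive-definiteness of $h_0$ on the vertical subspace.
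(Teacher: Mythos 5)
Your architecture is sound and genuinely different from the paper's: you decompose $\T_x\bfM(\sG)$ into eigenspaces $V_j$ for the stabilizing group $\Z_k$, use weight-$2$ equivariance of $\varpi^{(2)}$ both to get $x\in\bfW_0$ (exactly as the paper does) and to show that the $\Bbold^{(2)}_X$-component of $\rd_x\varpi^{(2)}$ is supported on $V_2$ and surjective there (via Proposition \ref{prop:dimCoker}), and then check nondegeneracy of $h_0$ summand by summand. Two small imprecisions do not affect the bookkeeping: $\T_x\bfW_0$ is the kernel of the $\Bbold^{(2)}_X$-component of $\rd_x\varpi^{(2)}$ (the preimage of the tangent space to the zero section), not of $\rd_x\varpi^{(2)}$ itself; and since $V_j\subset\V_x\bfM$ for $j\neq 0$ while $V_0\supset\cH_x$, all the genuinely indefinite behaviour of $h_0|_{\T_x\bfW_0}$ is concentrated in $V_0$. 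The genuine gap is the one you flag yourself: the invariance of $h_0$ under $\rd m_{\zeta}$ on \emph{all} of $\T_x\bfM(\sG)$, which you need to make $V_0$ orthogonal to the $V_j$ with $j\neq 0$. Your proposed route to this invariance does not work as stated: there is no action on $\bfX(\sG)$ intertwined with the $\C^*$- (or $\sU(1)$-) action via $\rH$ — the $\sU(1)$-orbit of a Higgs bundle maps to a nonconstant path of representations (the real twistor line) — so one cannot argue on the character-variety side. What is true, and what closes the gap, is a direct computation on semiharmonic representatives: by Lemma \ref{lem omega_0 formula}, $h_0(v_1,v_2)=\langle\beta_1,\beta_2\rangle+\langle\psi_1,\psi_2\rangle-\tfrac14\langle\Phi\mu_1,\Phi\mu_2\rangle$ is manifestly unchanged under $(\mu,\beta,\psi)\mapsto(\mu,\beta,\lambda\psi)$, $\Phi\mapsto\lambda\Phi$, for $|\lambda|=1$, and the computation in Proposition \ref{prop:U(1)ActionPreservesHorizontal} shows that $\rd m_\lambda$ carries semiharmonic vectors to semiharmonic vectors, so this formula may legitimately be evaluated on both sides. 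Since $h_0$ on $\bfM(\sG)$ is gauge-invariant, this gives the required invariance of $h_0$ under $\rd m_\zeta$ at the fixed point.

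For comparison, the paper sidesteps the full invariance statement. It uses Proposition \ref{prop:CyclicSubbundle} (which rests on the same $\sU(1)$-computation) to conclude $\cH_x\subset\T_x\bfY_k\subset\T_x\bfW_0$, so that $\T_x\bfW_0=\V_x\bfW_0\oplus\cH_x$ is already an $h_0$-orthogonal decomposition with $h_0$ positive definite on the vertical summand and nondegenerate on $\cH_x$ precisely because $x\in\bfM_0$. That route is shorter because it only needs the action to preserve the horizontal distribution, not to preserve $h_0$ on the whole tangent space; your eigenspace argument is a viable alternative once the invariance lemma is actually proved rather than deferred.
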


Since Baraglia's influential thesis \cite{g2geometry}, cyclic Higgs bundles have appeared in many different setting that are listed in \S \ref{ss:CyclicHB}. Theorem \ref{MainThmG} applies to all of these cases. Since the space of $k$-cyclic Higgs bundles is a complex submanifold of $\bfM(\G)$, we conclude the following.

\begin{corollary}[Theorem \ref{thm:FamilyOfCyclicHB}]\label{cor:intro cyclic}
Let $k\geq 3$ and $\bfZ$ be the space of $k$-cyclic Higgs bundles in the
    Hitchin component for the split real form of $\G$. Then the nonabelian
    Hodge map restricts to a symplectic immersion on $\bfZ$. In particular,
    $h_0$ restricts to a pseudo-K\"ahler metric on $\bfZ$.
\end{corollary}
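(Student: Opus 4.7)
The plan is to combine Theorem \ref{MainThmG} with Theorem \ref{MainThmD} pointwise, and then use an equivariance argument to propagate nondegeneracy from $\T_x\bfW_0$ down to the tangent space of $\bfZ$ itself.

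First, I would check that $\bfZ$ is a complex submanifold of $\bfM(\G)$: the $\C^\ast$-action on $\bfM(\G)$ by scaling the Higgs field is holomorphic, so the fixed locus of the finite subgroup $\mu_k\subset\C^\ast$ of $k$-th roots of unity is a closed complex subvariety whose smooth locus is a complex submanifold, and $\bfZ$ is the union of the connected components of this fixed locus that meet the Hitchin component for the split real form of $\G$. Next, I would show $\bfZ\subseteq\bfM_0$: Hitchin representations are $\theta$-positive and live in the (split) Cayley component, so Theorem \ref{MainThmD} places their isomonodromic leaves entirely inside $\bfM_0$; any $x\in\bfZ$ maps under nonabelian Hodge to a Hitchin representation $\rho$, hence $x\in\Lcal_\rho\subseteq\bfM_0$. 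Theorem \ref{MainThmG} then applies pointwise at each $x\in\bfZ$, giving $x\in\bfW_0$ and nondegeneracy of $h_0$ on $\T_x\bfW_0$. In particular $\bfZ\subseteq\bfW_0$.

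The crucial remaining step is to show that the restriction of $h_0$ to the subspace $\T_x\bfZ\subseteq\T_x\bfW_0$ is still nondegenerate, since this does not pass automatically to subspaces of an indefinite hermitian form. Here I would exploit the $\mu_k$-symmetry: the tangent space $\T_x\bfW_0$ carries an induced $\mu_k$-action whose fixed subspace is exactly $\T_x\bfZ$. The principal obstacle I expect is verifying that this $\mu_k$-action is by $h_0$-isometries at cyclic points of $\bfZ$. This is not a formal consequence of holomorphicity of the $\C^\ast$-action (which does not preserve $\omega_0$ globally on $\bfM(\G)$), but at a cyclic point the harmonic metric associated by nonabelian Hodge descends $\mu_k$-equivariantly, and combined with the identification $\omega_0=\rH^\ast\Real\omega_{ABG}$ on $\bfW_0$ from Theorem \ref{MainThmF} should yield the needed $\mu_k$-invariance of $h_0$ at $x$. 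Granting the isometry property, the weight decomposition of $\T_x\bfW_0$ under $\mu_k$ is $h_0$-orthogonal, so nondegeneracy of $h_0$ on the full tangent space propagates to each weight space, in particular to the trivial weight space $\T_x\bfZ$.

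Finally, since $\omega_0$ is closed on $\bfM(\G)$, $\bfZ$ is a complex submanifold, and $h_0|_{\T_x\bfZ}$ is nondegenerate, the restriction $h_0|_{\bfZ}$ is a pseudo-K\"ahler metric; and by Theorem \ref{MainThmF}, $\omega_0|_{\bfW_0}=\rH^\ast\Real\omega_{ABG}$, so restricting further to the complex submanifold $\bfZ\subseteq\bfW_0$ shows that $\rH|_{\bfZ}$ is a symplectic immersion, giving the corollary.
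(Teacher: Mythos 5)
Your proposal is correct in outline, but the mechanism you use for the crucial step — propagating nondegeneracy from $\T_x\bfW_0$ down to $\T_x\bfZ$ — is genuinely different from the paper's. The paper (proof of Theorem \ref{thm:NewThmG}, feeding into Theorem \ref{thm:FamilyOfCyclicHB}) never invokes a weight decomposition: it uses Proposition \ref{prop:CyclicSubbundle}, which says that at a $k$-cyclic point the \emph{entire horizontal space} $\cH_x$ is tangent to the cyclic locus, to produce the $h_0$-orthogonal splitting $\T_x\bfZ=\V_x\bfZ\oplus\cH_x$ in which $h_0$ is positive definite on the vertical factor and negative definite on $\cH_x$ (the latter because $x\in\bfM_0$). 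This gives nondegeneracy together with the signature for free. Your route instead takes nondegeneracy on $\T_x\bfW_0$ as a black box and descends to the $\mu_k$-fixed weight space. Two comments on the points you flag. First, the $h_0$-isometry property you worry about is in fact true and needs less machinery than you suggest: the computation in the proof of Proposition \ref{prop:U(1)ActionPreservesHorizontal} extends verbatim to show that $\rd m_\lambda:(\mu,\beta,\psi)\mapsto(\mu,\beta,\lambda\psi)$ carries semiharmonic representatives at $(J,\Phi)$ to semiharmonic representatives at $(J,\lambda\Phi)$ for $|\lambda|=1$ (the gauge and holomorphicity conditions pick up factors $|\lambda|^2=1$ and $\lambda$ respectively), and the formula of Lemma \ref{lem omega_0 formula}(2) then manifestly preserves $h_0$; the gauge transformation identifying $(J,\zeta\Phi)$ with $(J,\Phi)$ at a fixed point lies in the compact gauge group and so is also an isometry. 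You do not need Theorem \ref{MainThmF} or any statement about the harmonic metric descending. Second, your argument identifies the trivial weight space of $\T_x\bfW_0$ with $\T_x\bfY_k$, so you need $\T_x\bfZ=\T_x\bfY_k$, i.e.\ that the Hitchin $k$-cyclic locus is open in the full fixed locus $\bfY_k$; you build this into your redefinition of $\bfZ$ as a union of components of the fixed locus, but the corollary's $\bfZ$ is defined as the cyclic points inside the Hitchin component, and equating the two deserves a sentence (it follows from the rigidity of the cyclic grading, cf.\ Remark \ref{rem:cyclicVBopen}). Note that the paper's argument has the exact same dependence, since it needs $\cH_x\subset\T_x\bfZ$, which it extracts from $\cH_x\subset\T_x\bfY_k$. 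On balance your approach is a clean alternative that avoids knowing the signs of $h_0$ on the two factors, at the cost of losing the signature statement and of importing Theorem \ref{MainThmG} (whose proof is itself the splitting argument you are circumventing).
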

 
Corollary \ref{cor:intro cyclic}
strengthens a result of  Labourie \cite{cyclicSurfacesRank2} in the case
when $k-1$ is the length of the longest root in $\gfrak$, see also Remark
\ref{rem:ES cyclic}. Namely, for such a $k$, Labourie proved that the
nonabelian Hodge map is an immersion. For this case, the signature
of the metric is $((2k-1)(g-1) ,3g-3)$.

For any $\rho \in \bfX(\G)$ corresponding either to a maximal
representation in $\SO_{2,n}$ or to a Hitchin representation for $\rank(\G)=2$, a Higgs bundle $x\in \rH^{-1}(\rho)$ is in $\bfW_0$ if and only if it is $4$-cyclic.  Applying Theorem \ref{MainThmG} to these cases equips these spaces with a pseudo-K\"ahler metric.

\begin{mainthm}\label{MainThmH}
    Let $\bfY\subset \bfX(\sG)$ be \emph{either} the submanifold of
    Hitchin representations in case $\mathrm{rk}(\sG)=2$, \emph{or}
    the submanifold of maximal $\sSO_{2,n}$-representations in case
    $\G=\SO_{n+2}(\CBbb)$. 
    Let $\bfW(\sG)\subset\bfM(\sG)$ be the space of equivariant minimal surfaces, $\rH$ be the nonabelian Hodge map, and let $\bfZ=\rH^{-1}(\bfY)\cap \bfW(\sG)$. Then, 
\begin{enumerate}
	\item $\omega_0$ is nondegenerate on $\bfZ$, and
	\item $\rH:\bfZ\to\bfY$ is a symplectomorphism.
\end{enumerate} 
 In particular, $h_0$ defines a mapping class group invariant
    pseudo-K\"ahler metric on $\bfY$, compatible with the
    Atiyah-Bott-Goldman symplectic form, and of signature $\left((\dim\sG-3)(g-1),3g-3\right)$.
\end{mainthm}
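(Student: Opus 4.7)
The plan is to assemble the theorem from Theorems \ref{MainThmD}, \ref{MainThmF} and \ref{MainThmG}, together with the existence and uniqueness of equivariant minimal surfaces for representations in $\bfY$. The key structural input is the $4$-cyclic characterization of minimal-surface Higgs bundles over $\bfY$ stated just before the theorem, combined with the fact that $\bfY$ lies in Cayley components for a $\theta$-positive real form of $\sG$ (split in the Hitchin case, and $\sSO_{p,q}$-positive in the maximal $\sSO_{2,n}$ case). First, Theorem \ref{MainThmD} gives $\cL_\rho\subset\bfM_0$ for every $\rho\in\bfY$, hence $\bfZ\subset\rH^{-1}(\bfY)\cap\bfW(\sG)\subset\bfM_0\cap\bfW(\sG)=\bfW_0$. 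Every $x\in\bfZ$ is therefore a $4$-cyclic Higgs bundle in $\bfM_0$, so Theorem \ref{MainThmG} (with $k=4$) yields that $h_0$ is nondegenerate on $\T_x\bfW_0$ and that $\rH|_{\bfW_0}$ is a symplectic immersion at $x$. Because $\bfW_0$ and $\bfX(\sG)$ share the complex dimension $2(g-1)\dim_\CBbb\sG$, the holomorphic immersion $\rH|_{\bfW_0}$ is in fact a local biholomorphism in a neighborhood of $\bfZ$; pulling back $\bfY$ through it exhibits $\bfZ$ as a real submanifold of $\bfW_0$ and $\rH|_\bfZ$ as a local diffeomorphism onto $\bfY$.

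To globalize, I would invoke uniqueness of the $\rho$-equivariant branched minimal immersion, proved by Labourie \cite{cyclicSurfacesRank2} in the Hitchin rank-two case and by Collier-Toulisse-Tholozan \cite{CTTmaxreps} for maximal $\sSO_{2,n}$-representations, to get injectivity of $\rH|_\bfZ$; existence of at least one such minimal surface for every $\rho\in\bfY$ (the Anosov-existence statement recalled in the introduction) gives surjectivity. Thus $\rH|_\bfZ:\bfZ\to\bfY$ is a diffeomorphism. By Theorem \ref{MainThmF}, $\omega_0|_{\bfW_0}=\rH^*\omega_{\mathrm{ABG}}$, so $\omega_0|_\bfZ=(\rH|_\bfZ)^*(\omega_{\mathrm{ABG}}|_\bfY)$. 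Nondegeneracy of the latter is classical: $\bfY$ lies in the fixed locus of an anti-holomorphic involution of $\bfX(\sG)$ coming from the real form structure, and the restriction of the holomorphic Atiyah-Bott-Goldman form is the (real) Goldman symplectic form on the real character variety, nondegenerate on the irreducible locus. This establishes (1) and (2) simultaneously. For the signature, Theorem \ref{MainThmB} gives $h_0$ signature $(\dim_\CBbb\bfX(\sG),3g-3)$ on $\T\bfM_0$, with the $3g-3$ negative directions coinciding with the horizontal distribution $\cH$; a decomposition of $\T\bfZ$ into horizontal and vertical parts, combined with the $4$-cyclic cut-down of the vertical dimension to $(\dim\sG-3)(g-1)$, produces the claimed signature $((\dim\sG-3)(g-1),3g-3)$.

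The most delicate point is the preservation of nondegeneracy upon passing from $\bfW_0$ to the real submanifold $\bfZ$. In general, an indefinite hermitian form can acquire a kernel when restricted to a proper submanifold, so conclusion (1) does not follow directly from Theorem \ref{MainThmG}. The proof sidesteps this by identifying $\omega_0|_\bfZ$ with the pullback of a form whose nondegeneracy is classical, namely the Goldman symplectic form on the real character variety $\bfY$. The signature bookkeeping is the secondary obstacle: it requires verifying that the $3g-3$ horizontal negative directions of Theorem \ref{MainThmB} remain tangent to $\bfZ$, which in turn follows from submersiveness of the projection $\bfZ\to\bfT(\Sigma)$, itself a consequence of the fact that varying $\rho\in\bfY$ fills out $\bfT(\Sigma)$ through the canonical conformal structure $X_\rho$ of the unique $\rho$-equivariant minimal surface.
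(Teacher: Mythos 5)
Your proposal is correct and follows the same skeleton as the paper's proof: Theorem \ref{MainThmD} places $\bfZ$ in $\bfM_0$, the cyclic characterization of $\bfW(\sG)$ over $\bfY$ lets you invoke Theorem \ref{MainThmG}, the dimension count $\dim\bfW_0=\dim\bfX(\sG)$ upgrades the symplectic immersion to a local symplectomorphism, and the existence/uniqueness of equivariant minimal surfaces (Labourie, Collier--Toulisse--Tholozan) globalizes it --- exactly the route taken in the proof of Theorem \ref{thm:NewThmH}, which cites \cite[Theorem 8.1.1]{cyclicSurfacesRank2} for the last step. Where you genuinely diverge is in establishing nondegeneracy of $\omega_0$ on $\bfZ$: the paper gets this from the cyclic orthogonal-decomposition argument of Theorem \ref{thm:NewThmG} (via Theorem \ref{thm:FamilyOfCyclicHB}, applied to $\bfZ$ viewed as a \emph{complex} submanifold of the cyclic locus), whereas you transport the problem to $\bfY$ via Theorem \ref{MainThmF} and the local diffeomorphism, and then appeal to the classical nondegeneracy of the Goldman symplectic form on the real character variety. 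Your route is valid and arguably makes conclusion (1) more transparent, at the cost of needing the (standard) identification of $\Real\omega^\C_{ABG}|_{\bfY}$ with the Goldman form of the real form; the paper's route keeps everything internal to the Higgs-bundle side and simultaneously yields the signature statement from the decomposition $\T_x\bfZ=\V_x\bfZ\oplus\cH_x$.

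Two small inaccuracies to fix. First, $\rH$ is not holomorphic (the nonabelian Hodge map is only real analytic), so "the holomorphic immersion $\rH|_{\bfW_0}$ is a local biholomorphism" should read "the immersion between equidimensional manifolds is a local diffeomorphism"; nothing downstream depends on holomorphy. Second, the points of $\bfZ$ are cyclic of order $3$, $4$ or $6$ according to whether $\gfrak$ is $\slfrak_3\CBbb$, $\sofrak_{n+2}\CBbb$ or $\gfrak_2$ (not uniformly $4$-cyclic, notwithstanding the phrasing in the introduction); since Theorem \ref{MainThmG} only requires $k\geq 3$, this does not affect the argument. Finally, your signature bookkeeping should be justified by Proposition \ref{prop:CyclicSubbundle} (which gives $\cH_x\subset\T_x\bfY_k$, hence $\cH_x\subset\T_x\bfZ$) rather than by an appeal to the conformal structures $X_\rho$ filling out Teichm\"uller space; the latter gives surjectivity of $\bfZ\to\bfT(\Sigma)$ but not directly the containment of the horizontal distribution needed for the count $\left((\dim\sG-3)(g-1),3g-3\right)$.
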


\begin{remark}\label{rem:ES cyclic}
Theorem \ref{MainThmH}, for Hitchin representations, and Corollary \ref{cor:intro cyclic}, in the case $k-1$ is the length of the longest root in $\gfrak$, were recently obtained with different methods by El Emam-Sagman in
 \cite[Thm.\ A and A']{EmamSagman:25}.
\end{remark}

\begin{remark}
Theorem \ref{MainThmH} provides a uniform proof of all cases where Labourie's conjecture \cite{CrossRatioAnosovProperEnergy} holds. As mentioned above, there exist Hitchin and maximal representations in groups of rank at least 3 admitting various equivariant minimal surfaces \cite{Markovic,MSS,SagmanSmillieLabConjarXiv}.
\end{remark}

The study of pseudo-K\"ahler structures on moduli spaces of representations was initiated by Mazzoli-Seppi-Tamburelli in \cite{MST}: adapting Donaldson's construction of the Teichm\"uller space,  they construct a para-hyperK\"ahler structure on the Hitchin component for $\SO_{2,2}$. In a similar spirit, Tamburelli-Rungi constructed in \cite{TamburelliRungi} a pseudo-K\"ahler structure on a neighborhood of the Fuchsian locus in the $\SL_3\R$-Hitchin component. Both constructions rely on choosing an adapted metric on the underlying Riemann surface; it is not clear how their structures relate with ours.

\subsection{Technical aspects of $\bfM(\sG)$ and its structures}\label{sec intro techncal}
We now describe more precisely the construction of $\bfM(\G)$. 
The method used in this paper is a combination of the work of  Ebin and  Tromba  on the
differential geometric description of 
Teichm\"uller space (see \cite{Ebin:68,Tromba}),
and the classic construction of Kuranishi slices in gauge theory
(see \cite{AHS:78,FreedUhlenbeck} for  early treatments of this method, 
as well as Hitchin's original construction in
\cite{selfduality} for Higgs bundles).
We note that alternative formulations 
can be found in Earle-Eells \cite{EarleEells:69} in the case of
Teichm\"uller space, and more  recently in the
work of Diez-Rudolph \cite{DiezRudolph:24} for Teichm\"uller space as well
as Yang-Mills connections.

For a fixed principal $\sG$-bundle $P$, 
we consider a \emph{configuration space} $\cC(P)$ that
is an infinite dimensional complex Fr\'echet manifold. Stable $\G$-Higgs
bundles correspond to points $x\in \cC(P)$ solution to an equation
$F(x)=0$, where $F$ is a holomorphic map on $\cC(P)$ valued in some complex
vector space. This realizes $\bfM(\G)$ as a topological
quotient $F^{-1}(0)/\Aut_0(P)$
where $\Aut_0(P)$ is an infinite dimensional Lie group (see
\eqref{eqn:gauge-group}).
A manifold structure is then obtained by proving
a \emph{slice theorem};
namely,  one can locally find complex
finite dimensional submanifolds $\cS$ of $F^{-1}(0)$ such that
the action map $\Aut_0(P)\times \cS \to F^{-1}(0)$ is open
and a diffeomorphism onto its image (moduli the center of $\G$).
The local slices then patch together to 
form a holomorphic atlas on the quotient $\bfM(\G)$.

The local structure of the map $F$ and of the action of $\Aut_0(P)$ is encoded in a \emph{deformation complex} $(\B^\bullet,\delta_\B^\bullet)$ which turns out to be elliptic. Given a Riemannian metric on $\cC(P)$, one can then define harmonic representative in the cohomology $H^1(\B^\bullet)$ of the complex and construct the slice using the implicit function theorem. If the metric is $\Aut_0(P)$-invariant and compatible with the complex structure, one can hope to obtain a K\"ahler structure on $\bfM(\G)$.

Surprisingly, finding such a natural Riemannian metric on the configuration space is tricky. Our approach consists in defining a map
\[\widehat\rH: \cC(P) \longrightarrow \J(\Sig)\times \cA(P)~,\]
where $\J(\Sig)$ is the space of complex structures on $\Sig$ and $\cA(P)$
the space of connection on the underlying smooth principal $\G$-bundle. For any $s\in\R$, we then have a closed two-form
$$\widehat \omega_s = \widehat\rH^* (s\cdotp \omega_{WP}\oplus \omega_{ABG})~,$$
where $\omega_{WP}$ is the Weil-Petersson form on $\J(\Sigma)$ and $\omega_{ABG}$ is the real part of the and Atiyah-Bott-Goldman form on $\cA(P)$.

Let $\omega_s$ denote the $(1,1)$-part of $\widehat\omega_s$. For any $s>0$,
the hermitian form $h_s$ associated to $\omega_s$ fails to be positive
definite on the entire configuration space. Nevertheless, for any $x\in
\cC(P)$, one can find  $s$ large enough such that $h_s$ is positive
definite on $\T_x\cC(P)$. Remarkably, the harmonic representatives of
$H^1(\B^\bullet)$ using $h_s$ are independent of $s$ (see Proposition
\ref{prop:harmonics}) and each $\omega_s$ is closed in the direction of the
harmonics (see Corollary \ref{c:Closed}). As a result, all the structure
descends to the quotient $\bfM(\G)$ where we get a family
$(\omega_s)_{s\in\mathbb R_+}$ of closed 2-forms compatible with the complex structure and such that
\[U_s= \{x\in \bfM(\G)~\vert~(h_s)_x>0\}~,\]
defines an increasing exhaustion of $\bfM(\G)$ by open K\"ahler manifolds which
contain the nilpotent cone for sufficiently large $s$ (see Theorem
\ref{thm:nilpotent-kahler}).

For $s=0$, we no longer have  a notion of harmonic representatives
associated to $h_0$. Rather,
classes in $H^1(\B^\bullet)$ are represented by an infinite dimensional
space of representatives we term \emph{semiharmonic}. It turns out,
however,  that the
$h_0$ norm is independent of the choice of semiharmonic representative
of a given class
(Proposition \ref{prop:semiharmonic}). Thus,  $h_0$ gives a well-defined
hermitian form on  $H^1(\B^\bullet)$,  which, as discussed above, 
is  nondegenerate at points in $\Mbold_0$. 

In addition to the aforementioned work of Simpson, 
 let us note here several variants
 used by various authors to describe joint moduli spaces. 
 For instance, the moment map point of view developed by Donaldson in
 \cite{DonaldsonMoment1,DonaldsonMoment2} was used by Trautwein
 \cite{Trautwein} in his thesis to obtain partial results (see also
 \cite{GarciaFernandezThesis,ACGFGP:13,MST,TamburelliRungi} for related constructions).
 The curvature of connections appears in the realization of the deformation
theory of pairs  $(X,\Ecal)$, $\Ecal\to X$ a holomorphic bundle,  as a differential graded Lie algebra 
(see Huang \cite{huang}). In a similar way, 
Ono \cite{ono,Ono:24} produces local Kuranishi models for joint
deformations of Higgs bundles (in higher dimensions as well), and proves
local triviality of the fibration.
The gauge theoretic approach taken here allows for a precise description of the
isomonodromic distribution.

\subsection*{Structure of the paper} Section \ref{s:Preliminaries} is devoted to 
preliminaries in gauge theory. In Section
\ref{sec:differential_geometry_of_configuration_spaces} we introduce the
configuration space and study its main properties. The construction of the
joint moduli space is done in Section \ref{s:ConstructingModuliSpace}. In
particular, we prove Theorem \ref{MainThmA} and discuss the family of
hermitian forms $(h_s)_{s\in\mathbb R_+}$. 
In Section \ref{sec:iso hor and energy}, we define the horizontal and isomonodromic distributions and study their relation with the energy function, proving Theorems \ref{MainThmB} and \ref{MainThmC}. The stratification of $\bfM(\G)$ is treated in Section \ref{sec:Stratification}, where we prove Theorems \ref{MainThmD} and \ref{MainThmE}. Finally, the perspective on minimal surfaces and Higgs bundles is treated in Section \ref{sec:MinimalSurfaces} where Theorems \ref{MainThmF}, \ref{MainThmG} and \ref{MainThmH} are proved.
The four sections of the  Appendix serve to fix notation,  relate 
the Dolbeault and \v{C}ech deformation complexes, give an
alternative expression for $\Theta$, and  
show how \eqref{eqn:hessian} recovers  To\v{s}i\'c's formula.

\subsection*{Acknowledgements:} The authors are grateful to Nigel Hitchin
for his interest in this work and for many   essential   comments.  
Thanks also to
Luis \' Alvarez C\' onsul, Steve Bradlow, Gerard Freixas i Montplet,
Mario Garcia-Fernandez, Oscar Garc\'ia-Prada, Pierre Godfard,  Fran\c{c}ois
Labourie,\break Qiongling Li, Daniel Litt, Filippo Mazzoli,
Swarnava Mukhopadhyay, 
Nathaniel Sagman, Andy Sanders, Carlos Simpson,
Peter Smillie, Nicolas Tholozan, and Charlie Wu for their input and suggestions. 
Portions  of this work were  completed  at the Institut Henri Poincar\'e and 
the Institut des Hautes Etudes Scientifiques, Paris, and the Instituto
de Ciencias Mathem\'aticas, Madrid; their generous hospitality is
warmly acknowledged. 
B.C.’s research is supported by NSF grant DMS-2337451.
J.T. was partially funded by the European Research Council under ERC-Advanced grant 101095722, and by the Institut Universitaire de France.
R.W.’s research is supported by NSF grants DMS-2204346 and DMS-2506596.

\medskip

After work on this paper was completed, we became aware of
several other related articles. 
The preprint
    \cite{HSSZ:25}, among other results, has a description of the
    isomonodromic distribution similar to the one in Lemma \ref{lem semiharmonic
    iso gauge hermitian}. The characterization of holomorphic leaves in
    Theorem E (1) also appears there. Theorem E has also been obtained in
    \cite{LamLitt}, 
    and Corollary \ref{cor:elliptic} in \cite{LittWu}. 
    An alternative derivation of the result in Theorem C appears in Hitchin's
    note on the universal Higgs bundle moduli space \cite{Nigel}. 
    Finally, the preprint \cite{ACGFGPT:25} gives a construction of a 
    universal space which builds on the authors previous partial results in Trautwien's thesis \cite[Chapter 7]{Trautwein}.

\section{Gauge theory preliminaries}\label{s:Preliminaries}
Throughout the paper, we will fix 
\begin{itemize}
	\item $\Sig$ a  connected closed
        oriented topological surface of genus $g\geq 2$;
    \item $\sG$ a connected  complex semisimple Lie group with center $Z(\G)$, Lie
        algebra $\gfrak$,
        and Killing form $\kappa_\gfrak$; and 
	\item $\pi_P:P\to \Sig$ a smooth right principal $\sG$-bundle.
\end{itemize} 

\subsection{Principal bundles}

\subsubsection{Basic definitions}
The \emph{vertical bundle} of $P$ is the distribution $\V P$ defined by $\V
P = \ker(d \pi_P)$. It fits into the exact sequence
\begin{equation} \label{eqn:tangent-sequence}
    0 \lra \V P \lra \T P \lra \pi_P^*(\T\Sig) \lra 0~.
\end{equation}
Since the right $\sG$-action preserves the vertical bundle, every element $X\in\gfrak$ in the Lie algebra spans a vertical vector field that we denote by $X^\sharp$.

Given a linear representation $\rho:\sG\to\sGL(W)$, the \emph{associated bundle} is defined by
\[P(W):= \sG\backslash(P\times W)~,\]
where $g\cdotp(p,v)=(pg^{-1},\rho(g)v)$. The projection on the first factor turns $P(W)$ into a vector bundle over $\Sig$.
The main example is $P(\gfrak)$, where $\rho$ is the adjoint representation. 
We will denote the space of $P(W)$-valued $k$-forms on $\Sig$ by $\Omega^k(\Sig,P(W))$. 

Denote the space of $W$-valued $k$-forms on $P$ by $\Omega^k(P,W)$, and let $\Omega^k(P,W)^\sG$ denote the subset of forms which are equivariant with respect to the natural $\sG$-action.
A vector field $V$ on $P$ defines a contraction map
$\iota_V:\Omega^k(P,W)\to \Omega^{k-1}(P,W)$.
A $W$-valued $k$-form $\psi\in\Omega^k(P,W)$ is called \emph{basic}
if it is equivariant and $\iota_V(\psi)=0$ for all vertical vector fields $V\in \V P$. Denote the spaces of basic $W$-valued k-forms by $\Omega_b^k(P,W)$. 

Given a basic form $\hat \psi\in\Omega^k_b(P,W)$, define $\psi\in\Omega^k(\Sig,P(W))$ by
\[\psi_x(u_1,...,u_k) := \left[\left(p, \hat\psi_p(\hat u_1,...,\hat u_k)\right)\right]~,\]
where $x\in \Sig$, $u_1,\dots,u_k\in \T_x\Sig,$ and $p, \hat u_1, \dots, \hat u_k$ are lifts of $x, u_1,\dots, u_k$, respectively. One checks that this is independent of the choices of lifts, and defines a isomorphism $\Omega^k_b(P,W)\cong\Omega^k(\Sig,P(W))$ between basic W-valued $k$-forms on $P$ and k-forms valued in $P(W)$.

There is a natural bracket 
\[[\cdot,\cdot]: \Omega^k(P,\gfrak)\times \Omega^\ell(P,\gfrak)\to \Omega^{k+\ell}(P,\gfrak)\]
defined by 
\[[\zeta,\eta](V_1,\cdots, V_{k+\ell})=[\zeta(V_1,\cdots,V_k), \eta(V_{k+1},\cdots,V_{k+\ell})].\]
Note that $[\zeta,\eta]=(-1)^{k\ell+1}[\eta,\zeta]$, and that $[\zeta,\eta]$ is basic whenever $\zeta$ and $\eta$ are basic.  
This defines a bracket on $P(\gfrak)$-valued forms on $\Sig$ which we also denote by $[\cdot, \cdot].$

\subsubsection{Connections on principal bundles and associated bundles} 
There are many ways to think about connections on principal bundles, we will use the following equivalent definitions. 

\begin{definition} 
The following are equivalent definitions of a connection $P:$
\begin{enumerate}
 	\item  A $\sG$-equivariant $1$-form $B \in \Omega^1(P,\gfrak)^\sG$ such that $B(x^\sharp)=x$ for any $x\in\gfrak$.
 	\item A $\sG$-equivariant projection $B: \T P\to \V P$. 
    \item A $\sG$-invariant splitting of \eqref{eqn:tangent-sequence}.
\end{enumerate}
 Denote the space of connections on $P$ by $\cA(P)$. 
\end{definition}
By (3), 
a connection $B$ defines a splitting $\T P=\V P\oplus \rH_B$, where the distribution $\rH_B$ is defined by the kernel of $B$ and is called the \emph{horizontal distribution} of $B$.
Since the difference between two connections on $P$ is basic,  $\cA(P)$ is an affine space modeled on $\Omega^1_b(P,\gfrak)$. 
Explicitly, given a basic 1-form $\psi\in\Omega^1_b(P,\gfrak)$, in the splitting $\T P=\V P\oplus\rH_B$ the connection $B+\psi$ is given by 
\[B+\psi=\begin{pmatrix}
	\Id&\psi\\0&0
\end{pmatrix}:\V P\oplus\rH_B \to\V P\oplus\rH_B ,\]
where we have identified $\Omega^1_b(P,\gfrak)$ with $\sG$-equivariant bundle maps $\pi_P^*(\T\Sig) \cong\rH_B \to \V P$.

The \emph{curvature} of a connection $B$ is the $\gfrak$-valued $2$-form 
\[F_B = dB + \tfrac{1}{2}[B, B]~.\]
One checks that $F_B$ is basic and so can be considered as an element in $\Omega^2(\Sig,P(\gfrak))$. Equivalently, the curvature can be defined using the Lie bracket of horizontal vector fields: given $V,W\in\rH_B$, we have 
$$F_B(V,W)=-B([V,W]).$$

Given a representation $\rho: \sG\to\sGL(W)$, let $d\rho:\gfrak\to\End(W)$ denote the associated Lie algebra representation. For a connection $B\in\cA(P)$, define
\begin{equation}
	\label{eq cov derv of conn}d_B:\Omega^k(P,W)\to \Omega^{k+1}(P, W), 
\end{equation} 
by $d_B(\eta)=d\eta+ (d\rho\circ  B)\wedge \eta.$ One checks that $d_B$ preserves basic forms and hence defines a covariant derivative $d_B:\Omega^k(\Sig,P(W))\to \Omega^{k+1}(\Sig,P(W))$. 

\subsubsection{Structure group reductions and metrics}\label{sec metrics}
Let $\sK<\sG$ be a maximal compact subgroup of $\sG$ and
$\kfrak\subset\gfrak$  its Lie algebra. As $\kfrak$ is a real form of
$\gfrak$, it is the fixed point subalgebra of a conjugate linear involution
$\tau_0:\gfrak\to\gfrak$
called the \emph{Cartan involution} associated to $\kfrak.$ 
The Killing form $\kappa_\gfrak$ of $\gfrak$ is negative definite on
$\kfrak,$ and hence defines a $\sK$-invariant hermitian inner
product on $\gfrak$ defined by 
\[\langle x,y\rangle_{\tau_0} := \kappa_\gfrak(x,-\tau_0(y))~.\]
For $y\in \gfrak,$ the hermitian adjoint of $\ad_y$ is $\ad_{-\tau_0(y)}$. Hence we use the notation $-\tau_0(y)=y^*$.

A \emph{reduction of structure group} of $P$ to $\sK$ is the choice of
$\sK$-invariant subbundle $P_\sK\subset P$. Using the canonical
identification between the associated bundles $P_\sK(\gfrak)$ and
$P(\gfrak)$ and $\sK$-invariance of $\langle\cdot,\cdot\rangle_{\tau_0},$ the extensions of
$\tau_0$ and $\langle\cdot,\cdot\rangle_{\tau_0}$ to $P(\gfrak)$ define a hermitian metric on $P(\gfrak)$.

Equivalently, such a reduction is given by a \emph{Cartan involution on $P$}, that is a $\G$-equivariant map $\tau: P \to \End(\gfrak)$ (where $\G$ acts on $\End(\gfrak)$ via conjugation by $\Ad$) such that $\tau(p)$ is a Cartan involution for any $p\in P$. The corresponding hermitian metric is then given by 
$$
\langle s_1,s_2\rangle_\tau (x) =-\kappa_\gfrak(s_1(p),\tau(p)s_2(p))
$$
for local sections $s_a$ of $P(\gfrak)$, regarded as $\G$-equivariant
maps $\sigma_a: P\to \gfrak$ for the adjoint action. 

The Cartan involution and hermitian metric extend to $P(\gfrak)$-valued $1$-forms as follows.  

\begin{definition}
Consider $\sigma_1,\sigma_2 \in \Omega^1(\Sigma,P(\gfrak))$ of the form $\sigma_a= \alpha_a \otimes s_a$
 with $\alpha_a\in \Omega^1(\Sigma,\C)$ and $s_a \in \Omega^0(\Sigma,P(\gfrak))$.
 \begin{enumerate}
 	\item The extension of the Cartan involution to $\Omega^1(\Sigma,P(\gfrak))$ is defined by
 	\[\tau(\sigma) := \overline \alpha\otimes \tau(s)~.\]
 	We use the notation: $\sigma^* = -\tau(\sigma) = \overline \alpha\otimes s^*$.
 	\item The Killing form defines 
        \begin{align*}    
            \kappa_\gfrak(\sigma_1\otimes\sigma_2) &:= \kappa_\gfrak(s_1,s_2)
         \alpha_1\otimes\alpha_2~\in \Omega^0(T^\ast\Sigma\otimes T^\ast\Sigma,\C)\ , \\
            \kappa_\gfrak(\sigma_1\wedge \sigma_2) &:= \kappa_\gfrak(s_1,s_2)
         \alpha_1\wedge \alpha_2~\in \Omega^2(\Sigma,\C)\ .
        \end{align*}
 	\item The combination of $\kappa_\gfrak$ and $\tau$ defines: 
 	\[\kappa_\gfrak(\sigma_1\wedge \sigma_2^*) := -\kappa_\gfrak(s_1,\tau(s_2))\alpha_1\wedge \overline\alpha_2 ~.\]
 \end{enumerate}
\end{definition}
For each Riemann surface structure $X=(\Sig,j)$ on $\Sig$, the hermitian metric  defines an $L^2$-inner product on $P(\gfrak)$-valued $(p,q)$-forms. 
The following signs are important (see Appendix  \ref{sec:app-kahler}). 
\begin{lemma}\label{lem signs on L2 innerproducts}
	Suppose $\sigma_1,\sigma_2$ are $(p,q)$-forms valued in $ P(\gfrak),$ then 
	\[\langle \sigma_1,\sigma_2\rangle:=\int_X\kappa_\gfrak(s_1,s_2^*)\alpha_1\wedge \bar\star \alpha_2 =\begin{dcases}
		i\int_X\kappa_\gfrak(\sigma_1\wedge\sigma_2^*)&\text{ if }(p,q)=(1,0)\\ 
		-i\int_X \kappa_\gfrak(\sigma_1\wedge\sigma_2^*)&\text{ if }(p,q)=(0,1)
	\end{dcases}
	\]
\end{lemma}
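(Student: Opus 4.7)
The plan is to reduce the formula to a pointwise identity relating $\alpha_1\wedge \overline\star\alpha_2$ and $\alpha_1\wedge\overline\alpha_2$ in a local holomorphic coordinate, then extract the sign from the action of the Hodge star on forms of pure type.

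By linearity it suffices to treat simple tensors $\sigma_a=\alpha_a\otimes s_a$. Unpacking definition (3) above gives
\[
\kappa_\gfrak(\sigma_1\wedge\sigma_2^*) = -\kappa_\gfrak(s_1,\tau(s_2))\,\alpha_1\wedge \overline\alpha_2 = \kappa_\gfrak(s_1,s_2^*)\,\alpha_1\wedge\overline\alpha_2,
\]
so both sides of the claimed identity share the common factor $\kappa_\gfrak(s_1,s_2^*)$. Hence the lemma reduces to the pointwise equality
\[
\alpha_1\wedge\overline\star\alpha_2 \;=\; \pm\, i\,\alpha_1\wedge\overline\alpha_2,
\]
with $+i$ when $\alpha_2$ is of type $(1,0)$ and $-i$ when it is of type $(0,1)$.

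Next I would compute $\star$ explicitly. In a local holomorphic coordinate $z=x+iy$ compatible with the orientation and the conformal structure of $X$, one has $\star dx=dy$ and $\star dy=-dx$, from which $\star\,dz=-i\,dz$ and $\star\,d\overline z=i\,d\overline z$ follow immediately. Taking complex conjugates, $\overline\star$ sends $f\,dz$ to $i\,\overline f\,d\overline z$ and $f\,d\overline z$ to $-i\,\overline f\,dz$. Thus if $\alpha_2$ is of type $(1,0)$, then $\overline\star\alpha_2 = i\,\overline\alpha_2$, while if $\alpha_2$ is of type $(0,1)$, then $\overline\star\alpha_2 = -i\,\overline\alpha_2$, which gives the desired pointwise identity. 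Multiplying by $\kappa_\gfrak(s_1,s_2^*)$ and integrating over $X$ yields the two formulas.

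There is no genuine obstacle here; the only subtlety worth emphasising is the bookkeeping that $\overline\star$ denotes the complex conjugate of $\star$ applied to its (complex-valued) argument, and that the sign depends on the type of the second factor $\alpha_2$, not on that of $\alpha_1$.
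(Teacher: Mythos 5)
Your proof is correct and follows essentially the same route as the paper, which defers exactly this computation to Appendix \ref{sec:app-kahler}: there the identities $\star dz=-i\,dz$, $\star d\bar z = i\,d\bar z$, hence $\bar\star dz = i\,d\bar z$ and $\bar\star d\bar z=-i\,dz$, are derived from $\star dx=dy$, $\star dy=-dx$, and the two signed formulas for the $(1,0)$ and $(0,1)$ inner products are read off just as you do. Your observation that the sign is governed by the type of $\alpha_2$ and that the $\kappa_\gfrak(s_1,s_2^\ast)$ factor is common to both sides is the right bookkeeping.
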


\subsection{Complex structures on principal bundles}
We now describe the space of complex structures on a principal $\G$-bundle. 

\begin{definition}
A \emph{(principal) complex structure} on $P$ is an almost complex
    structure $J\in \End(\T P)$ satisfying
\begin{enumerate}
	\item $J$ commutes with the right $\G$-action: for any $g$ in $\G$ we
        have $J\circ (R_g)_\ast = (R_g)_\ast \circ J$.
	\item For any $X \in \gfrak$, we have $J(X^\sharp)=(iX)^\sharp$.
\end{enumerate}
We denote by $\J(P)$ the space of principal complex structures on $P$.
\end{definition}

One checks that a complex structure on $P$ is always integrable since the obstruction to integrability of $J$ lies in $\Omega^{0,2}_b(P,\gfrak)$.
Denote the space of (almost) complex structures on $\Sig$ by $\J(\Sig)$. There is a projection map \[\pi_\J:\J(P)\to\J(\Sig).\]
Here $j=\pi_\J(J)$ is defined by $j_x(u) = J_p(\hat u)$ for any lift $(p,\hat u)$ of $(x,u)$ to $\T P$. 
 Since $J$ preserves the vertical bundle and commutes with the $\G$-action, $\pi_\J(J)$ is independent of the choice of lift.

\subsubsection{The Chern-Singer connection}\label{sss:CompatibleConnection}

A connection $B\in\cA(P)$ is \emph{compatible} with a complex structure $J$ if the horizontal distribution of $B$ is $J$-invariant. Equivalently, $B$ is compatible with $J$ if and only if $B$ is of type $(1,0)$ with respect to $J$, meaning that $B \circ J = iB$.

Given a connection $B\in\cA(P)$ and a complex structure $j\in\J(\Sig)$, there is a unique complex structure $J\in\pi_\J^{-1}(j)$ which is compatible with $B$. In the splitting $\T P = \V P \oplus \rH_B $, it is given by
\begin{equation}
	\label{eq J wrt compatible connection}J = \left(\begin{array}{ll} i & 0 \\ 0 & \pi^*j \end{array}\right)~.
\end{equation}

\begin{proposition}\label{prop:SingerConnection}Let $P_\sK\subset P$ be a structure group reduction to a maximal compact subgroup $\sK<\sG$.
\begin{enumerate}
	\item 
	  For each $J\in\J(P)$, there is a unique connection $A_J\in\cA(P_\sK)$ which is compatible with $J$.
	 \item The set of connections on $P$ compatible with a complex structure $J\in\J(P)$ is an affine space modeled on $\Omega^{1,0}(\Sig,P(\gfrak))$, where the type is computed using $\pi_\J(J)$.
	\item The projection $\pi_\J:\J(P)\to \J(\Sig)$ turns $\J(P)$ into an affine  bundle where the fiber over $j$ is modeled on $\Omega^{0,1}(\Sig,P(\gfrak))$.
\end{enumerate}
\end{proposition}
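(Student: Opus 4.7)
The plan for part \emph{(1)} is to construct $A_J$ as the principal bundle analogue of the Chern connection. A $J$-compatible $\sG$-connection $B\in\cA(P)$ satisfies $B^{0,1}=0$, so a $\sK$-connection $A\in\cA(P_\sK)$ with $\sG$-extension $A^\sG$ is $J$-compatible if and only if $(A^\sG)^{0,1}=0$. Fixing any reference $A_0\in\cA(P_\sK)$ and writing $A_J = A_0 + \alpha$ for some $\alpha\in\Omega^1(\Sig,P(\kfrak))$, the constraint becomes $\alpha^{0,1} = -(A_0^\sG)^{0,1}$, an equation in $\Omega^{0,1}(\Sig,P(\gfrak))$. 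The reality condition $\tau(\alpha)=\alpha$ (expressing that $\alpha$ is $\kfrak$-valued) forces $\alpha^{1,0} = \tau(\alpha^{0,1})$, so $\alpha$ is uniquely determined by its $(0,1)$-component. This yields existence and uniqueness of $A_J$.

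Part \emph{(2)} follows from the characterization of compatibility as $B^{0,1}=0$: two $J$-compatible connections on $P$ differ by some $\eta\in\Omega^1(\Sig,P(\gfrak))$ with $\eta^{0,1}=0$, i.e., $\eta\in\Omega^{1,0}(\Sig,P(\gfrak))$, and conversely adding any such $\eta$ to a $J$-compatible connection preserves compatibility. With the base point $A_J^\sG$ provided by \emph{(1)}, this exhibits the set of $J$-compatible connections on $P$ as an affine space modeled on $\Omega^{1,0}(\Sig,P(\gfrak))$.

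For part \emph{(3)}, the plan is to show the map $\pi_\J^{-1}(j)\to\cA(P_\sK)$, $J\mapsto A_J$, from \emph{(1)} is a bijection, then to identify the affine model. For surjectivity, given $A\in\cA(P_\sK)$, formula \eqref{eq J wrt compatible connection} applied to $(j,A^\sG)$ produces a $J\in\pi_\J^{-1}(j)$ for which $A^\sG$ is compatible, so $A_J = A$ by uniqueness in \emph{(1)}. For injectivity, if $A_J = A_{J'}$, then $J$ and $J'$ share the horizontal distribution $\rH_{A_J^\sG}$ and both act as $i$ on $\V P$ and as $\pi^*j$ on this distribution, so $J=J'$. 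Hence $\pi_\J^{-1}(j)$ inherits the affine structure of $\cA(P_\sK)$, modeled on $\Omega^1(\Sig,P(\kfrak))$. The real-linear isomorphism $\alpha\mapsto\alpha^{0,1}$ (with inverse $\beta\mapsto\tau(\beta)+\beta$) identifies $\Omega^1(\Sig,P(\kfrak))$ with $\Omega^{0,1}(\Sig,P(\gfrak))$, yielding the stated fiber and the affine bundle structure.

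The main obstacle is the bookkeeping in \emph{(1)}: one must verify that the Cartan involution $\tau$ interchanges the $(1,0)$ and $(0,1)$ components of $\gfrak$-valued forms on $P$ in a manner compatible with the reality constraint, so that prescribing $\alpha^{0,1}$ together with $\alpha\in\Omega^1(\Sig,P(\kfrak))$ admits a unique consistent solution. Once this reality compatibility is established, parts \emph{(2)} and \emph{(3)} follow essentially formally from \emph{(1)}.
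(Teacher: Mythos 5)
Your proof is correct, but for items (1) and (3) it takes a genuinely different route from the paper. For (1), the paper argues geometrically: it defines the horizontal distribution of $A_J$ pointwise as $H_p(J)=\T_pP_\sK\cap J(\T_pP_\sK)$, uses the fact that $\kfrak$ is a real form of $\gfrak$ to see this is a complement to $\V_pP_\sK$ in $\T_pP_\sK$ of the right dimension, and notes it is $\sK$-invariant; uniqueness is built in because any $J$-invariant horizontal distribution inside $\T P_\sK$ must lie in that intersection. You instead run the Chern-connection argument: compatibility means $(A^\sG)^{0,1}=0$, and since $\tau$ is conjugate-linear on forms (hence swaps types), a $\kfrak$-valued $\alpha$ is uniquely recovered from $\alpha^{0,1}$ via $\alpha=\tau(\alpha^{0,1})+\alpha^{0,1}$. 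This is sound; the one detail worth recording is that $(A_0^\sG)^{0,1}$ is basic (its vertical part vanishes automatically because $B(JX^\sharp)=iB(X^\sharp)$ for any connection $B$), so your equation genuinely lives in $\Omega^{0,1}(\Sig,P(\gfrak))$. For (3), the paper fixes a background connection, writes $J\in\pi_\J^{-1}(j)$ in matrix form, and reads off from $J^2=-\Id$ that the off-diagonal entry $\beta$ is a $(0,1)$-form; you instead show the Chern-Singer map is a fiberwise bijection onto $\cA(P_\sK)$ and transport the affine structure through $\alpha\mapsto\alpha^{0,1}$. Both are valid: the paper's version needs no reduction $P_\sK$ and yields exactly the parametrization of the fiber by $\beta$ used throughout the rest of the paper (e.g.\ in the tangent-vector decomposition), whereas yours identifies the fiber with $\Omega^{0,1}(\Sig,P(\gfrak))$ via a map differing from that one by a factor of $\tfrac{1}{2i}$ (compare $\rd_J\rS(M)=\tfrac{1}{2i}(\beta+\beta^*)$), which is harmless for the statement as written. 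Item (2) is proved essentially the same way in both.
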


Item (1) is due to Singer \cite{GeometricConnectionSinger} and is the principal bundle analogue of the Chern connection on a holomorphic hermitian vector bundle. We will refer to the isomorphism
\begin{equation}
	\label{eq chern-singer map}\rS:\J(P)\lra \cA(P_\sK),
\end{equation}
as the Chern-Singer map, and the
unitary connection $\rS(J)=A_J$ as the \emph{Chern-Singer connection}. 
We will use the same notation for the extension of $A_J$ to a connection on $P$.

\begin{proof}
For item (1), let $J\in \J(P)$. For each $p\in P_\sK$, consider the $J$-invariant space $H_p(J):=\T_p P_\sK \cap J(\T_p P_\sK)$. Since $\sK$ is a real form of $\G$, $H_p(J)$ intersects trivially the vertical space $\V_p P_\sK$. Furthermore, since $\T_p P_\sK$ has co-dimension $\tfrac{1}{2}\dim(\G)$, then $H_p(J)$ has dimension at least $2$. In particular, $\T_p P_K = \V_p P_K \oplus H_p(J)$ and so $\{H_p(J)\}_{p\in P_\K}$ defines a horizontal distribution. Such a distribution is $\sK$-invariant because $J$ commutes with the $\G$-action.  

For item $(2)$, consider a connection $B$ compatible with $J$. In the splitting $\T P = \V P \oplus \rH_B$, any other connection $B'$ is given by
\[B' = \left(\begin{array}{ll} \Id & \beta \\ 0 & 0 \end{array}\right)~,\]
where $\beta \in \Omega^1_b(P,\gfrak)$. The condition $B'\circ J = i B'$ is equivalent to $\beta \circ j =i\beta$ for $j=\pi_\J(J)$. 

For item $(3)$, fix a background connection $B$ on $P$. In the splitting $\T P = \V P \oplus \rH_B$, a point $J$ in the fiber $\pi_\J^{-1}(j)$ has the form
\[J = \left(\begin{array}{ll} i & \beta \\ 0 & \pi^*j \end{array}\right)~,\]
for some basic $1$-form $\beta\in \Omega^1_b(P,\gfrak)$. The condition $J^2=-\Id$ is equivalent to $i\beta+\beta\circ \pi^*j=0$. Hence, under the identification $\Omega^{1}_b(P,\gfrak)\cong \Omega^1(\Sig,P(\gfrak))$, $\beta$ is a $(0,1)$-form with respect to $j$.
\end{proof}

\subsubsection{Dolbeault operator on associated bundles} We now explain how
a complex structure $J$ on $P$ defines a Dolbeault operator on associated bundles. Given a representation $\rho:\G\to \GL(W)$ and a connection $B$ on $P$ compatible with $J$, define
\[\dbar_j: \Omega^0(X,P(V))\lra \Omega^{0,1}(X,P(V))\ :\ s\longmapsto (d_B s)^{0,1}~.\]

\begin{lemma}\label{lem hol w comp conn}
The operator $\overline \partial_J$ is a Dolbeault operator on $P(V)$ that is independent of the choice of compatible connection $B$.
\end{lemma}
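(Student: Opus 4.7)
The plan is to prove the two claims separately: independence of the compatible connection first, and then the Leibniz rule characterizing $\overline\partial_J$ as a Dolbeault operator.

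For independence, suppose $B$ and $B'$ are two connections on $P$ both compatible with $J$. By Proposition \ref{prop:SingerConnection}(2), their difference $\beta = B' - B$ is a basic $1$-form whose associated element in $\Omega^1(\Sigma, P(\gfrak))$ is of type $(1,0)$ with respect to $j=\pi_\J(J)$. For a section $s \in \Omega^0(\Sigma, P(V))$, the covariant derivatives satisfy
\[ d_{B'} s - d_B s = d\rho(\beta)\cdot s, \]
where $d\rho(\beta)\cdot s \in \Omega^1(\Sigma, P(V))$ is again of type $(1,0)$ (the type of a form is unaffected by a pointwise algebraic operation through $d\rho$). Taking the $(0,1)$-part annihilates this difference, so $(d_{B'} s)^{0,1} = (d_B s)^{0,1}$, proving that $\overline\partial_J$ depends only on $J$.

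For the Leibniz rule, let $f \in C^\infty(\Sigma, \C)$ and $s \in \Omega^0(\Sigma, P(V))$. Then $d_B(fs) = df \otimes s + f \cdot d_B s$, and taking $(0,1)$-parts with respect to $j$ gives
\[ \overline\partial_J(fs) = (df)^{0,1}\otimes s + f \cdot (d_B s)^{0,1} = \overline\partial f \otimes s + f \cdot \overline\partial_J s, \]
since the splitting into types is $C^\infty(\Sigma, \C)$-linear. Thus $\overline\partial_J$ is a first-order $\C$-linear differential operator satisfying the Leibniz rule with respect to $\overline\partial$ on functions. Because $\Sigma$ is a Riemann surface, $\Omega^{0,2}(\Sigma, P(V))=0$, so the integrability condition $\overline\partial_J^2 = 0$ is automatic, and $\overline\partial_J$ is a genuine Dolbeault operator on $P(V)$.

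The main step is the independence statement; everything else is a direct unwinding of definitions. The only subtlety is ensuring that the identification of $B'-B$ as a basic $(1,0)$-form valued in $P(\gfrak)$ transfers correctly through the representation $d\rho$ to a $(1,0)$-form valued in $P(V)$, which is routine since type decomposition is performed on the form factor alone.
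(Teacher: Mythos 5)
Your proof is correct and follows exactly the paper's argument: the Leibniz rule for $d_B$ makes $\overline\partial_J$ a Dolbeault operator, and independence of $B$ follows from Proposition \ref{prop:SingerConnection}(2), since two compatible connections differ by a $(1,0)$-form, which is killed by the $(0,1)$-projection. You simply spell out the details (the transfer through $d\rho$ and the automatic integrability on a Riemann surface) that the paper leaves implicit.
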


\begin{proof}
The fact that  $\overline \partial_J$ is a Dolbeault operator follows directly from the Leibniz rule for $d_B$. The second statement follows from item (2) of Proposition \ref{prop:SingerConnection}. In fact, any other compatible connection differs from $B$ by a $(1,0)$-form, hence its $(0,1)$-projection is the same.
\end{proof}

\section{Differential geometry of configuration spaces}
\label{sec:differential_geometry_of_configuration_spaces}
\subsection{Tangent spaces and complex structures}
The tangent space at a complex structure $j\in \J(\Sig)$ is given by 
\[\T_j \J(\Sig) = \{ m \in \Omega^1(\Sig,\T\Sig) ~ \vert ~ jm+mj=0 \}~.\]
The condition $jm+mj=0$ implies that the complex linear extension $m^\C$ of $m$ has the form 
\[m^\C=\begin{pmatrix}
	0&\mu\\\overline\mu&0
\end{pmatrix}:\T_j^{1,0}\Sig\oplus \T_j^{0,1}\Sig\lra \T_j^{1,0}\Sig\oplus \T_j^{0,1}\Sig.\]
The tensor  $\mu\in\Omega^{0,1}(\Sig,\T_j^{1,0}\Sig)$ is called a \emph{Beltrami differential}. 

\begin{definition}\label{def compl on J(S)}
	Post-composing with $j$ defines an almost complex structure
    $\I:\T\J(\Sig)\to \T\J(\Sig)$,
    $$\I_j:\T_j\J(S)\lra\T_j\J(S) ~:~ m\longmapsto j m\ .$$
In terms of Beltrami differentials, the almost complex structure is $\I_j(\mu)=i\mu.$
\end{definition}

Given a complex structure $J\in\J(P)$, let $\Omega^0(\End(\T P))^\sG$ denote the space of equivariant sections. The tangent space $\T_J\J(P)$ is given by
\[\T_J\J(P)=\{M\in \Omega^0(\End(\T P))^\sG~|~ \text{$M(X)=0$ for all $X\in \Omega^0(\V P)$ and } JM+MJ=0\}~.\]
Let $B\in\cA(P)$ be a connection which is compatible with $J\in\J(P)$. In the splitting $\T P=\V P\oplus\rH_B$, tangent vectors are given by 
\begin{equation}
	\label{eq decomp of M wrt J-comp B}
	M=\begin{pmatrix}
	0&\beta\\0&m
\end{pmatrix}:\V P\oplus \rH_B\to \V P\oplus \rH_B.\end{equation}
Using \eqref{eq J wrt compatible connection}, we have
\[JM+MJ= \begin{pmatrix}
	0&i\beta+\beta\circ \pi^*j\\0&m\circ \pi^*j+\pi^*j\circ m
\end{pmatrix}.\]
Since $M$ is equivariant,  $\beta$ is identified with a basic $\gfrak$-valued $(0,1)$-form $\beta\in \Omega^{0,1}_b(P,\gfrak)$, and $m$ is the pullback of a tangent vector $\T_j\J(\Sig)$. 

As in Definition \ref{def compl on J(S)}, we have an almost complex structure $\I:\T\J(P)\to \T\J(P)$ defined by
\begin{equation}
	\label{eq complex str on J(P)}
    \I_J:\T_J\J(P)\lra\T_J\J(P) ~:~ M\longmapsto JM\ .
\end{equation}
If $B\in\cA(P)$ is a connection compatible with $J,$ then writing $M=(\beta, m)$  as in \eqref{eq decomp of M wrt J-comp B}, we have 
\[\I_J(\beta,m)=(i\beta, jm)~.\]  
In particular, the projection $\pi_\J:\J(P)\to\J(\Sig)$ is holomorphic.
The complex structures $\I$ and holomorphic projection $\pi_\J$ extend to the product
\[\J(P)\times\Omega^1_b(P,\gfrak).\] 
\begin{definition}\label{def: higg config}
	The \emph{configuration space} of Higgs bundles $\cC(P)$ is  defined by
 \[\cC(P)= \{(J,\Phi)\in \J(P)\times \Omega^1_b(P,\gfrak)~|~\Phi\circ J=i \Phi\}.\] 
 Denote the restriction of the projection map by
$\pi_\J:\cC(P)\to\J(\Sig)$.
\end{definition}

The tangent space of $\cC(P)$ is given by
\[\T_{(J,\Phi)}\cC(P)=\{(M,\theta)\in \T_J\J(P)\times \Omega^1_b(P,\gfrak)~|~ \Phi\circ M+\theta\circ J=i\theta\}~.\]
\begin{lemma}
	For each tangent vector $(M,\theta)\in \T_{(J,\Phi)}\cC(P)$ we have 
	\[\theta=\psi+\tfrac{1}{2i}\Phi\circ\mu,\]
	where $\psi$ has type $(1,0)$ with respect to $\pi_\J(J,\Phi)$ and $\mu$ is the Beltrami differential associated to $d\pi_\J(M,\theta)$.
\end{lemma}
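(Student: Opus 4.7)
The plan is to decompose $\theta$ into its $(1,0)$ and $(0,1)$ parts with respect to $j=\pi_\J(J)$, use the constraint on $(M,\theta)$ to express the $(0,1)$-part explicitly, and then observe that $\Phi\circ M$ simplifies to $\tfrac{1}{2i}\Phi\circ\mu$ thanks to the types of the tensors involved.

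More precisely, here are the steps I would carry out. First, choose a connection $B\in\cA(P)$ that is compatible with $J$. Using the splitting $\T P=\V P\oplus\rH_B$ and the formula \eqref{eq J wrt compatible connection}, write $M=\bigl(\begin{smallmatrix} 0 & \beta \\ 0 & m\end{smallmatrix}\bigr)$ where $\beta\in\Omega^{0,1}_b(P,\gfrak)$ and $m$ is the pullback of a tangent vector in $\T_j\J(\Sig)$. By definition, the Beltrami differential $\mu$ associated to $\rd\pi_\J(M,\theta)$ is the $T^{1,0}\Sig$-valued $(0,1)$-form coming from the complexification $m^\C=\bigl(\begin{smallmatrix} 0 & \mu \\ \bar\mu & 0\end{smallmatrix}\bigr)$ with respect to the decomposition $\T^\CBbb\Sig=\T^{1,0}_j\Sig\oplus\T^{0,1}_j\Sig$.

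Next, write $\theta=\psi+\chi$, with $\psi\in\Omega^{1,0}(\Sig,P(\gfrak))$ and $\chi\in\Omega^{0,1}(\Sig,P(\gfrak))$, so that $\psi\circ J=i\psi$ and $\chi\circ J=-i\chi$. Substituting into the linearized constraint $\Phi\circ M+\theta\circ J=i\theta$ gives
\begin{equation*}
    \Phi\circ M+i\psi-i\chi=i\psi+i\chi,
\end{equation*}
whence $\chi=\tfrac{1}{2i}\Phi\circ M$. It only remains to identify $\Phi\circ M$ with $\Phi\circ\mu$. Since $\Phi$ is basic, it vanishes on $\V P$, so the component of $M$ landing in $\V P$ (that is, $\beta$) contributes nothing and $\Phi\circ M=\Phi\circ m$. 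Moreover, as $\Phi$ is of type $(1,0)$ with respect to $j$, it annihilates $\T^{0,1}_j\Sig$; under the decomposition of $m^\C$, only the component $\mu:\T^{0,1}\to\T^{1,0}$ survives after applying $\Phi$, and so $\Phi\circ m=\Phi\circ\mu$ as $(0,1)$-forms valued in $P(\gfrak)$. Combining these identifications yields $\chi=\tfrac{1}{2i}\Phi\circ\mu$ and hence the claimed decomposition $\theta=\psi+\tfrac{1}{2i}\Phi\circ\mu$.

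The only subtle point, and what I would treat most carefully, is the bookkeeping in the last step: the $\End(\T P)$-valued object $M$ has two a priori independent pieces $(\beta,m)$, and one must check that both the basicness of $\Phi$ (killing $\beta$) and its $(1,0)$-type (killing the conjugate component $\bar\mu$) are used to reduce $\Phi\circ M$ to the single term $\Phi\circ\mu$. Once this identification is made, the algebraic decomposition of the constraint is immediate and uniqueness of the splitting $\theta=\psi+\chi$ into types gives the desired formula.
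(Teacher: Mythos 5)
Your proof is correct and follows essentially the same route as the paper's: the paper likewise observes that the linearized condition $\Phi\circ M+\theta\circ J=i\theta$ is equivalent to $\theta^{(0,1)_J}=\tfrac{1}{2i}\Phi\circ M$, and then uses that $\Phi$ is basic and of type $(1,0)$ to conclude $\Phi\circ M=\Phi\mu$. Your write-up simply makes the final bookkeeping step (basicness killing $\beta$, type killing $\bar\mu$) more explicit than the paper's one-line justification.
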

We will write $\Phi\circ\mu$ simply as $\Phi\mu$.

\begin{proof}
The condition $\Phi\circ M+\theta\circ J=i\theta$ is equivalent to $\theta^{(0,1)_J}=\tfrac{1}{2i}\Phi\circ M~.$  Moreover, since $\Phi$ is basic and of type $(1,0)$, $\Phi\circ M=\Phi\mu$, where $\mu$ is the associated Beltrami differential.
\end{proof}
For $(M,\theta)\in \T_{(J,\Phi)}\cC(P)$ we have 
\[\Phi\circ(JM)+J\theta\circ J=i J\theta.\] Indeed, $J\theta=i\theta$  since $\theta$ is valued in vertical bundle and $\Phi(JM)=i\Phi\mu$ by definition of $(J,\Phi)\in\cC(P)$. In particular, $\cC(P)$ is a holomorphic submanifold of $\J(P)\times\Omega^1_b(P,\gfrak)$ and so inherits a complex structure $\I$. Explicitly, 
$$\I_{(J,\Phi)}:\T_{(J,\Phi)}\cC(P)\lra\T_{(J,\Phi)}\cC(P) ~:~ (M,\theta)\longmapsto
    (JM,J\theta)=(JM,i\theta)\ .$$

Fix a structure group reduction $P_\sK$ to a maximal compact subgroup $\sK$ of $\sG$.
Let $(J,\Phi)\in \cC(P)$ be a Higgs bundle and $A_J$ be the associated Chern-Singer connection of $J$. The following lemma is immediate from the above discussion.

\begin{lemma}\label{lem: tangent vector decomp}
	In the splitting $\T P=\V P\oplus \rH_{A_J}$, a tangent vector $(M,\theta)\in \T_{(J,\Phi)}\cC(P)$ is given by 
\[M=\begin{pmatrix}
	0&\beta\\0&m
\end{pmatrix}\ \ \ \ \ \text{and}\ \ \ \ \ \theta = \begin{pmatrix}
	0&\psi+\tfrac{1}{2i}\Phi\mu\\0&0
\end{pmatrix},\]
where $\mu$ is the Beltrami differential associated to $m\in\T_j\J(\Sig)$. In particular, $(M,\theta)$ is uniquely determined by a tuple $(\mu,\beta,\psi),$ where $\mu$ is a Beltrami differential, $\beta\in\Omega_b^{0,1}(P,\gfrak)$ and $\psi\in\Omega_b^{1,0}(P,\gfrak).$
\end{lemma}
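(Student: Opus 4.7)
The plan is to assemble the lemma by combining two descriptions already in place: the matrix form of tangent vectors to $\J(P)$ obtained from a compatible connection, and the constraint on $\theta$ coming from the definition of $\cC(P)$.

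First, I would apply Proposition \ref{prop:SingerConnection} with $B=A_J$, the Chern-Singer connection, which is compatible with $J$. Under the splitting $\T P=\V P\oplus\rH_{A_J}$ we have $J=\begin{pmatrix}i&0\\0&\pi^\ast j\end{pmatrix}$ by \eqref{eq J wrt compatible connection}. The tangent-space description of $\J(P)$ given before Definition \ref{def compl on J(S)} already produces the matrix form
\[
M=\begin{pmatrix}0&\beta\\0&m\end{pmatrix},\qquad \beta\in\Omega^1_b(P,\gfrak),\ m\in\T_j\J(\Sigma),
\]
and the condition $JM+MJ=0$ splits into $i\beta+\beta\circ\pi^\ast j=0$ (so that $\beta\in\Omega^{0,1}_b(P,\gfrak)$) and $jm+mj=0$ (so that $m$ is represented by a Beltrami differential $\mu$).

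Next, since $\theta\in\Omega^1_b(P,\gfrak)$ is basic, it has the block form $\begin{pmatrix}0&\theta\\0&0\end{pmatrix}$. I would then invoke the preceding (unnumbered) lemma, which gave $\theta^{(0,1)_J}=\tfrac{1}{2i}\Phi\circ M=\tfrac{1}{2i}\Phi\mu$, using the fact that $\Phi$ vanishes on $\V P$ and is of type $(1,0)$ so that $\Phi\circ M$ records only the horizontal part of $M$, namely $m$. Decomposing $\theta=\theta^{(1,0)_J}+\theta^{(0,1)_J}$ and setting $\psi:=\theta^{(1,0)_J}\in\Omega^{1,0}_b(P,\gfrak)$ gives
\[
\theta=\psi+\tfrac{1}{2i}\Phi\mu,
\]
as claimed.

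Finally, I would verify the last sentence of the lemma: the assignment $(M,\theta)\mapsto(\mu,\beta,\psi)$ is clearly well defined and, conversely, every triple $(\mu,\beta,\psi)$ with $\mu$ a Beltrami differential, $\beta\in\Omega^{0,1}_b(P,\gfrak)$ and $\psi\in\Omega^{1,0}_b(P,\gfrak)$ produces a unique pair $(M,\theta)\in\T_{(J,\Phi)}\cC(P)$ by the above formulas. There is no real obstacle here, as the lemma is essentially a repackaging of the two earlier results in the convenient basis provided by $A_J$; the only point requiring care is keeping track of types $(1,0)$ vs.\ $(0,1)$ and the sign in the identity $\theta^{(0,1)_J}=\tfrac{1}{2i}\Phi\mu$.
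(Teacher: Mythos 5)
Your proposal is correct and follows exactly the route the paper intends: the paper states this lemma is ``immediate from the above discussion,'' meaning precisely the combination of the block form of $M\in\T_J\J(P)$ relative to the Chern--Singer connection (with $JM+MJ=0$ forcing $\beta$ to be of type $(0,1)$ and $m$ to correspond to a Beltrami differential) together with the preceding unnumbered lemma giving $\theta=\psi+\tfrac{1}{2i}\Phi\mu$. Your bookkeeping of types and the bijection $(M,\theta)\leftrightarrow(\mu,\beta,\psi)$ matches the paper's argument.
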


\subsection{The holomorphic section $\Theta$}\label{section hol bundle map}
As above, let $\pi_\cJ:\cC(P)\to \J(\Sig)$ be the holomorphic projection. 
We now define a holomorphic section $\Theta$ of the bundle $\Hom(\pi_\cJ^*\T\J(\Sig),\ker(\rd\pi_\J))$ over $\cC(P).$ 

To start, consider the bundle map $\widehat\Theta:\T\cC(P)\to\T\cC(P)$ defined at $(J,\Phi)\in\cC(P)$
$$\widehat\Theta_{(J,\Phi)} : \T_{(J,\Phi)}\cC(P)\lra\T_{(J,\Phi)}\cC(P) ~:~
(M,\theta)\longmapsto(\tfrac{1}{2i}\Phi\circ M,0)\ .
$$

Since $\Phi$ takes values in the vertical bundle, $\widehat\Theta$ takes values in the subbundle $\ker(\rd\pi_\cJ)$. Since $\Phi$ is basic and has type $(1,0)$ we have $\Phi\circ M=\Phi \mu$, where $\mu$ is the Beltrami differential associated to $\rd\pi_\cJ(M).$ 
Hence, $\widehat\Theta$ defines a bundle map 
\begin{equation}
	\label{eq bundle map eta}
    \Theta: \pi^*_\J\T\J(\Sig)\lra\ker(\rd\pi_\cJ) ~:~
    ((J,\Phi),\mu)\longmapsto((J,\Phi),(\tfrac{1}{2i}\Phi\circ\mu, 0)) \ .
\end{equation}
\begin{remark}\label{rem:eta in beta notation}
	As in Lemma \ref{lem: tangent vector decomp}, fixing a structure group reduction $P_\sK\subset P$ allows us to write tangent vectors $(M,\theta)$ as $(\mu,\beta,\psi)$. In terms of this data, we have 
	\[\Theta(\mu)=(0,\tfrac{1}{2i}\Phi\mu,0).\]
\end{remark}
\begin{lemma}
	The bundle map $\Theta$ is holomorphic.
\end{lemma}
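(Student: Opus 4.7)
The goal is to show that $\Theta$ is a holomorphic section of the holomorphic vector bundle $\Hom(\pi_\cJ^\ast\T\J(\Sig),\ker(\rd\pi_\cJ))$ over $\cC(P)$. Since the projection $\pi_\cJ$ is a holomorphic submersion, both the pullback source and the vertical target inherit canonical holomorphic structures, so verifying holomorphicity of $\Theta$ reduces to two checks: (i) $\Theta$ is $\C$-linear on each fiber, and (ii) $\rd\Theta\circ \I = \I_E\circ \rd\Theta$ where $\I_E$ is the complex structure on the total space of the Hom bundle (equivalently, $\bar\partial\Theta = 0$).

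Check (i) is a direct computation from the formula of Remark \ref{rem:eta in beta notation}: using that $\I$ on $\ker(\rd\pi_\cJ)$ multiplies the $\beta$-slot by $i$, one gets $\Theta(i\mu) = \tfrac{1}{2i}\Phi(i\mu) = i\cdot \tfrac{1}{2i}\Phi\mu = \I\Theta(\mu)$.

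For check (ii), I would verify $\nabla_{\I V}\Theta = \I\,\nabla_V\Theta$ for any tangent vector $V=(M_0,\theta_0)\in \T_{(J,\Phi)}\cC(P)$. Taking a curve $(J_t,\Phi_t)$ with derivative $V$ at $t=0$ and a locally constant section $\mu$ in a holomorphic trivialization of $\pi_\cJ^\ast\T\J(\Sig)$, one computes
\[
 \nabla_V\Theta(\mu) = \left.\tfrac{\rd}{\rd t}\right|_{t=0}\tfrac{1}{2i}\Phi_t\mu = \tfrac{1}{2i}\theta_0\mu.
\]
Since the $\theta$-slot of $\I V$ is $i\theta_0$, and since $\I$ on $\ker(\rd\pi_\cJ)$ multiplies the $\beta$-slot by $i$, both $\nabla_{\I V}\Theta(\mu)$ and $\I\,\nabla_V\Theta(\mu)$ evaluate to $\tfrac{1}{2}\theta_0\mu$, establishing the desired equality.

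The main obstacle is foundational: in the infinite-dimensional Fr\'echet setting one must check that $\pi_\cJ^\ast\T\J(\Sig)$ and $\ker(\rd\pi_\cJ)$ carry compatible local holomorphic trivializations, and that the covariant derivative along a curve is well-defined as above. Once these points are in place, the computation captures the near-tautological fact that $\Theta$ is $\C$-linear in the universal Higgs field $\Phi$, and that $\Phi$ itself defines a holomorphic map $\cC(P)\to \Omega^1_b(P,\gfrak)$ to the complex vector space (since $\rd\Phi(V)=\theta_0$ gives $\rd\Phi(\I V) = i\theta_0 = i\,\rd\Phi(V)$). In other words, $\Theta$ is a $\C$-linear functional of a holomorphic section, hence itself holomorphic.
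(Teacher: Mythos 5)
Your proposal is correct and follows essentially the same route as the paper: the paper verifies holomorphicity by showing that the total-space map $\widehat\Theta:\T\cC(P)\to\T\cC(P)$ has complex-linear derivative, and the two terms $\tfrac{1}{2i}\varphi\circ M+\tfrac{1}{2i}\Phi\circ N$ in that derivative are exactly your checks (ii) and (i) respectively, with the same use of $\Phi\circ JN=i\,\Phi\circ N$. Working with $\widehat\Theta$ on the total space is also how the paper sidesteps the trivialization issue you flag at the end, since $\T\cC(P)$ carries its canonical complex structure and no choice of local holomorphic frame for the Hom bundle is needed.
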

\begin{proof}
To prove the lemma, we will show that $\widehat\Theta$, seen as a smooth map from $\T \cC(P)$ to itself covering the identity, is holomorphic.The derivative of $\widehat\Theta$ at $(J,\Phi,M,\theta)$ 
 	\[\rd_{(J,\Phi,M,\theta)}\widehat\Theta(X,\varphi,N, \vartheta)=(X, \varphi, \tfrac{1}{2i}\varphi\circ M+\tfrac{1}{2i}\Phi\circ N,0).\]
 	The complex structure $\widehat I$ on $\T_{(J,\Phi,M,\theta)}(\T\cC(P))$ is given by
 	$\widehat I(X,\varphi,N, \vartheta)=(JX,i\varphi, JN, i\vartheta).$
 	Hence,
 	\begin{align*}
 		\rd_{(J,\Phi,M,\theta)}\widehat\Theta(\widehat I(X,\varphi,N, \vartheta))&=\rd_{(J,\Phi,M,\theta)}\widehat\Theta(JX,i\varphi,JN, i\vartheta)\\ 
 		&=(J X,i \varphi, \tfrac{1}{2i} i\varphi\circ M+\tfrac{1}{2i}\Phi\circ JN,0)\\
 		&=(J X,i \varphi,i(\tfrac{1}{2i}\varphi\circ M+\tfrac{1}{2i}\Phi\circ N),0),
 	\end{align*}
where for the last equality we used $\Phi\circ J N=\Phi\circ(i\nu)$ where $\nu$ is the Beltrami differential associated to $d\pi (N).$
Since $\varphi\circ M+\Phi\circ N$ is a vertical vector, we have 
\[(J X,i \varphi,i(\tfrac{1}{2i}\varphi\circ M+\tfrac{1}{2i}\Phi\circ N),0)=\widehat I(X, \varphi,\tfrac{1}{2i}\varphi\circ M+\tfrac{1}{2i}\Phi\circ N,0).\]
Thus, $\rd\widehat\Theta$ is complex linear as desired.
\end{proof}

\subsection{The closed 2-form $\omega_0$ and the hermitian form $h_0$}
In this subsection we fix a structure group reduction $P_\sK\subset P$ to
a maximal compact subgroup $\sK<\sG.$ Recall that $-\tau(X)=X^*$ is the
hermitian adjoint of $X$ with respect to the hermitian metric
$\kappa_\gfrak(\cdot,-\tau(\cdot))$, where $\kappa_\gfrak$ is the Killing form.
\subsubsection{The Atiyah-Bott-Goldman form} 
For a principal $\sG$-bundle $P,$ the vector space $\Omega^1(\Sig,P(\gfrak))$ has a complex symplectic form called the \emph{ Atiyah-Bott-Goldman form}. It is defined by 
\[\omega^\C_{ABG}(\eta_1,\eta_2)=\int_\Sig \kappa_\gfrak(\eta_1\wedge \eta_2).\]
Since the  space of connections $\cA(P)$ is affine over $\Omega^1(\Sig,P(\gfrak))$, $\omega_{ABG}^\C$  defines a nondegenerate 2-form on $\cA(P)$. Moreover, $\omega^\C_{ABG}$ is closed since it is independent of the base point.

The structure group reduction $P_\sK\subset P$  defines a real symplectic form on $\cA(P)$ which we denote by $\omega_{ABG}$. 
Writing $\eta=X+iY$ for $X,Y\in \Omega_b^1(P_\sK,\kfrak)$, we have
\[\omega^\C_{ABG}(\eta_1,\eta_2)=\omega^\C_{ABG}(X_1+iY_1,X_2+iY_2)=\int_\Sig \kappa_\gfrak( X_1\wedge X_2-Y_1\wedge Y_2)+ i \int_\Sig \kappa_\gfrak(X_1\wedge Y_2+Y_1\wedge X_2).\]
The Killing form $\kappa_\gfrak$ is real on $\kfrak$, $\omega_{ABG}$ is defined as the real part of $\omega^\C_{ABG}$:
\begin{equation}
	\label{eq ABG}\omega_{ABG}(X_1+iY_1,X_2+iY_2)=\int_\Sig \kappa_\gfrak(X_1\wedge X_2-Y_1\wedge Y_2).
\end{equation}

\subsubsection{The pullback of the Atiyah-Bott-Goldman form}
Recall from \eqref{eq chern-singer map} that the structure group reduction $P_\sK\subset P$ defines the Chern-Singer isomorphism  $\rS:\J(\Sig)\to\cA(P_\sK)$, where we denote $\rS(J)=A_J$. The defining property of $A_J$ is  $A_J\circ J=i A_J.$ Consider the following smooth map 
\begin{equation}
	\label{eq H map}\rH:\J(P)\times\Omega^1_b(P,\gfrak)\lra  \cA(P)\ :\ (J,\Psi)\longmapsto A_J+\tfrac{1}{2i}(\Psi-\Psi^*)
\end{equation}

We start by computing the derivative of the Singer map.
\begin{lemma}\label{lem:DifferentialOfChernSinger}
	For $J\in\J(\Sig)$, the differential of the Chern-Singer map $\rS$ at $J$ is given by 
	\[\rd_J\rS(M)=\tfrac{1}{2i}(\beta+\beta^*),\]
	where $M=\begin{pmatrix}
		0&\beta\\0&m
	\end{pmatrix}$ in the splitting $\T P=\V P\oplus \rH_{A_J}$ associated to $A_J$.
\end{lemma}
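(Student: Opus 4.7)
The plan is to differentiate the two defining properties of $A_t := \rS(J_t)$ along a smooth curve $t\mapsto J_t\in\J(P)$ with $J_0=J$ and $\dot J_0 = M$, and then solve the resulting linear equation for $\gamma := \dot A_0$. By construction of the Chern--Singer map, each $A_t$ is the $\sG$-equivariant extension of a $\kfrak$-valued connection on $P_\sK$, and it is compatible with $J_t$ in the sense that $A_t\circ J_t = iA_t$. Consequently, $\gamma$ is a basic $\gfrak$-valued $1$-form on $P$, i.e.\ an element of $\Omega^1(\Sig,P(\gfrak))$, subject to two constraints: tangency to the real affine subspace $\cA(P_\sK)\subset\cA(P)$ forces the reality condition $\tau(\gamma) = \gamma$ (equivalently $\gamma^* = -\gamma$), and differentiating the compatibility condition at $t=0$ yields the linearized relation
\[
\gamma\circ J + A_J\circ M = i\gamma \qquad\text{on }\T P.
\]

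Next I would evaluate this equation on horizontal vectors $h\in\rH_{A_J}$. Using the block form of $M$ recorded just before the statement, we have $Mh = \beta(h)^\sharp + m(h)$ with $\beta(h)^\sharp\in\V P$ and $m(h)\in\rH_{A_J}$, so that $A_J(Mh) = \beta(h)$. After complexifying and decomposing the horizontal bundle into $J$-eigenspaces, and using that $J$ acts as $-i$ on the $(0,1)$ subspace, the relation reduces on type $(0,1)$ to $-2i\,\gamma(h) = -\beta(h)$, giving
\[
\gamma^{0,1} \;=\; \tfrac{1}{2i}\,\beta.
\]
On type $(1,0)$ the equation is tautological, since the constraint $\beta\circ\pi^*j = -i\beta$ established earlier for $M\in\T_J\J(P)$ already means that $\beta$ is of type $(0,1)$ and hence annihilates $\T^{1,0}$.

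Finally, I would determine $\gamma^{1,0}$ from the reality condition. Since $\tau$ exchanges the $(1,0)$ and $(0,1)$ parts of complex $1$-forms and is conjugate-linear in scalars, $\tau(\gamma)=\gamma$ forces
\[
\gamma^{1,0} \;=\; \tau\bigl(\gamma^{0,1}\bigr) \;=\; \tau\bigl(\tfrac{1}{2i}\beta\bigr) \;=\; -\tfrac{1}{2i}\,\tau(\beta) \;=\; \tfrac{1}{2i}\,\beta^*,
\]
using $\beta^* = -\tau(\beta)$. Summing the two type components then gives $\gamma = \tfrac{1}{2i}(\beta + \beta^*)$, which is the claimed formula for $\rd_J\rS(M)$. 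The main nuisance of the argument is the bookkeeping of the type decomposition with respect to $J$ together with the Cartan involution encoding the reduction to $P_\sK$; once both ingredients are in place, the formula emerges as the unique solution to a linear equation inside the real affine space $\cA(P_\sK)$.
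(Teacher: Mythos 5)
Your proposal is correct and follows essentially the same route as the paper's proof: both linearize the compatibility condition $A_t\circ J_t = iA_t$ to identify the $(0,1)$-part of $\dot A_0$ as $\tfrac{1}{2i}\beta$, and then use tangency to $\cA(P_\sK)$ (the condition $\dot\eta=-\dot\eta^*$, equivalently $\tau$-invariance) to recover the $(1,0)$-part as $\tfrac{1}{2i}\beta^*$. The only cosmetic difference is that the paper carries out the computation in block-matrix form in the splitting $\T P=\V P\oplus\rH_{A_J}$, whereas you evaluate the linearized equation directly on horizontal vectors decomposed by type.
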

\begin{proof}
	Consider a path $J_t\in\J(P)$ with $J=J_0$ and $\deriv J_t=M$. In the splitting $\T P=\V P\oplus \rH_{A_J} P$, we have
	\[J_t=\begin{pmatrix}
		i&\alpha_t\\0&j_t
	\end{pmatrix}\ \ \ \ \ \text{and} \ \ \ \ \ \rS(J_t)=A_{J_t}=\begin{pmatrix}
		\Id&\eta_t\\0&0
	\end{pmatrix},\]
where $\eta_t,\alpha_t\in\Omega^1_b(P,\gfrak)$ satisfy $\eta_t=-\eta_t^*$ and $\alpha_t\circ J_t=i\alpha_t$ and are both zero when $t=0$. 

Denoting the derivatives of $\eta_t$ at $t=0$ by $\dot \eta$, we have $\rd_J\rS(M)=\dot\eta.$ 
The equation $A_{J_t}\circ J_t=iA_{J_t}$ is given by  
\[\begin{pmatrix}
		\Id&\eta_t\\0&0
	\end{pmatrix}\begin{pmatrix}
		i&\alpha_t\\0&j_t
	\end{pmatrix}=\begin{pmatrix}
		i\Id&i\eta_t\\0&0
	\end{pmatrix}.\]
	Hence, $\alpha_t+\eta_t\circ j_t=i\eta_t$. 
	Differentiating at $0$ and using $\eta_0=0$, we have 
	\[\beta+\dot\eta\circ j=i\dot\eta,\]
	where $\beta=\deriv\alpha_t$. 
	Thus, $\beta=2i (\dot\eta)^{0,1}$, and so
	\[\beta+\beta^* = 2i (\dot\eta)^{0,1} - 2i \left((\dot\eta)^{0,1}\right)^* = 2i(\dot\eta)^{0,1}+ 2i(\dot\eta)^{1,0}~,\]
	where we used $\dot\eta=-\dot\eta^*$.
\end{proof}
The following lemma is now immediate.
\begin{lemma}\label{lem deriv of H config}
	The derivative of the map $\rH:\J(P)\times \Omega^1_b(P,\gfrak)$ at $(J,\Psi)$ in the direction $(M,\theta)$ is 
		\[\rd_{(J,\Psi)}\rH(M,\theta)=\tfrac{1}{2i}(\beta+\beta^*)+\tfrac{1}{2i}(\theta-\theta^*),\]
		where $M=\begin{pmatrix}
		0&\beta\\0&m
	\end{pmatrix}$ in the splitting $\T P=\V P\oplus \rH_{A_J}$ associated to $A_J$.
\end{lemma}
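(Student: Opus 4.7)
The proof plan is essentially an immediate application of Lemma \ref{lem:DifferentialOfChernSinger} combined with linearity, so the task is really to organize these ingredients cleanly.

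First I would decompose $\rH$ as the sum of two maps on $\J(P)\times\Omega^1_b(P,\gfrak)$:
\begin{equation*}
\rH_1(J,\Psi) = A_J = \rS(J), \qquad \rH_2(J,\Psi) = \tfrac{1}{2i}(\Psi - \Psi^\ast).
\end{equation*}
Because the derivative is a linear operation, it suffices to compute $\rd\rH_1$ and $\rd\rH_2$ separately and add them. The map $\rH_1$ depends only on the $\J(P)$-coordinate and factors through the Chern-Singer map $\rS$, whose differential is given exactly by Lemma \ref{lem:DifferentialOfChernSinger}. Writing $M\in\T_J\J(P)$ in the splitting $\T P = \V P \oplus \rH_{A_J}$ as $M = \bigl(\begin{smallmatrix} 0 & \beta \\ 0 & m \end{smallmatrix}\bigr)$ with $\beta\in\Omega^{0,1}_b(P,\gfrak)$ and $m\in \T_j\J(\Sigma)$, I then conclude
\begin{equation*}
\rd_{(J,\Psi)}\rH_1(M,\theta) = \rd_J\rS(M) = \tfrac{1}{2i}(\beta + \beta^\ast).
\end{equation*}

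For $\rH_2$, the map is affine linear in $\Psi$ and does not depend on $J$. Hence its derivative at $(J,\Psi)$ in the direction $(M,\theta)$ is obtained by replacing $\Psi$ with $\theta$:
\begin{equation*}
\rd_{(J,\Psi)}\rH_2(M,\theta) = \tfrac{1}{2i}(\theta - \theta^\ast).
\end{equation*}
Here I am using that the Cartan involution $\tau$ (and hence the operation $\Psi\mapsto \Psi^\ast = -\tau(\Psi)$) is a fixed, $\R$-linear operator on $\Omega^1_b(P,\gfrak)$ coming from the structure group reduction $P_\sK\subset P$, so it commutes with differentiation.

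Adding the two contributions gives the claimed formula, completing the proof. There is essentially no obstacle here: all the work was already done in Lemma \ref{lem:DifferentialOfChernSinger}, and the only subtlety worth being careful about is that $\Psi\mapsto \Psi^\ast$ is only conjugate linear, but it is still $\R$-linear, so its $\R$-derivative at any $\Psi$ is simply $\theta\mapsto \theta^\ast$, which is what we need.
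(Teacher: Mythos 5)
Your proof is correct and matches the paper's treatment: the paper simply declares the lemma ``immediate'' after establishing Lemma \ref{lem:DifferentialOfChernSinger}, relying on exactly the decomposition into the Chern--Singer part and the affine ($\R$-linear) part $\Psi\mapsto \tfrac{1}{2i}(\Psi-\Psi^\ast)$ that you spell out. Your extra remark about the conjugate-linearity of $\ast$ still being $\R$-linear is a sensible point of care that the paper leaves implicit.
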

Note that the decomposition $\rd_{(J,\Psi)}\rH(M,\theta)=X+iY$ for $X,Y\in\Omega^1(P_\sK,\kfrak)$ is given by
\[\xymatrix{X=\tfrac{1}{2i}(\beta+\beta^*)&\text{and}& Y=-\tfrac{1}{2}(\theta-\theta^*)}.\]
Using \eqref{eq ABG}, the pullback of $\omega_{ABG}$ by $\rH$ is thus given by
\begin{equation}
	\label{eq pullbackABG}
	\vcenter{\xymatrix@=0em{H^*\omega_{ABG}((M_1,\theta_1),(M_2,\theta_2))&=& -\dfrac{1}{4}\displaystyle\int_\Sig \kappa_\gfrak(\beta_1\wedge\beta_2^*+\beta_1^*\wedge\beta_2)&+ \dfrac{1}{4}\displaystyle\int_\Sig \kappa_\gfrak(\theta_1\wedge\theta_2^*+\theta_1^*\wedge\theta_2)\\ 
		&&-\dfrac{1}{4}\displaystyle\int_\Sig \kappa_\gfrak(\theta_1\wedge\theta_2+\theta_1^*\wedge\theta_2^*)&}}
\end{equation}
\begin{lemma}\label{lem I-inv closed}
The first two integrals in \eqref{eq pullbackABG} are $\I$-invariant and the third integral is $\I$-anti-invariant. Moreover, all three integrals define  closed 
    2-forms on $\J(P)\times \Omega^1_b(P,\gfrak).$ 
\end{lemma}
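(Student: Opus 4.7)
The plan is to verify the type conditions under $\I$ by direct computation, then recognize each integral as a pullback of a closed form by a natural map.

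For the type conditions, recall that $\I$ sends $(M,\theta)$ to $(JM, i\theta)$, so in the splitting $\T P = \V P\oplus \rH_{A_J}$ it acts as $\beta \mapsto i\beta$ and $\theta \mapsto i\theta$. Conjugate-linearity of $\tau$ gives $\tau(i\sigma) = -i\tau(\sigma)$ for any $\gfrak$-valued form, hence $(i\sigma)^* = -i\sigma^*$. Applied to the first integrand, $i\beta_1\wedge(i\beta_2)^* = i\beta_1\wedge(-i\beta_2^*) = \beta_1\wedge\beta_2^*$, and the $\beta_1^*\wedge\beta_2$ term behaves identically, so the first integral is $\I$-invariant. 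The same reasoning with $\beta$ replaced by $\theta$ shows the second integral is $\I$-invariant. For the third integral, however, $i\theta_1\wedge i\theta_2 = -\theta_1\wedge\theta_2$ (and similarly for the $\theta^*\wedge\theta^*$ term), so it is $\I$-anti-invariant.

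For closedness, observe that the last two integrals involve only the tangent vectors $\theta_a$ together with the fixed operations $\kappa_\gfrak$, $\wedge$, and the Cartan involution $\tau$; none of these depend on the base point $(J,\Psi)$. Thus, viewed as 2-forms on $\J(P)\times\Omega^1_b(P,\gfrak)$, they are pulled back from constant (translation-invariant) bilinear forms on the vector space $\Omega^1_b(P,\gfrak)$, and such constant forms are automatically closed. The first integral depends only on the $\J(P)$ factor, so descends to a 2-form on $\J(P)$. Using Lemma \ref{lem:DifferentialOfChernSinger}, $\rd_J\rS(M) = \tfrac{1}{2i}(\beta+\beta^*)$; since $\beta$ is of type $(0,1)$ by Proposition \ref{prop:SingerConnection}(3) and $\beta^*$ of type $(1,0)$, the terms $\beta\wedge\beta$ and $\beta^*\wedge\beta^*$ vanish on the Riemann surface, and a direct calculation identifies the first integral with $\rS^*(\omega_{ABG}|_{\cA(P_\sK)})$. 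Since $\omega_{ABG}$ is translation-invariant (hence closed) on the affine space $\cA(P_\sK)$, this pullback is closed.

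The only delicate point is bookkeeping the conjugate-linearity of $\tau$ (which flips a sign when commuted past $i$) together with the opposite Dolbeault types of $\beta$ and $\beta^*$ on the Riemann surface, which together force the ``diagonal" wedge terms to drop. With these observations the lemma reduces to the elementary facts that pullbacks preserve closedness and constant bilinear forms on a vector space are closed.
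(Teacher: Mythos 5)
Your proof is correct. The invariance computations and the closedness of the last two integrals (as base-point-independent, translation-invariant forms) are exactly the paper's argument. The one place you diverge is the first integral: the paper observes that $\rH^*\omega_{ABG}$ is closed as the pullback of a closed form and deduces closedness of the first integral by subtracting the other two (already known closed) pieces, whereas you identify the first integral directly as the pullback of $\omega_{ABG}|_{\cA(P_\sK)}$ under the composition of the projection to $\J(P)$ with the Chern--Singer map $\rS$, using Lemma~\ref{lem:DifferentialOfChernSinger} and the vanishing of the $(2,0)$ and $(0,2)$ wedge terms. Both routes rest on the same underlying fact (translation-invariance of $\omega_{ABG}$ on the affine space of connections); your version is slightly more self-contained in that it does not require first assembling all three integrals into $\rH^*\omega_{ABG}$, and it makes explicit the pleasant identification of the $\beta$-part of the pullback with $\rS^*\omega_{ABG}$, while the paper's subtraction argument is shorter given that \eqref{eq pullbackABG} has already been derived.
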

\begin{proof}
	 For $(J,\Psi)\in\J(P)\times \Omega^1_b(P,\gfrak)$, recall the complex structure $\I$ acts on a tangent vector $(M,\theta)=(\mu,\beta,\theta)$ by $\I(\mu,\beta,\theta)=(i\mu, i\beta,i\theta)$. Hence, the first two integrals are $\I$-invariant, and the third is $\I$-anti-invariant.
	The last two integrals are closed because they are independent of the
    base point $(J,\Psi)$ in $\J(P)\times \Omega^1_b(P,\gfrak)$. The
    first
     integral is now closed because $\omega_{ABG}$ is closed.
\end{proof}

\subsubsection{The closed 2-form $\omega_0$ and hermitian form $h_0$}
We denote by $\rH:\cC(P)\to \cA(P)$, the restriction of the map $\rH$ from \eqref{eq H map} to the configuration space of Higgs bundles $\cC(P)$. 
By Lemma \ref{lem I-inv closed}, the restriction of the $\I$-invariant part of $\rH^*\omega_{ABG}$ to $\cC(P)$ is closed.
\begin{definition}\label{def:omega0}
	The closed 2-form $\omega_0$ on $\cC(P)$ is defined to be the $\I$-invariant part of the pullback of the Atiyah-Bott-Goldman form by the map $\rH$. 
	Denote the associated hermitian form  by $h_0$:
	\[h_0(v_1,v_2)=2(\omega_0(\I(v_1),v_2)+i\omega_0(v_1,v_2)).\]
\end{definition}

\begin{lemma}\label{lem omega_0 formula}
	Fix $(J,\Phi)\in \cC(P)$ and tangent vectors $v_1,v_2$ with $v_a=(\mu_a,\beta_a,\theta_a)$ and $\theta_a=\psi_a+\tfrac{1}{2i}\Phi\mu_a$. Then 
	\begin{enumerate}
	\item the form $\omega_0$ is given by
		\begin{align*}
			\omega_0(v_1,v_2)&= -\frac{1}{4}\int_\Sig \kappa_\gfrak(\beta_1\wedge\beta_2^*+\beta_1^*\wedge\beta_2)+\frac{1}{4}\int_\Sig \kappa_{\gfrak}(\theta_1\wedge\theta_2^*+\theta_1^*\wedge\theta_2)\\ &=-\frac{1}{4}\int_\Sig \kappa_\gfrak(\beta_1\wedge\beta_2^*+\beta_1^*\wedge\beta_2)+\frac{1}{4}\int_\Sig \kappa_{\gfrak}(\psi_1\wedge\psi_2^*+\psi_1^*\wedge\psi_2)\\&\ \ \ \ \ +\frac{1}{16}\int_\Sig \kappa_\gfrak(\Phi\mu_1\wedge\Phi^*\overline\mu_2+\Phi^*\overline\mu_1\wedge\Phi\mu_2) ~.
		\end{align*}
	\item The hermitian form $h_0$ is equal to
		\[h_0(v_1,v_2)=\langle \beta_1,\beta_2\rangle +\langle \psi_1,\psi_2\rangle-\tfrac{1}{4}\langle \Phi\mu_1,\Phi\mu_2\rangle,\]
where $\langle \cdot,\cdot\rangle$ denotes the $L^2$-inner products from Lemma \ref{lem signs on L2 innerproducts}.
\end{enumerate}
\end{lemma}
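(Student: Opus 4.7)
The plan is to derive both identities directly from formula \eqref{eq pullbackABG} for $\rH^\ast\omega_{ABG}$, using the definition of $\omega_0$ and type decomposition with respect to $\pi_\J(J,\Phi)$.

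For part (1), I first observe that Lemma \ref{lem I-inv closed} identifies the first two integrals in \eqref{eq pullbackABG} as the $\I$-invariant components and the third as $\I$-anti-invariant. Since $\omega_0$ is defined in Definition \ref{def:omega0} as the $\I$-invariant part of $\rH^\ast\omega_{ABG}$, restriction to $\cC(P)$ gives the first displayed formula at once. For the second formula, I substitute $\theta_a = \psi_a + \tfrac{1}{2i}\Phi\mu_a$ and expand. Since $\psi_a$ has type $(1,0)$ and $\Phi\mu_a$ has type $(0,1)$, and the $\ast$-operation flips types, the cross terms $\psi_1\wedge(\Phi\mu_2)^\ast$ and $\Phi\mu_1\wedge\psi_2^\ast$ land in types $(2,0)$ and $(0,2)$ respectively and vanish identically on the Riemann surface $\Sig$. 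The surviving diagonal terms carry the scalar $\tfrac{1}{2i}\cdot\overline{\tfrac{1}{2i}} = \tfrac{1}{4}$, which combines with the overall $\tfrac{1}{4}$ to produce the $\tfrac{1}{16}$ coefficient in the $\Phi\mu$ contribution.

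For part (2), I use $h_0 = 2\omega_0(\I\cdot,\cdot) + 2i\omega_0(\cdot,\cdot)$ together with Lemma \ref{lem signs on L2 innerproducts} to translate each integral $\int_\Sig\kappa_\gfrak(\sigma_1\wedge\sigma_2^\ast)$ into $\mp i\langle\sigma_1,\sigma_2\rangle$, the sign depending on whether $\sigma_a$ has type $(0,1)$ or $(1,0)$. A short computation, using the symmetry of $\kappa_\gfrak$ and the sign from swapping two $1$-forms, gives the companion identity $\int_\Sig\kappa_\gfrak(\sigma_1^\ast\wedge\sigma_2) = \pm i\,\overline{\langle\sigma_1,\sigma_2\rangle}$. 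Adding these, each symmetric pair in (1) collapses to $-2\Imag\langle\sigma_1,\sigma_2\rangle$ for $(0,1)$-forms and $+2\Imag\langle\sigma_1,\sigma_2\rangle$ for $(1,0)$-forms, yielding $\omega_0(v_1,v_2) = \tfrac{1}{2}\Imag\langle\beta_1,\beta_2\rangle + \tfrac{1}{2}\Imag\langle\psi_1,\psi_2\rangle - \tfrac{1}{8}\Imag\langle\Phi\mu_1,\Phi\mu_2\rangle$. Since $\I$ multiplies the first tangent vector entrywise by $i$ and $\langle\cdot,\cdot\rangle$ is complex-linear in the first slot, $\omega_0(\I v_1,v_2)$ is the same expression with every $\Imag$ replaced by $\Real$. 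The assembly $h_0 = 2\omega_0(\I\cdot,\cdot) + 2i\omega_0(\cdot,\cdot)$ then recombines each $\Real + i\Imag$ into the corresponding hermitian inner product, giving $\langle\beta_1,\beta_2\rangle + \langle\psi_1,\psi_2\rangle - \tfrac{1}{4}\langle\Phi\mu_1,\Phi\mu_2\rangle$.

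The main obstacle is purely bookkeeping: the two cases of Lemma \ref{lem signs on L2 innerproducts} (types $(1,0)$ versus $(0,1)$), the wedge sign on $1$-forms, and the identity $\overline{1/(2i)} = -1/(2i)$ all conspire so that the $\beta$ and $\psi$ contributions enter $h_0$ with the same $+1$ coefficient while the $\Phi\mu$ term, despite also being of type $(0,1)$, acquires a net $-\tfrac{1}{4}$. Once the type argument eliminates the cross terms, the rest is careful accounting of these signs.
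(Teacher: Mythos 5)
Your proof is correct and follows essentially the same route as the paper: part (1) is read off from \eqref{eq pullbackABG} together with the type decomposition of $\theta_a$, and part (2) assembles $h_0=2\omega_0(\I\cdot,\cdot)+2i\omega_0(\cdot,\cdot)$ and converts the resulting integrals via Lemma \ref{lem signs on L2 innerproducts}. The only cosmetic difference is that you pass through real and imaginary parts of the $L^2$ pairings before recombining, whereas the paper combines the two terms of $h_0$ first so that the $\sigma_1^*\wedge\sigma_2$ contributions cancel directly; the sign bookkeeping is identical.
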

\begin{proof}
For $\omega_0$, the first equation follows from \eqref{eq pullbackABG}. The second follows from  $\theta_a=\psi_a+\tfrac{1}{2i}\Phi\mu_a$ and $\theta_a^*=\psi_a^*-\tfrac{1}{2i}\Phi^*\overline\mu_a$, and the fact that $\psi_a$ and $\Phi\mu_a$ has type $(1,0)$ and $(0,1)$, respectively.

For the second item, we compute $h_0(v_1,v_2)=2\omega_0(\I(v_1),v_2)+i2\omega_0(v_1,v_2)$ to be 
	\[h_0(v_1,v_2)=-i\int_\Sig \kappa_\gfrak(\beta_1\wedge\beta_2^*)+i\int_\Sig \kappa_\gfrak(\psi_1\wedge\psi_2^*)+\frac{i}{4}\int_\Sig \kappa_\gfrak(\Phi\mu_1\wedge\Phi^*\overline\mu_2)\]
	The result follows from $\beta$ and $\Phi\mu$ being $(0,1)$-forms, $\psi$ being a $(1,0)$-form and Lemma \ref{lem signs on L2 innerproducts}.
\end{proof}

\subsection{The family of forms $\omega_s$ and $h_s$}
Unlike the Atiyah-Bott-Goldman form $\omega_{ABG}$, the closed 2-form
$\omega_0$ (equivalently the hermitian form $h_0$) is not everywhere
nondegenerate. Even when $h_0$ is nondegenerate, it might not be positive
definite. In this section, we modify these forms by adding a multiple of
the Weil-Petersson form on $\J(\Sig)$ to obtain a positive definite form
$h_{s_0}$ on the tangent space $\T_{(J,\Phi)}\cC(P)$. The scale $s_0\in\R_+$,
will depend on the point $(J,\Phi)$. 

\subsubsection{Weil-Petersson form}
For each complex structure $j\in\J(\Sig)$ on $\Sig,$ let $\rho$ denote the associated conformal Riemannian metric with constant curvature $-1$. This choice defines nondegenerate 2-form $\omega_{WP}$ on $\T_j\J(\Sig)$ called the Weil-Petersson form. For tangent vectors $m_1,m_2\in T_j\J(\Sig)$, it is defined by 
 	 \[\omega_{WP}(m_1,m_2) = \frac{1}{2}\int_\Sig \tr(m_1 j m_2)
     \nu_{\rho} =\frac{1}{2i}\int_\Sig (\mu_1\overline\mu_2 -
     \mu_2\overline\mu_1)\, \nu_{\rho}~,\]
     where $\nu_\rho$ is the area form associated to $\rho$ (see Appendix
     \ref{sec:app-kahler} for our conventions).
 	 The associated hermitian form  is given by
\[h_{WP}(\mu_1,\mu_2)=2\omega_{WP}(i\mu_1,\mu_2)+2i\omega_{WP}(\mu_1,\mu_2)=\int_\Sig\mu_1\overline\mu_2
\, \nu_\rho ~.\]
The Weil-Petersson form $\omega_{WP}$ is \emph{not} closed on $\J(\Sig)$,
but it does descend to a closed form on the Teichm\"uller space of $\Sig$,
see the proof of Corollary \ref{c:Closed} below. 

 \subsubsection{The forms $\omega_s$ and $h_s$}\label{sec: omega_s and h_s}
\begin{definition}
	For $s\in \R_{\geq 0}$, let $\omega_s$ be the 2-form on $\cC(P)$ defined by 
	\[\omega_s=s\cdot \omega_{WP}+\omega_0.\]
	The associated hermitian form will denoted by $h_s=s \cdot h_{WP}+h_0$.
\end{definition}
Explicitly for tangent vectors $v_1,v_2$ with
$v_a=(\mu_a,\beta_a,\psi_a)$, the hermitian form $h_s$ is given by 
\begin{equation}
	\label{eq hs metric}h_s(v_1,v_2)=s\cdot
    h_{WP}(\mu_1,\mu_2)-\tfrac{1}{4}\langle \Phi\mu_1,\Phi\mu_2\rangle
    +\langle \beta_1,\beta_2\rangle +\langle \psi_1,\psi_2\rangle\ .
\end{equation}
Equivalently, we have 
\[h_s(v_1,v_2)=\int_\Sig
(s-\tfrac{1}{4}|\Phi|^2)\mu_1\overline\mu_2\, \nu_\rho-\frac{i}{2}\int_\Sig \kappa_\gfrak(\beta_1\wedge\beta_2^*)+\frac{i}{2}\int_\Sig \kappa_\gfrak(\psi_1\wedge\psi_2^*),\]
where $\vert \Phi\vert$ is the pointwise norm of $\Phi$. 
The following is immediate.

\begin{lemma}\label{lem sufficient large s}
For each $(J,\Phi)\in\cC(P)$, there exists  $s_0>0$ such that $h_s$ is
    positive definite on $\T_{(J,\Phi)}\cC(P)$ for any $s> s_0$.
\end{lemma}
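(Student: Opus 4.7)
The plan is to apply the explicit expression for $h_s$ at a tangent vector $v = (\mu, \beta, \psi) \in \T_{(J,\Phi)}\cC(P)$ given by \eqref{eq hs metric}:
\[
h_s(v,v) = s\,\|\mu\|^2_{WP} - \tfrac{1}{4}\langle \Phi\mu, \Phi\mu\rangle + \langle \beta, \beta\rangle + \langle \psi, \psi\rangle ,
\]
and to observe that only the term involving $\Phi\mu$ can be negative, so that it will be absorbed by the first term once $s$ is taken large enough.

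First I would verify that the last three terms on the right are nonnegative. Indeed, $\beta$ is of type $(0,1)$ and $\psi$ is of type $(1,0)$, so by Lemma \ref{lem signs on L2 innerproducts} and the positive definiteness of the hermitian inner product $\langle \cdot, \cdot\rangle_\tau$ on $P(\gfrak)$, the quantities $\langle\beta,\beta\rangle$, $\langle\psi,\psi\rangle$, and $\langle\Phi\mu,\Phi\mu\rangle$ are all genuine $L^2$-norms squared, hence $\geq 0$, with equality if and only if the relevant form vanishes.

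Next I would establish the pointwise bound needed to control $\langle\Phi\mu,\Phi\mu\rangle$. Viewing $\mu$ and $\Phi$ as complex linear maps $\T^{0,1}\Sig \to \T^{1,0}\Sig$ and $\T^{1,0}\Sig \to P(\gfrak)$ respectively, their composition $\Phi\mu\in \Omega^{0,1}(\Sig, P(\gfrak))$ satisfies the pointwise inequality
\[
|\Phi\mu|_p^2 \ \leq\ |\Phi|_p^2 \,|\mu|_p^2
\]
for the metrics induced by the hyperbolic representative $\rho$ of $\pi_\J(J,\Phi)$ and the $\tau$-hermitian metric on $P(\gfrak)$. Since $\Sig$ is compact, $|\Phi|$ is bounded above by some constant $M_\Phi < \infty$, so
\[
\tfrac{1}{4}\langle \Phi\mu,\Phi\mu\rangle \ \leq\ \tfrac{M_\Phi^2}{4} \int_\Sig |\mu|^2\,\nu_\rho \ =\ \tfrac{M_\Phi^2}{4}\,\|\mu\|^2_{WP}.
\]

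Finally, I would set $s_0 := M_\Phi^2/4$. For any $s > s_0$ and any $v=(\mu,\beta,\psi)\neq 0$,
\[
h_s(v,v) \ \geq\ (s - s_0)\,\|\mu\|^2_{WP} + \|\beta\|^2_{L^2} + \|\psi\|^2_{L^2},
\]
which is manifestly nonnegative, and equals zero only when $\mu$, $\beta$, $\psi$ all vanish. Hence $h_s$ is positive definite on $\T_{(J,\Phi)}\cC(P)$. There is no real obstacle here beyond confirming the pointwise inequality $|\Phi\mu|\leq |\Phi|\,|\mu|$, which is just the fact that the operator norm bounds the composition, applied fiberwise.
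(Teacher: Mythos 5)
Your proof is correct and follows essentially the same route as the paper, which simply rewrites $h_s(v,v)$ as $\int_\Sig (s-\tfrac14|\Phi|^2)|\mu|^2\,\nu_\rho + \|\beta\|^2 + \|\psi\|^2$ and observes that the first integrand becomes positive once $s$ exceeds $\tfrac14\sup_\Sig|\Phi|^2$ (the paper declares the lemma immediate from this formula). Your pointwise bound $|\Phi\mu|\leq|\Phi|\,|\mu|$ is in fact an equality here since the relevant fibers are one-dimensional, but the inequality is all that is needed.
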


\subsection{Exponential maps}
In our moduli space construction we will need a holomorphic ``exponential map'' from the tangent space of $\T_{(J,\Phi)}\cC(P)$ to $\cC(P)$. This is more involved than the exponential map for the affine space of connections. 
\subsubsection{Almost complex structures}
Following  \cite[\S 4]{Tromba}, we describe explicit local $\I$-holomorphic charts on $\J(\Sig)$ centered at $j$. 
\begin{definition}\label{def: exp_j}
	For a complex structure $j\in\J(\Sig)$, the exponential map at $j$ is defined by 
$$\exp_j:U_j\subset
    T_j\J(\Sig)\lra\J(\Sig) ~:~ m\longmapsto\left(\Id+\tfrac{1}{2}jm\right)
    \circ j\circ \left(\Id+\tfrac{1}{2}jm\right)^{-1},$$
	where $U_j$ is the subset of where $\left(\Id+\tfrac{1}{2}jm\right)$ is invertible.
\end{definition}
In terms of Beltrami differentials, one computes that $U_j\subset T_j\J(S)$ is the complex subset 
\[U_j=\left\{\mu\in \Omega^{0,1}(T^{1,0}_j\Sig)~,~|\mu|^2<4\right\} .\]
The following lemma justifies calling the map in Definition \ref{def: exp_j}
an ``exponential map.'' 
\begin{lemma}\label{lem expo props j}
	Let $j\in\J(\Sig)$, the exponential map $\exp_j$ satisfies the following:
	\begin{enumerate}
		\item $\exp_j(0)=j$,
		\item $\rd_0\exp_j=\Id$, and 
		\item $\exp_j$ is holomorphic.
	\end{enumerate}
\end{lemma}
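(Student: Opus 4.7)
The plan is to verify all three items by direct algebraic manipulation, using only $j^2 = -\Id$ and the defining antisymmetry $jm + mj = 0$ for $m \in \T_j\J(\Sig)$. The key auxiliary identity I will use is $j\dot m\, j = \dot m$ for any $\dot m \in \T_j\J(\Sig)$, which follows immediately from the antisymmetry together with $j^2 = -\Id$.

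Item (1) is immediate: substituting $m = 0$ in the defining formula gives $\Id\cdot j\cdot\Id^{-1} = j$. For item (2), I will differentiate along the path $m_t = t\dot m$. Writing $A_t := \Id + \tfrac{t}{2}j\dot m$ so that $\exp_j(t\dot m) = A_t\, j\, A_t^{-1}$, the product rule at $t = 0$ produces $\tfrac{1}{2}j\dot m\cdot j + j\cdot(-\tfrac{1}{2}j\dot m) = \tfrac{1}{2}j\dot m j + \tfrac{1}{2}\dot m$, which collapses to $\dot m$ by the auxiliary identity. Thus $\rd_0\exp_j = \Id$.

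Item (3) is the main step. I will first derive a closed-form expression for $\rd_m\exp_j$ at an arbitrary $m \in U_j$ and then check directly that it intertwines the complex structures $\I_j$ on $\T_j\J(\Sig)$ and $\I_{j'}$ on $\T_{j'}\J(\Sig)$, where $j' := \exp_j(m) = AjA^{-1}$ and $A := \Id + \tfrac{1}{2}jm$. Running the calculation of (2) without specializing to $m = 0$, and using the standard rule $(A_t^{-1})' = -A_t^{-1}\dot A_t A_t^{-1}$, the identity $j\dot m j = \dot m$ yields
\[
\rd_m\exp_j(\dot m) \;=\; \tfrac{1}{2}(\Id - j'j)\,\dot m\, A^{-1}.
\]
With this formula in hand, the $\I$-linearity verification is a short manipulation. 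Plugging $\I_j\dot m = j\dot m$ into the formula and using $j\cdot j = -\Id$ produces $\tfrac{1}{2}(j + j')\dot m\, A^{-1}$; left-multiplying the formula by $j'$ and using $(j')^2 = -\Id$ produces the same expression $\tfrac{1}{2}(j' + j)\dot m\, A^{-1}$. Hence $\rd_m\exp_j\circ\I_j = \I_{j'}\circ\rd_m\exp_j$ at every $m \in U_j$, which is holomorphicity. The only subtlety, rather than a genuine obstacle, is bookkeeping: at nonzero $m$ the operator $A$ does not commute with $j$, so one must track carefully the order of factors when simplifying, typically using the commutation $Aj = jA + m$ that flows from $j\dot m j = \dot m$ applied to $\dot m = m$.
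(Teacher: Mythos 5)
Your proposal is correct and follows essentially the same route as the paper: both derive the closed form $\rd_m\exp_j(\dot m)=\tfrac{1}{2}(\Id-j'j)\,\dot m\,(\Id+\tfrac{1}{2}jm)^{-1}$ using $j\dot m j=\dot m$, and then check the intertwining $\rd_m\exp_j\circ j=j'\circ\rd_m\exp_j$ by the same two-line computation with $(j')^2=-\Id$. The commutation $Aj=jA+m$ you flag as a bookkeeping subtlety is in fact never needed once the factors of $A^{-1}$ are collected on the right, exactly as in the paper.
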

\begin{proof}
Item (1) is  obvious. For item (2) and (3), fix $j\in \J(\Sig)$ and set $k=\exp_j(m).$ 
The differential of $\exp_j$ at $m\in \T_j\J(\Sig)$ is given by 
\begin{align*}
	\rd_m\exp_j (\dot m)&=\tfrac{1}{2}j \dot m j \left(\Id+\tfrac{1}{2}jm\right)^{-1}-\left(\Id+\tfrac{1}{2}jm\right)j\left(\Id+\tfrac{1}{2}jm\right)^{-1}\frac{j\dot m}{2}\left(\Id+\tfrac{1}{2}jm\right)^{-1}\\&=\tfrac{1}{2}(\Id-kj)\dot m\left(\Id+\tfrac{1}{2}jm\right)^{-1},
\end{align*}
where we used that tangent vectors $\dot m\in\T_m(\T_j\J(\Sig))$ satisfies $j\dot m+\dot mj=0$.
Evaluating at $m=0$, we have $\rd_0\exp_j=\Id$, proving item (2). Item (3) is equivalent to $D_m\exp_j \circ j=k \circ D_m\exp_j$. The result follows from direct computation
\[\rd_m\exp_j\circ j= \tfrac{1}{2}(\Id-kj) j \left(\Id+\tfrac{1}{2}jm\right)^{-1} =\tfrac{1}{2}(j+k) \left(\Id+\tfrac{1}{2}jm\right)^{-1} = k\circ \rd_m\exp_j.\]
\end{proof}
Analogous to Definition \ref{def: exp_j}, for $J\in\J(P)$ define the exponential map
$$\exp_J: U_J\subset T_J\J(P) \lra  \J(P) ~:~ 
M\longmapsto\left(\Id+\tfrac{1}{2}JM\right) J
\left(\Id+\tfrac{1}{2}JM\right)^{-1}~,$$
where $U_J=\{M\in T_J\J(P)~|~ \left(\Id+\tfrac{1}{2}JM\right) \text{ is invertible}\}$.
 Parallel to Lemma \ref{lem expo props j}, we have that  $\exp_J$ is holomorphic and satisfies $\exp_J(0)=J$ and $\rd_0\exp_J=\Id.$ 

\begin{lemma} \label{lem explicit form of exp_J}
	Let $B\in\cA(P)$ be a connection compatible with $J\in\J(P)$, let $j=\pi_\J(J)$  and write tangent vectors $M=(\beta,m)\in \T_J\J(P)$ as in \eqref{eq decomp of M wrt J-comp B}. Then, in the splitting $\T P=\V P\oplus \rH_B $,  we have
\begin{enumerate}
	\item $U_J\cong \Omega^{0,1}_b(P,\gfrak)\times U_{j}$, 
	\item $\exp_J(\beta,m)=\begin{pmatrix}
		i&\beta \left(\Id+\tfrac{1}{2}jm\right)^{-1}\\0&\exp_j(m)
	\end{pmatrix}$ 
\end{enumerate}
 \end{lemma}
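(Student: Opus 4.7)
The plan is to compute $\exp_J(\beta,m)$ directly by block matrix multiplication in the splitting $\T P=\V P\oplus \rH_B$. Since $B$ is compatible with $J$, formula \eqref{eq J wrt compatible connection} gives
\[
J=\begin{pmatrix} i & 0 \\ 0 & \pi^*j \end{pmatrix},\qquad M=\begin{pmatrix} 0 & \beta \\ 0 & m \end{pmatrix},
\]
with $\beta$ of type $(0,1)$ with respect to $j$ (as noted after \eqref{eq decomp of M wrt J-comp B}). First I will compute
\[
\Id+\tfrac{1}{2}JM=\begin{pmatrix} \Id & \tfrac{i}{2}\beta \\ 0 & \Id+\tfrac{1}{2}\pi^*jm \end{pmatrix}.
\]
This is upper block triangular, so it is invertible if and only if $\Id+\tfrac{1}{2}\pi^*jm$ is invertible. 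By Definition \ref{def: exp_j}, this happens precisely when $m\in U_j$, which proves item (1): $U_J\cong \Omega^{0,1}_b(P,\gfrak)\times U_j$.

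For item (2), I will invert $\Id+\tfrac{1}{2}JM$ by the standard block upper triangular formula and then expand the conjugation $(\Id+\tfrac{1}{2}JM)\,J\,(\Id+\tfrac{1}{2}JM)^{-1}$. The $(1,1)$ and $(2,1)$ entries are immediate: $i$ and $0$, respectively. For the $(2,2)$ entry the $\V P$-piece drops out, leaving $(\Id+\tfrac{1}{2}\pi^*jm)\,\pi^*j\,(\Id+\tfrac{1}{2}\pi^*jm)^{-1}$, which by functoriality of $\pi^*$ equals $\pi^*(\exp_j(m))$, matching Definition \ref{def: exp_j}.

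The only step requiring attention is the $(1,2)$ entry, which is the main (and only) obstacle. Expanding the product yields
\[
\tfrac{1}{2}\beta\bigl(\Id+\tfrac{1}{2}\pi^*jm\bigr)^{-1}+\tfrac{i}{2}\,\beta\circ\pi^*j\,\bigl(\Id+\tfrac{1}{2}\pi^*jm\bigr)^{-1}.
\]
Here I will invoke the type condition: since $\beta\in \Omega^{0,1}_b(P,\gfrak)$ satisfies $\beta\circ\pi^*j=-i\beta$, the second term collapses to $\tfrac{1}{2}\beta(\Id+\tfrac{1}{2}\pi^*jm)^{-1}$, and the two halves combine to give $\beta(\Id+\tfrac{1}{2}jm)^{-1}$, as claimed. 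This completes the identification of $\exp_J(\beta,m)$ with the block matrix in the statement. I anticipate no additional difficulty beyond being careful that $(\Id+\tfrac{1}{2}jm)^{-1}$ sits on the correct side of $\beta$, which is enforced by the fact that it is a pullback endomorphism acting on the horizontal factor.
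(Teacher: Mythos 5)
Your proof is correct and follows essentially the same route as the paper: the same block upper-triangular computation of $\Id+\tfrac{1}{2}JM$ and its inverse for item (1), and for item (2) the same direct expansion of the conjugation, using the type condition $\beta\circ\pi^*j=-i\beta$ to collapse the $(1,2)$ entry to $\beta(\Id+\tfrac{1}{2}jm)^{-1}$. The paper merely leaves that last expansion as "a direct computation," which you have carried out explicitly and correctly.
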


\begin{proof}
	For Item (1), in the splitting $\T P=\V P\oplus \rH_B$ we have 
	\[\left(\Id+\tfrac{1}{2}JM\right)=\begin{pmatrix}
		\Id&\tfrac{1}{2}i\beta\\0&\Id+\tfrac{1}{2}jm
	\end{pmatrix}.\]
	Hence $\left(\Id+\tfrac{1}{2}JM\right)$ is invertible if and only if $\left(\Id+\tfrac{1}{2}jm\right)$ is invertible. Specifically,
	\[\left(\Id+\tfrac{1}{2}JM\right)^{-1}=\begin{pmatrix}
		\Id&\tfrac{1}{2i}\beta\left(\Id+\tfrac{1}{2}jm\right)^{-1}\\0&\left(\Id+\tfrac{1}{2}jm\right)^{-1}
	\end{pmatrix}.\]
	Item (2) now follows by a direct computation using that $\beta$ has type $(0,1)$ with respect to $j$.
\end{proof}

\subsubsection{Higgs bundles}
Before defining the exponential map for the configuration space $\cC(P)$ of Higgs bundles,  we prove the following lemma. 
\begin{lemma}
	Fix $(J,\Phi)\in \cC(P)$ and a tangent vector $M\in\T_J\J(P)$. Then, for $\eta\in \Omega^{(1,0)_J}(P,\T P),$ we have 
	\[(\Phi+\eta)\left(\Id-\tfrac{1}{2}JM\right)\circ \exp_J(M)= i\cdot (\Phi+\eta)\left(\Id-\tfrac{1}{2}JM\right).\]
\end{lemma}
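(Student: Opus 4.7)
The plan is to reduce this identity to a purely algebraic computation on $\T P$, relying on three ingredients: the anticommutation relation $JM+MJ=0$ (which defines $M\in\T_J\J(P)$); the type-$(1,0)_J$ condition on both $\Phi$ and $\eta$, which together give $(\Phi+\eta)\circ J=i(\Phi+\eta)$; and the explicit formula
\[
\exp_J(M)=\left(\Id+\tfrac{1}{2}JM\right)\circ J\circ\left(\Id+\tfrac{1}{2}JM\right)^{-1}
\]
which is the $\J(P)$-version of Definition \ref{def: exp_j}.

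First I would substitute this formula for $\exp_J(M)$ into the left-hand side, obtaining
\[
(\Phi+\eta)\circ\left(\Id-\tfrac{1}{2}JM\right)\circ\left(\Id+\tfrac{1}{2}JM\right)\circ J\circ\left(\Id+\tfrac{1}{2}JM\right)^{-1}.
\]
Because $M$ and $J$ anticommute, $(JM)^2=JMJM=-J^2M^2=M^2$, so
\[
\left(\Id-\tfrac{1}{2}JM\right)\left(\Id+\tfrac{1}{2}JM\right)=\Id-\tfrac{1}{4}(JM)^2=\Id-\tfrac{1}{4}M^2.
\]
The same anticommutation gives $JM^2=-MJM=M^2J$, so $J$ commutes with $\Id-\tfrac{1}{4}M^2$.

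Next, I would slide $J$ to the left of $\Id-\tfrac{1}{4}M^2$ and apply the type-$(1,0)_J$ condition to extract a factor of $i$, reducing the expression to
\[
i\,(\Phi+\eta)\circ\left(\Id-\tfrac{1}{4}M^2\right)\circ\left(\Id+\tfrac{1}{2}JM\right)^{-1}.
\]
A final application of $\Id-\tfrac{1}{4}M^2=\left(\Id-\tfrac{1}{2}JM\right)\left(\Id+\tfrac{1}{2}JM\right)$ cancels the inverse factor and produces $i(\Phi+\eta)\circ\left(\Id-\tfrac{1}{2}JM\right)$, which is the right-hand side. The computation is entirely formal; there is no serious obstacle beyond tracking the anticommutation, and the whole argument hinges on the single identity $(JM)^2=M^2$ which makes the conjugation by $\Id+\tfrac{1}{2}JM$ transparent.
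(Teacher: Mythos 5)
Your proposal is correct and follows essentially the same route as the paper: substitute the conjugation formula for $\exp_J(M)$, use $JM=-MJ$ and $J^2=-\Id$ to get $\left(\Id-\tfrac{1}{2}JM\right)\left(\Id+\tfrac{1}{2}JM\right)=\Id-\tfrac{1}{4}M^2$ and to commute $J$ past it, apply $(\Phi+\eta)\circ J=i(\Phi+\eta)$, and cancel the inverse factor. The only difference is cosmetic (you simplify $(JM)^2=M^2$ before sliding $J$ left, while the paper slides $J$ left first).
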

\begin{proof}
	Using the definition of  $\exp_J(M)$, we compute
	\begin{align*}
				(\Phi+\eta)\left(\Id-\tfrac{1}{2}JM\right)\circ \exp_J(M)&=(\Phi+\eta)\left(\Id-\tfrac{1}{2}JM\right)\left(\Id+\tfrac{1}{2}JM\right)J\left(\Id+\tfrac{1}{2}JM\right)^{-1}\\ &=(\Phi+\eta)\left(\Id-\tfrac{1}{4}JMJM\right)J\left(\Id+\tfrac{1}{2}JM\right)^{-1}\\ 
				&=(\Phi+\eta)J\left(\Id-\tfrac{1}{4}M^2\right)\left(\Id+\tfrac{1}{2}JM\right)^{-1}\\
				&=i\cdot(\Phi+\eta)\left(\Id-\tfrac{1}{4}M^2\right)\left(\Id+\tfrac{1}{2}JM\right)^{-1}\\ 
				&= i\cdot(\Phi+\eta)\left(\Id-\tfrac{1}{2}JM\right)
			\end{align*}
			where we used $J^2=-\Id$, $JM=-MJ$ and $(\Phi+\eta)J=i(\Phi+\eta)$. 
\end{proof}

With the above lemma, we define the exponential map for $\cC(P)$ as follows.
\begin{definition}\label{def Higgs exp}
Let $(J,\Phi)\in\cC(P)$, and for  $(M,\theta)\in \T_{(J,\Phi)}\cC(P)$, write $\theta=\psi+\frac{1}{2i}\Phi\mu$. The exponential map at $(J,\Phi)$ is defined by 
\[\exp_{(J,\Phi)}(M,\theta)=\left(\exp_J(M), (\Phi+\psi)\left(\Id-\tfrac{1}{2}JM\right)\right),\]
	where the domain is the set $V_J=\left\{(M,\theta)\in \T_{(J,\Phi)}\cC(P)~,~\left(\Id+\tfrac{1}{2}JM\right) \text{ is invertible}\right\}$.
\end{definition}
As with $\J(P)$, the exponential map satisfies the following.
\begin{lemma}
 	For each $(J,\Phi)\in\cC(P)$, the exponential map $\exp_{(J,\Phi)}$ is holomorphic and satisfies
 	\begin{enumerate}
 		\item $\exp_{(J,\Phi)}(0)=(J,\Phi)$, and
 		\item $\rd_0\exp_{(J,\Phi)}=\Id$, 
 	\end{enumerate}	
 \end{lemma}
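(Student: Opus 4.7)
My plan is to verify the three claims directly in the order (1), (2), (holomorphicity), using Lemma \ref{lem explicit form of exp_J} (the analog for $\exp_J$) and the lemma just before Definition \ref{def Higgs exp}. First, item (1) is immediate: at $(M,\theta)=0$, we have $\mu=0$ and $\psi=0$, so $\exp_J(0)=J$ and $(\Phi+\psi)(\Id-\tfrac{1}{2}JM)=\Phi$.

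For item (2), fix a compatible connection (the Chern--Singer connection of $J$) and write a tangent vector in the form $(\mu,\beta,\psi)$ of Lemma \ref{lem: tangent vector decomp}, so that $\theta=\psi+\tfrac{1}{2i}\Phi\mu$. The derivative of the first component at $0$ is the identity by the obvious extension of Lemma \ref{lem expo props j}(2) to $\J(P)$. For the second component, a one-parameter variation $(tM,t\theta)$ gives
\[
\deriv (\Phi+t\psi)\bigl(\Id-\tfrac{t}{2}JM\bigr)=\psi-\tfrac{1}{2}\Phi\circ JM\ .
\]
Using $\eqref{eq J wrt compatible connection}$ in the Chern--Singer splitting, $JM$ acts on $\rH_{A_J}$ as $jm$, so $\Phi\circ JM=\Phi\circ (jm)=\Phi\circ(i\mu)=i\Phi\mu$. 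Hence $-\tfrac{1}{2}\Phi\circ JM=\tfrac{1}{2i}\Phi\mu$, and the derivative of the second component at $0$ equals $\psi+\tfrac{1}{2i}\Phi\mu=\theta$, as required. In particular, $\rd_0\exp_{(J,\Phi)}=\Id$.

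For holomorphicity, one first checks that $\exp_{(J,\Phi)}$ takes values in $\cC(P)$: this is precisely the content of the algebraic identity proved in the lemma immediately preceding Definition \ref{def Higgs exp}, applied with $\eta=\psi$. Then it suffices to verify that the derivative at an arbitrary $(M,\theta)$ is complex linear with respect to the source complex structure $\I$ and the target complex structure on $\cC(P)\subset\J(P)\times\Omega^1_b(P,\gfrak)$, which acts by $(X,Y)\mapsto(\exp_J(M)X,iY)$. The first component is handled by the $\J(P)$-analog of Lemma \ref{lem expo props j}(3). For the second component, a direct computation of the derivative in direction $(\dot\mu,\dot\beta,\dot\psi)$ gives
\[
\dot\psi\,\bigl(\Id-\tfrac{1}{2}JM\bigr)-\tfrac{1}{2}(\Phi+\psi)\circ J\dot M\ .
\]
Since both $\Phi$ and $\psi$ are of type $(1,0)$ with respect to $J$, we have $(\Phi+\psi)\circ J=i(\Phi+\psi)$, so replacing $(\dot M,\dot\psi)$ by $(J\dot M,i\dot\psi)$ and simplifying with $J^2=-\Id$ yields exactly $i$ times the original derivative. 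This checks complex linearity, and so $\exp_{(J,\Phi)}$ is holomorphic.

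The only step requiring any real care is the complex-linearity calculation for the second component; the potential obstacle is keeping track of the type condition on $\psi$ (it is $(1,0)$ with respect to $\pi_\J(J,\Phi)$, and hence, once lifted, satisfies $\psi\circ J=i\psi$ on all of $\T P$ because $\psi$ is basic). Once this identity is used, holomorphicity reduces to an algebraic identity as above, and the rest of the argument is routine.
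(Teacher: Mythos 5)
Your proof is correct and follows essentially the same route as the paper's: item (1) is immediate, item (2) follows from differentiating the explicit formula and using the type condition $\Phi\circ J=i\Phi$, and holomorphicity is reduced to complex linearity of the derivative, with the first component handled by the holomorphicity of $\exp_J$. The only difference is that you carry out the complex-linearity check for the second component explicitly (correctly using both $(\Phi+\psi)\circ J=i(\Phi+\psi)$ and $J^2=-\Id$), whereas the paper disposes of it in one sentence; your version is, if anything, slightly more complete.
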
 
 \begin{proof}
 	The first item is obvious. For $M\in \T_J\J(P)$, set $K=\exp_{J}(M)$. The differential of $\exp_{(J,\Phi)}$ at $(M,\theta)\in \T_{(J,\Phi)}\cC(P)$ is 
 	\[\rd_{(M,\theta)}\exp_{(J,\Phi)}(\dot M,\dot \theta)=\left(\tfrac{1}{2}(\Id-KJ)\dot M\left(\Id+\tfrac{1}{2}JM\right)^{-1}, \dot\psi(\Id-\tfrac{1}{2}JM)+(\Phi+\psi)(-\tfrac{1}{2}J\dot M)\right ),\]
 	where $\dot\theta=\dot\psi+\tfrac{1}{2i}\Phi\dot M$, and the first term is computed as in the proof of Lemma \ref{lem expo props j}. 
 	The second item now follows by evaluating at $(M,\psi)=0$. Namely, 
 	\[\rd_{0}\exp_{(J,\Phi)}(\dot M,\dot \theta)=(\dot M, \dot \psi+\tfrac{1}{2i}\Phi\dot M)= (\dot M,\dot \theta),\]
 	where we used that $\Phi \circ J=i \Phi$.
The exponential map is holomorphic since  the complex structure at $K$ is given by postcomposing the first factor by $K$ and multiplying the second factor by $i$. The first term is holomorphic since $\exp_J$ is holomorphic. 
 \end{proof}

Fix a reduction $P_\sK\subset P$ of the  structure group to a maximal
compact $\K<\G$.
Let $(J,\Phi)\in \cC(P)$ be a Higgs bundle and $A_J$ 
the Chern-Singer connection associated to $J$.
The following lemma is immediate from the above discussion.

\begin{lemma} 
In terms of the tangent data $(\mu,\beta,\psi),$ the exponential map $\exp_{(J,\Phi)}:\T_{(J,\Phi)}\cC(P)\to\cC(P)$ is  
 \[\exp_{(J,\Phi)}(\mu,\beta,\psi)=\left(\exp_J(\mu,\beta),~ \Phi+\psi+\tfrac{1}{2i}(\Phi+\psi) \mu\right).\]
\end{lemma}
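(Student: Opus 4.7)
The plan is to unpack Definition \ref{def Higgs exp} using the structure group reduction $P_\sK\subset P$ and the Chern–Singer connection $A_J$, and rewrite both components of $\exp_{(J,\Phi)}(M,\theta)$ purely in terms of the tangent data $(\mu,\beta,\psi)$ from Lemma \ref{lem: tangent vector decomp}.

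First I would handle the complex–structure component. In the splitting $\T P=\V P\oplus \rH_{A_J}$, the tangent vector $M$ has block form $\begin{pmatrix}0 & \beta\\ 0 & m\end{pmatrix}$, where $m\in\T_j\J(\Sig)$ is the pushforward of $M$ and $\mu$ is its associated Beltrami differential. Lemma \ref{lem explicit form of exp_J} then identifies $\exp_J(M)$ with the pair $(\beta,m)$, which in the $(\mu,\beta,\psi)$-notation is exactly $\exp_J(\mu,\beta)$.

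The main computation is for the Higgs field component. Starting from
\[
    (\Phi+\psi)\!\left(\Id-\tfrac{1}{2}JM\right)=(\Phi+\psi)-\tfrac{1}{2}(\Phi+\psi)\,JM,
\]
I use that $\Phi$ is a basic $(1,0)$-form (by the definition of $\cC(P)$) and, by Lemma \ref{lem: tangent vector decomp}, $\psi$ is also a basic $(1,0)$-form. Hence $(\Phi+\psi)\circ J=i(\Phi+\psi)$, so
\[
    (\Phi+\psi)JM = i\,(\Phi+\psi)M.
\]
Combined, this gives $(\Phi+\psi)\bigl(\Id-\tfrac{1}{2}JM\bigr)=(\Phi+\psi)+\tfrac{1}{2i}(\Phi+\psi)M$.

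It remains to identify $(\Phi+\psi)M$ with $(\Phi+\psi)\mu$. Here the block form of $M$ and the fact that $\Phi+\psi$ is basic (so it annihilates $\V P$) imply that $(\Phi+\psi)M=(\Phi+\psi)\circ m$ as a map on $\rH_{A_J}$, which under the standard identification $\rH_{A_J}\cong \pi_P^*\T\Sig$ is precisely the $P(\gfrak)$-valued $1$-form written in the paper as $(\Phi+\psi)\mu$. Substituting back yields
\[
    (\Phi+\psi)\!\left(\Id-\tfrac{1}{2}JM\right)=\Phi+\psi+\tfrac{1}{2i}(\Phi+\psi)\mu,
\]
which together with the first component completes the proof.

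I do not anticipate a serious obstacle: the statement is essentially a bookkeeping exercise, and the only subtlety is to keep track of where the hypothesis that $\Phi$ and $\psi$ are basic $(1,0)$-forms intervenes, so that $(\Phi+\psi)J=i(\Phi+\psi)$ can be applied and the vertical component of $M$ (namely $\beta$) drops out when post-composed with $\Phi+\psi$.
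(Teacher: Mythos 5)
Your proposal is correct and is exactly the unpacking the paper has in mind: the paper states this lemma without proof ("immediate from the above discussion"), and your computation — identifying the first component via Lemma \ref{lem explicit form of exp_J}, using $(\Phi+\psi)\circ J=i(\Phi+\psi)$ to turn $-\tfrac{1}{2}(\Phi+\psi)JM$ into $\tfrac{1}{2i}(\Phi+\psi)M$, and then using basicness of $\Phi+\psi$ together with the convention $\Phi\circ M=\Phi\mu$ to reduce to $(\Phi+\psi)\mu$ — is precisely that intended argument. No gaps.
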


\section{Construction of the joint moduli space}\label{s:ConstructingModuliSpace}
This section is devoted to the construction of the joint moduli space
$\bfM(\G)$ of stable $\G$-Higgs bundles. 
This moduli space is a complex orbifold equipped with a holomorphic submersion to 
Teichm\"uller space
and a holomorphic action of the mapping class group of the  surface $\Sig$.
Furthermore, $\bfM(\sG)$ admits an exhaustion by open sets $\cU_s$ equipped with mapping class group invariant K\"ahler forms $\omega_s$, see Theorem \ref{theo:ModuliSpace}. 
We have included a fair number of details in this section which are likely well known to experts. We hope the exposition benefits those who are not familiar with the various subtleties.

\subsection{The automorphism group}\label{sec auto group}
An \emph{automorphism} of $P$ is a $\sG$-equivariant diffeomorphism $\varphi:P\to P$. Such an automorphism preserves the fibers of $P$, and so covers a diffeomorphism of $\Sig$. We will be interested in the subgroup $\Aut_0(P)<\Aut(P)$ which covers the identity component $\Diff_0(\Sig)$ of $\Diff(\Sig)$. In particular, the group $\Aut_0(P)$ fits into the short exact sequence
\begin{equation} \label{eqn:gauge-group}
    0 \lra \cG (P) \lra \Aut_0(P) \lra \Diff_0(\Sig) \lra 0~.
\end{equation}
The subgroup $\cG(P)$ is called the \emph{gauge group} of $P$. We note that $\Aut_0(P)$ is not necessarily connected since it has the same components as the gauge group $\cG(P).$ 
The quotient $\Aut(P)/\Aut_0(P)$ is isomorphic to the mapping class group of $\Sigma$.

On the Lie algebra level, the above exact sequence gives
\[0 \lra \Omega^0(\Sig,P(\gfrak)) \lra \aut(P) \lra \Omega^0(\Sig,\T \Sig)
\lra 0~.\]
The Lie algebra $\aut(P)$ of $\Aut(P)$ consists of $\sG$-invariant vector fields on $P$. 
The Lie algebra of $\cG(P)$ consists of $\G$-invariant vertical vector fields on $P$, which is identified with sections of the adjoint bundle $P(\gfrak)$.

For a structure group reduction $P_\sK\subset P$ the groups $\sG(P_\sK)$ and $\Aut_0(P_\sK)$ are defined analogously.
\begin{remark} \label{rem:vertical}
The invariance has the following two consequences which will be used later on. 
\begin{enumerate}
    \item If $V\in \aut(P)$ vanishes on a $\sK$-subbundle $P_{\sK}\subset P$, it is identically zero on $P$.
	\item
If $X\in \gfrak$, then $[X^\sharp,V]=0$ for all $V\in \aut(P)$. 
\end{enumerate}
\end{remark}

The automorphism group $\Aut(P)$ acts on the configurations space $\cC(P)$ from Definition \ref{def: higg config} and the projection map $\pi:\cC(P)\to \J(\Sig)$ is equivariant with respect to the actions of $\Aut_0(P)$ and $\Diff_0(\Sig)$. 

\subsection{The holomorphicity condition and the Hitchin equations}
Recall from Definition \ref{def: higg config} that the configuration space of Higgs bundles $\cC(P)$ is defined by
\[ \cC(P) = \{(J,\Phi)\in \J(P)\times \Omega^1_b(P,\gfrak)~\vert~\Phi\circ J=i\Phi\}~.\] 
A pair $(J,\Phi)\in \cC(P)$ is called a \emph{Higgs bundle} if $\Phi$ is holomorphic with respect to $J.$ The holomorphicity condition will be encoded as the zero level set of the map
\begin{equation}
	\label{eq hol F map}
    F: \cC(P) \lra \Omega^2_b(P,\gfrak) ~:~ (J,\Phi)\longmapsto\dbar_J \Phi \ .
\end{equation}
Recall that tangent vectors in $\T_{(J,\Phi)}\cC(P)$ are written $(\mu,\beta,\theta)$ where  $\theta=\psi+\tfrac{1}{2i}\Phi\mu$, and $\psi$ has type $(1,0)$. Recall also that the complex structure $\I$ on $\cC(P)$ acts on $(\mu,\beta,\theta)$ by multiplication by $i.$

\begin{lemma}\label{lem:DifferentialOfF}
The map $F$ is holomorphic.
\end{lemma}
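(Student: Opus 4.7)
The plan is to verify the Cauchy-Riemann condition $\rd F\circ \I = i\cdot \rd F$ directly on tangent vectors. The convenient choice is to express $F$ using the Chern-Singer connection $A_J$, which is a canonical compatible connection depending smoothly on $J$. Since $\Phi$ is of type $(1,0)$ with respect to $J$, and $A_J$ is compatible, $d_{A_J}\Phi$ has no $(2,0)$-component on the Riemann surface, and therefore $F(J,\Phi)=\dbar_J\Phi=d_{A_J}\Phi$.

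Next, I would differentiate along a path $(J_t,\Phi_t)$ with $(\dot J_0,\dot\Phi_0)=(M,\theta)$, and decompose $M=(\beta,m)$ in the splitting $\T P=\V P\oplus \rH_{A_J}$ as in Lemma \ref{lem: tangent vector decomp}. By Lemma \ref{lem:DifferentialOfChernSinger}, the variation of the Chern-Singer connection is $\rd_J\rS(M)=\tfrac{1}{2i}(\beta+\beta^*)$, and differentiating $d_{A_{J_t}}\Phi_t=d\Phi_t+[A_{J_t},\Phi_t]$ at $t=0$ yields
\[\rd F_{(J,\Phi)}(M,\theta)=d_{A_J}\theta+\tfrac{1}{2i}[\beta+\beta^*,\Phi].\]
Substituting $\I(M,\theta)$, which corresponds to $(i\mu,i\beta,i\theta)$, and using the $\C$-linearity of $d_{A_J}$ together with the identity $(i\beta)^*=-i\beta^*$ (from the conjugate-linearity of $\tau$), a short calculation gives
\[\rd F_{(J,\Phi)}(\I(M,\theta))-i\cdot \rd F_{(J,\Phi)}(M,\theta)=\tfrac{1}{2}[\beta-\beta^*,\Phi]-\tfrac{1}{2}[\beta+\beta^*,\Phi]=-[\beta^*,\Phi].\]

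Finally, the essential point is that $\beta$ has type $(0,1)$, so $\beta^*$ has type $(1,0)$; together with $\Phi$ of type $(1,0)$, this makes $[\beta^*,\Phi]$ a $P(\gfrak)$-valued $(2,0)$-form on $\Sigma$. Since $\Omega^{2,0}(\Sigma,P(\gfrak))=0$ on a Riemann surface, $[\beta^*,\Phi]=0$, and the Cauchy-Riemann condition is verified. The main obstacle will be the bookkeeping of the connection variation --- correctly identifying the contribution from $\rd_J\rS(M)$ rather than from a fixed background connection, and tracking the conjugate-linearity of $\tau$; once this is handled, the vanishing on the surface is automatic.
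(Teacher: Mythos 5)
Your proof is correct and follows essentially the same route as the paper: both express $F$ via the Chern-Singer connection, use Lemma \ref{lem:DifferentialOfChernSinger} to compute $\rd F=\rd_{A_J}\theta+\tfrac{1}{2i}[\beta+\beta^*,\Phi]$, and observe that the only conjugate-linear contribution, $[\beta^*,\Phi]$, is a $(2,0)$-form on $\Sigma$ and hence vanishes. The paper phrases this by rewriting $\rd F$ in a manifestly $\C$-linear form rather than computing the defect $\rd F\circ\I-i\,\rd F$, but the underlying argument is identical.
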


\begin{proof}
Fix $(J,\Phi)\in\cC(P)$ and a reduction $P_\sK\subset P$. By Lemma \ref{lem hol w comp conn}, $F$ is given by
\[F(J,\Phi) = \rd\Phi + [A_J,\Phi] = \left(\rd_{A_J}\Phi \right)^{1,1}~,\]
where $A_J$ is the Chern-Singer connection of $J$.
 Using Lemma \ref{lem:DifferentialOfChernSinger} we get
\begin{equation}\label{eq hol eq of F}
	\rd_{(J,\Phi)} F(\mu,\beta,\theta) = \rd \theta +\tfrac{1}{2i} [\beta+\beta^*, \Phi] + [A_J,\theta] = \overline \partial_J \psi + \tfrac{1}{2i}\left( \rd_{A_J} \Phi\mu \right)^{1,1} + \tfrac{1}{2i}[\beta,\Phi]~,
\end{equation}
where for the second equality we used the fact that this 2-form is basic, so its $(2,0)$ and $(0,2)$ parts both vanish. Hence, $\rd_{(J,\Phi)}F(i\mu,i\beta,i\theta)=i\rd_{(J,\Phi)}F(\mu,\beta,\theta)$ and we conclude that $F$ is holomorphic.
\end{proof}

To construct a moduli space of Higgs bundles on a fixed Riemann surface, it is necessary to restrict to the set of so-called \emph{polystable} $\sG$-Higgs bundles. This notion, as well as the notions of (semi-)stability come from Geometric Invariant theory and are relevant for constructing the moduli space as an algebraic variety. 
We will not need any of the technical definitions of stability for this
paper. Instead, we will use the equivalence between stability and
irreducible solutions
to the so called Hitchin equations. Before explaining this, we mention a
few important features.

Fixing, a Riemann surface $X=(\Sig,j)$ and recall there is a holomorphic projection $\pi:\cC(P)\to\J(\Sig)$. The set 
\[\bfM_X(\sG)=\{(J,\Phi)\in F^{-1}(0)~|~\pi(J,\Phi)=j\text{ and $(J,\Phi)$
is stable}\}/\cG(P)\]
has the structure of a normal quasi-projective variety (with only orbifold
singularities) called the moduli space stable $\sG$-Higgs bundles on $X$.
The smooth locus of $\Mbold_X(\G)$ 
consists of isomorphism classes of stable $\sG$-Higgs bundles whose
automorphism group is the center of $\G$. We will refer to such Higgs
bundles as \emph{regularly stable}. 

We will make use of the following property of stable $\G$-Higgs bundles, see \cite{BiswasRamananInfitesimal}. which we will use are the following. 
\begin{proposition}\label{prop stable auts}
	The automorphism group of a stable $\sG$-Higgs bundle is finite. 
 In particular, at stable points, 
    $F^{-1}(0)\subset \cC(P)$ is locally  a holomorphic submanifold.
\end{proposition}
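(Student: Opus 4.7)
The plan is to establish both assertions via a Kuranishi-type argument, with the finiteness of the automorphism group providing exactly the cohomological vanishing needed to apply the implicit function theorem to $F$.

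For the first assertion, the Lie algebra of the automorphism group of a Higgs bundle $(J,\Phi)$ on the Riemann surface $X=(\Sig,\pi_\J(J))$ is $H^0$ of the Dolbeault deformation complex
\[ \Omega^0(\Sig,P(\gfrak)) \xrightarrow{D_1} \Omega^{0,1}(\Sig,P(\gfrak))\oplus \Omega^{1,0}(\Sig,P(\gfrak)) \xrightarrow{D_2} \Omega^{1,1}(\Sig,P(\gfrak)), \]
with $D_1(s)=(\dbar_J s,[\Phi,s])$ and $D_2(\beta,\psi)=\dbar_J\psi+[\beta,\Phi]$; equivalently, it is the space of holomorphic sections $s$ of $P(\gfrak)$ satisfying $[\Phi,s]=0$. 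Stability forces any such $s$ to be central in $\gfrak$ (otherwise the centralizer of $s$ in $\sG$ would provide a reduction of the Higgs bundle contradicting stability, compare \cite{BiswasRamananInfitesimal}), and $Z(\gfrak)=0$ since $\sG$ is semisimple. Hence $H^0=0$, the automorphism group is discrete, and the group-level version of the same argument places it inside $Z(\sG)$, which is finite.

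For the second assertion, I would apply the holomorphic implicit function theorem to $F$, which is holomorphic by Lemma~\ref{lem:DifferentialOfF}. The decisive step is surjectivity of $\rd_{(J,\Phi)}F$ at a stable point. Setting $\mu=0$ in \eqref{eq hol eq of F} produces an operator whose image agrees with that of the second differential $D_2$; its cokernel is therefore $H^2$ of the complex above. Serre duality identifies $H^2\cong (H^0)^\ast$, so the vanishing of $H^0$ just established forces $H^2=0$, giving surjectivity of $D_2$, and hence of $\rd_{(J,\Phi)}F|_{\mu=0}$. Allowing $\mu$ to vary can only enlarge the image, so $\rd_{(J,\Phi)}F$ is itself surjective with closed range and finite-dimensional kernel (by ellipticity).

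With surjectivity of a Fredholm operator in hand, the Banach-space implicit function theorem, applied in an appropriate Sobolev completion $\cC(P)^{k,p}$ of the configuration space, produces a local holomorphic submanifold cut out by $F$; elliptic regularity for $\dbar_J$ then bootstraps this to a smooth submanifold inside the Fr\'echet configuration space $\cC(P)$. The main obstacle I expect is analytic rather than conceptual: one must select Sobolev exponents for which the nonlinear terms in $F$, especially the bracket $[\beta,\Phi]$ and the coupling to the varying Beltrami differential $\mu$, remain continuous; verify the Fredholm property in these completions; and justify the final return to smooth sections. These steps are the standard gauge-theoretic template of \cite{FreedUhlenbeck} and \cite{selfduality}, here adapted to allow simultaneous variation of the Riemann surface structure on $\Sig$.
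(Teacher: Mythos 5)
The paper itself offers no proof of this proposition --- it is quoted from \cite{BiswasRamananInfitesimal} --- so the relevant comparison is with how the result is actually used later (in \S 4.5, where $H^0(\A^\bullet)=0$ and Serre duality give $H^2(\A^\bullet)=0$). Your second paragraph reproduces exactly that mechanism, and your identification of the infinitesimal automorphisms with $\ker D_1$, together with the argument that stability forces a commuting holomorphic section of $P(\gfrak)$ to be central (hence zero, since $\gfrak$ is semisimple), is the standard and correct route to discreteness.

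There is, however, a genuine gap in the finiteness step. The assertion that ``the group-level version of the same argument places it inside $Z(\sG)$'' is false, and it contradicts the paper's own distinction between stable and \emph{regularly} stable Higgs bundles: regularly stable is \emph{defined} as having automorphism group equal to $Z(\G)$, and if every stable Higgs bundle satisfied this, $\bfM(\sG)$ would be a manifold rather than an orbifold. The Lie-algebra argument does not transfer to the group level: a noncentral element $s\in\gfrak$ commuting with the Higgs field produces a $\Phi$-invariant \emph{parabolic} reduction (via the filtration attached to its Jordan decomposition), which stability forbids; but a noncentral group element $g$ in the stabilizer has centralizer a proper \emph{reductive} subgroup, and a reduction to a reductive subgroup is perfectly compatible with stability (e.g.\ irreducible representations into $\sSO_n\C$ with centralizer a finite group strictly containing the center). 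To get finiteness you need a compactness or algebraicity input on top of discreteness: for instance, by the uniqueness statement in Theorem \ref{thm Hitchin-Simpson} every automorphism preserves the harmonic metric, so the automorphism group is a closed subgroup of the compact stabilizer inside $\cG(P_\sK)$, and a discrete closed subgroup of a compact group is finite; alternatively, the automorphism group is a complex algebraic subgroup of $\G$ with trivial Lie algebra, hence zero-dimensional, hence finite. A smaller slip in the second part: $\ker\rd_{(J,\Phi)}F$ is \emph{not} finite dimensional (it contains the entire image of $\delta^0_\B$, i.e.\ the gauge-orbit directions); the implicit function theorem still applies in the Sobolev/Hilbert completions because closed subspaces there are automatically complemented, but the finite-dimensionality claim should be dropped.
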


We now describe the relation between stability and solutions to the so called Hitchin equation. This requires fixing a structure group reduction $P_\sK\subset P$ to the maximal compact subgroup $\sK<\sG.$ Consider the map $\cC(P)\to \Omega^2_b(P,\gfrak)$ defined by 
\[(J,\Phi)\mapsto F_{A_J}+\tfrac{1}{4}[\Phi,\Phi^*]~,\]
where $F_{A_J}$ is the curvature of the Chern-Singer connection and $\Phi^*$ is the adjoint of $\Phi.$ 
The equation 
\begin{equation}
 	\label{eq Hitchin eq} F_{A_J}+\tfrac{1}{4}[\Phi,\Phi^*]=0
 \end{equation} 
 is called the \emph{Hitchin equation}, and the set of $(J,\Phi)\in F^{-1}(0)$ solving equation \eqref{eq Hitchin eq} is called the space of \emph{solutions to the Hitchin equation}. 
﻿
 Recall from \eqref{eq H map} that the map $\rH:\cC(P)\to\cA(P)$ sends
 $(J,\Phi)$  to the $\sG$-connection $D_{(J,\Phi)}:=A_J+\frac{1}{2i}(\Phi-\Phi^*)$. For $(J,\Phi)$ a solution to the Hitchin, the connection $D_{(J,\Phi)}$ is flat.  Indeed, 
 \[ F_{D_{(J,\Phi)}}=F_{A_J}+\tfrac{1}{4}[\Phi,\Phi^*]+\tfrac{1}{2i}\dbar_J\Phi-\tfrac{1}{2i}\partial_{J}\Phi^*.\]

\begin{remark} 
The factor of $\tfrac{1}{4}$ appears naturally since the configuration space is defined in terms of almost complex complex structures instead of $\dbar$-operators, see Lemma \ref{lem:DifferentialOfChernSinger}.
\end{remark}

Since the Hitchin equation requires fixing a structure group reduction, the space of
solutions is preserved by the $\sK$-gauge group $\cG(P_\sK)$ but not the
$\sG$-gauge group. The following theorem of Hitchin for $\sSL_2\C$, and
Simpson in general, relates solutions to the Hitchin equation and stability. 
\begin{theorem}[\cite{selfduality,SimpsonVHS}] \label{thm Hitchin-Simpson}
	A Higgs bundle $(J,\Phi)\in F^{-1}(0)$ is polystable if and only if there exists a gauge transformation $g\in\cG(P)$ such that $g\cdot(J,\Phi)$ solves the Hitchin equations \eqref{eq Hitchin eq}. Moreover, $g$ is unique up to the actions of the $\cG(P)$-stabilizer of $(J,\Phi)$ and the compact gauge group $\cG(P_\sK)$. In particular, for $(J,\Phi)$ stable, $g$ is unique up to the action of $\cG(P_\sK)$.
\end{theorem}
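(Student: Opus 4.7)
The plan is to prove the two implications separately, following Hitchin's original argument for $\sSL_2\C$ and Simpson's extension to an arbitrary complex semisimple $\sG$. Throughout, recall that $\rH$ sends any solution of \eqref{eq Hitchin eq} to a flat $\sG$-connection, so morally this is the Hitchin-Kobayashi correspondence for the triple $(P,J,\Phi)$.

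For the direction that a solution implies polystable, the strategy is to argue by contradiction. Suppose $(J,\Phi)$ satisfies \eqref{eq Hitchin eq} but is not polystable. Then there exists a proper $\Phi$-invariant parabolic reduction $\sigma$ of $(P,J)$ to some parabolic $\mathsf{Q}<\sG$, together with an antidominant character $\chi$ of $\mathsf{Q}$ whose associated line bundle has nonnegative degree. From $\sigma$ and $\chi$ one constructs a section $s\in\Omega^0(\Sig,P(\gfrak))$, and a Chern-Weil / moment-map identity gives
\[ \deg(\sigma,\chi)\;=\;\int_\Sig \kappa_\gfrak\bigl(s,\,F_{A_J}+\tfrac{1}{4}[\Phi,\Phi^*]\bigr)\;-\;\|\dbar_J s\|_{L^2}^2\;-\;\tfrac{1}{4}\|[\Phi,s]\|_{L^2}^2. \]
Under \eqref{eq Hitchin eq} the first integral vanishes, so the left-hand side is nonpositive; this contradicts the nonnegativity of the degree unless $\dbar_J s=0$ and $[\Phi,s]=0$, in which case $s$ produces the splitting witnessing polystability.

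For the converse, fix a polystable $(J,\Phi)\in F^{-1}(0)$ and look for a solution inside its $\cG(P)$-orbit. Since $P_\sK$ is fixed, deformations by $g\in\cG(P)$ can be reparametrized as deformations of the hermitian metric $h$ on $P(\gfrak)$ with the underlying holomorphic pair unchanged, and \eqref{eq Hitchin eq} becomes a nonlinear elliptic equation for $h$. I would introduce the Donaldson functional $\Mcal(h_0,h)$, whose Euler-Lagrange equations are exactly \eqref{eq Hitchin eq} and which is strictly convex along the natural geodesics $h_t=h_0 e^{ts}$. The key analytic step is to show that polystability implies $\Mcal$ is bounded below and attains its infimum; I would implement this either via the Donaldson heat flow $\dot h=-h(F_{A_h}+\tfrac{1}{4}[\Phi,\Phi^*])$ with long-time convergence, or via Simpson's continuity method perturbing to $F+\tfrac{1}{4}[\Phi,\Phi^*]=-\epsilon\log(h_0^{-1}h)$ and passing to the limit. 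Uniqueness modulo $\cG(P_\sK)$ and the $\cG(P)$-stabilizer of $(J,\Phi)$ then follows from strict convexity: two solutions $h,h'$ render $\Mcal$ affine along the joining geodesic, forcing the geodesic variation to lie in the infinitesimal stabilizer, which exponentiates to the claimed ambiguity. In the stable case, Proposition \ref{prop stable auts} shows the $\cG(P)$-stabilizer is the finite center of $\sG$, reducing the ambiguity to $\cG(P_\sK)$ alone.

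The hardest step is producing the minimizer of $\Mcal$, or equivalently proving convergence of the heat flow. The a priori obstruction is an uncontrolled $C^0$ bound on a minimizing sequence $h_n$; the Uhlenbeck-Yau argument extracts from any such blowup a weak destabilizing subobject of $(J,\Phi)$, which polystability excludes (or, in the merely semistable case, reconciles via a Jordan-H\"older decomposition into stable pieces, each carrying its own Hitchin solution). Executing this for an arbitrary complex semisimple $\sG$ and principal $\sG$-bundle $P$, rather than the original vector-bundle / $\sSL_n$ setting, is the main technical content of \cite{SimpsonVHS}, and I would invoke that work rather than redo the analysis.
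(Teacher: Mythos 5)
The paper does not prove this statement: it is quoted verbatim as the Hitchin--Simpson theorem and attributed to \cite{selfduality,SimpsonVHS}, so there is no in-paper argument to compare against. Your outline is a faithful summary of the standard proof in those references --- Chern--Weil/moment-map identity for the direction ``solution $\Rightarrow$ polystable,'' and the Donaldson functional with heat flow or continuity method plus the Uhlenbeck--Yau destabilizing-subobject argument for the converse, with uniqueness from convexity --- and you correctly defer the principal-bundle analysis to Simpson rather than redoing it, which is exactly what the paper does.

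Two small points worth tightening. First, in the ``easy'' direction your framing as a contradiction is slightly muddled: the identity shows every $\Phi$-invariant antidominant reduction has $\deg(\sigma,\chi)\le 0$, i.e.\ the equations force semistability, and equality forces $s$ to be parallel and $\Phi$-commuting, which yields the splitting; there is no genuine contradiction step. Second, for $(J,\Phi)$ stable the $\cG(P)$-stabilizer is finite (Proposition \ref{prop stable auts}) but need not equal the center of $\sG$ unless $(J,\Phi)$ is \emph{regularly} stable in the paper's terminology; the reduction of the ambiguity to $\cG(P_\sK)$ instead uses that automorphisms of a Higgs bundle solving the Hitchin equations preserve the harmonic metric and hence already lie in $\cG(P_\sK)$.
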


As a consequence, for each structure group reduction $P_\sK\subset P$ to the maximal compact $\sK<\sG$, the moduli space of stable Higgs bundles on a fixed Riemann surface can be equivalently described as
\[\bfM_X(\sG)=\left\{(J,\Phi)\in F^{-1}(0)^{st} \mid (J,\Phi) \text{
    satisfies \eqref{eq Hitchin eq} with } \pi(J,\Phi)=j\right\}/\cG(P_\sK).\]

\subsection{The joint moduli space} 
Fix a structure group reduction $P_\sK\subset P$. Consider the map  
$F$ from \eqref{eq hol F map}, and denote the set of stable (resp.\
regularly stable) Higgs bundles by $F^{-1}(0)^{st}\subset \cC(P)$
(resp.\ $F^{-1}(0)^{rs}\subset \cC(P)$). 
The joint moduli space of stable $\sG$-Higgs bundles is the space
\[\bfM(\sG)=F^{-1}(0)^{st}/\Aut_0(P)=\{(J,\Phi)\in F^{-1}(0)^{st} \mid
(J,\Phi)  \text{ satisfies \eqref{eq Hitchin eq}}\}/\Aut_0(P_\sK).\]
There is a natural projection to $\pi:\bfM(\sG)\to \bfT(\Sig)$ to the Teichm\"uller space of $\Sigma.$ Moreover, the mapping class group $\sMod(\Sig)$, identified with $\Aut(P)/\Aut_0(P)$, acts naturally on $\bfM(\sG)$ covering its standard action on $\bfT(\Sig)$.
Recall from \S \ref{sec: omega_s and h_s} that $\cC(P)$ has a 1-parameter family of closed 2-forms $\{\omega_s\}_{s\in \mathbb R}$ and an associated family of hermitian forms $\{h_s\}_{s\in \mathbb R}$. Finally, recall the holomorphic bundle map $\Theta$ from \S \ref{section hol bundle map}.

\begin{theorem}\label{theo:ModuliSpace}
The joint moduli space $\bfM(\G)$ of stable $\G$-Higgs bundles on $\Sig$ is an orbifold equipped with an integrable complex structure $\I$, a holomorphic submersion $\pi:\bfM(\sG)\to \bfT(\Sig)$ to the Teichm\"uller space of $\Sig$ and a holomorphic bundle map 
\[\Theta: \pi^*\T\bfT(\Sig)\lra \ker(\rd\pi).\]
In addition, $\bfM(\sG)$ is equipped with a  $1$-parameter family
 $\{\omega_s\}_{s \in \R}$ of closed $2$-forms which are compatible with $\I$, and the associated hermitian forms $h_s$ satisfy the following:
\begin{enumerate}
	\item  The open sets 
	\[\cU_s :=\{ p\in \bfM(\sG) ~ \vert ~ (h_s)_p>0\}\]
	define an increasing exhaustion of $\bfM(\sG)$ as $s$ goes to $+\infty$.
	\item The restriction of $h_s$ to $\pi^{-1}(j)$ is independent of $s$,
        positive and $\pi^{-1}(j)$ is isomorphic as a K\"ahler manifold to
        the moduli space of stable Higgs bundles on the fixed Riemann
        surface $X=(\Sigma,j)$.
\end{enumerate} 
Moreover, all of these structures are invariant under the action of $\sMod(\Sig)$.
\end{theorem}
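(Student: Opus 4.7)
The plan is to proceed via a Kuranishi slice construction adapted to the configuration space $\cC(P)$. At a regularly stable point $(J,\Phi)\in F^{-1}(0)^{st}$, I first set up the \emph{deformation complex}
\[
\B^\bullet \, : \, \aut(P)\xrightarrow{\,\delta_\B^0\,} \T_{(J,\Phi)}\cC(P) \xrightarrow{\,\delta_\B^1\,} \Omega^2_b(P,\gfrak),
\]
where $\delta_\B^0$ is the infinitesimal action of $\Aut_0(P)$ on $\cC(P)$ and $\delta_\B^1=\rd_{(J,\Phi)}F$ is the differential of the holomorphicity map from \eqref{eq hol F map}. Using Lemma \ref{lem:DifferentialOfF}, both differentials are complex linear, so $\B^\bullet$ is a complex of complex vector spaces. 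One checks using the formula for $\delta_\B^1$ in \eqref{eq hol eq of F} together with the standard ellipticity of the Hitchin complex on a fixed Riemann surface that $\B^\bullet$ is elliptic; its first cohomology $H^1(\B^\bullet)$ is the formal tangent space of the would-be moduli space at $(J,\Phi)$, and by Proposition \ref{prop stable auts} the zeroth and second cohomologies are controlled (trivial modulo the center of $\G$, and zero, respectively, at regularly stable points).

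Next, using Lemma \ref{lem sufficient large s}, I pick $s$ large enough that $h_s$ is positive definite on $\T_{(J,\Phi)}\cC(P)$. Combined with a fixed $\Aut_0(P_\sK)$-invariant metric on $\aut(P)$ and on $\Omega^2_b(P,\gfrak)$, this lets me form $L^2$-adjoints and a Laplacian for $\B^\bullet$; the harmonic space $\HBbb_s^1$ gives a finite dimensional complex model for $H^1(\B^\bullet)$. A key point, to be verified, is that the harmonic space $\HBbb_s^1\subset \T_{(J,\Phi)}\cC(P)$ is \emph{independent of $s$} among those $s$ for which $h_s>0$ at the point (this follows because the kernel of $\delta_\B^1$ intersected with the $h_s$-orthogonal complement of the image of $\delta_\B^0$ is algebraically the same subspace, by the explicit structure of $\delta_\B^0$ using Remark \ref{rem:vertical}). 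Applying the holomorphic exponential map $\exp_{(J,\Phi)}$ of Definition \ref{def Higgs exp} to a small neighborhood of $0\in \HBbb^1_s$ and using the implicit function theorem together with the holomorphicity of $F$, I produce a finite dimensional complex submanifold $\cS\subset F^{-1}(0)$ that is transverse to the $\Aut_0(P)$-orbit and meets it in the (finite) automorphism group of $(J,\Phi)$. Gluing these slices as $(J,\Phi)$ varies and checking that transition maps are holomorphic (which follows from the holomorphicity of $\exp_{(J,\Phi)}$, $F$, and the action) endows $\bfM(\G)$ with the structure of a complex orbifold; holomorphicity of $\pi$ and of $\Theta$ is inherited from the analogous statements on $\cC(P)$ proven in Section \ref{section hol bundle map}.

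The 2-forms $\omega_s$ on $\cC(P)$ descend to $\bfM(\G)$ once I verify two things: (i) the restriction of $\omega_s$ to the slice $\cS$ is closed, and (ii) it is $\Aut_0(P)$-basic. For (i), $\omega_0$ is the $\I$-invariant part of the pullback under $\rH$ of the Atiyah-Bott-Goldman form, which is closed, and the Weil-Petersson contribution $s\cdot \omega_{WP}$ is closed when restricted to slices in $\J(\Sig)$ defining Teichm\"uller coordinates (this is the standard fact that $\omega_{WP}$ descends to a closed form on $\bfT(\Sig)$). For (ii), gauge invariance is clear since $\rH$ and the Weil-Petersson form are equivariant, and verifying that the contraction of $\omega_s$ with an infinitesimal gauge direction vanishes on harmonic representatives uses the defining $h_s$-orthogonality. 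The compatibility of $\omega_s$ with $\I$ is automatic from its construction as an $\I$-invariant piece. Positivity on fibers of $\pi$ reduces to the standard positivity of the Dolbeault Hitchin $L^2$-metric and is independent of $s$ since the Weil-Petersson term vanishes on vertical directions; this identifies $\pi^{-1}(j)$ with the usual K\"ahler moduli $\bfM_X(\G)$. The exhaustion property (1) is immediate from Lemma \ref{lem sufficient large s}. Finally, because the entire construction only used canonical objects (the map $\rH$, the reduction $P_\sK$ up to $\cG(P_\sK)$-action, Weil-Petersson, and the exponential maps, all of which are $\Aut(P)$-equivariant up to the Cartan involution), the residual action of $\Aut(P)/\Aut_0(P)\cong \sMod(\Sig)$ is holomorphic and preserves each $\omega_s$.

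The hardest step will be step (ii) above: establishing that $\omega_s$ is basic with respect to the $\Aut_0(P)$-action when restricted to the harmonic slice, and that the slice itself is well defined despite $h_s$ not being globally positive. In particular, verifying that the local Kuranishi model on $\HBbb^1_s$ is intrinsic (i.e., does not depend on the choice of $s$ used in the slice construction) is delicate, and is what ultimately allows all the $\omega_s$ to coexist as mapping class group invariant closed forms on a single underlying complex orbifold $\bfM(\G)$.
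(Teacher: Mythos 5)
Your proposal follows essentially the same route as the paper: the elliptic deformation complex $\B^\bullet$, the $s$-independence of the harmonic space (Proposition \ref{prop:harmonics}), the holomorphic exponential map plus implicit function theorem to build the slice (Proposition \ref{prop:slice}), descent of $\omega_s$ via harmonic lifts with closedness reduced to the Weil-Petersson form on harmonic Beltrami differentials (Corollary \ref{c:Closed}), and naturality of the construction giving the holomorphic mapping class group action. The only items you leave implicit --- the Sobolev completions and elliptic regularity, and the properness and finite-stabilizer statements (Proposition \ref{prop:ProperFreeAction}) needed to upgrade local transversality of the slice to a genuine orbifold chart --- are exactly the technical inputs the paper supplies.
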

\begin{remark}
	We note also that $(2,0)$-part of the pullback of the
    Atiyah-Bott-Goldman form defines a closed holomorphic 2-form on $\bfM(\sG)$.
\end{remark}
The rest of this section is devoted to setting up and proving the
Theorem \ref{theo:ModuliSpace}.
The standard approach to prove such a theorem is to construct a local slice for the action.
First, following Atiyah-Bott \cite[p. 577]{AtiyahBott:82}, 
let us denote $\overline \Aut_0(P):=\Aut_0(P)/Z(\G)$, where the inclusion
$Z(\G)\hookrightarrow  \Aut_0(P)$ is by constant gauge transformations in
the center $Z(\G)$.  Then $\overline \Aut_0(P)$ still acts on $\Ccal(P)$.
Now,  given a point $p$ in $\bfM(\sG)$, and a lift $x\in F^{-1}(0)^{st}$
which solves the Hitchin equations \eqref{eq Hitchin eq}, we seek a holomorphic submanifold $S_x$ of $F^{-1}(0)^{st}$ through $x$ such that the orbit map
\[\overline\Aut_0(P) \times S_x \longrightarrow F^{-1}(0)^{st}\]
is a diffeomorphism onto an open neighborhood of $x$ in $F^{-1}(0)^{st}$
(or more generally a finite ramified cover). 
If the slice is natural, in the sense that $S_y=g\cdot S_x$ for $g\in
\overline\Aut_0(P_\sK)$ and $y=g\cdot x$, then the slices define a holomorphic (orbifold) atlas on $\bfM(\sG)$. 

By Lemma \ref{lem sufficient large s}, for sufficiently large $s$ the hermitian form $h_s$ is positive at $x$. 
We use this metric to complete all relevant Fr\'echet spaces to Hilbert manifolds using the Sobolev topology. 
The slice is constructed as follows: the deformation complex arising from the infinitesimal action of $\Aut_0(P)$ on $\cC(P)$ and the variation of the holomorphicity equation $F$ turns out to be elliptic, and so has a finite dimensional space of harmonics. 
Applying the exponential map from Definition \ref{def Higgs exp} to the
space of harmonics,  and an implicit function theorem to project back to
$F^{-1}(0)^{st}$, one obtains a complex submanifold $\cS_x$ of
$F^{-1}(0)^{st}$. When $x$ is regularly stable, this is shown to be a local slice using properness and
freeness of the action of $\overline\Aut_0(P)$. For $x$ stable but not
regularly stable, the stabilizer of $x$ in $\overline\Aut_0(P)$ is finite and preserves $\cS_x$, yielding an orbifold structure on $\bfM(\G)$.

Surprisingly, the harmonics at $x$, defined with respect to the metric $h_s$ for $s$ sufficiently large, are independent of the parameter $s$. 
Using the explicit form of the harmonics, we show that $\rd\omega_s$
vanishes on the tangent spaces to the slices for all $s$. For sufficiently
large $s$, this implies that the real part of $h_s$ defines
a K\"ahler metric in a neighborhood of $p\in\bfM(\sG)$.

\subsection{Sobolev completion}

In order to work with Hilbert manifolds, it is classical to complete the different spaces. Fixing a background metric on $\Sig$ and structure group reduction $P_\sK\subset\ P$ define the Sobolev $W^{k,2}$-norm on spaces of tensors valued in associated bundles (where the $W^{k,2}$-norm is defined using the $L^2$-norm of the first $k$ derivatives). This norm can be used to complete $\cC(P)$ into a Hilbert manifold $\cC(P)^k$. Compactness of $\Sig$ implies that the $W^{k,2}$-topology is independent of the choices. 

Similarly, $\Aut_0(P)$ has a Sobolev completion $\Aut_0(P)^{k+1}$ and the
different maps (exponential map and holomorphicity) extend smoothly to those completions and their extension will be denoted with an exponent $k$. We will refer to the subspaces $\cC(P)$ and $\Aut_0(P)$ of the respective completions as the smooth tensors (or symmetries). We will also assume $k$ large enough to ensure our tensors to be $C^2$ and that $W^{k,2}$ is closed under taking products.

Let us highlight two classical issues coming with this completion:
\begin{enumerate}
	\item The Hilbert topology and the Fr\'echet topology on $\cC(P)$ are different.
	\item Even if the action of $\Aut_0(P)$ on $\cC(P)$ is smooth, that
        of $\Aut_0(P)^{k+1}$ on $\cC(P)^k$ is not.
\end{enumerate}

The first issue will be fixed using elliptic regularity: the equation $F(x)=0$ is elliptic and thus the Sobolev and smooth topology will coincide on $F^{-1}(0)$.
For the second issue, even is the action of $\Aut_0(P)^{k+1}$ on $\cC(P)^k$ is not smooth, given  $x\in\cC(P)^k$ which is a smooth tensor, the orbit map 
$$\Aut_0(P)^{k+1}  \longrightarrow  \cC(P)^k  ~:~ g  \longmapsto  gx $$
is smooth. We now gather some properties of the action of $\Aut_0(P)^{k+1}$ on $\cC(P)^k$.

\begin{proposition}\label{prop:ProperFreeAction}
The action of $\Aut_0(P)^{k+1}$ on $\cC(P)^k$ is proper, and acts with
    finite stabilizer on the space of stable Higgs bundles. The stabilizer
    is $Z(\G)$ for regularly stable Higgs bundles.
    Moreover, given a smooth $x\in \cC(P)$ and an element $g\in \Aut_0(P)^{k+1}$, if $gx\in \cC(P)$, then $g\in Aut_0(P)$.
\end{proposition}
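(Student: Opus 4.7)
The plan is to establish the four assertions in order: the elliptic regularity \emph{moreover} first, then properness, then the two stabilizer statements.

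\textbf{Elliptic regularity.} Start with smooth $x=(J,\Phi)\in\cC(P)$ and $g\in\Aut_0(P)^{k+1}$ such that $gx\in\cC(P)$ is also smooth. Project $g$ via the exact sequence \eqref{eqn:gauge-group} to a diffeomorphism $\varphi\in\Diff_0(\Sigma)^{k+1}$ satisfying $\varphi^\ast j=j'$ for $j=\pi_\J(J)$ and $j'=\pi_\J(gx)$, both smooth. The Beltrami equation relating $\varphi$ to $j,j'$ is an overdetermined first-order elliptic system with smooth coefficients, so standard regularity theory bootstraps $\varphi$ from $W^{k+1,2}$ to smooth. Choosing any smooth lift $\tilde\varphi\in\Aut_0(P)$ of $\varphi$ and replacing $g$ by $g\circ\tilde\varphi^{-1}$ reduces the problem to the case $g\in\cG(P)^{k+1}$ fixing the Riemann surface. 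The transformation rule for the Chern-Singer connection, $g\cdot A_J=gA_Jg^{-1}-dg\,g^{-1}$, then gives a first-order elliptic equation for $g$ whose right-hand side is smooth, and ordinary bootstrapping yields $g\in\cG(P)$.

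\textbf{Properness.} I would split the question using \eqref{eqn:gauge-group} into properness of the $\Diff_0(\Sigma)^{k+1}$-action on $\J(\Sigma)^k$ (classical, since the quotient is the Hausdorff Teichm\"uller space $\bfT(\Sigma)$) and properness of the $\cG(P)^{k+1}$-action on the fiber of $\pi_\J:\cC(P)^k\to\J(\Sigma)^k$. Concretely, given sequences $x_n\to x$ and $g_n x_n\to y$ in $\cC(P)^k$, push them down to obtain $\pi_\J(x_n)\to\pi_\J(x)$ and $\varphi_n^\ast\pi_\J(x_n)\to\pi_\J(y)$. Properness of the Teichm\"uller action yields a subsequence $\varphi_n\to\varphi$ in $\Diff_0(\Sigma)^{k+1}$. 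Post-composing with lifts of $\varphi_n^{-1}\varphi$ reduces matters to a sequence in $\cG(P)^{k+1}$ over a fixed Riemann surface, where the standard transformation law together with elliptic estimates on $dg_n$ extracts a convergent subsequence.

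\textbf{Stabilizers.} Suppose $g\cdot(J,\Phi)=(J,\Phi)$ with $(J,\Phi)$ stable. Projecting, $\varphi\in\Diff_0(\Sigma)^{k+1}$ fixes $j=\pi_\J(J)$; by the regularity step $\varphi$ is smooth. Since $g(\Sigma)\geq 2$, the action of $\Diff_0(\Sigma)$ on $\J(\Sigma)$ is free (a conformal self-map of a Riemann surface of genus $\geq 2$ that is isotopic to the identity is the identity), hence $\varphi=\id$ and $g\in\cG(P)^{k+1}$. Elliptic regularity gives $g\in\cG(P)$, and Proposition \ref{prop stable auts} makes the stabilizer finite. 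For a regularly stable Higgs bundle, the stabilizer in $\cG(P)$ is by definition precisely $Z(\G)$, which finishes the proof.

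\textbf{Main obstacle.} The most delicate point is properness of the gauge group action on the full configuration space $\cC(P)^k$ (not merely on Hitchin solutions): the bounds on $g_n\cdot x_n$ must be leveraged via the first-order equations satisfied by $g_n$ to produce uniform Sobolev bounds on $g_n$ itself. The remaining steps, including the smoothness bootstrap and the use of freeness of $\Diff_0(\Sigma)\curvearrowright\J(\Sigma)$, are essentially standard once elliptic regularity is in hand.
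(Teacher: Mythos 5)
Your proposal follows essentially the same route as the paper: split the action via the exact sequence $\cG(P)\to\Aut_0(P)\to\Diff_0(\Sigma)$, use properness/freeness of $\Diff_0(\Sigma)$ on $\J(\Sigma)$ for the base, and elliptic regularity for the residual gauge transformation on a fixed fiber. The only difference is presentational — the paper outsources the Teichm\"uller-theoretic inputs to Tromba and the gauge-theoretic bootstrapping to Freed--Uhlenbeck (Proposition A.5), whereas you sketch the Beltrami-equation and $\dbar$-bootstrap arguments directly (note only that the relevant elliptic equation is for the $(0,1)$-part $\dbar_{gJ}=g\,\dbar_J\,g^{-1}$ rather than the full Chern--Singer connection, which is not gauge-natural without fixing the reduction).
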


\begin{proof}
The proof will follow from the same statement for the action of $\Diff^{k+1}_0(\Sig)$ on $\J^k(\Sig)$ proved by Tromba \cite{Tromba} as well as for the action of $\cG^{k+1}(P)$ on $1$-forms proved by Freed-Uhlenbeck \cite{FreedUhlenbeck}.

Indeed, for properness, let $(x_n)_{n\in\mathbb N}$ and $(g_n)_{n\in\mathbb N}$ be sequences in $\cC(P)^k$ and $\Aut_0(P)^{k+1}$ respectively such that $(x_n)_{n\in\mathbb N}$ and $(g_nx_n)_{n\in\mathbb N}$ converge. Since the projection $\pi: \cC(P)^k \to \cJ^k(\Sig)$ is equivariant under the map $p: \Aut^{k+1}(P) \to \Diff^{k+1}(\Sig)$, we get that $(\pi(x_n))_{n\in \mathbb N}$ and $(p(g_n)\pi(x_n))_{n\in \mathbb N}$ both converge in $\cJ^{k+1}(\Sig)$. By the properness of the action of $\Diff_0^{k+1}(\Sig)$ on $\cJ^k(\Sig)$ (see \cite[Theorem 2.3.1]{Tromba}), up to extracting, we have that $(p(g_n))_{n\in \mathbb N}$ converges in $\Diff_0^{k+1}(\Sig)$. 

For any $n$, let $h_n$ be a lift of $p(g_n)^{-1}$ to $\Aut_0^{k+1}(P)$ such that $(h_n)_{n\in \mathbb N}$ converges. The sequence $(h_nx_n)_{n\in \mathbb N}$ and $(h_ng_nx_n)_{n\in \mathbb N}$ are then converging sequences in a fixed fiber of $\pi$, and one can apply Freed-Uhlenbeck properness \cite[Proposition A.5]{FreedUhlenbeck}: up to extracting again, the sequence $(h_ng_n)_{n\in \mathbb N}$ converges in $\cG^{k+1}(P)$. Since $(h_n)_{n\in \mathbb N}$ already converges, $(g_n)_{n\in \mathbb N}$ converges in $\Aut_0^{k+1}(P)$. This proves properness.

\medskip

To show that if $x$ and $gx$ are both in $\cC(P)$ then $g$ must be in
    $\Aut_0(P)$ we follow the same trick: projecting to $\cJ(\Sig)$ and
    applying \cite[Remark 2.4.3]{Tromba} we get that $p(g)$ is in
    $\Diff_0(\Sig)$. Taking a lift $h$ of $p(g)$ in $\Aut_0(P)$, we get
    that $hg$ is a gauge transformation 
    in $\cG^{k+1}(P)$ mapping a smooth element $x\in \cC(P)$ to a smooth
    element $hgx\in \cC(P)$. Hence, $hg \in \cG(P)$ by \cite[Proposition
    A.5]{FreedUhlenbeck}, and so $g\in \Aut_0(P)$.

\medskip

Finally, freeness of the action of $\overline\Aut_0(P)^{k+1}$ on regularly stable
    Higgs bundles follow from freeness of the action of
    $\Diff^{k+1}_0(\Sig)$ on $\cJ^k(\Sig)$ (see \cite[Theorem
    2.2.1]{Tromba}) and freeness of the action of 
    the gauge group $\Gcal(P)/Z(\G)$ on regularly stable Higgs bundles. The finite stabilizer property is proved analogously. 
\end{proof}

\subsection{Deformation complex} \label{sec:deformation}
Fix a stable Higgs bundle $(J,\Phi)\in F^{-1}(0)^{st}$. By stability, up to acting by an element in $\Aut_0(P)$, we can furthermore assume that $(J,\Phi)$ satisfies the Hitchin equations \eqref{eq Hitchin eq}. The deformation complex in our situation is given by 
\begin{equation}
	\label{eq joint deformation
    complex}\xymatrix{(\B^\bullet,\delta_\B^\bullet)
    :&0\ar[r]&\aut(P)^k\ar[r]^{\delta_\B^0\quad }\ar[r]&
    \T_{(J,\Phi)}\cC(P)^k\ar[r]^{\delta_\B^1}\ar[r]&
    \Omega^2_b(P,\gfrak)^{k-1}\ar[r]&  0},
\end{equation}
where the map $\delta_\B^0$ is the derivative at the identity of the action
of $\Aut_0(P)^{k+1}$ on $\cC(P)^k$, i.e., taking the Lie derivative of $(J,\Phi)$, and $\delta_\B^1$ is the derivative of $F^k$ at $0.$ 

Consider the $\Aut_0(P)^{k+1}$-equivariant projection $\pi^k:\cC(P)^k\to\J(\Sig)^k$. For $j=\pi(J,\Phi)$,  we get a 
subcomplex $(\A^\bullet,\delta_\A^\bullet)$ isomorphic to the deformation complex for Higgs bundles on the fixed Riemann surface $(\Sig,j)$ and a quotient deformation complex isomorphic to the deformation complex for $j\in\J(\Sig).$ We thus obtain an exact sequence of complexes of Hilbert spaces
\[0^\bullet \lra (\A^\bullet,\delta_\A^\bullet) \lra
(\B^\bullet,\delta_\B^\bullet) \lra (\rmC^\bullet,\delta_\rmC^\bullet) \lra 0^\bullet~.\]
Denote $(\pi^k)^{-1}(j)$ by $\cC(P)^k_j$, the complexes $(A^\bullet,\delta_\A^\bullet)$ and $(\rmC^\bullet,\delta_C^\bullet)$ are given by
\[\xymatrix@R=0em{(\A^\bullet,\delta_\A^\bullet):&0\ar[r]&\Omega^0_b(P,\gfrak)^k\ar[r]^{\delta_\A^0\quad }&\T_{(J,\Phi)}\cC(P)^k_j\ar[r]^{\delta_\A^1}&\Omega^2_b(P,\gfrak)^{k-1}\ar[r]&0\\
(\rmC^\bullet,\delta_\rmC^\bullet): &0\ar[r]&\Omega^0(\T\Sig)^k\ar[r]^{\delta_{\rmC}^0}&\T_j\J(\Sig)^k\ar[r]&0&}.\]
The maps $\delta_\A^0$ and $\delta_\rmC^0$ are the derivative at the identity of the actions of the gauge group $\cG(P)$ and the diffeomorphism group, respectively, and the map $\delta_A^1$ is the differential of the holomorphicity condition. 

We now give a more explicit description of the above maps, which will serve
to simplify  the computations of the harmonics in the next section. Recall that we are assuming $(J,\Phi)\in F^{-1}(0)^{st}$ solves the Hitchin equations. 
Following Simpson, we introduce the following operators acting on
$P(\gfrak)$-valued forms on $\Sig$, 
\begin{equation} \label{eqn:simpson-ops}
    \xymatrix{D''=\dbar_{A_J}+\tfrac{1}{2i}\Phi&\text{and}&  D'=\partial_{A_J}+(\tfrac{1}{2i}\Phi)^\ast}.
\end{equation}
The holomorphicity condition implies $(D'')^2=0$ and $(D')^2=0$, and the assumption that $(J,\Phi)$ solves the Hitchin equations \eqref{eq Hitchin eq} for the reduction $P_\sK$ implies $D''D'=-D'D''$.

The maps $\delta_\A^0$ and $\delta_\A^1$ are defined by applying the operator $D''$ to the appropriate spaces.  
In terms of the operators $D'$ and $D''$, the map $\delta_\B^1$ is described as follows.
\begin{lemma}\label{lem delta 1 B}
	For $(\mu,\beta,\theta)\in \T_{(J,\Phi)}\cC(P)^k$ with $\theta=\psi+\frac{1}{2i}\Phi\mu$, we have
	\[\delta_\B^1(\mu,\beta,\theta)=D''(\beta,\psi)+D'\left(\tfrac{1}{2i}\Phi\mu\right).\]
\end{lemma}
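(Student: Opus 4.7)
The plan is to compare the two expressions directly using the formula for $\rd_{(J,\Phi)}F$ already derived in Equation \eqref{eq hol eq of F} in the proof of Lemma \ref{lem:DifferentialOfF}. Recall that $\delta_\B^1=\rd_{(J,\Phi)}F$ by definition, so equation \eqref{eq hol eq of F} already gives
\[\delta_\B^1(\mu,\beta,\theta)=\dbar_J\psi+\tfrac{1}{2i}\left(\rd_{A_J}\Phi\mu\right)^{1,1}+\tfrac{1}{2i}[\beta,\Phi].\]
Since $A_J$ is compatible with $J$, we have $\dbar_J\psi=\dbar_{A_J}\psi$ by Lemma \ref{lem hol w comp conn}, and the task reduces to showing that $D''(\beta,\psi)+D'(\tfrac{1}{2i}\Phi\mu)$ produces exactly the same three terms.

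The main computation is to expand the two operators from \eqref{eqn:simpson-ops} using dimensional vanishing on a Riemann surface (any $(2,0)$ or $(0,2)$ form valued in $P(\gfrak)$ vanishes). For the $D''$-term, write
\[D''(\beta+\psi)=\dbar_{A_J}\beta+\dbar_{A_J}\psi+\tfrac{1}{2i}[\Phi,\beta]+\tfrac{1}{2i}[\Phi,\psi].\]
Here $\dbar_{A_J}\beta$ is of type $(0,2)$ and $[\Phi,\psi]$ is of type $(2,0)$, so both vanish, leaving $D''(\beta,\psi)=\dbar_{A_J}\psi+\tfrac{1}{2i}[\Phi,\beta]$. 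Similarly, since $\Phi\mu$ is of type $(0,1)$,
\[D'\left(\tfrac{1}{2i}\Phi\mu\right)=\tfrac{1}{2i}\partial_{A_J}(\Phi\mu)+\tfrac{1}{2i}\left[\Phi^*,\tfrac{1}{2i}\Phi\mu\right]=\tfrac{1}{2i}\partial_{A_J}(\Phi\mu),\]
as the bracket term is of type $(0,2)$.

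It remains to match the resulting formula $\dbar_{A_J}\psi+\tfrac{1}{2i}[\Phi,\beta]+\tfrac{1}{2i}\partial_{A_J}(\Phi\mu)$ with the expression from \eqref{eq hol eq of F}. The middle terms agree once we observe that the graded bracket of two $\gfrak$-valued $1$-forms is symmetric, i.e.\ $[\Phi,\beta]=[\beta,\Phi]$ (the sign convention $[\zeta,\eta]=(-1)^{k\ell+1}[\eta,\zeta]$ gives $(-1)^2=1$ for $k=\ell=1$). Finally, the identity $(\rd_{A_J}\Phi\mu)^{1,1}=\partial_{A_J}(\Phi\mu)$ is immediate: since $\Phi\mu$ is of type $(0,1)$, the component $\dbar_{A_J}(\Phi\mu)$ lives in degree $(0,2)$ and vanishes on $\Sigma$, so the whole covariant derivative is of pure type $(1,1)$.

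No genuine obstacle is expected; the only subtlety is keeping track of types and the (perhaps counterintuitive) symmetry of the wedge-bracket on $1$-forms, which accounts for the apparent sign swap between $[\beta,\Phi]$ in \eqref{eq hol eq of F} and $[\Phi,\beta]$ coming from the $D''$-expansion.
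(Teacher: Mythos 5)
Your proof is correct and follows essentially the same route as the paper: both start from the expression \eqref{eq hol eq of F} for $\rd_{(J,\Phi)}F$ and identify its terms with $D''(\beta,\psi)+D'\bigl(\tfrac{1}{2i}\Phi\mu\bigr)$ by discarding the $(2,0)$ and $(0,2)$ components, which vanish on a surface. (The only blemish is a harmless sign slip in the coefficient of the $[\Phi^*,\cdot]$ term of $D'$, since $(\tfrac{1}{2i}\Phi)^*=-\tfrac{1}{2i}\Phi^*$; as you note, that term is of type $(0,2)$ and vanishes anyway.)
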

\begin{proof}
	The map $\delta_\B^1$ is the differential of the map $F$ at $0$. By \eqref{eq hol eq of F}, we have 
	\[\delta_\B^1(\mu,\beta,\theta)=d \theta +\tfrac{1}{2i} [\beta+\beta^*, \Phi] + [A_J,\theta]= \dbar_J \psi +\left[\tfrac{1}{2i}\Phi,\beta\right]+\partial_{A_J}\left(\tfrac{1}{2i}\Phi\mu\right), \]
	where we used that the $(0,2)$ and $(2,0)$ parts vanish. The first two terms are $D''(\beta,\psi)$ and third term is $D'(\frac{1}{2i}\Phi\mu)$.
\end{proof}
We now give an explicit description of $\delta_\B^0.$ For $V\in \aut(P)$,
we will denote the contraction of $V$ with the Chern-Singer connection by $\eta_V=A_J(V)$.

\begin{lemma}  
\label{lem:d0-simplified}
    Suppose $(J,\Phi)$ is a stable Higgs bundle satisfying \eqref{eq Hitchin eq} such that $\pi(J,\Phi)=j$. Let $V\in\aut(P)^k$ and denote the projected vector field by $v\in \Omega^0(\T^{(1,0)}_j\Sig)^k$.
    Writing $\delta_B^0(V)=(\mu,\beta,\theta)$, we have
    $$
    \xymatrix{\mu= 2i \dbar_j v&\text{and}&(\beta,\theta)=2i D''(\eta_V)+D'(\Phi(V))+D''(\Phi(V))}.
    $$
\end{lemma}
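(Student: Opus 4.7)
The approach is to compute the Lie derivatives $\Lcal_V J$ and $\Lcal_V \Phi$ directly. Decompose $V \in \aut(P)^k$ using the Chern-Singer connection as $V = V^h + \eta_V^\sharp$, where $V^h$ is $A_J$-horizontal and $\eta_V = A_J(V)$. By definition, $\delta_\B^0(V)$ is (up to sign convention) the pair $(\Lcal_V J, \Lcal_V \Phi)$, and the plan is to compute its $(\mu, \beta, \theta)$-components via the splitting $\T P = \V P \oplus \rH_{A_J}$, handling the vertical and horizontal parts of $V$ separately.

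The formula for $\mu$ follows from the $\Aut_0(P)$-equivariance of the projection $\pi_\J:\cC(P)\to\J(\Sig)$: the image of $\delta_\B^0(V)$ in $\T_j\J(\Sig)$ is the Lie derivative $\Lcal_{v_\R} j$, where $v_\R = d\pi(V) = v + \bar v$ splits into its $(1,0)$ and $(0,1)$ parts. A local coordinate computation using $(\Lcal_{v_\R}j)(X) = [v_\R, jX] - j[v_\R, X]$ then yields the Beltrami differential $\mu = 2i\dbar_j v$.

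To compute $\theta = \Lcal_V \Phi$, I would apply Cartan's formula $\Lcal_V \Phi = d(\iota_V \Phi) + \iota_V(d\Phi)$ on $P$, together with the identity $d\Phi = -[A_J, \Phi]$, which holds because $d_{A_J}\Phi$ has no $(0,2)$-part (by holomorphicity $F(J,\Phi)=0$) and no $(2,0)$-part (for dimensional reasons on the Riemann surface). This yields $\theta = d_{A_J}(\Phi(V)) + [\Phi, \eta_V]$; its $(1,0)$-part is $\psi = \partial_{A_J}(\Phi(V)) + [\Phi, \eta_V]$, while its $(0,1)$-part $\dbar_{A_J}(\Phi(V))$ equals $\tfrac{1}{2i}\Phi\mu$ (a check using holomorphicity together with $\mu = 2i\dbar_j v$ from the previous step).

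The computation of $\beta$ is the main subtlety, and is where the Hitchin equation enters crucially. Splitting by linearity: for the vertical part $\eta_V^\sharp$, this is a pure gauge variation, and combining Lemma \ref{lem:DifferentialOfChernSinger} (which says that the $(0,1)$-part of the Chern-Singer variation is $\tfrac{1}{2i}\beta$) with the standard change of the Dolbeault operator under gauge gives $\beta_1 = 2i\dbar_{A_J}(\eta_V)$. For the horizontal part $V^h$, the contribution arises from the curvature: using $[V^h, W]^v = -F_{A_J}(V^h, W)^\sharp$ for horizontal $V^h, W$, a direct computation of $(\Lcal_{V^h} J)(W)^v$ yields $\beta_2 = 2i(\iota_{V^h} F_{A_J})^{(0,1)}$. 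The Hitchin equation $F_{A_J} = -\tfrac{1}{4}[\Phi, \Phi^*]$ then allows one to rewrite $\beta_2 = -\tfrac{1}{2i}[\Phi^*, \Phi(V)]$. Assembling $\beta = \beta_1 + \beta_2$ with $\theta$ from the previous step, and using $[\Phi, \Phi(V)] = 0$ (the self-bracket vanishes since $\Phi$ is locally $\phi\, dz$ with $\phi$ a single section of $P(\gfrak)$), the pair $(\beta, \theta)$ recovers the $(0,1)$- and $(1,0)$-type components of $2iD''(\eta_V) + D'(\Phi(V)) + D''(\Phi(V))$, establishing the formula.
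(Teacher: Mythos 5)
Your proposal is correct and follows essentially the same route as the paper: both decompose $V$ into its $A_J$-vertical part $\eta_V$ and horizontal part, compute the complex-structure variation via the Lie derivative (with the gauge part giving $2i\dbar_{A_J}(\eta_V)$ and the horizontal part contributing $2i\,\iota_{V^{(1,0)}}F_{A_J}$, converted via the Hitchin equation into $[(\tfrac{1}{2i}\Phi)^*,\Phi(V)]$), compute $\Lcal_V\Phi$ by Cartan's formula together with the vanishing of the $(2,0)$- and $(0,2)$-parts of $d_{A_J}\Phi$, and assemble the result using $[\Phi,\Phi(V)]=0$. The only cosmetic difference is that you route the vertical contribution to $\beta$ through Lemma \ref{lem:DifferentialOfChernSinger} and the standard gauge variation of $\dbar$, whereas the paper computes $[\eta_V,\cdot]$ directly in the splitting $\T P=\V P\oplus\rH_{A_J}$; these are equivalent.
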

\begin{proof}

	We first compute the action of $\aut(P)^k$ on the tangent space $\T_J\J(P)^k$.  
	
    Denote this by 
   \[\aut(P)^k\lra \T\J(P)^k ~:~ V\mapsto V^\sharp_J = L_V\circ J-J\circ L_V\in \Omega^0_b(\End(\T P))\ ,
    \]
    where $L_V$ denotes the Lie derivative.

For general vector field $W\in\Omega^0(\T P)$, denote the vertical and
    horizontal parts with respect to the Chern-Singer connection by $W_v$ and $W_h$.  
By Remark \ref{rem:vertical}, $V^\sharp_J(W)= V^\sharp_J(W_h)$.
Hence,
\begin{align*}
	V^\sharp_J(W)&=[V,jW_h]-J([V,W_h])\\ 
	&=[\eta_V,jW_h]-J([\eta_V,W_h])+[V_h,jW_h]-J([V_h,W_h])
\end{align*}
The decomposition of the image of $V^\sharp_J$ into its vertical and horizontal parts corresponds to the decomposition of the tangent vector $M=(m,\beta)$ from \eqref{eq decomp of M wrt J-comp B}, where $\mu$ is the $(0,1)$-part of $m$. 
Hence, we have 
\begin{align*}
	\beta(W)&=[\eta_V,jW_h]_v-i[\eta_V,W_h]_v+[V_h,jW_h]_v-i[V_h,W_h]_v\\
	&=2i ([\eta_V,W_h^{(0,1)_J}]_v+[V_h,W_h^{(0,1)_J}]_v)
\end{align*}
Using $F_{A_J}(V,W)=-[V_h,W_h]_v$ has type $(1,1)$ and the Hitchin equations, we have 
$$
	\beta= 2i\left(\dbar_J(\eta_V)+\iota_{V^{(1,0)_J}}F_{A_J}\right)
	 = 2i\left(\dbar_J(\eta_V)-\tfrac{1}{4}\left[\Phi(V),\Phi^*\right]\right)
	= 2i\dbar_J(\eta_V)+\left[\left(\tfrac{1}{2i}\Phi\right)^*,\Phi(V)\right].
$$
The horizontal piece depends only on the projected vector fields $v',w'$ of $V,W$. Thus, 
\[m(w')=[v',jw']-j([v',w']).\]
Denoting the $(1,0)$-part of $v'$ by $v$, the $(0,1)$-part of $m$ is given by 
$\mu=2i\dbar_j v$.
Finally we compute the Higgs field term. For a vector field $W\in \Omega^0(\T P)$ we have
\[L_V\Phi(W)= d\Phi(V,W)+W \Phi(V).\]
 Since $\Phi$ is basic, the (2,0) part of $d\Phi+[A_J,\Phi]$ vanishes. By the holomorphicity condition, we have

\begin{align*}
	L_V\Phi(W)&=-[A_J,\Phi](V,W)+W\Phi(V)\\
	&= [\Phi(W), \eta_V]+[A_J(W),\Phi(V)]+W\Phi(V)\\ 
	&= \left([\Phi,\eta_V]+d_{A_J}\Phi(V)\right)(W) 
\end{align*}
Adding the expressions for $\beta$ and $L_V\Phi$ and using $[\Phi,\Phi(V)]=0$ gives the desired result.
\end{proof}

\begin{remark}\label{rem D''(Phi(V))}
	The $(0,1)$-part of $L_V\Phi$ is $D''(\Phi(V))$. This is indeed $\frac{1}{2i}\Phi\mu$ since, using the notation above, 
	\[D''(\Phi(V))=\dbar_J(\Phi(V))= \dbar_J(\Phi(v)) =\Phi(\dbar_j v)=\tfrac{1}{2i}\Phi(\mu),\]
	where we used the expression for $v$ from Lemma \ref{lem:d0-simplified}, and that $\Phi$ is basic and holomorphic.
\end{remark}
\begin{lemma}
	The sequence $(\B^\bullet,\delta_\B^\bullet)$ is a complex, i.e., $\delta_\B^1\circ\delta_\B^0=0.$
\end{lemma}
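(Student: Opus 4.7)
The plan is to verify $\delta_\B^1 \circ \delta_\B^0 = 0$ by a direct computation that reduces immediately to two standard identities for the Simpson operators $D', D''$ from \eqref{eqn:simpson-ops}: namely $(D'')^2 = 0$ and $D''D' + D'D'' = 0$. Both are recorded just after \eqref{eqn:simpson-ops} as consequences of the holomorphicity condition $\dbar_J\Phi = 0$ and the Hitchin equation \eqref{eq Hitchin eq} satisfied by $(J,\Phi)$, respectively.

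Let $V\in\aut(P)^{k+1}$ and write $\delta_\B^0(V) = (\mu,\beta,\theta)$ with $\theta = \psi + \tfrac{1}{2i}\Phi\mu$. First I would separate the combined expression in Lemma \ref{lem:d0-simplified}: Remark \ref{rem D''(Phi(V))} identifies the summand $D''(\Phi(V))$ with $\tfrac{1}{2i}\Phi\mu$, so what remains assembles into the $P(\gfrak)$-valued $1$-form $(\beta,\psi) = 2iD''(\eta_V) + D'(\Phi(V))$, together with $\tfrac{1}{2i}\Phi\mu = D''(\Phi(V))$. Substituting into the formula $\delta_\B^1(\mu,\beta,\theta) = D''(\beta,\psi) + D'(\tfrac{1}{2i}\Phi\mu)$ of Lemma \ref{lem delta 1 B} yields
\[
\delta_\B^1\bigl(\delta_\B^0(V)\bigr) = D''\bigl(2iD''(\eta_V) + D'(\Phi(V))\bigr) + D'\bigl(D''(\Phi(V))\bigr) = 2i(D'')^2\eta_V + (D''D' + D'D'')(\Phi(V)),
\]
which vanishes by the two identities above.

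I do not expect any real obstacle beyond the $(0,1)/(1,0)$ bookkeeping required to extract $(\beta,\psi)$ from the combined expression of Lemma \ref{lem:d0-simplified}; all analytic issues are absorbed by the Sobolev setup in \S\ref{s:ConstructingModuliSpace}, since $D'$ and $D''$ are first-order operators mapping the declared Sobolev spaces correctly. Conceptually, the same identity reflects the $\Aut_0(P)$-equivariance of the holomorphicity map $F$: since $F(g\cdot(J,\Phi)) = g_\ast F(J,\Phi)$ and $(J,\Phi)\in F^{-1}(0)$, the curve $t\mapsto F(\exp(tV)\cdot(J,\Phi))$ is identically zero, so its derivative at $t=0$, which equals $\delta_\B^1(\delta_\B^0(V))$, vanishes; density of smooth $V$ in $\aut(P)^{k+1}$ together with smoothness of orbit maps through smooth points (noted before Proposition \ref{prop:ProperFreeAction}) handles the Sobolev regularity.
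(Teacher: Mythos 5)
Your computation is exactly the paper's proof: both decompose $\delta_\B^0(V)$ via Lemma \ref{lem:d0-simplified} and Remark \ref{rem D''(Phi(V))}, apply Lemma \ref{lem delta 1 B}, and conclude from $(D'')^2=0$ and $D'D''=-D''D'$ (the paper keeps the harmless extra term $D''(D''(\Phi(V)))$ inside the first operator, which vanishes anyway). Your closing remark that the identity also follows from $\Aut_0(P)$-equivariance of $F$ at a zero is a correct conceptual alternative, but the argument you actually carry out coincides with the paper's.
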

\begin{proof}
	Let $V\in \aut(P)$. Then, by the above remark,   
	\[\delta_\B^1(\delta_\B^0(V))=D''(2i D''(\eta_V)+D'(\Phi(V))+D''(\Phi(V))) +D'(D''\Phi(V)).\]
	The result follows from the fact that $(D'')^2=0$ and $D'D''(\Phi(V))=-D''D'(\Phi(V)).$
\end{proof}

\begin{lemma}
For $(J,\Phi)$ stable, the sequence 
\[0\lra H^1(\A^\bullet)\lra H^1(\B^\bullet)\lra H^1(\rmC^\bullet)\lra 0\]
is exact and all other cohomology groups vanish. 
\end{lemma}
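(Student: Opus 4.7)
The plan is to apply the long exact sequence in cohomology associated to the short exact sequence of complexes displayed just above the statement, namely $0\to\A^\bullet\to\B^\bullet\to\rmC^\bullet\to 0$. Once the cohomologies of $\A^\bullet$ and $\rmC^\bullet$ are identified in the appropriate degrees, the assertion about $\B^\bullet$ will follow formally. So the work reduces to three vanishing statements.

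First, observe that $\rmC^\bullet$ is supported in degrees $0$ and $1$, so $H^2(\rmC^\bullet)=0$. The cohomology $H^0(\rmC^\bullet)=\ker\delta_\rmC^0$ identifies with $H^0(X,T^{1,0}X)$, the space of holomorphic vector fields on the Riemann surface $X=(\Sig,j)$, which vanishes since $g\geq 2$. Meanwhile, $H^1(\rmC^\bullet)=\coker\delta_\rmC^0$ is the Dolbeault group $H^{0,1}(X,T^{1,0}X)$, the tangent space to Teichm\"uller space at $[j]$.

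Next, $\A^\bullet$ is the standard deformation complex for a $\sG$-Higgs bundle on the fixed Riemann surface $X$: the differentials $\delta_\A^0$ and $\delta_\A^1$ are both given by the operator $D''$ of \eqref{eqn:simpson-ops}. Then $H^0(\A^\bullet)=\ker D''$ is the Lie algebra of infinitesimal automorphisms of $(J,\Phi)$. Since $(J,\Phi)$ is stable and $\gfrak$ is semisimple (so $\mathfrak{z}(\gfrak)=0$), this vanishes; this is the one non-formal input. The vanishing $H^2(\A^\bullet)=0$ is then obtained from Serre duality on $X$, which pairs $\A^\bullet$ against itself via the Killing form and gives $H^2(\A^\bullet)\cong H^0(\A^\bullet)^*$.

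Feeding these into the long exact sequence
\[\cdots\to H^{i-1}(\rmC^\bullet)\to H^i(\A^\bullet)\to H^i(\B^\bullet)\to H^i(\rmC^\bullet)\to H^{i+1}(\A^\bullet)\to\cdots\]
the segment $0\to H^0(\A^\bullet)\to H^0(\B^\bullet)\to H^0(\rmC^\bullet)$ yields $H^0(\B^\bullet)=0$; the segment $H^1(\rmC^\bullet)\to H^2(\A^\bullet)\to H^2(\B^\bullet)\to H^2(\rmC^\bullet)$ yields $H^2(\B^\bullet)=0$; and the middle segment $H^0(\rmC^\bullet)\to H^1(\A^\bullet)\to H^1(\B^\bullet)\to H^1(\rmC^\bullet)\to H^2(\A^\bullet)$ collapses to precisely the asserted short exact sequence. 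The only real obstacle is the vanishing $H^0(\A^\bullet)=0$ under stability, which is the infinitesimal form of the statement that stable $\sG$-Higgs bundles admit only automorphisms in $Z(\sG)$; everything else in the argument is formal homological algebra.
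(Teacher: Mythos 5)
Your proposal is correct and follows essentially the same route as the paper: the authors likewise establish $H^0(\rmC^\bullet)=0$ (via freeness of the $\Diff_0(\Sig)$-action, equivalent to your observation that there are no nonzero holomorphic vector fields for $g\geq 2$), $H^0(\A^\bullet)=0$ from stability and semisimplicity, and $H^2(\A^\bullet)=0$ by Serre duality, then conclude by the long exact sequence. No substantive difference.
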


\begin{proof}
Since $\Diff_0(\Sig)$ acts freely on $\J(\Sig)$, the cohomology group
    $H^0(\rmC^\bullet)$ is vanishes. The assumption that $(J,\Phi)$ is
    stable and $\sG$ is semisimple implies that $H^0(\A^\bullet)=0$ and, 
    by Serre duality,
    $H^2(\A^\bullet)=0$, see Proposition \ref{prop stable auts}.
    The statement now follows from the associated long exact sequence in cohomology.
\end{proof}
\subsection{Harmonic and semiharmonic representatives}
Recall that $(J,\Phi)$ is a fixed stable Higgs bundle which solves the Hitchin equation. 
Let $s \in \R$ be large enough such that the hermitian form $h_s$ is positive on $\T_{(J,\Phi)}\cC(P)$, and use it to define the different adjoints in the above diagram. Recall that, at $(J,\Phi)$, $h_s$ involves the hyperbolic metric on $\Sig$ which uniformizes the induced Riemann surface. 
Denote the adjoints of $\delta_\B^0$ and $\delta_\B^1$ with respect to
$h_s$ by  $(\delta_\B^0)^*$ and $(\delta_\B^1)^*$, respectively. The
adjoints of the operators $D'$ and $D''$ from \eqref{eqn:simpson-ops}
satisfy the K\"ahler identities (see Appendix \ref{sec:app-kahler}):
\begin{equation} \label{eqn:kahler-simpson}
    \xymatrix{(D'')^\ast=-i[\Lambda,D']\quad&\text{and}&\quad (D')^\ast=i[\Lambda,D'']},
\end{equation}
where $\Lambda$ 
denotes the contraction with the area form of the hyperbolic metric on $X$.

Standard Hodge theory yields the following orthogonal decomposition
\[ \T_x \cC(P)^k = \rIm(\delta^0_\B) \oplus \cH^1(\B^\bullet) \oplus \rIm((\delta_\B^1)^*)~,\]
where $\cH^1(\B^\bullet)= \ker(\delta_\B^1)\cap \ker((\delta^0_\B)^*)$ is
the \emph{space of harmonics} which is 
naturally identified with $H^1(\B^\bullet)$. 
The harmonics are given by the following proposition. 

\begin{proposition} \label{prop:harmonics}
  Let $(J,\Phi)$ be a stable Higgs bundle satisfying \eqref{eq Hitchin
    eq}, and $s\in\R$ be large enough so that the hermitian form
    $h_s$ is positive definite at $(J,\Phi)$. A tangent vector
    $(\mu,\beta,\theta)\in \T_{(J,\Phi)} \cC(P)$  with $\theta=\psi+\frac{1}{2i}\Phi\mu$ is harmonic 
    with respect to $h_s$ (that is, it is in $\cH^1(\B^\bullet)$)
    if and only if it satisfies 
\begin{enumerate}
    \item $D''(\beta,\psi)+D'(\frac{1}{2i}\Phi\mu)=0$;
    \item $D'(\beta,\psi)=0$;
	\item $\dbar^\ast\mu=0$.
\end{enumerate}
In particular, the space of harmonics is independent of the parameter $s$ and consists of smooth tensors, i.e., vectors tangent to $\cC(P)$ in the Sobolev completion. 
\end{proposition}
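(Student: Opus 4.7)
The space of harmonics is $\ker(\delta_\B^1)\cap\ker((\delta_\B^0)^\ast)$ computed with respect to $h_s$, and condition (1) is precisely the equation $\delta_\B^1(\mu,\beta,\theta)=0$ by Lemma \ref{lem delta 1 B}. So the plan is to derive conditions (2) and (3) from the adjoint equation $(\delta_\B^0)^\ast(\mu,\beta,\theta)=0$. To test this equation, I use the Chern-Singer connection $A_J$ to decompose an arbitrary $V\in\aut(P)^{k+1}$ as $V=V_\sigma+V_v$, where $V_\sigma$ is the vertical piece determined by $\sigma=\eta_V\in\Omega^0(\Sigma,P(\gfrak))^{k+1}$ and $V_v$ is the horizontal piece projecting to $v\in\Omega^0(\Sigma,\T\Sigma)^{k+1}$, and test against each piece separately.

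For the vertical part, Lemma \ref{lem:d0-simplified} yields $\delta_\B^0(V_\sigma)=(0,2i\dbar_{A_J}\sigma,[\Phi,\sigma])$, so the $\mu$-component vanishes and the pairing $h_s(\delta_\B^0(V_\sigma),(\mu,\beta,\theta))$ does not see the $s$-dependent part of $h_s$. Rewriting $[\Phi,\sigma]=2i[\tfrac{1}{2i}\Phi,\sigma]$ collapses the pairing to $2i\langle D''\sigma,(\beta,\psi)\rangle$, so vanishing for all $\sigma$ is equivalent to $(D'')^\ast(\beta,\psi)=0$. By the Kähler identity \eqref{eqn:kahler-simpson}, this equals $-i\Lambda D'(\beta,\psi)$; since $D'(\beta,\psi)\in\Omega^{1,1}(\Sigma,P(\gfrak))$ and $\Lambda$ is an isomorphism onto $\Omega^0(\Sigma,P(\gfrak))$ on a Riemann surface, this is equivalent to condition (2).

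For the horizontal part, Lemma \ref{lem:d0-simplified} gives $\delta_\B^0(V_v)=(2i\dbar_j v,[(\tfrac{1}{2i}\Phi)^\ast,\Phi(v)],\partial_{A_J}\Phi(v))$, and all three entries now contribute. The crucial input is the holomorphicity identity $\Phi(\dbar_j v)=\dbar_{A_J}(\Phi(v))$ (which uses $\dbar_{A_J}\Phi=0$): it rewrites the $-\tfrac14\langle \Phi\cdot 2i\dbar_j v,\Phi\mu\rangle$ term of $h_s$ as $-\tfrac{i}{2}\langle \Phi(v),\dbar_{A_J}^\ast(\Phi\mu)\rangle$ after integration by parts, and similar integrations bring the $\beta$- and $\psi$-contributions into the form $\langle \Phi(v),\cdot\rangle$. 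The Weil-Petersson contribution becomes $2is\langle v,\dbar^\ast\mu\rangle$. I claim that the residual $\langle\Phi(v),\cdot\rangle$ terms, which are $s$-independent, cancel on any vector already satisfying conditions (1) and (2); this follows from the Kähler identities applied to the equations $D''(\beta,\psi)=-D'(\tfrac{1}{2i}\Phi\mu)$ and $D'(\beta,\psi)=0$, together with the Hitchin equation which controls commutators involving $\Phi$ and $\Phi^\ast$. Once this cancellation is in place, the horizontal equation reads $2is\langle v,\dbar^\ast\mu\rangle=0$ for all $v$, which for $s>0$ gives condition (3). Because conditions (1)--(3) do not involve $s$, the harmonic space is $s$-independent; smoothness of $W^{k,2}$-harmonics is then standard elliptic regularity applied to the elliptic complex $(\B^\bullet,\delta_\B^\bullet)$.

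The main technical hurdle is verifying the cancellation of the residual $\langle\Phi(v),\cdot\rangle$ terms: this is a delicate sign-and-factor calculation using the conventions of Lemma \ref{lem signs on L2 innerproducts}, the Kähler identities \eqref{eqn:kahler-simpson}, and the Hitchin equation. Once this is dispatched, the remaining arguments are routine Hodge-theoretic and elliptic-regularity steps.
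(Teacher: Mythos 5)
Your proposal is correct and follows essentially the same route as the paper: the paper likewise computes $\langle\delta_\B^0(V),(\mu,\beta,\theta)\rangle_{h_s}$ for arbitrary $V$ using Lemma \ref{lem:d0-simplified} and the K\"ahler identities, arriving at $-\langle\eta_V,2\Lambda D'(\beta,\psi)\rangle+2is\langle v,\dbar^\ast\mu\rangle_{WP}$, from which (2) and (3) follow because $\eta_V$ and $v$ can be prescribed independently (your vertical/horizontal splitting of $V$ is just this independence made explicit). The one refinement worth noting is that the cancellation you defer is shorter than you anticipate: applying $(D')^\ast=i\Lambda D''$ to $\langle D'(\Phi(V)),(\beta,\psi)\rangle$ and then substituting the holomorphicity condition $D''(\beta,\psi)=-D'(\tfrac{1}{2i}\Phi\mu)$ yields $-\langle\Phi(V),i\Lambda D'(\tfrac{1}{2i}\Phi\mu)\rangle$, which exactly cancels the term $-\tfrac14\langle\Phi\mu_1,\Phi\mu\rangle=+\langle\Phi(V),i\Lambda D'(\tfrac{1}{2i}\Phi\mu)\rangle$, so neither condition (2) nor a further appeal to the Hitchin equation is needed at that stage (the Hitchin equation has already been consumed in deriving the formula for $\beta$ in Lemma \ref{lem:d0-simplified}).
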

\begin{proof}
Item (1) is the property of being in the kernel of $\delta_\B^1$. 
For (2), we compute the terms in the expression \eqref{eq hs metric}. 
Let $(\mu,\beta,\psi)\in\ker(\delta_\B^1)$. As above, let $\eta_V=A_J(V)$ for any $V\in \aut(P)$ and $(\mu_1,\beta_1,\theta_1)=\delta_B^0(V)$.
   From Lemma \ref{lem:d0-simplified}, Remark \ref{rem D''(Phi(V))} and the K\"ahler identities
    \eqref{eqn:kahler-simpson} we have 
    \begin{align*}
        \langle (\beta_1,\psi_1),(\beta,\psi)\rangle &=
        \langle 2i\, D''\eta_V,(\beta,\psi)\rangle +
        \langle D'(\Phi(V)),(\beta,\psi)\rangle  \\
        &=-2i\langle\eta_V, i\Lambda D'(\beta,\psi)\rangle+\langle\Phi(V),
        i\Lambda D''(\beta,\psi)\rangle \\
        &=-\langle\eta_V, 2\Lambda D'(\beta,\psi)\rangle-\langle\Phi(V),
        i\Lambda D'\left(\tfrac{1}{2i}\Phi\mu\right)\rangle~,
    \end{align*}
    where we have  the definition of  $\ker(\delta_\B^1)$. 
    On the other hand
    \[ -\tfrac{1}{4}\langle \Phi\mu_1,\Phi\mu\rangle ~=~ -\tfrac{i}{2}\langle
        D''(\Phi(V)), \Phi\mu\rangle~=~-\langle
        D''(\Phi(V)),\tfrac{1}{2i}\Phi\mu\rangle~= ~+\langle \Phi(V), i\Lambda D'\left(\tfrac{1}{2i}\Phi\mu\right)\rangle.\]
    It follows that
    $$
    \langle \delta_B^0(V),(\mu,\beta,\theta)\rangle_{h_s}=
-\langle\eta_V, 2\Lambda D'(\beta,\psi)\rangle+2is\langle
    v,\dbar^\ast\mu\rangle_{WP} \ .
    $$
    Since $V\in \aut(P)$ was arbitrary, the result follows.
\end{proof}
We have the following corollary.
\begin{corollary}\label{c:Closed}
For any $s$ the form $\rd\omega_s$ vanishes on $\cH^1(\B^\bullet)$.
\end{corollary}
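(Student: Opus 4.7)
The plan is to reduce the statement to the classical closedness of the Weil-Petersson form on Teichm\"uller space, using that $\omega_0$ is already closed on $\cC(P)$.

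First, by Lemma \ref{lem I-inv closed}, the first two integrals in \eqref{eq pullbackABG}, whose sum is $\omega_0$, are each closed on $\J(P)\times\Omega^1_b(P,\gfrak)$, so $d\omega_0=0$ on the submanifold $\cC(P)$. Hence $d\omega_s=s\,d\omega_{WP}$, and since $\omega_{WP}$ on $\cC(P)$ is the pullback via $\pi_\cJ$ of the Weil-Petersson form on $\J(\Sig)$, Proposition \ref{prop:harmonics}(3) reduces the problem to showing that $d\omega_{WP}$ vanishes on triples of harmonic Beltrami differentials $\mu_1,\mu_2,\mu_3\in\cH^1(\rmC^\bullet)_j\subset\T_j\J(\Sig)$.

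Next, let $\pi_\bfT\colon\J(\Sig)\to\bfT(\Sig)$ denote the Teichm\"uller projection and let $\omega_\bfT$ be the induced Weil-Petersson symplectic form, which is classically known to be closed. The key claim is that for any $L\in\T_j\J(\Sig)$ and any harmonic $v\in\cH^1(\rmC^\bullet)_j$, one has $\omega_{WP}(L,v)=\pi_\bfT^\ast\omega_\bfT(L,v)$. Indeed, decompose $L=L^h+\dbar w$ into harmonic and tangent-to-orbit components; the adjoint identity $h_{WP}(\dbar w,v)=\langle w,\dbar^\ast v\rangle=0$ forces $\omega_{WP}(\dbar w,v)=0$, and by definition of $\omega_\bfT$ we have $\omega_{WP}(L^h,v)=\omega_\bfT([L^h],[v])=\omega_\bfT([L],[v])$. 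Consequently, the 2-form $\beta:=\omega_{WP}-\pi_\bfT^\ast\omega_\bfT$ on $\J(\Sig)$ annihilates every pair containing a harmonic vector.

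To conclude, I will extend each $\mu_i$ to a vector field $V_i$ on a neighborhood of $j$ whose value is harmonic at every point---this is possible since the hyperbolic metric depends smoothly on $j$ and the Hodge projection onto harmonic Beltrami differentials is elliptic-regular. With these extensions, each coefficient $\beta(V_i,V_j)$ vanishes identically, and every bracket term $\beta([V_i,V_j],V_k)$ vanishes because $V_k$ remains harmonic. Cartan's formula for the exterior derivative of a 2-form thus yields $d\beta(\mu_1,\mu_2,\mu_3)=0$ at $j$, and combined with $d\pi_\bfT^\ast\omega_\bfT=\pi_\bfT^\ast d\omega_\bfT=0$, this gives the desired vanishing of $d\omega_{WP}$. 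The main subtlety is the construction of smooth harmonic extensions via the Hodge projection $\mu\mapsto\mu-\dbar G\dbar^\ast\mu$, which requires smooth variation of the Green's operator $G$ as $j$ varies; granting this, the remaining argument is a mechanical application of Cartan's formula.
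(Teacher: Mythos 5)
Your proposal is correct and follows the same two-step reduction as the paper: since $\omega_0$ is closed on all of $\cC(P)$ by Lemma \ref{lem I-inv closed}, the whole content is the vanishing of $\rd\omega_{WP}$ on triples of harmonic Beltrami differentials. Where you diverge is in how that last fact is handled: the paper simply cites Ahlfors and Wolpert for it, whereas you derive it from the (equally classical) closedness of the descended Weil--Petersson form on $\bfT(\Sig)$ by introducing the comparison form $\beta=\omega_{WP}-\pi_\bfT^\ast\omega_\bfT$, observing that $\beta$ annihilates any pair containing a harmonic vector (via the adjoint identity $\langle\dbar w,v\rangle=\langle w,\dbar^\ast v\rangle=0$), and then killing all terms of Cartan's formula using pointwise-harmonic extensions. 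This is a legitimate and self-contained unpacking of the citation; its only real cost is the need for smoothly varying harmonic extensions, which is unproblematic here because the spaces of harmonic Beltrami differentials have constant dimension $3g-3$, so the Hodge projection varies smoothly with $j$. Either route is acceptable; yours makes the dependence on the classical input ($\rd\omega_\bfT=0$ downstairs versus the statement on harmonic representatives upstairs) more explicit.
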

\begin{proof} 
	Recall from \S \ref{sec: omega_s and h_s} that $\omega_s=s\cdot
    \omega_{WP}+\omega_0$, where $\omega_0$ is the closed 2-form from
    Definition \ref{def:omega0} and $\omega_{WP}$ is the Weil-Peterson form
    on $\J(\Sig)$. The result follows from the fact that the Weil-Petersson
    symplectic form is closed on the space of harmonic Beltrami
    differentials, see \cite{Ahlfors:61,Wolpert:86}.
\end{proof}

The hermitian form $h_0$ from Definition \ref{def:omega0} does not make use of a conformal metric on $\Sig$, and so we have no notion of harmonic representatives for Beltrami
differentials. This motivates the following definition.
\begin{definition} \label{def:semiharmonic}
    Let $(J,\Phi)$ be a Higgs bundle satisfying \eqref{eq Hitchin
    eq}.  We say a vector
    $(\mu,\beta,\psi)\in \T_{(J,\Phi)} \cC(P)$ is \emph{semiharmonic}
    if it satisfies 
\begin{itemize}
    \item $D''(\beta,\psi)+D'\left(\tfrac{1}{2i}\Phi\mu\right)=0$,\ \  (holomorphicity condition);
    \item $D'(\beta,\psi)=0$,\ \  (gauge condition).
\end{itemize}
\noindent In other words, we impose no condition on the Beltrami differential (item
(3) in Proposition \ref{prop:harmonics}).
\end{definition}

There is an infinite dimensional
space of
semiharmonic representatives for a given cohomology
class in $H^1(\B^\bullet)$. The following result characterizes these.

\begin{lemma} \label{lem:semiharmonic}
Let $(J,\Phi)$ be a Higgs bundle which solves the Hitchin equations,  $V\in
    \aut(P)$ and set $\eta_V=A_J(V)$.  If $(\mu,\beta,\theta)$ and 
    $(\mu_1,\beta_1,\theta_1)=(\mu,\beta,\theta)+\delta_B^0(V)$
    are both semiharmonic, then $V$ is horizontal with respect to the
    Chern-Singer connection $A_J$. 
\end{lemma}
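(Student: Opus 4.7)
The plan is to subtract the two semiharmonic conditions so that $\delta_\B^0(V)$ must itself be semiharmonic. The holomorphicity equation for $\delta_\B^0(V)$ is automatic because $(\B^\bullet,\delta_\B^\bullet)$ is a complex (i.e., $\delta_\B^1\circ\delta_\B^0=0$), so the entire content of the hypothesis is that $\delta_\B^0(V)$ satisfies the gauge condition $D'(\beta_V+\psi_V)=0$, where $(\beta_V,\psi_V)$ are the $(0,1)$- and $(1,0)$-parts of $\delta_\B^0(V)$ \emph{excluding} the Beltrami-induced summand $\tfrac{1}{2i}\Phi\mu_V=D''(\Phi(V))$ identified in Remark \ref{rem D''(Phi(V))}.

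Using Lemma \ref{lem:d0-simplified} together with Remark \ref{rem D''(Phi(V))}, the two remaining pieces of $\delta_\B^0(V)$ assemble into the $1$-form
\[\beta_V+\psi_V\;=\;2i\,D''(\eta_V)+D'(\Phi(V)).\]
Applying $D'$ and invoking $(D')^2=0$ (valid on $0$-forms because $(J,\Phi)$ satisfies the Hitchin equations), the gauge condition collapses to $D'D''(\eta_V)=0$.

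To promote this to $\eta_V=0$, I would fix a conformal metric on $\Sigma$ to define adjoints and apply the K\"ahler identity \eqref{eqn:kahler-simpson}, $(D'')^\ast=-i[\Lambda,D']$. Since $D''\eta_V$ is a $1$-form, $\Lambda(D''\eta_V)=0$ for degree reasons, so
\[(D'')^\ast\bigl(D''\eta_V\bigr)\;=\;-i\,\Lambda\,D'D''(\eta_V)\;=\;0.\]
The Hodge-theoretic pairing then gives $\|D''\eta_V\|^2=\langle\eta_V,(D'')^\ast D''\eta_V\rangle=0$, forcing $D''\eta_V=0$. But $D''\eta_V=0$ says precisely that $\eta_V$ is an infinitesimal automorphism of the stable Higgs bundle $(J,\Phi)$; by Proposition \ref{prop stable auts} and the semisimplicity of $\sG$ (so that $H^0(\A^\bullet)=0$), this forces $\eta_V=0$, which is the horizontality of $V$.

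The main subtlety to track is the bookkeeping of the three pieces of $\delta_\B^0(V)=(\mu_V,\beta_V,\theta_V)$: the Beltrami part $\mu_V=2i\bar\partial_j v$ is \emph{unconstrained} by the semiharmonic conditions, which is why the conclusion yields horizontality of $V$ rather than $V=0$. Once that is organized, the rest is the standard chain: apply $D'$, use $(D')^2=0$, apply the K\"ahler identity, and invoke stability.
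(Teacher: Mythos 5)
Your proof is correct and follows essentially the same route as the paper: subtract the gauge conditions, use Lemma \ref{lem:d0-simplified} and $(D')^2=0$ to reduce to $D'D''(\eta_V)=0$, and conclude $\eta_V=0$ from stability and semisimplicity. The only difference is that you spell out the intermediate step $D'D''(\eta_V)=0\Rightarrow D''\eta_V=0$ via the K\"ahler identity and integration by parts, which the paper compresses into the assertion that $\eta_V$ lies in $H^0$ of the deformation complex.
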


\begin{proof}
    From Lemma \ref{lem:d0-simplified}, we have $\mu_1=\mu+2i\dbar v$, and
    $$
    (\beta_1,\psi_1)=(\beta,\psi)+2i D''(\eta_V)+D'(\Phi(V))\ .
    $$
    Applying  the gauge condition to both sides, we have $D'D''(\eta_V)=0$,
    and therefore $\eta_V\in H^0(\B^\bullet)$.
    But since stable implies simple,
    $\eta_V$ must take values in the center. Since $\G$ is semisimple,
    $\eta_V=0$, and  therefore $V$ is horizontal. 
\end{proof}

An important consequence of the above is the following.
\begin{proposition} \label{prop:semiharmonic}
The difference of any two semiharmonic representatives of a given cohomology class
    $[(\mu,\beta,\theta)]$
    lies in the kernel of the hermitian form  $h_0$.
    In particular, $h_0$ is a well-defined  on
    $H^1(\B^\bullet)$.
\end{proposition}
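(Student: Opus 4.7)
My plan is to use Lemma \ref{lem:semiharmonic} to pin down the form of such differences, and then invoke the K\"ahler identities \eqref{eqn:kahler-simpson} together with the expression for $h_0$ from Lemma \ref{lem omega_0 formula} to show directly that $h_0$ pairs trivially between any such difference and any semiharmonic vector.

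Suppose $u$ and $u + \delta_\B^0(V)$ are two semiharmonic representatives of the same class in $H^1(\B^\bullet)$. By Lemma \ref{lem:semiharmonic}, $V$ is horizontal with respect to $A_J$, so $\eta_V := A_J(V) = 0$. Setting $\xi := \Phi(V) \in \Omega^0(\Sigma, P(\gfrak))$ and applying Lemma \ref{lem:d0-simplified} together with Remark \ref{rem D''(Phi(V))}, the difference $\delta_\B^0(V) = (\mu', \beta', \psi')$ takes the form
\[(\beta', \psi') = D'\xi = \partial_{A_J}\xi + \bigl[(\tfrac{1}{2i}\Phi)^\ast, \xi\bigr]\ , \qquad \tfrac{1}{2i}\Phi\mu' = \dbar_J\xi\ .\]
Observe that $\delta_\B^0(V)$ is itself semiharmonic: the holomorphicity piece is automatic from $\delta_\B^1 \circ \delta_\B^0 = 0$, and the gauge piece follows from $(D')^2\xi = 0$.

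The core computation is then, for any semiharmonic $w = (\mu, \beta, \psi)$, to evaluate
\[h_0\bigl(\delta_\B^0(V), w\bigr) = \langle D'\xi, (\beta, \psi)\rangle - \langle \dbar_J\xi, \tfrac{1}{2i}\Phi\mu\rangle\ ,\]
which follows from Lemma \ref{lem omega_0 formula} and the identities above. Since $\Lambda$ annihilates $1$-forms on a Riemann surface, the K\"ahler identities \eqref{eqn:kahler-simpson} simplify to $(D')^\ast = i\Lambda D''$ and $(D'')^\ast = -i\Lambda D'$ on $\Omega^1(\Sigma, P(\gfrak))$. Using the holomorphicity condition $D''(\beta, \psi) = -D'(\tfrac{1}{2i}\Phi\mu)$, I compute
\[\langle D'\xi, (\beta, \psi)\rangle = \langle \xi, i\Lambda D''(\beta, \psi)\rangle = \langle \xi, (D'')^\ast(\tfrac{1}{2i}\Phi\mu)\rangle = \langle D''\xi, \tfrac{1}{2i}\Phi\mu\rangle\ .\]
Finally, writing $D''\xi = \dbar_J\xi + [\tfrac{1}{2i}\Phi, \xi]$, the bracket term is of type $(1,0)$ while $\tfrac{1}{2i}\Phi\mu$ is of type $(0,1)$, so by Lemma \ref{lem signs on L2 innerproducts} their $L^2$-pairing vanishes. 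The first term thus collapses to $\langle \dbar_J\xi, \tfrac{1}{2i}\Phi\mu\rangle$, cancelling the second and giving $h_0(\delta_\B^0(V), w) = 0$.

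Because $\delta_\B^0(V)$ is itself semiharmonic whenever $V$ is horizontal, sesquilinearity propagates this orthogonality to either entry of the pairing, and hence $h_0$ descends unambiguously to $H^1(\B^\bullet)$. The main obstacle is the careful bookkeeping of Hodge types throughout the computation: the fact that $\Lambda$ kills $1$-forms on a Riemann surface is what allows the K\"ahler identities to collapse to a single term, while the type mismatch between $[\tfrac{1}{2i}\Phi, \xi]$ and $\tfrac{1}{2i}\Phi\mu$ is what produces the final cancellation.
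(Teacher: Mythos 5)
Your proof is correct and follows essentially the same route as the paper's: reduce via Lemma \ref{lem:semiharmonic} to a horizontal $V$, express the difference as $D'(\Phi(V))$ together with $\tfrac{1}{2i}\Phi\mu'=\dbar_J(\Phi(V))$, and use the K\"ahler identities plus the holomorphicity condition to cancel the two terms of $h_0$. The only cosmetic deviation is that you drop the $[\tfrac{1}{2i}\Phi,\xi]$ term by type orthogonality, whereas the paper notes in Remark \ref{rem D''(Phi(V))} that this term actually vanishes identically since $[\Phi,\Phi(V)]=0$; both are valid.
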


\begin{proof}
   Suppose $(\mu,\beta,\psi)$ and $(\mu_1,\beta_1,\psi_1)$ are cohomologous, and  $(\mu_2,\beta_2,\theta_2)\in \ker \delta_\B^1$. Then, by the previous lemma  we have
   \[\langle (\beta_1,\psi_1)-(\beta,\psi), (\beta_2,\psi_2)\rangle 
        ~=~ \langle D'(\Phi(V)),(\beta_2,\psi_2)\rangle. \]
        The K\"ahler identities and the holomorphicity condition twice gives
        \[\langle D'(\Phi(V)),(\beta_2,\psi_2)\rangle~= ~ \langle \Phi(V), i\Lambda D''(\beta_2,\psi_2)\rangle ~=~ \langle D''( \Phi(V)), \tfrac{1}{2i}\Phi\mu_2\rangle.\]
        Using Remark \ref{rem D''(Phi(V))}, we have 
        \[\langle (\beta_1,\psi_1)-(\beta,\psi), (\beta_2,\psi_2)\rangle~=~ \langle \tfrac{1}{2i}\Phi(\mu_1-\mu), \tfrac{1}{2i}\Phi\mu_2\rangle~.\]
    Hence,
    $
    \langle (\mu_1,\beta_1,\theta_1)-(\mu,\beta,\theta),
    (\mu_2,\beta_2,\theta_2)\rangle_{h_0} =0
    $.
\end{proof}

\subsection{Constructing a local slice}

\begin{proposition}\label{prop:slice}
Fix $p\in\bfM(\sG)$ and let $x\in \cC(P)$ be a lift of $p$ that satisfies the Hitchin equation \eqref{eq Hitchin eq}. Let $\B^\bullet$ be the associated deformation complex from \eqref{eq joint deformation complex}. Then there exists a complex submanifold $\mathcal S_x$ of  $F^{-1}(0)$ which passes through $x$ and satisfies the following:
\begin{enumerate}
	\item $\T_x\mathcal S_x=\cH^1(\B^\bullet)$ ;
	\item $\mathcal S_x$ is biholomorphic to a neighborhood of $0$ in $\cH^1(\B^\bullet)$;
	\item if $g\in \overline\Aut_0(P_\sK)$ and $y=g x$
        is another lift of $p$ solving the Hitchin equations, then
        $\cS_y=g\cS_x$; and
	\item if $x$ is regularly stable, then the action map 
        $\overline\Aut_0(P) \times \mathcal S_x \to F^{-1}(0)$
        is a diffeomorphism onto an open neighborhood of $x$. 
\end{enumerate}
\end{proposition}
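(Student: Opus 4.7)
The plan is to realize $\cS_x$ as a Kuranishi graph over the finite-dimensional space of harmonics $\cH^1(\B^\bullet)$, using the $h_s$-Hodge decomposition of Section 4.5 together with a holomorphic implicit function theorem. First, fix $s$ large enough that $h_s$ is positive definite at $x$ (Lemma 3.12). Stability of $x$ gives $H^0(\B^\bullet)=0$ and, by Serre duality, $H^2(\B^\bullet)=0$; combined with ellipticity of $\B^\bullet$, standard Hodge theory on the Sobolev completion yields an $h_s$-orthogonal decomposition
\[
\T_x\cC(P)^k=\rIm(\delta_\B^0)\oplus\cH^1(\B^\bullet)\oplus\rIm((\delta_\B^1)^\ast),
\]
in which $\delta_\B^1$ restricts to a topological isomorphism of $\rIm((\delta_\B^1)^\ast)$ onto $\Omega^2_b(P,\gfrak)^{k-1}$.

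Next, consider the holomorphic map $\Phi(v,w):=F(\exp_x(v+w))$ on a neighborhood of the origin in $\cH^1(\B^\bullet)\oplus\rIm((\delta_\B^1)^\ast)$. It is holomorphic because $F$ (Lemma 4.3) and $\exp_x$ (Section 3.4) are, and its $w$-differential at the origin is precisely $\delta_\B^1$ restricted to $\rIm((\delta_\B^1)^\ast)$, an isomorphism. The holomorphic IFT then produces a unique holomorphic $\xi\colon U\to\rIm((\delta_\B^1)^\ast)$ on a neighborhood $U$ of $0$ in $\cH^1(\B^\bullet)$, with $\xi(0)=0$, $d\xi_0=0$, and $F(\exp_x(v+\xi(v)))=0$ for all $v\in U$. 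Set $\cS_x:=\{\exp_x(v+\xi(v)):v\in U\}$. Since $d_0\exp_x=\Id$, one reads off $\T_x\cS_x=\cH^1(\B^\bullet)$, and the map $v\mapsto\exp_x(v+\xi(v))$ is a biholomorphism of $U$ onto $\cS_x$; elliptic regularity for the coupled system ``$F=0$ plus the slice condition $(\delta_\B^0)^\ast=0$'' then guarantees that every point of $\cS_x$ is a smooth tensor in $\cC(P)$, proving (1) and (2).

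For (3), the metric $h_s$ is built from the Cartan involution of $P_\sK$ and is therefore $\Aut_0(P_\sK)$-invariant, while $\exp$ and $F$ are $\Aut_0(P)$-equivariant by construction. Hence any $g\in\overline\Aut_0(P_\sK)$ transports the Hodge decomposition at $x$ to that at $y=gx$, and the uniqueness clause of the IFT forces $\cS_y=g\cdot\cS_x$. For (4), assume $x$ is regularly stable, so by Proposition 4.2 its stabilizer in $\overline\Aut_0(P)^{k+1}$ is trivial. The derivative of the action map $\mu(g,y):=g\cdot y$ on $\overline\Aut_0(P)^{k+1}\times\cS_x$ at $(e,x)$ sends $(V,v)$ to $\delta_\B^0(V)+v$; by the orthogonal decomposition together with injectivity of $\delta_\B^0$ on $\aut(P)^{k+1}/Z(\G)$, this is a topological isomorphism onto $\ker(\delta_\B^1)=\T_xF^{-1}(0)^k$. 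So $\mu$ is a local diffeomorphism near $(e,x)$, and properness of the $\Aut_0(P)^{k+1}$-action (Proposition 4.4) upgrades this to a diffeomorphism onto an open neighborhood of $x$ in $F^{-1}(0)^k$, by ruling out two distinct preimages accumulating at $(e,x)$.

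The main obstacle is the familiar tension between Sobolev regularity and smoothness: the holomorphic IFT must be run at the Hilbert level with the correct mapping properties of $F\circ\exp_x$, and only afterwards does one extract smoothness of the slice via elliptic regularity. A subtler point, specific to item (4), is promoting the infinitesimal isomorphism to a surjection onto a full \emph{open} neighborhood; here the Hodge-theoretic input alone is insufficient, and one must genuinely invoke the properness statement of Proposition 4.4.
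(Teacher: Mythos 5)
Your proposal follows essentially the same route as the paper: the $h_s$-orthogonal Hodge decomposition for large $s$, the holomorphic implicit function theorem applied to $F\circ\exp_x$ on $\cH^1(\B^\bullet)\oplus\rIm((\delta_\B^1)^\ast)$, equivariance of the construction under $\overline\Aut_0(P_\sK)$ for item (3), and the inverse function theorem combined with properness and freeness of the action for item (4). The only divergence is in how smoothness of the slice is obtained --- you invoke elliptic regularity for the gauge-fixed system $F=0$, $(\delta_\B^0)^\ast=0$, whereas the paper bootstraps by running the implicit function theorem at every Sobolev level $W^{k+\alpha,2}$ and using uniqueness to conclude $\varphi^{k+\alpha}=\varphi^k$ --- both of which are standard and correct.
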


\begin{proof} Let $s\in\R$ be large enough such that $h_s$ is positive definite on $\T_x\cC(P)^k$ and consider the Hodge decomposition
\[\T_x\cC(P)^k = \rIm(\delta_B^0)\oplus \cH^1(\B^\bullet)\oplus \rIm((\delta_B^1)^*)~.\]
This decomposition is orthogonal with respect to $h_s$ and $\I$-invariant since both $F^k$ and the orbit $\Aut_0(P)^{k+1} \cdotp x$ are holomorphic. Given neighborhoods $U$ and $V^k$ of the origin in $\cH^1(\B^\bullet)$ and $\rIm((\delta_B^1)^*)$, respectively, we consider the holomorphic map
\[\Psi^k:= F^k\circ \left({\exp^k_x}_{\big\vert U\oplus V^s}\right) : U \oplus V^k \longrightarrow \Omega^2_b(P,\mathfrak{g})^{k-1}~.\]
Using that $\rd_0 \exp^k_x$ is the identity, we obtain that $\rd_{(0,0)}\Psi^k$ is the restriction of $\delta_B^1$ to $\cH^1(\A^\bullet) \oplus \rIm((\delta_B^1)^*)$. In particular, its restriction to the second factor is a linear isomorphism. By the implicit function theorem for complex Hilbert manifolds, up to shrinking $U$ if necessary, there exists a unique holomorphic map $\varphi^k : U \to V^k$ such that for $(u,v)\in U\times V^k$ we have $\Psi^k(u,v)=0$ if and only if $v=\varphi^k(u)$.

\begin{lemma}
The map $\varphi^k$ described above takes value in $\T_x \cC(P)$ (in particular it is independent of $k$ and will be denoted $\varphi$). Moreover, it satisfies $\rd_0\varphi=0$
\end{lemma}

\begin{proof}
For the first item, observe that doing the same construction with the Sobolev completion $W^{k+1,2}$ instead of $W^{k,2}$ yields a map $\varphi^{k+1}: U \to V^{k+1}$ such that for $(u,v)\in U\times V^{k+1}$ we have $\Psi^{k+1}(u,v)=0$ if and only if $v=\varphi^{k+1}(u)$.
Nevertheless, $\cC(P)^{k+1}$ is contained in $\cC(P)^k$, so $V^{k+1} = V^k\cap \T_x \cC(P)^{k+1}$ and $\Psi^{k+1}$ is the restriction of $\Psi^k$ to $U\oplus V^{k+1}$. 
Hence, for all $u\in U$ we have 
\[\Psi^{k+1}(u,\varphi^{k+1}(u))=\Psi^k(u,\varphi^{k+1}(u))~,\]
so the uniqueness part of the implicit function theorem gives $\varphi^{k+1}(u)=\varphi^k(u)$. In particular, $\varphi^k$ takes value in $\bigcap_{\alpha\in \mathbb N} V^{k+\alpha}$ which is equal to $V^k\cap \T_x\cC(P)$.

To compute $\rd_0\varphi$, we differentiate the equation $\Psi^k(u,\varphi(u))=0$. It gives
\[\rd^1_{(0,0)}\Psi^k + \rd^2_{(0,0)}\Psi^k \circ \rd_0\varphi = 0\]
where $\rd^1\Psi^k$ and $\rd^2\Psi^k$ are respectively the differential of $\Psi^k$ with respect to the first and second variable, and so their value at $(0,0)$ coincides with the restriction of $\delta_B^1$ to $\cH^1(\A^\bullet)$ and $\rIm((\delta_B^1)^*)$ respectively. Since $\cH^1(\A^\bullet)$ is contained in $\ker(\delta_B^1)$, the above equation gives $\rd_0\varphi =0$.
\end{proof}

Consider now the immersion
\begin{equation}\label{eq sigma}
	\sigma:  U\subset \cH^1(\B^\bullet)  \longrightarrow  \cC(P)^k  ~:~
 u  \longmapsto  \exp_x(u,\varphi(u))  \ ,
\end{equation}
and let $\cS_x=\sigma (U)$. The above lemma and discussion imply that
    $\cS_x$ is a holomorphic submanifold of $\cC(P)$ contained in
    $F^{-1}(0)$ and passing through $x$ with $\T_x \cS_x =
    \cH^1(\A^\bullet)$. Furthermore, if $y=gx$ is another
    lift of $p$ solving the Hitchin equation, then $g\in \Aut_0(P_\K)$ and
    so preserves the harmonic metric. By the uniqueness part of the implicit
    function theorem, we easily obtain that $\sigma_y=g\sigma_x$, and so $\cS_y=g\cS_x$.

\begin{lemma}
If $x$ is regularly stable, up to shrinking $\cS_x$ if necessary, the
    action of $\overline\Aut_0(P)^{k+1}$ on $\cS_x$ defines a map
$$
\wp^k :  \overline\Aut_0(P)^{k+1} \times \cS_x  \longrightarrow  (F^k)^{-1}(0) ~:~
 (g,y)  \longmapsto  g\cdotp y  \ ,
 $$
which is a diffeomorphism onto an open neighborhood of $x$ in $(F^k)^{-1}(0)$.
\end{lemma}

\begin{proof}
First note that since $\cS_x$ is contained in $\cC(P)$, the map $\wp^k$ is smooth. 
    Moreover, its differential at $(\Id,x)$ is given by 
$$(\delta_B^0,\iota) : \aut(P)^k \times \cH^{1}(\A^\bullet) \to \ker(\delta_B^1)~,$$ 
where $\iota$ is the natural injection. Using the decomposition
    $\ker(\delta_B^1)=\rIm(\delta_B^0)\oplus \cH^1(\B^\bullet)$, we get
    that $D_{(\Id,x)} \wp^k$ is a linear isomorphism.
    By the inverse function theorem, up to shrinking $\cS_x$, 
    there exists a neighborhood $W$ of $\Id$ in $\overline\Aut_0(P)^{k+1}$
    such that $\wp^k$ restricts to a diffeomorphism from $W\times \cS_x$ onto an open neighborhood of $x$ in $(F^k)^{-1}(0)$. 

The only thing that remains to be proved is that, up to shrinking $\cS_x$
    if necessary, then for any $y\in \cS_x$ its $\overline\Aut_0(P)^{k+1}$-orbit
    does not intersect $\cS_x$ at a different point. Assume to the contrary
    that this is false. Then there are sequences
    $(y_n)_{n\in\mathbb N}$ in $\cS_x$ and $(g_n)_{n\in \mathbb N}$ in
    $\overline\Aut_0(P)^{k+1}$ such that $(y_n)_{n\in\mathbb N}$ converges to $x$
    and $(g_ny_n)_{n\in\mathbb N}$ is a sequence of $\cS_x$ converging to
    $x$. By Proposition \ref{prop:ProperFreeAction}, the action of
    $\overline\Aut_0(P)^{k+1}$ on $\cC(P)^k$ is proper, so the sequence
    $(g_n)_{n\in\mathbb N}$ subconverges to some $g\in \overline\Aut_0(P)^{k+1}$. In
    particular, $gx=x$, and so $g=\Id$ since the action is free
    on regularly stable Higgs bundles (Proposition \ref{prop:ProperFreeAction}).
    But this implies that $g_n\in W$ for $n$ large enough, contradicting
    the fact that $\wp^k$ is a diffeomorphism.
\end{proof}

The proof  now follows from Proposition \ref{prop:ProperFreeAction}: if
$y\in \cC(P)$ and $g\in \overline\Aut_0(P)^{k+1}$ is such that $gy\in \cC(P)$, then
$g\in \cG(P)$.
\end{proof}

 \subsection{Proof of Theorem \ref{theo:ModuliSpace}}

\begin{proof}[Proof of Theorem \ref{theo:ModuliSpace}]
Let us first prove the theorem on the open set $\bfM^{rs}(\G)$ corresponding to regularly stable Higgs bundles. In this case, the slices constructed in
    Proposition \ref{prop:slice} form a smooth atlas for which the quotient
    map $\pi: F^{-1}(0)^{rs} \to \bfM^{rs}(\sG)$ is a smooth principal $\Aut_0(P)$-bundle. To prove this atlas is holomorphic, the
    key remark is that the almost complex structure $\I$ on $F^{-1}(0)^{rs}$
    naturally induces an almost complex structure (still denoted by $\I$)
    on $\bfM^{rs}(\sG)$ as follow: given a tangent vector $u \in \T_p
    \bfM^{rs}(\sG)$, define
\[\I_p(u) := \rd_x\pi( \I_x (v))~,\]
where $v\in \T_x F^{-1}(0)$ is such that $\rd_x\pi (v)=u$. Since $\I$ is $\Aut_0(P)^k$-invariant and $\I$ preserves vertical vectors, one easily checks that the above definition is independent of the choice of the lift. Moreover, the restriction of $\pi$ to the slices is holomorphic. Since $\I$ is integrable on $\cC(P)$ (by the exponential maps), it is on the slices, and hence on $\bfM^{rs}(\sG)$. Furthermore, the transition functions for nearby slices are holomorphic, defining a holomorphic atlas on $\bfM^{rs}(\sG)$. 

For points lifting to stable but not regularly stable Higgs
    bundles $x$ in $F^{-1}(0)$, the complex submanifold $\cS_x$ of
    Proposition \ref{prop:slice} is invariant under the stabilizer of $x$
    in $\Aut_0(P)$ (by item (3)). Since this stabilizer is finite, we get an orbifold structure on $\bfM(\G)$.

To see that the holomorphic bundle map $\Theta:\pi^*\T\J(\Sig)\to
    \ker(\rd\pi_\cJ)$ from \S \ref{section hol bundle map}  descends to $\bfM(\sG)$, it suffices to check $\Theta_{(J,\Phi)}(\mu)$ is tangent to $F^{-1}(0)$ for each $(J,\Phi)\in F^{-1}(0)$ and each $\mu\in \pi^*\T\J(\Sig).$ This is equivalent to $\Theta_{(J,\Phi)}(\mu)$ being in the kernel of the boundary map $\delta_{\B}^1$ from Lemma \ref{lem delta 1 B}. By Remark \ref{rem:eta in beta notation}, we have
\[\delta_{\B}^1\left(\Theta_{(J,\Phi)}(\mu)\right)=\delta_{\B}^1\left(0,\tfrac{1}{2i}\Phi\mu,0\right)=D''\left(\tfrac{1}{2i}\Phi\mu,0\right)=\left[\Phi,\tfrac{1}{2i}\Phi\mu\right]=0.\]

For $s\in \R_{>0}$, the hermitian form $h_s$ descends to $\bfM(\sG)$ via
\[h_s(u,v)_p := h_s(\hat u,\hat v)_x\]
where $\hat u$ and $\hat v$ are harmonic vectors lifting $u$ and $v$. Since $h_s$ is $\Aut_0(P_\sK)$-invariant, such a definition is independent of the choice of $x$ lifting $p$ (as soon as $x$ satisfies the Hitchin equations). The corresponding $2$-form is closed by Corollary \ref{c:Closed}.

For $s=0$, it suffices to define $h_0$ on $\bfM(\sG)$ by evaluating along semiharmonic lift: by Proposition \ref{prop:semiharmonic}, it defines a hermitian form on $\bfM(\sG)$.
\end{proof}
 
\begin{remark} It is important to note that if $\mu\in \T\Jcal(\Sigma)$ corresponds to
    a trivial deformation of $X$, then $\Theta(\mu)\in \rIm D''$. However,
    it may be the case that $\Theta(\mu)\in \rIm D''$ even for nontrivial
    classes in $H^1(X,T_X)$. 
\end{remark}
\subsection{Nonabelian Hodge and variation of the metric}
We end this section by proving two important propositions concerning the regularity of the nonabelian Hodge map and the first variation of the metric along the slice.

The \emph{$\sG$-character variety} of the surface $\Sig$ is the affine GIT quotient
$\Hom(\pi_1\Sig,\sG)\sslash \sG$
of the space of group homomorphisms  $\pi_1\Sig\to\sG$ by conjugation (see
\cite{LubotzkyMagid:85}, and also \cite{Sikora:12} and the references therein).
As in the introduction we denote the irreducible locus of the character variety
by $\bfX(\sG)$,
\begin{equation}
	\label{eq smooth char}\bfX(\sG)\subset \Hom(\pi_1\Sig,\sG)\sslash \sG.
\end{equation} 
The holonomy map $\mathsf{hol}$ from the moduli space of irreducible
flat $\sG$-connections to  $\bfX(\sG)$ is a complex analytic isomorphism usually called the \emph{Riemann-Hilbert correspondence}.

Recall that the map $\rH$ from \eqref{eq H map} sends a Higgs bundle $(J,\Phi)$ to the $\sG$-connection $A_J+\frac{1}{2i}(\Phi-\Phi^*)$, and that $\rH(J,\Phi)$ is flat if $(J,\Phi)$ 
solves the Hitchin equations \eqref{eqn:hitchin-eqns}.  
Since $\rH$ is equivariant with respect to the action of $\Aut(P_\sK)$, it
descends to a map from the joint moduli space $\bfM(\sG)$ to the moduli
space of flat $\sG$-connections.  Post-composing with the (Riemann-Hilbert) holonomy map $\mathsf{hol}$ defines a map from the joint moduli space $\bfM(\sG)$ to $\bfX(\sG)$ which we also denote by $\rH$ 
\begin{equation}
	\label{eq NAH map}
	\rH:\bfM(\sG)\lra \bfX(\sG) ~:~
    [J,\Phi]\longmapsto\mathsf{hol}([A_J+\tfrac{1}{2i}(\Phi-\Phi^*)])\ ,
\end{equation}
 and which we call the \emph{nonabelian Hodge map}. 
 By work of Hitchin \cite{selfduality}, Simpson \cite{SimpsonVHS},
 Donaldson \cite{harmoicmetric} and Corlette \cite{canonicalmetrics}, the
 restriction  $\rH_X: \bfM_X(\sG)\to\Xbold(\G)$ is a real analytic
 diffeomorphism (in the orbifold sense) for each $X\in\Tbold(\Sigma)$. 
According to \cite[Theorem 7.18]{Simpson:94b},  
the map 
$$\Mbold(\G)\lra \Tbold(\Sigma)\times \Xbold(\G)
~:~ [J,\Phi]\longmapsto ([j], \rH[J,\Phi])
$$
is  a homeomorphism.
   As expected, the relative nonabelian Hodge map is real
   analytic, at least on the stable locus. For a closely related result,
   see Slegers \cite{Slegers:22}.
   In the following, we assume a fixed real analytic structure on the
underlying principal bundle $P\to \Sigma$.

\begin{theorem}\label{thm:realanal}
	The nonabelian Hodge map $\rH:\bfM(\sG)\to \bfX(\sG)$ is real
    analytic.
\end{theorem}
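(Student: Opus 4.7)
The strategy is to prove real analyticity locally in the complex analytic slice charts $\cS_x$ constructed in Proposition \ref{prop:slice}. Fix $x\in F^{-1}(0)$ solving the Hitchin equation \eqref{eq Hitchin eq} and let $\sigma:U\subset \cH^1(\B^\bullet)\to\cC(P)$ be the slice immersion from \eqref{eq sigma}. In this chart, the nonabelian Hodge map takes the form
\[u\mapsto \rH([\sigma(u)]) = \mathsf{hol}\left(A_{g_u\cdot J_u}+\tfrac{1}{2i}(g_u\cdot\Phi_u-(g_u\cdot\Phi_u)^*)\right),\]
where $(J_u,\Phi_u):=\sigma(u)$ and $g_u\in\cG(P)$ is a complex gauge transformation, unique modulo $\cG(P_\sK)$ by Theorem \ref{thm Hitchin-Simpson}, such that $g_u\cdot\sigma(u)$ satisfies \eqref{eq Hitchin eq}. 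Since $\sigma$ is holomorphic, hence real analytic, it suffices to show that $u\mapsto g_u$ can be chosen real analytically and that the holonomy map on the resulting family of flat connections is real analytic.

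The Donaldson-Corlette step I would treat with the real analytic implicit function theorem. Using the Cartan decomposition $\gfrak=\kfrak\oplus i\kfrak$, parameterize the transverse directions to $\cG(P_\sK)$ inside $\cG(P)$ by $g=\exp(is)$ with $s\in\Omega^0(\Sigma,P_\sK(\kfrak))$, and consider
\[\Hcal(u,s):= F_{A_{\exp(is)\cdot J_u}}+\tfrac{1}{4}\bigl[\exp(is)\cdot\Phi_u,\ (\exp(is)\cdot\Phi_u)^*\bigr],\]
viewed as a map between Sobolev completions $U\times W^{k+2,2}\to W^{k,2}$. This map is real analytic in both variables, being built from the matrix exponential, bilinear Lie brackets and wedge products, the Cartan involution associated with the fixed reduction $P_\sK$, the curvature functional, and the Chern-Singer map $\rS$ from \eqref{eq chern-singer map}. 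One has $\Hcal(0,0)=0$ by hypothesis on $x$. After contraction with the area form of a conformal metric on $\pi(x)=X$, the partial derivative $\partial_s\Hcal(0,0)$ becomes the Hessian at $0$ of the Donaldson functional along the hermitian directions; it is a self-adjoint second-order elliptic operator on $\Omega^0(\Sigma,P_\sK(\kfrak))$ whose kernel consists of infinitesimal complex automorphisms of $x$ lying in $\Omega^0(P_\sK(\kfrak))$. This kernel vanishes by stability of $x$ and semisimplicity of $\G$, so $\partial_s\Hcal(0,0)$ is a toplinear isomorphism. The analytic implicit function theorem yields a real analytic solution $u\mapsto s(u)$ with $\Hcal(u,s(u))=0$, and elliptic regularity ensures each $s(u)$ is smooth.

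Finally, a real analytic family of flat $\G$-connections on $\Sigma$ has real analytic holonomy along any fixed loop, because parallel transport solves a linear ODE whose coefficients depend real analytically on the parameter. Combining the three steps, $\rH$ is real analytic on each slice chart and hence globally on $\bfM(\G)$, since the slice transition functions are holomorphic. The main obstacle is verifying genuine real analyticity (as opposed to $C^\infty$ smoothness) in the Hilbert setting: one must check that the exponential map and the product and bracket operations extend as real analytic maps between Sobolev completions, that the Chern-Singer map $\rS$ is a real analytic map of Hilbert manifolds, and that the implicit function theorem applies in its real analytic form. These facts are standard but are the only nontrivial analytic inputs beyond the algebraic and elliptic structure of the problem.
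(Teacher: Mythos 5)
Your proposal is correct and shares the overall skeleton of the paper's argument (work in the slice $\cS_x$ of Proposition \ref{prop:slice}, show the harmonic metric depends real-analytically on the slice parameter, then conclude via the analyticity of holonomy/Riemann--Hilbert), but the central analytic step is carried out by a genuinely different method. The paper fixes the Higgs pair on the slice, writes the harmonic metric as $\tau=\tau_0 k$, observes that the Hitchin equation becomes the second-order nonlinear elliptic system \eqref{eqn:change-of-metric} in $k$ with coefficients that are real analytic jointly in the slice parameter and the bundle variable (because $\sigma$ is holomorphic and $A_J^0$ is analytic in $J$), and then invokes Morrey's interior analyticity theorem to get $k$ real analytic on $\cS_x\times P$. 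You instead apply the real analytic implicit function theorem in Sobolev completions to the Hitchin functional $\Hcal(u,s)$ along the hermitian directions $g=\exp(is)$, using stability to make the linearization invertible. Both routes are legitimate; Morrey's theorem buys joint analyticity in the spatial variable essentially for free and avoids having to verify that the nonlinear composition operators (exponential, bracket, Chern--Singer map) are real analytic between Sobolev spaces, which is exactly the burden you correctly flag at the end of your argument. Conversely, your IFT route makes the role of stability (invertibility of the linearized operator) more transparent and sidesteps the pointwise regularity theory. Two small points you should make explicit: the contracted target of $\partial_s\Hcal(0,0)$ must be the hermitian sections $\Omega^0(\Sigma,P_\sK(i\kfrak))$ so that the operator is genuinely an isomorphism of index zero, and the locally unique IFT solution $s(u)$ must be identified with the actual harmonic-metric gauge via the uniqueness statement in Theorem \ref{thm Hitchin-Simpson} together with the polar decomposition of $\cG(P)$; both are routine and do not affect the validity of the argument.
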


\begin{proof}
    Fix a point $x\in \Ccal(P)$ corresponding to a point $p\in \Mbold(\G)$.
    We assume $x=(J_0,\Phi_0)$ satisfies \eqref{eq Hitchin eq}. 
 Recall from \S \ref{sec metrics} that a structure group reduction to a
    maximal compact $P_\sK\subset P$ is equivalent to a Cartan involution
    $\tau$. Let $\tau_0$ denote a harmonic metric for $(J_0,\Phi_0)$.  
    Let $\Scal_x$   be the slice obtained in Proposition \ref{prop:slice}.
    By Theorem \ref{thm Hitchin-Simpson} of Hitchin and Simpson, for each
    $(J,\Phi)\in \Scal_x$ there is a unique harmonic metric $\tau$ satisfying 
    \eqref{eq Hitchin eq}.
    We may therefore  view $\tau$ as a map $\tau:\Scal_x\times P \to
    \End(\gfrak)$.  
    We first claim that $\tau$ is real analytic.  To see this,
    recall that we may express $\tau=\Ad_{g^{-1}}\tau_0\Ad_g$ for some $g$.
    Let $k=(\Ad_g)^\ast\Ad_g$, where the adjoint is with respect to the
    hermitian structure on $P(\gfrak)$ defined by $\tau_0$ (see \S \ref{sec
    metrics}). Analyticity of
    $k$ implies that of $\tau$; indeed, $\tau=\tau_0 k$.  Now 
    for each $(J,\Phi)\in \Scal_x$,  Equation \eqref{eq Hitchin eq}
    in the adjoint representation becomes
    \begin{equation} \label{eqn:change-of-metric}
        \dbar_{A_J^0}(k^{-1}\partial_{A_J^0}k)+F_{A_J^0}+[\Phi,k^{-1}\Phi^\ast
        k]=0\ ,
    \end{equation}
    where $A_J^0$ is the Chern-Singer connection with respect to
    $(J,\tau_0)$. 
By the continuity of $\rm H$, we may assume $k$ is uniformly invertible.
    Then in local coordinates and trivializations, \eqref{eqn:change-of-metric} 
    is a second order  nonlinear 
    elliptic system in $k$.  Moreover, since the map $\sigma$ from \eqref{eq sigma} 
     defining the slice is holomorphic, in particular real
    analytic,
and since $A_J^0$ is analytic in $J$,
    the coefficients of \eqref{eqn:change-of-metric} are real analytic
    on $\Scal_x$. By a theorem of Morrey \cite{Morrey:58}, $k$ (and therefore also $\tau$)  is
    real analytic on $\Scal_x\times P$. 
    Next, from the analyticity of $\tau$ it
    follows that the Chern-Singer connection $A_J$ associated to $(J,\tau)$ is real
    analytic  as well. Hence, the family 
    $$D_{(J,\Phi)}=A_J+\tfrac{1}{2i}(\Phi+\tau(\Phi))$$ 
    of flat connections is real analytic on $\Scal_x\times P$. Finally, from the
    analyticity of the Riemann-Hilbert map, the result follows.  
\end{proof}

We now show that the first variation of the metric solving the Hitchin equations is zero along the slice associated to the semiharmonics. 
The following theorem  generalizes
\cite[Theorem 3.5.1]{LabourieWentworthFuchLocus} and \cite[Proposition 3.12]{CollierWentworth:19}
to the setting of the joint moduli space.

\begin{theorem}\label{thm:MetricvariationSlice}
	Fix a Cartan involution $\tau$ on $P$, and let $(J,\Phi)$ be a stable $\sG$-Higgs which solves the Hitchin equation for $\tau$. Suppose $(J_t,\Phi_t)$ is a 1-parameter family of Higgs bundles with $(J,\Phi)=(J_0,\Phi_0)$ which is contained in a 
    slice $\cS_{(J,\Phi)}$ from Proposition \ref{prop:slice}.
As in the proof of  Theorem \ref{thm:realanal},
    let $(\tau(t))$ be the 1-parameter family of Cartan involutions such that $(J_t,\Phi_t)$ solves the $\tau(t)$-Hitchin equation. Then 
	\[\deriv\tau_t=0.\]
\end{theorem}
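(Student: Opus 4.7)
My plan is to linearize the Hitchin equation at $t=0$ and exploit the gauge condition from Proposition \ref{prop:harmonics}. Writing the tangent vector to the family $(J_t, \Phi_t)$ at $t=0$ in the form $(M,\theta)$ with $M = (\mu,\beta)$ and $\theta = \psi + \tfrac{1}{2i}\Phi\mu$ as in Lemma \ref{lem: tangent vector decomp}, and differentiating $F_{A_{J_t,\tau_t}} + \tfrac{1}{4}[\Phi_t, \Phi_t^{*_{\tau_t}}] = 0$ at $t = 0$, I obtain
\[
\cL_1(M,\theta) + \cL_2(\dot\tau) = 0,
\]
where $\cL_1$ denotes the linearization in $(J,\Phi)$ with $\tau$ fixed at $\tau_0$, and $\cL_2$ the linearization in $\tau$ with $(J,\Phi)$ fixed. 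The strategy is to show $\cL_1(M,\theta) = 0$, and then to invoke injectivity of $\cL_2$ on infinitesimal Cartan involutions to conclude $\dot\tau=0$.

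By Lemma \ref{lem:DifferentialOfChernSinger}, the variation of the Chern-Singer connection at fixed $\tau_0$ is $\dot A = \tfrac{1}{2i}(\beta+\beta^*)$, so
\[
\cL_1(M,\theta) = \tfrac{1}{2i}\,d_A(\beta+\beta^*) + \tfrac{1}{4}[\theta,\Phi^*] + \tfrac{1}{4}[\Phi,\theta^*].
\]
The new ingredient compared with the fixed-Riemann-surface situation is that $\theta$ has a nonzero Beltrami component $\tfrac{1}{2i}\Phi\mu$. However, on the curve this is a $(0,1)$-form, and so is $\Phi^*$; hence $[\tfrac{1}{2i}\Phi\mu,\Phi^*]=0$, and dually $[\Phi,(\tfrac{1}{2i}\Phi\mu)^*]=0$ by type. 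The Beltrami contributions therefore drop out, and $\cL_1(M,\theta)$ reduces to an expression in $\beta$ and $\psi$ alone. The gauge condition (2) of Proposition \ref{prop:harmonics}, namely $D'(\beta,\psi) = \partial_A\beta + [(\tfrac{1}{2i}\Phi)^*,\psi] = 0$, determines $\partial_A\beta$, and its pointwise Hermitian adjoint determines $\dbar_A\beta^*$. A direct computation in a local unitary frame then shows that the remaining terms in $\cL_1$ cancel in pairs. Notably, neither the holomorphicity condition (1) nor the harmonic Beltrami condition (3) of Proposition \ref{prop:harmonics} enters --- only the gauge condition.

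For injectivity of $\cL_2$, I parametrize infinitesimal variations of $\tau$ by sections $s\in\Omega^0(\Sig,P(\pfrak))$, using the Cartan decomposition $\gfrak = \kfrak\oplus\pfrak$ associated to $\tau_0$. On this space $\cL_2$ is a second-order elliptic operator, and its kernel is the infinitesimal version of the ambiguity in Theorem \ref{thm Hitchin-Simpson}: any nearby Cartan involution solving the Hitchin equation for the fixed Higgs bundle $(J,\Phi)$ must differ from $\tau_0$ by compact gauge, which is parametrized by $\Omega^0(P(\kfrak))$, disjoint from $\Omega^0(P(\pfrak))$. Since $(J,\Phi)$ is stable and hence has finite automorphism group (Proposition \ref{prop stable auts}), this injectivity holds, forcing $\dot\tau=0$.

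The main obstacle is the cancellation of the four residual terms in $\cL_1$: the sign conventions for form-valued Lie brackets involve competing contributions from form conjugation ($\overline{dz\wedge d\bar z}=-dz\wedge d\bar z$), the matrix adjoint identity $[X,Y]^* = [Y^*,X^*]$, and graded commutativity of Lie-algebra-valued 1-forms. The computation generalizes \cite[Theorem 3.5.1]{LabourieWentworthFuchLocus} and \cite[Proposition 3.12]{CollierWentworth:19} from the fixed-Riemann-surface case to the joint moduli space, and the observation that the Beltrami term drops out by type is the essential new input.
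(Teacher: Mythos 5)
Your overall strategy --- linearize the Hitchin equation, split the linearization as $\cL_1(M,\theta)+\cL_2(\dot\tau)=0$, kill $\cL_1$ using only the gauge condition, and then conclude from injectivity of $\cL_2$ --- is exactly the structure of the paper's proof, and your treatment of $\cL_1$ is essentially right: the Beltrami contribution $\tfrac{1}{2i}\Phi\mu$ does drop out of the bracket terms by type on a curve, and the gauge condition $D'(\beta,\psi)=0$ together with its $\tau$-conjugate is precisely what makes the remaining four terms cancel in pairs (this is Equation \eqref{eq semiharmonic var proof} in the paper). Your observation that only the gauge condition, not holomorphicity or harmonicity of $\mu$, is needed also matches the paper, which proves the statement for semiharmonic slices.

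The gap is in $\cL_2$. First, you never write $\cL_2$ down, and doing so is not free: the Chern--Singer connection $A_{J,\tau}$ depends on $\tau$ as well as $J$, so $\cL_2$ contains a curvature contribution $-\rd_{A_J}\bigl((\rd_{A_J}\tau(Z))^{1,0}\bigr)$ coming from the $\tau$-variation of $A$ (the paper isolates this in an auxiliary lemma: writing $\tau\dt\tau=\ad_Z$, one has $\dt A_J=-\rd_{A_J}\tau(Z)+\tau(\dt A_J)$), in addition to the term $-\tfrac14[\Phi,[\tau(Z),\tau(\Phi)]]$ from varying $\Phi^{*_\tau}$. Second, and more seriously, your argument for injectivity of $\cL_2$ is a non sequitur: uniqueness of the harmonic metric up to compact gauge (Theorem \ref{thm Hitchin-Simpson}) is a statement about solutions, and does not by itself imply that the \emph{linearized} operator has trivial kernel --- an injective map can perfectly well have a degenerate derivative. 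Since injectivity of $\cL_2$ is the entire content of the theorem, this step must be done by hand. The paper does it by showing that the two contributions above combine into $\cL_2(\dot\tau)=-D''D'(\tau(Z))$; pairing with $\tau(Z)$ and integrating by parts gives $D'(\tau(Z))=0$, and stability (i.e.\ $H^0(\B^\bullet)=0$ together with semisimplicity of $\G$) forces $\tau(Z)=0$, hence $Z=0$ and $\dot\tau=0$. To repair your argument you should carry out this computation (or the equivalent second-variation computation for the Donaldson functional) rather than appeal to the uniqueness statement.
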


Before proving the theorem, we prove an auxiliary lemma. Let $(J_t)$ and $(\tau_t)$ be 1-parameter families of complex structures on $P$ and Cartan involutions, respectively, and denote the associated Chern-Singer connection by $A_{J_t}$.  Set $\tau=\tau(0)$ and $\dt\tau=\deriv\tau(t)$.
Note that $\tau\dt\tau$ is a derivation. Since $\gfrak$ is semisimple there
is section $Z$ of $P(\gfrak)$ with 
$\tau\dt\tau=\ad_Z$.

\begin{lemma}With the above  set up, we have
\begin{equation}\label{eq aux lem}
	( \dt A_J)^{1,0}=-(\rd_{A_J}\tau(Z))^{1,0} +\tau((\dt A_J)^{0,1}).
\end{equation}
\end{lemma}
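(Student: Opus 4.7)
The plan is to reduce the general variation to the case where $\tau$ is held fixed, where Lemma \ref{lem:DifferentialOfChernSinger} applies. To do this, I will absorb the variation of $\tau_t$ into a family of gauge transformations.

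First, I parameterize $\tau_t = \Ad_{h_t^{-1}}\tau\Ad_{h_t}$ for a smooth family $h_t$ of $\sG$-equivariant maps $P\to\sG$ (for the conjugation action) with $h_0 = e$. Setting $H:=\dot h|_{t=0}$, the gauge freedom modulo $\kfrak$-valued sections lets me choose $H\in\pfrak$, in which case a short computation gives $H=\tfrac{1}{2}Z$, consistent with $\tau\dot\tau=\ad_Z$ and the forced identity $\tau(Z)=-Z$. I form the family $\phi_t\in\Aut(P)$, $\phi_t(p):=p\cdot h_t^{-1}(p)$, which satisfies $\phi_t(P_{\sK_0})=P_{\sK_t}$ and therefore $\phi_t^*\tau_t=\tau$. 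By the gauge-equivariance of the Chern--Singer construction (Proposition \ref{prop:SingerConnection}), $\phi_t^* A_{J_t}$ is then the Chern--Singer connection for $(\phi_t^* J_t,\tau)$, a family in which only the complex structure varies. Applying Lemma \ref{lem:DifferentialOfChernSinger}, the derivative $\dot F := \tfrac{d}{dt}|_{0}\,\phi_t^* A_{J_t}$ is therefore $\tau$-real: $\tau(\dot F^{0,1})=\dot F^{1,0}$.

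The next step is to compute $\dot F$ explicitly. By the chain rule, $\dot F = \dot A_J + \cL_{\dot\phi} A_J$, where $\dot\phi=-H^\sharp=-\tfrac{1}{2}Z^\sharp$. Cartan's magic formula, together with the fundamental identity $\iota_{X^\sharp}A_J=X$ and the basicness of $F_{A_J}$, yields $\cL_{X^\sharp}A_J = d_{A_J}X$ for any section $X$ of $P(\gfrak)$; hence $\dot F = \dot A_J - \tfrac{1}{2}d_{A_J}Z$. Inserting this into the $\tau$-realness relation and extracting $(1,0)$-parts gives
\[
(\dot A_J)^{1,0} = \tau((\dot A_J)^{0,1}) - \tfrac{1}{2}\tau(\dbar_{A_J}Z) + \tfrac{1}{2}\partial_{A_J}Z.
\]

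The concluding, and most delicate, step is to identify $\tau(\dbar_{A_J}Z)$ with $\partial_{A_J}\tau(Z)$. The key observation is that $A_J$ itself is $\tau$-real as a basic $1$-form, i.e., $\tau(A_J^{0,1})=A_J^{1,0}$; this is immediate in a local $\sK$-trivialization, where $A_J$ takes $\kfrak$-values. Combining this with $\tau(\dbar Z)=\partial\tau(Z)$ (valid in such a trivialization) and using $\tau(Z)=-Z$ gives $\tau(\dbar_{A_J}Z)=\partial_{A_J}\tau(Z)=-\partial_{A_J}Z$. Substituting back, the two $\tfrac{1}{2}$ contributions recombine cleanly into $\partial_{A_J}Z = -(d_{A_J}\tau(Z))^{1,0}$, yielding \eqref{eq aux lem}. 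The main obstacle is this final recombination, which requires the non-obvious $\tau$-realness of the full Chern--Singer connection on $\Sigma$---a separate fact from the $\tau$-compatibility of its horizontal distribution.
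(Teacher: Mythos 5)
Your proof is correct, but it takes a genuinely different route from the paper's. The paper simply differentiates the metric-compatibility identity $\rd_{A_{J_t}}\tau_t(X)=\tau_t(\rd_{A_{J_t}}X)$ at $t=0$ for an arbitrary fixed section $X$ of $P(\gfrak)$, cancels terms, and uses faithfulness of $\ad$ (semisimplicity) to strip off the bracket with $\tau(X)$ — a four-line algebraic computation. You instead conjugate the moving family of reductions back to the fixed one by a family of automorphisms $\phi_t$ generated by $-\tfrac{1}{2}Z^\sharp$, invoke Lemma \ref{lem:DifferentialOfChernSinger} to get $\tau$-reality of the fixed-$\tau$ variation $\dt F$, and use the infinitesimal gauge action formula $\cL_{X^\sharp}A_J=\rd_{A_J}X$; your sign conventions check out, and the final recombination is correct. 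What your approach buys is a conceptual explanation — the discrepancy between $\dt A_J$ and a $\kfrak$-valued variation is exactly the gauge vector field needed to freeze $\tau$ — at the cost of more moving parts (equivariance of the Chern--Singer construction, the identification $H=\tfrac{1}{2}Z$, Cartan's formula). One quibble: the ``non-obvious $\tau$-realness'' you need at the end, $\tau(\dbar_{A_J}Z)=\partial_{A_J}\tau(Z)$, is not really a separate fact from Proposition \ref{prop:SingerConnection}; it is precisely the statement that $A_J\in\cA(P_\sK)$, i.e.\ the $t=0$ instance of the identity $\rd_{A_J}\circ\tau=\tau\circ\rd_{A_J}$ whose derivative is the paper's entire proof. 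In that sense your argument ultimately rests on the same underlying input, just packaged geometrically rather than differentiated directly.
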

\begin{proof}
	Let $X$ be a smooth section of $P(\gfrak)$, independent of the
    parameter. By definition of the Chern-Singer
    connection, $\rd_{A_{J_t}}\tau_t(X)=\tau_t(\rd_{A_{J_t}}X)$.
   Differentiating this at $t=0$, we have
    \begin{align*}
        [\dt A_J,\tau(X)]+\rd_{A_J}\dt\tau(X)&= \dt\tau(\rd_{A_J}
        X)+\tau([\dt A_J,X]) \\
        [\dt A_J,\tau(X)]+\rd_{A_J}[\tau(Z),\tau(X)]&= [\tau(Z),\tau(\rd_{A_J}
        X)]+[\tau(\dt A_J),\tau(X)] \\
        [\dt A_J+\rd_{A_J}\tau(Z),\tau(X)]&=
        [\tau(\dt A_J),\tau(X)]. 
    \end{align*}
Since $X$ was arbitrary, $\dt A_J=-\rd_{A_J}\tau(Z)+\tau(\dt
    A_J)$. Taking the $(1,0)$-part completes the proof.
\end{proof}

\begin{proof}[Proof of Theorem \ref{thm:MetricvariationSlice}]
Let $(J_t,\Phi_t)$ be a 1-parameter family in a semiharmonic slice $\cS_{(J,\Phi)}$ such that $(J_0,\Phi_0)=(J,\Phi)$ and $\tau_t$ be the
    1-parameter family of Cartan involutions which such that $(J_t,\Phi_t)$
    solve the $\tau_t$-Hitchin equation
\begin{equation}
	\label{eqn:hitchin-eqns}F_{A_{J_t}}-\tfrac{1}{4}[\Phi_t,\tau_t(\Phi_t)]=0.
\end{equation}
Set  $(\frac{1}{2i}\beta,\psi)=((\dt A_J)^{0,1},(\dt\Phi)^{1,0})$. By assumption $(\beta,\psi)$ are semiharmonic, hence
\begin{equation}
	\label{eq semiharmonic var proof}0=\tfrac{1}{2i}D'(\beta,\psi)=\rd_{A_J}\left(\tfrac{1}{2i}\beta\right)+\left[\left(\tfrac{1}{2i}\Phi\right)^*,\tfrac{1}{2i}\psi\right]=\tfrac{1}{2i}\rd_{A_J}\beta-\tfrac{1}{4}[\tau(\Phi),\psi].
\end{equation}
We first compute the first variation of \eqref{eqn:hitchin-eqns} at $t=0$. Set $\tau=\tau_0$. 
\[\overset{\bullet}{\wideparen{\tau(\Phi)}}=\tau(\dt\Phi)+\dt\tau(\Phi)= \tau(\dt\Phi)+\tau(\tau\dt\tau(\Phi))=\tau(\dt\Phi)+\tau([Z,\Phi]). \]
Since $\Phi$ has type $(1,0)$, the $(0,1)$-part of $\dt\Phi$ does not enter in the computation, and we have
\[\overset{\bullet}{\wideparen{[\Phi,\tau(\Phi)]}}=[\psi,\tau(\Phi)]+[\Phi,\tau(\psi)]+[\Phi,[\tau(Z),\tau(\Phi)]].\]
For the curvature term, we have $\dt F_{A_{J}}= \rd_{A_J}( \dt A_J )$. Using \eqref{eq aux lem}, and $\tfrac{1}{2i}\beta=(\dt A_J)^{0,1}$, we have
$$
	\dt F_{A_{J}}= \tfrac{1}{2i}\rd_{A_J}\beta+ \rd_{A_J}\left(\tau\left(\tfrac{1}{2i}\beta\right)-(\rd_{A_J}\tau (Z))^{1,0}\right)=\tfrac{1}{2i}
	\rd_{A_J}\beta+\tau\left(\tfrac{1}{2i}\rd_{A_J}\beta\right)-\rd_{A_J}((\rd_{A_J}\tau(Z))^{1,0}).
$$
Hence, using $[\Phi,\tau(\psi)]=\tau([\tau(\Phi),\psi])$ and \eqref{eq semiharmonic var proof}, the linearization of the Hitchin equations \eqref{eqn:hitchin-eqns} is given by
\begin{align*}
	 0&=\tfrac{1}{2i}\rd_{A_J}\beta-\tfrac{1}{4}[\tau(\Phi),\psi]+\tau\left(\tfrac{1}{2i}\rd_{A_J}\beta-\tfrac{1}{4}[\tau(\Phi),\psi]\right)-\rd_{A_J}((\rd_{A_J}\tau(Z))^{1,0})-\tfrac{1}{4}[\Phi,[\tau(Z),\tau(\Phi)]]\\&
	 =-\rd_{A_J}((\rd_{A_J}\tau(Z))^{1,0})-\tfrac{1}{4}[\Phi,[\tau(Z),\tau(\Phi)]]\\
	 &=-D''\left((\rd_{A_J}\tau(Z))^{1,0}-\tfrac{1}{2i}[\tau(Z),\tau(\Phi)]\right)\\
	 &=-D''D'(\tau(Z)).
\end{align*}
By the stability assumption, this implies $\tau(Z)=0$, completing the proof.
\end{proof}

\section{Isomonodromic and horizontal distributions and energy}\label{sec:iso hor and energy}
Recall that $\omega_0$ is the closed two form on $\bfM(\sG)$ defined by the
$\I$-invariant part of the pullback of the Atiyah-Bott-Goldman form by the
nonabelian Hodge map \eqref{eq NAH map}.
The associated hermitian form $h_0$ is given by 
\[h_0(\cdot,\cdot)=2(\omega_0(\I\cdot,\cdot)+i\omega_0(\cdot,\cdot)).\]

In this section we prove Theorems \ref{thm Non intro Thm B} and \ref{thm Non intro Thm C} concerning the relation between the kernel of $h_0,$ the isomonodromic distribution and the energy. These are equivalent to Theorems B and C from the introduction. 
We start by defining the isomonodromic  distribution and deducing some immediate consequences. 
\subsection{Horizontal and isomonodromic distribution}
Recall that the hermitian form $h_0$ is positive definite on the fibers of $\pi:\bfM(\sG)\to\bfT(\Sig)$. Denote the vertical tangent bundle by
\[\V\bfM=\ker(\rd\pi)\subset \T\bfM(\sG).\]
\begin{definition}\label{def:horizontal dist}
	The \emph{horizontal distribution} is the subbundle $\cH\subset \T\bfM(\sG)$ which is $h_0$-perpendicular to the vertical tangent bundle. For each $x\in \bfM(\sG)$ and each tangent vector $[\mu]\in \T_{\pi(x)}\bfT(\Sig)$, there is a unique lift $w_{[\mu]}\in\cH_x$ which we call the \emph{horizontal vector} associated to $[\mu]$. 
\end{definition}
Note that the horizontal vector $w_{[\mu]}$ is the lift of $[\mu]$ with minimal $h_0$-norm.

The fibers of the nonabelian Hodge map $\rH$ define a foliation of $\bfM(\sG)$ called the \emph{isomonodromic foliation}. The leaves of the foliation are called \emph{isomonodromic leaves} and will be denoted by $\cL_\rho= \rH^{-1}(\rho)$. The  leaf $\cL_\rho$ can also be interpreted as the image of a section 
\[s_\rho:\bfT(\Sig)\to\bfM(\sG).\]
 \begin{definition}
The \emph{isomonodromic distribution} is the subbundle $\cD\subset \T\bfM(\sG)$ defined by
     $\cD = \ker(\rd \rH)$.
For each $x\in \bfM(\sG)$ and each tangent vector $[\mu]\in \T_{\pi(x)}\bfT(\Sig)$, there is a unique lift $\ell_{[\mu]}\in\cD_x$ which we call the \emph{isomonodromic vector} associated to $[\mu].$
\end{definition}

In the \S \ref{sec:semiharmonic_representatives}, we will prove the following proposition relating the holomorphic section $\Theta$ from \S \ref{section hol bundle map} and Theorem \ref{theo:ModuliSpace} with the horizontal and isomonodromic distributions, see Remark \ref{rem proof of item 1} and Lemma \ref{lem key relation of iso}. 
\begin{proposition}\label{prop hor and iso dist properties}
	For $x\in\bfM(\sG)$ and $[\mu]\in \T_{\pi(x)}\bfT(\Sig)$, let $w_{[\mu]}$ and $\ell_{[\mu]}$ be the horizontal and isomonodromic lifts of $[\mu]$. Then 
	\begin{enumerate}
		\item $\Vert w_{[\mu]}\Vert^2_{h_0}=-\Vert\Theta_x([\mu])\Vert^2_{h_0}$, and 
		\item $w_{[\mu]}=\frac{1}{2}\left(\ell_{[\mu]}-\I\ell_{[i\mu]}\right)$.
	\end{enumerate}
\end{proposition}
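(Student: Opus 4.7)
The plan exploits two structural facts. First, any isomonodromic vector $\ell_{[\mu]}$ is annihilated by $\omega^{\rH}:=\rH^\ast\omega_{ABG}$, since $\rd\rH(\ell_{[\mu]})$ is an infinitesimal gauge transformation of the flat connection. Second, on the vertical bundle $\V\bfM$ the form $\omega_0$ coincides with $\omega^{\rH}$ and is K\"ahler with respect to $\I$ (the $(2,0){+}(0,2)$ piece in the formula of Lemma~\ref{lem omega_0 formula} vanishes when the Beltrami component is zero). Introduce the vertical class
\[
u_\mu := \I\ell_{[\mu]} - \ell_{[i\mu]}\in\V\bfM,
\]
which is vertical because $\rd\pi$ is $\C$-linear; a direct computation gives $u_{i\mu}=-\I u_\mu$. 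Using the identity $\omega_0(u,v)=\tfrac{1}{2}(\omega^{\rH}(u,v)+\omega^{\rH}(\I u,\I v))$ and the annihilation property of $\ell_{[\mu]}$ produces the master formula
\[
h_0(\ell_{[\mu]},v) \;=\; \omega^{\rH}(u_\mu,v)+i\,\omega^{\rH}(u_\mu,\I v) \qquad \text{for every } v\in \T\bfM(\sG).
\]

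To prove (2), I would restrict this master formula to $v\in\V\bfM$. The K\"ahler condition there gives $\omega^{\rH}(u_\mu,\I v)=-\omega^{\rH}(\I u_\mu,v)$, which rearranges the right-hand side to $-\tfrac{i}{2}\,h_0(u_\mu,v)$. Combined with the analogous identity $h_0(\ell_{[i\mu]},v)=-\tfrac{1}{2}\,h_0(u_\mu,v)$ obtained from $u_{i\mu}=-\I u_\mu$, one concludes $h_0\!\left(\tfrac{1}{2}(\ell_{[\mu]}-\I\ell_{[i\mu]}),v\right)=0$ for all vertical $v$. Since $\tfrac{1}{2}(\ell_{[\mu]}-\I\ell_{[i\mu]})$ projects to $[\mu]$ under $\rd\pi$, it must coincide with the horizontal lift $w_{[\mu]}$, which is (2).

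For (1), substituting $v=\ell_{[\mu]}$ in the master formula and using $\omega^{\rH}(u_\mu,u_\mu)=0$ together with $\omega^{\rH}(u_\mu,\ell_{[i\mu]})=0$ shows that $\ell_{[\mu]}$ is $h_0$-isotropic. From (2) one has $\ell_{[\mu]}-w_{[\mu]}=-\tfrac{1}{2}\I u_\mu\in\V\bfM$, so the Pythagorean decomposition gives
\[
\|w_{[\mu]}\|^2_{h_0} \;=\; -\tfrac{1}{4}\,\|u_\mu\|^2_{h_0}.
\]

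The main obstacle is the remaining pointwise norm identity $\|u_\mu\|_{h_0}=2\,\|\Theta_x([\mu])\|_{h_0}$. I expect to establish it by producing a distinguished semiharmonic representative of the isomonodromic lift $\ell_{[\mu]}$: the isomonodromy condition $\rd\rH(\ell_{[\mu]})\in\rIm(\rd_D)$, combined with Theorem~\ref{thm:MetricvariationSlice} (vanishing of the first variation of the harmonic metric along the slice), pins down the $(1,0)/(0,1)$ content of $(\beta_\mu,\psi_\mu)$; the resulting formula exhibits $u_\mu\in H^1(\A^\bullet)$ as an explicit cocycle whose $h_0$-norm is $2\|\Theta_x([\mu])\|_{h_0}$. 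Together with the displayed equation above, this completes (1).
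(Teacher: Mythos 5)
Your argument for Item (2) is correct and takes a genuinely different route from the paper. The paper constructs, via Lemma \ref{lem semiharmonic iso gauge hermitian}, explicit hermitian sections $\zeta_1,\zeta_2$ with $D'(i\zeta_1+\zeta_2)=\tfrac{1}{2}\left((\beta_1,\psi_1)-i(\beta_2,\psi_2)\right)$ and then invokes the characterization of horizontal semiharmonic vectors in Lemma \ref{lem D' and D'' eta}. You instead argue purely formally: $\rH^\ast\omega_{ABG}$ annihilates $\cD$, $\omega_0$ is its $\I$-invariant part, and the anti-invariant part $-\tfrac14\int_\Sig\kappa_\gfrak(\theta_1\wedge\theta_2+\theta_1^\ast\wedge\theta_2^\ast)$ vanishes on pairs of vertical vectors because $\psi_1\wedge\psi_2$ is a $(2,0)$-form on a surface. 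I checked your master formula and the computation that $\tfrac12(\ell_{[\mu]}-\I\ell_{[i\mu]})$ is $h_0$-orthogonal to $\V\bfM(\sG)$; since $h_0$ is positive definite on $\V\bfM(\sG)$, the horizontal lift is unique and the conclusion follows. This bypasses the gauge-theoretic computations entirely and is arguably cleaner; what it does not produce is the explicit semiharmonic representative of $w_{[\mu]}$, which the paper needs elsewhere (e.g.\ in the proof of Theorem \ref{thm Non intro Thm C}).

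For Item (1), your reduction is also correct: $h_0$-isotropy of $\cD$ (Lemma \ref{lem D isotr}, which you rederive from the master formula) together with the orthogonal decomposition $\ell_{[\mu]}=w_{[\mu]}-\tfrac12\I u_\mu$ gives $\Vert w_{[\mu]}\Vert^2_{h_0}=-\tfrac14\Vert u_\mu\Vert^2_{h_0}$. But the remaining identity $\Vert u_\mu\Vert^2_{h_0}=4\Vert\Theta_x([\mu])\Vert^2_{h_0}$ is exactly where the analytic content of the statement lives, and you only assert a plan for it. Carrying it out requires: (i) the formula $D'\zeta_a=\tfrac{1}{2i}\bigl(\beta_a+\psi_a-(\tfrac{1}{2i}\Phi\mu_a)^\ast\bigr)$ for the isomonodromic representatives, whose proof (Lemma \ref{lem semiharmonic iso gauge hermitian}) needs the reality $\zeta_a=\zeta_a^\ast$ established via the K\"ahler identities; (ii) the resulting decomposition $2i(\tfrac{1}{2i}\Phi\mu)^\ast=u_\mu+2D'(\zeta_1+i\zeta_2)$, which exhibits $u_\mu$ as $2i$ times the projection of $(\tfrac{1}{2i}\Phi\mu)^\ast$ onto $\ker(D')\cap\ker(D'')$; and (iii) the facts that $\ast$ is a conjugate-linear isometry preserving $\ker(D')\cap\ker(D'')$ and that $\tfrac{1}{2i}\Phi\mu\perp\rIm(D')$, so that this projection has the same norm as $\pr_{\ker(D')}(\tfrac{1}{2i}\Phi\mu)$, which computes $\Vert\Theta_x([\mu])\Vert_{h_0}$ by Proposition \ref{prop h0 norm of Theta}. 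The paper obtains the same identity by computing $\Vert w_{[\mu]}\Vert^2_{h_0}$ directly (Lemmas \ref{lem D' and D'' eta} and \ref{lem norm of min norm vect}, using the Green's operators of the $D'$- and $D''$-Laplacians). Either way this step is a genuine computation, not a formality, so as written your proof of (1) is incomplete, though the route you sketch is viable.
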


The following two lemmas are immediate from the definition of $\omega_0$. 

\begin{lemma}\label{lem I-inv D in K}
Let $x\in\bfM(\sG)$ and $\cD_x\subset\T_x\bfM(\sG)$ be the isomonodromic distribution at $x.$ Then $\cD_x\cap \I(\cD_x)$ is contained in the kernel of $\omega_0$ and hence the kernel of $h_0.$
\end{lemma}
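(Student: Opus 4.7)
The plan is to read the statement directly off the defining formula for $\omega_0$. Recall from Definition \ref{def:omega0} that $\omega_0$ is the $\I$-invariant part of $\widetilde\omega := \rH^\ast \omega_{ABG}$, i.e.,
\[
\omega_0(u,v) \;=\; \tfrac{1}{2}\bigl(\widetilde\omega(u,v) + \widetilde\omega(\I u,\I v)\bigr),
\]
and that by definition $\cD_x = \ker(\rd_x \rH)$, so $\widetilde\omega(w,\cdot) = \omega_{ABG}(\rd_x\rH(w),\rd_x\rH(\cdot)) = 0$ whenever $w \in \cD_x$.

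First I would observe that $\cD_x \cap \I(\cD_x)$ is $\I$-invariant: indeed, if $v = \I w$ with $w \in \cD_x$, then $\I v = -w \in \cD_x$, while $v$ itself lies in $\cD_x$ by assumption. Thus both $v$ and $\I v$ belong to $\cD_x$, so each of the two terms in the formula for $\omega_0(v,u)$ vanishes for every $u \in \T_x\bfM(\sG)$. Hence $v$ lies in the kernel of $\omega_0$.

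To finish, I would pass to $h_0$ using
\[
h_0(v,u) \;=\; 2\,\omega_0(\I v, u) + 2i\,\omega_0(v,u).
\]
Since $\cD_x \cap \I(\cD_x)$ is $\I$-invariant, $\I v$ also lies in this intersection, so the argument of the previous paragraph applies to $\I v$ as well. Both terms in the displayed formula therefore vanish, giving $h_0(v,\cdot) = 0$. No substantive obstacle arises: the whole argument is a bookkeeping exercise showing that the $\I$-symmetrization which defines $\omega_0$ from $\rH^\ast\omega_{ABG}$ still annihilates any $\I$-invariant subspace of $\cD$.
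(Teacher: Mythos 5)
Your proof is correct and follows essentially the same route as the paper: both arguments read the claim off the formula $\omega_0(v,u)=\tfrac{1}{2}\bigl(\rH^\ast\omega_{ABG}(v,u)+\rH^\ast\omega_{ABG}(\I v,\I u)\bigr)$ together with the observation that $v$ and $\I v$ both lie in $\ker(\rd_x\rH)$. The only difference is that you spell out the $\I$-invariance of $\cD_x\cap\I(\cD_x)$ and the passage to $h_0$, which the paper leaves implicit.
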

\begin{proof}
	Suppose $\ell\in \cD_x\cap \I(\cD_x)$. For any tangent vector $u\in T_x\bfM(\sG)$ we have
	\[\omega_0(\ell,u)=\tfrac{1}{2}(\rH^*\omega_{ABG}(\ell,u)+\rH^*\omega_{ABG}(\I(\ell),\I(u)))=0\]
	since $\ell$ and $I(\ell)$ are in the kernel of $\rd_x\rH$.
\end{proof}
We will prove the kernel of $\omega_0$ at $x$ is exactly $\cD_x\cap \I(\cD_x)$ in the next subsection.
\begin{lemma}\label{lem D isotr}
	Both isomonodromic distribution $\cD$ and  $\I(\cD)$ are isotropic with respect to the real part of $h_0$. In particular, for any isomonodromic vector $\ell\in \cD$ we have 
	$\Vert \ell\Vert _{h_0}^2=0$.
\end{lemma}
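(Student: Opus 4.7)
The plan is to unwind the definitions of $h_0$ and $\omega_0$ and use the fact that $\cD=\ker(\rd\rH)$ together with the $\I$-invariant averaging used to define $\omega_0$.

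First, recall that by Definition \ref{def:omega0} the form $\omega_0$ is the $\I$-invariant part of $\rH^\ast \omega_{ABG}$, so for any $u,v\in \T\bfM(\sG)$ we have
\[
\omega_0(u,v)=\tfrac12\bigl(\rH^\ast\omega_{ABG}(u,v)+\rH^\ast\omega_{ABG}(\I u,\I v)\bigr),
\]
and the associated real inner product is $\Real(h_0)(u,v)=2\omega_0(\I u,v)$. Thus, to show that $\cD$ is isotropic for $\Real(h_0)$, it suffices to take $\ell_1,\ell_2\in \cD_x$ and evaluate
\[
\Real(h_0)(\ell_1,\ell_2)=2\omega_0(\I\ell_1,\ell_2)=\rH^\ast\omega_{ABG}(\I\ell_1,\ell_2)-\rH^\ast\omega_{ABG}(\ell_1,\I\ell_2),
\]
where I have used $\I^2=-\Id$. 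Both terms on the right vanish: in the first, $\ell_2\in\ker(\rd\rH)$, and in the second, $\ell_1\in \ker(\rd\rH)$, so the pullback of $\omega_{ABG}$ is zero in either case.

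The same computation applied to $u_a=\I\ell_a\in \I(\cD_x)$ with $\ell_a\in \cD_x$ gives
\[
\Real(h_0)(\I\ell_1,\I\ell_2)=2\omega_0(-\ell_1,\I\ell_2)=-\rH^\ast\omega_{ABG}(\ell_1,\I\ell_2)+\rH^\ast\omega_{ABG}(\I\ell_1,\ell_2)=0,
\]
so $\I(\cD)$ is also isotropic for $\Real(h_0)$.

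For the final assertion, note that $\omega_0(\ell,\ell)=0$ automatically by antisymmetry, so $h_0(\ell,\ell)=2\omega_0(\I\ell,\ell)=\Real(h_0)(\ell,\ell)$; by the previous paragraph this vanishes whenever $\ell\in\cD$. There is no real obstacle here: the argument is a one-line consequence of the definitions and of $\cD=\ker(\rd\rH)$. The only subtlety worth flagging is that although $\rH^\ast\omega_{ABG}$ is itself isotropic on $\cD$, its $\I$-invariant part $\omega_0$ is only isotropic on $\cD$ after pairing with $\I\cD$, not on the full tangent space — this is precisely what makes Lemma \ref{lem I-inv D in K} a separate (and stronger) statement about $\cD\cap \I(\cD)\subset \ker(h_0)$ rather than merely about $\cD$ itself.
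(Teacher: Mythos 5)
Your proof is correct and is essentially identical to the paper's: both expand $2\omega_0(\I\ell,u)$ via the $\I$-averaged pullback of $\omega_{ABG}$ and observe that each of the two resulting terms contains an argument lying in $\ker(\rd\rH)$. The closing remark distinguishing isotropy of $\Real(h_0)$ on $\cD$ from the stronger statement of Lemma \ref{lem I-inv D in K} is accurate.
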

\begin{proof}
	The real part of $h_0$ is $2\omega_0(\I\cdot,\cdot)$. For $\ell,u\in \cD$ we have
	\[ 2 \omega_0(\I (\ell),u)=\rH^*\omega_{ABG}(\I(\ell),u)+\rH^*\omega_{ABG}(-\ell,\I(u))=0\]
	since $\ell$ and $u$ are in the kernel of $\rd\rH.$ The proof is the same for $\I(\cD).$
\end{proof}
\begin{remark}
	Lemma \ref{lem D isotr} implies that $h_0$ is nonpositive on the horizontal distribution $\cH$ since $\cD$ and $\cH$ are both transverse to the vertical bundle $\V\bfM$ and $h_0$ is positive definite on $\V\bfM$.
\end{remark}

\subsection{Semiharmonic representatives}\label{sec:semiharmonic_representatives}
 For this section, we fix a stable Higgs bundle $(J,\Phi)$ solving the Hitchin equation and let $x=[J,\Phi]$ be the associated point in $\bfM(\sG)$.  We now relate the semiharmonic representatives of horizontal vectors and isomonodromic vectors. 

Recall that the tangent space $\T_x\bfM(\sG)$ can be represented by 
the space of semiharmonic tangent vectors at $(J,\Phi)$.  
The nonabelian Hodge map on the configuration space is defined by 
\[\rH(J,\Phi)=A_J+\tfrac{1}{2i}\Phi+\left(\tfrac{1}{2i}\Phi\right)^*.\]
As before, we write this flat connection as $D=\rH(J,\Phi)$ and decompose $D=D''+D'$, where 
\[\xymatrix{D''=\dbar_J+\frac{1}{2i}\Phi&\text{and}&D'=\partial_{A_J}+\left(\tfrac{1}{2i}\Phi\right)^*}.\] 
Now $D'$ and $D''$ satisfy the K\"ahler identities
\eqref{eqn:kahler-simpson}, and since $D$ is flat we have $D'D''=-D''D'$. 
As in Definition \ref{def:semiharmonic}, a tangent vector $(\mu,\beta,\psi)\in \T_{(J,\Phi)}\cC(P)$ is called semiharmonic if 
\begin{equation}
	\label{semiharmonic THm BC sect} \xymatrix{D''(\beta,\psi)+D'\left(\frac{1}{2i}\Phi\mu\right)=0&\text{and}&D'(\beta,\psi)=0}.
\end{equation}. 
Finally, recall that the $h_0$-norm of a class $v=[(\mu,\beta,\psi)]\in \T\bfM(\sG)$ is \[\Vert v\Vert ^2_{h_0}= -\Vert \tfrac{1}{2i}\Phi\mu\Vert^2+\Vert (\beta,\psi)\Vert^2~,\] 
where $(\mu,\beta,\psi)$ is any semiharmonic representative of
$v$ (see Proposition \ref{prop:semiharmonic}), and where henceforth
$\Vert\cdot\Vert$
(unannotated) always denotes the $L^2$-norm.
At $(J,\Phi)$, the section $\Theta$ from \S \ref{section hol bundle map} associates to a Beltrami differential $\mu$ the vertical vector \[\Theta_{(J,\Phi)}(\mu)=(0,\tfrac{1}{2i}\Phi\mu,0).\]
Note that $\Theta_{(J,\Phi)}(\mu)$ is not usually harmonic. Its
harmonic representative is
$\pr_{\ker(D')}\left(\tfrac{1}{2i}\Phi\mu\right)$, where $\pr_{\ker(D')}$ is the
$L^2$-orthogonal projection onto $\ker(D').$ 
By holomorphicity of $\Phi$, this only depends on the class $[\mu]$ of $\mu$. 
The following proposition is  immediate.
\begin{proposition}\label{prop h0 norm of Theta}
Let $x\in\bfM(\sG)$ and $[\mu]\in\T_{\pi(x)}\bfT(\Sig)$ be a tangent vector. Then 
\[\Vert\Theta_x([\mu])\Vert_{h_0}^2= \Vert
    \pr_{\ker(D')}\left(\tfrac{1}{2i}\Phi\mu\right)\Vert^2,\]
where $(J,\Phi)$ is a stable Higgs bundle solving the Hitchin equation with $x=[J,\Phi].$
\end{proposition}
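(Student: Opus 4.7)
The plan is to leverage that $\Theta_x([\mu])$ is vertical---represented by $(0,\tfrac{1}{2i}\Phi\mu,0)$ by Remark \ref{rem:eta in beta notation}---so that $h_0$ reduces to the positive-definite $L^2$-pairing $\Vert(\beta,\psi)\Vert^2$ on the vertical subspace, by Lemma \ref{lem omega_0 formula}(2). Therefore $\Vert\Theta_x([\mu])\Vert_{h_0}^2 = \Vert h\Vert^2$, where $h$ is the harmonic representative of the class in $H^1(\B^\bullet)$, and the task reduces to identifying this representative.

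The core step is to show that $h = \pr_{\ker(D')}(\tfrac{1}{2i}\Phi\mu,0)$, the $L^2$-orthogonal projection of $(\tfrac{1}{2i}\Phi\mu,0)\in\Omega^{0,1}(\Sig,P(\gfrak))\oplus\Omega^{1,0}(\Sig,P(\gfrak))$ onto $\ker(D')$. First one checks that $(0,\tfrac{1}{2i}\Phi\mu,0)$ is $\delta_\B^1$-closed using Lemma \ref{lem delta 1 B}; this reduces to $[\tfrac{1}{2i}\Phi,\tfrac{1}{2i}\Phi\mu]=0$, which holds on a Riemann surface since $\Phi\mu$ is the scalar contraction of $\Phi$ with $\mu$ and $[\Phi,\Phi]=0$. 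Performing the $D''$-Hodge decomposition $(\tfrac{1}{2i}\Phi\mu,0) = h + D''\sigma$, the K\"ahler identities \eqref{eqn:kahler-simpson} give $(D'')^\ast = -i\Lambda D'$ on 1-forms, so being harmonic is equivalent to lying in $\ker(D'')\cap\ker(D')$; in particular $h\in\ker(D')$. It then suffices to show $D''\sigma\perp \ker(D')$, which yields $\pr_{\ker(D')}(\tfrac{1}{2i}\Phi\mu,0) = h$.

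The orthogonality is a one-line K\"ahler identity computation: for any $\sigma\in\Omega^0(\Sig,P(\gfrak))$ and any 1-form $w\in\ker(D')$,
\[\langle D''\sigma,w\rangle = \langle \sigma,(D'')^\ast w\rangle = -i\langle\sigma,\Lambda D'w\rangle = 0.\]
There is no substantive obstacle; the only minor subtlety is that the harmonic $h$ generically has both $(0,1)$ and $(1,0)$ components (since the harmonicity system of Proposition \ref{prop:harmonics} couples $\beta$ and $\psi$), so $\pr_{\ker(D')}$ must be read as projection onto $\ker(D')$ inside the full pair space $\Omega^{0,1}(\Sig,P(\gfrak))\oplus\Omega^{1,0}(\Sig,P(\gfrak))$, matching the identification implicit in the text preceding the proposition.
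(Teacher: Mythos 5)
Your proof is correct and follows essentially the same route as the paper, which simply declares the proposition ``immediate'' after observing that the harmonic representative of the vertical vector $(0,\tfrac{1}{2i}\Phi\mu,0)$ is $\pr_{\ker(D')}(\tfrac{1}{2i}\Phi\mu)$ and that $h_0$ restricts to the positive $L^2$-pairing on vertical (semiharmonic) vectors. You have merely filled in the Hodge-theoretic details the authors leave implicit --- the $D''$-decomposition, the identification $\ker((D'')^\ast)=\ker(D')$ on $1$-forms via the K\"ahler identities, and the orthogonality $\rIm(D'')\perp\ker(D')$ --- all of which are accurate, including the observation that the projection must be taken in the full pair space $\Omega^{0,1}\oplus\Omega^{1,0}$.
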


\subsubsection{Semiharmonic horizontal vectors}

We have the following lemmas characterizing the semiharmonic horizontal vectors and their norms.

\begin{lemma}\label{lem D' and D'' eta}
Fix a Beltrami differential $\mu,$ and suppose $(\mu,\beta,\psi)$ satisfies
    $D''(\beta,\psi)+D'(\frac{1}{2i}\Phi\mu)=0$. Then $(\mu,\beta,\psi)$ is
    the semiharmonic representative of the horizontal vector
    $w_{[\mu]}\in\cH_x$ if and only there exists a unique
    $\eta\in\Omega^0(\Sig,P(\gfrak))$ such that $D'(\eta)=(\beta,\psi)$.
    In this case, $D''(\eta)=\pr_{\mathrm{Im}(D'')}\left(\tfrac{1}{2i}\Phi\mu\right),$
	where $\pr_{\mathrm{Im}(D'')}$ denotes the orthogonal projection of onto $\mathrm{Im}(D'')$.
\end{lemma}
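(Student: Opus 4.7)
The plan is to recast the statement via the $L^2$-Hodge decomposition of $\Omega^1(\Sigma,P(\gfrak))$ with respect to the operator $D'$ from \eqref{eqn:simpson-ops}. Since $(J,\Phi)$ solves the Hitchin equations \eqref{eq Hitchin eq}, the K\"ahler identities \eqref{eqn:kahler-simpson} yield
\[
\Omega^1(\Sigma,P(\gfrak))=\mathrm{Im}(D')\oplus \cH^1\oplus \mathrm{Im}((D')^*),
\]
where $\cH^1=\ker(D')\cap \ker((D')^*)$ is identified with $H^1(\A^\bullet)$, hence with the vertical tangent space $\V_x\bfM(\sG)$. Indeed, imposing $\mu=0$ in Definition \ref{def:semiharmonic} collapses both the holomorphicity and the gauge conditions to full harmonicity, so a vertical semiharmonic representative is necessarily of the form $(0,\beta',\psi')$ with $(\beta',\psi')\in \cH^1$.

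I would first compute the $h_0$-pairing between $(\mu,\beta,\psi)$ and a vertical harmonic $(0,\beta',\psi')$. By Lemma \ref{lem omega_0 formula}(2) this reduces to the $L^2$-pairing $\langle (\beta,\psi),(\beta',\psi')\rangle$, because the Beltrami contribution is killed by $\mu'=0$. Thus $(\mu,\beta,\psi)$ represents the horizontal lift $w_{[\mu]}$ if and only if it is semiharmonic and $(\beta,\psi)\perp \cH^1$. Combining the gauge condition $D'(\beta,\psi)=0$ with $(\beta,\psi)\perp \cH^1$ in the Hodge decomposition above forces $(\beta,\psi)\in\mathrm{Im}(D')$, i.e.\ $(\beta,\psi)=D'(\eta)$ for some $\eta\in\Omega^0(\Sigma,P(\gfrak))$. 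The converse is automatic, since $D'(\eta)\in\mathrm{Im}(D')\subset \ker D'$ (as $(D')^2=0$) and is $L^2$-orthogonal to $\cH^1$. Uniqueness of $\eta$ follows from the vanishing $\ker D'|_{\Omega^0(\Sigma,P(\gfrak))}=0$: by the K\"ahler identities this kernel coincides with $\ker D''|_{\Omega^0(\Sigma,P(\gfrak))}=H^0(\A^\bullet)$, which is trivial by stability of $(J,\Phi)$ and semisimplicity of $\sG$.

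For the final identification $D''(\eta)=\pr_{\mathrm{Im}(D'')}(\tfrac{1}{2i}\Phi\mu)$, I would substitute $(\beta,\psi)=D'(\eta)$ into the holomorphicity equation and use $D''D'=-D'D''$ (which is flatness of the connection $\rH(J,\Phi)$) to deduce
\[
D'\bigl(\tfrac{1}{2i}\Phi\mu-D''(\eta)\bigr)=0.
\]
Since $\Lambda$ annihilates $\Omega^1$, the K\"ahler identity reduces to $(D'')^*=-i\Lambda D'$ on $\Omega^1$; applying it gives $(D'')^*(\tfrac{1}{2i}\Phi\mu-D''(\eta))=0$. This places $\tfrac{1}{2i}\Phi\mu-D''(\eta)$ in the $L^2$-orthogonal complement of $\mathrm{Im}(D'')$, while $D''(\eta)\in\mathrm{Im}(D'')$ itself; this identifies $D''(\eta)$ as the claimed projection. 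The only subtle point in the whole argument is the uniform use of the K\"ahler identities to ensure that the $D'$- and $D''$-Hodge decompositions share the same finite-dimensional harmonic space, which is what makes the interplay between the gauge condition for $D'$ and the holomorphicity condition for $D''$ work cleanly.
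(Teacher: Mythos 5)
Your proof is correct. The first half is essentially the paper's argument: the paper also characterizes horizontality by pairing $(\beta,\psi)$ against vertical representatives, uses $\ker(D'')^{\perp}=\mathrm{Im}(D')$ (equivalently, your $D'$-Hodge decomposition together with $\ker((D')^\ast)=\ker(D'')$ from the K\"ahler identities), and gets uniqueness of $\eta$ from stability. Where you genuinely diverge is the final identification $D''(\eta)=\pr_{\mathrm{Im}(D'')}\left(\tfrac{1}{2i}\Phi\mu\right)$: the paper introduces the Green's operators of the $D'$- and $D''$-Laplacians, uses $G_{D'}=G_{D''}$ to write $\eta=G_{D''}(D'')^\ast\left(\tfrac{1}{2i}\Phi\mu\right)$ explicitly, and then reads off $D''\eta$ from the Hodge decomposition of $\tfrac{1}{2i}\Phi\mu$. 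You instead substitute $(\beta,\psi)=D'\eta$ into the holomorphicity equation, use $D'D''=-D''D'$ to get $D'\bigl(\tfrac{1}{2i}\Phi\mu-D''\eta\bigr)=0$, and convert this via $(D'')^\ast=-i\Lambda D'$ on $\Omega^1$ into membership in $\mathrm{Im}(D'')^{\perp}$. Your route is more direct and avoids invoking Green's functions and the identity $G_{D'}=G_{D''}$ altogether; the paper's version has the mild advantage of producing the explicit formula $\eta=G_{D''}(D'')^\ast\left(\tfrac{1}{2i}\Phi\mu\right)$, which makes the dependence of $\eta$ on $\mu$ manifest. Both arguments rest on the same two inputs you correctly flag: anticommutativity of $D'$ and $D''$ (from the Hitchin equations) and the K\"ahler identities aligning the two Hodge decompositions.
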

\begin{proof}

	If $(\mu,\beta,\psi)$ is a semiharmonic horizontal vector then it
    satisfies $D''(\beta,\psi)+D'(\frac{1}{2i}\Phi\mu)=0$ and is
    $h_0$-perpendicular to all vertical vectors. But  vertical vectors are
    given by triples $(0,\beta_1,\psi_1)$ with $(\beta_1,\psi_1)\in\ker(D'')$. Thus, 
	\[0=\langle (\mu,\beta,\psi),(0,\beta_1,\psi_1)\rangle_{h_0}=\langle(\beta,\psi),
    (\beta_1,\psi_1)\rangle~\]
	implies $(\beta,\psi)\in\ker(D'')^\perp=\mathrm{Im}(D').$ So there exists $\eta\in \Omega^0(\Sig,P(\gfrak))$ such that $D'\eta=(\beta,\psi)$. Uniqueness of $\eta$ follows from the stability assumption. 
	Conversely, if such an $\eta$ exists, then $(\beta,\psi)$ is $h_0$-perpendicular to the vertical space and semiharmonic since $D'^2=0$. Hence $(\mu,\beta,\psi)$ represents a horizontal vector $w_{[\mu]}\in \cH.$

Now suppose $(\mu,\beta,\psi)$ is a semiharmonic horizontal vector, and
    write $(\beta,\psi)=D'\eta.$ It remains to prove $D''\eta=\pr_{\mathrm{Im}(D'')}(\frac{1}{2i}\Phi\mu).$ 
The $D''$ and $D'$ Laplacians on $\Omega^0(\Sig,P(\gfrak))$ are given by
    $(D'')^*D''$ and $(D')^*D'$, respectively. By the stability assumption,
    both of these operators have trivial kernel. Let $G_{D''}$ and $G_{D'}$
    be the associated Green's functions (i.e.\ bounded two-sided inverses)
    of the respective Laplacians. 
    Using $D'D''=-D''D'$, one checks that $G_{D'}=G_{D''}.$

Since $(\mu,\beta,\psi)$ is semiharmonic, we have $D''(\beta,\psi)=-D'(\frac{1}{2i}\Phi\mu)$. By the K\"ahler identities, we can write $\eta$ as 
\[\eta=G_{D'}(D')^*D'\eta = G_{D'}(D')^*(\beta,\psi) = -iG_{D'}\Lambda D'\left(\tfrac{1}{2i}\Phi\mu\right) =G_{D''}(D'')^*\left(\tfrac{1}{2i}\Phi\mu\right). \]
Using the Hodge decomposition, write $\tfrac{1}{2i}\Phi\mu=D''u + v$, where $v\in \ker (D'')^\ast$. Then 
\begin{equation}
	\label{eq Im D''}D''\eta=
    D''G_{D''}(D'')^*\left(\tfrac{1}{2i}\Phi\mu\right) =
    D''G_{D''}(D'')^*(D''u)=D''u=\pr_{\mathrm{Im}(D'')}\left(\tfrac{1}{2i}\Phi\mu\right).
\end{equation}
This completes the proof.
\end{proof}

\begin{lemma}\label{lem norm of min norm vect}
 	Let $(\mu,\beta,\psi)$ is a horizontal semiharmonic vector. Then
\begin{enumerate}
	\item $\Vert (\beta,\psi)\Vert ^2=\Vert \pr_{\mathrm{Im}(D'')}\left(\tfrac{1}{2i}\Phi\mu\right)\Vert ^2$, and 
	\item $\Vert (\mu,\beta,\psi)\Vert _{h_0}^2=-\Vert \pr_{\ker(D')}\left(\tfrac{1}{2i}\Phi\mu\right)\Vert^2,$
\end{enumerate}
	where $\pr_{\mathrm{Im}(D'')}$ and $\pr_{\ker(D')}$ are the orthogonal projections onto $\mathrm{Im}(D'')$ and $\ker(D')$, respectively.
\end{lemma}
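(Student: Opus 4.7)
The plan is to use the explicit characterization of semiharmonic horizontal vectors from Lemma \ref{lem D' and D'' eta}: namely, $(\beta,\psi)=D'\eta$ for a unique $\eta\in\Omega^0(\Sig,P(\gfrak))$, with
$$D''\eta=\pr_{\mathrm{Im}(D'')}\left(\tfrac{1}{2i}\Phi\mu\right).$$
Throughout, I will exploit the K\"ahler identities \eqref{eqn:kahler-simpson} together with the fact that $\Lambda$ vanishes on $\Omega^0$ and $\Omega^1$, and the fact that on a Riemann surface $\Omega^{2,0}$ and $\Omega^{0,2}$ both vanish.

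For item (1), I will show that $\Vert D'\eta\Vert^2=\Vert D''\eta\Vert^2$, which combined with the formula for $D''\eta$ above immediately yields the claim. To this end, since $\eta\in\Omega^0$ and $(D')^\ast\eta=-i\Lambda D''\eta+\cdots=0$ (and likewise $(D'')^\ast\eta=0$), one has
$$\Vert D'\eta\Vert^2=\langle\eta,(D')^\ast D'\eta\rangle\ ,\qquad \Vert D''\eta\Vert^2=\langle\eta,(D'')^\ast D''\eta\rangle\ .$$
Expanding with the K\"ahler identities, $(D')^\ast D'\eta=i\Lambda D''D'\eta$ and $(D'')^\ast D''\eta=-i\Lambda D'D''\eta$. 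Since the total connection is flat and $(D')^2=(D'')^2=0$, we have $D'D''+D''D'=0$, so the two Laplacians agree on $\Omega^0$. This gives $\Vert D'\eta\Vert^2=\Vert D''\eta\Vert^2=\Vert\pr_{\mathrm{Im}(D'')}(\tfrac{1}{2i}\Phi\mu)\Vert^2$, proving item (1).

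For item (2), I start from
$$\Vert(\mu,\beta,\psi)\Vert_{h_0}^2=-\Vert\tfrac{1}{2i}\Phi\mu\Vert^2+\Vert(\beta,\psi)\Vert^2\ ,$$
substitute the result of item (1), and apply the orthogonal decomposition in $\Omega^1(\Sig,P(\gfrak))$:
$$\Omega^1=\mathrm{Im}(D''|_{\Omega^0})\oplus\ker((D'')^\ast|_{\Omega^1})\ ,$$
to obtain
$$\Vert\tfrac{1}{2i}\Phi\mu\Vert^2=\Vert\pr_{\mathrm{Im}(D'')}(\tfrac{1}{2i}\Phi\mu)\Vert^2+\Vert\pr_{\ker((D'')^\ast)}(\tfrac{1}{2i}\Phi\mu)\Vert^2\ .$$
Combining these yields $\Vert(\mu,\beta,\psi)\Vert_{h_0}^2=-\Vert\pr_{\ker((D'')^\ast)}(\tfrac{1}{2i}\Phi\mu)\Vert^2$, so the proof reduces to the identification $\ker((D'')^\ast|_{\Omega^1})=\ker(D'|_{\Omega^1})$. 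For any $\alpha\in\Omega^1$, the K\"ahler identity gives $(D'')^\ast\alpha=-i\Lambda D'\alpha$ (using $\Lambda\alpha=0$). On a Riemann surface $D'\alpha$ lies in $\Omega^{1,1}$, and $\Lambda:\Omega^{1,1}\to\Omega^0$ is a pointwise isomorphism; hence $(D'')^\ast\alpha=0$ if and only if $D'\alpha=0$, giving the required equality of kernels.

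The main obstacle I anticipate is purely one of bookkeeping rather than substance: even though $\tfrac{1}{2i}\Phi\mu$ is a pure $(0,1)$-form, the operator $D''$ on $\Omega^0$ produces a full $\Omega^1$-valued quantity (picking up a $(1,0)$ contribution from the bracket with $\Phi$), so its image and the orthogonal projection live in the full space $\Omega^1$, not just in $\Omega^{0,1}$. Once this is acknowledged, the K\"ahler identities and the vanishing of $\Omega^{2,0}$, $\Omega^{0,2}$ on a Riemann surface collapse both computations to one-line arguments.
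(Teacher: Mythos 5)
Your proposal is correct and follows essentially the same route as the paper: both rest on Lemma \ref{lem D' and D'' eta} (writing $(\beta,\psi)=D'\eta$ with $D''\eta=\pr_{\mathrm{Im}(D'')}(\tfrac{1}{2i}\Phi\mu)$), the K\"ahler identities \eqref{eqn:kahler-simpson}, and the anticommutation $D'D''=-D''D'$. The only cosmetic difference is in item (1), where you deduce $\Vert D'\eta\Vert^2=\Vert D''\eta\Vert^2$ from the agreement of the two Laplacians on $\Omega^0$, whereas the paper pairs $D''\eta$ directly against $\tfrac{1}{2i}\Phi\mu$ via the semiharmonicity condition; these are equivalent computations.
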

\begin{remark}\label{rem proof of item 1}
	Combining Item (2) of Lemma \ref{lem norm of min norm vect} with Proposition \ref{prop h0 norm of Theta} gives the proof of Item (1) of Proposition \ref{prop hor and iso dist properties}. 
\end{remark}
	\begin{proof}
Let $(\mu,\beta_\mu,\psi_\mu)$ is a horizontal semiharmonic vector and $\eta\in\Omega^0(\Sig, P(\gfrak))$ be as in Lemma \ref{lem D' and D'' eta}. Then, using the K\"ahler identities and $D''(\beta_\mu,\psi_\mu)=-D'(\frac{1}{2i}\Phi\mu)$, we have 
\[\Vert (\beta_\mu,\psi_\mu)\Vert^2=\langle D'\eta,D'\eta\rangle=\langle \eta,(D')^*D'\eta\rangle=\langle \eta, -i\Lambda D'\left(\tfrac{1}{2i}\Phi\mu\right)\rangle=\langle D''\eta,\tfrac{1}{2i}\Phi\mu\rangle.\]
Item (1) now follows from the fact that $D''\eta=\pr_{\mathrm{Im}(D'')}(\frac{1}{2i}\Phi\mu).$

Item (2) follows from Item (1) and the fact that $\ker(D')=\ker((D'')^*)$. Namely,
\begin{align*}
	\Vert (\mu,\beta_\mu,\psi_\mu)\Vert_{h_0}^2&=-\Vert
    \tfrac{1}{2i}\Phi\mu\Vert ^2+\Vert (\beta_\mu,\psi_\mu)\Vert ^2
    \\&=-\Vert \tfrac{1}{2i}\Phi\mu\Vert ^2+\Vert
    \pr_{\mathrm{Im}(D'')}\left(\tfrac{1}{2i}\Phi\mu\right)\Vert ^2 \\& =
    -\Vert \pr_{\ker(D')}\left(\tfrac{1}{2i}\Phi\mu\right)\Vert ^2.
\end{align*} 
This completes the proof.
\end{proof}

\subsubsection{Semiharmonic isomonodromic vectors}
We now characterize semiharmonic representatives of the isomonodromic distribution. We refer to these as semiharmonic isomonodromic vectors. 

\begin{lemma}\label{lem isomon vect on config}
	Let $D=\rH(J,\Phi)$ be the flat connection associated to $(J,\Phi).$ Fix a Beltrami differential $\mu,$ and suppose $(\mu,\beta,\psi)$ satisfies the holomorphicity condition $D''(\beta,\psi)+D'(\frac{1}{2i}\Phi\mu)=0$.
	Then, $(\mu,\beta,\psi)$ is a representative of the isomonodromic vector $\ell_{[\mu]}\in\cD$ if and only if there exists a unique $\zeta\in \Omega^0(\Sig,P(\gfrak))$ such that 
	\[\rd_{(J,\Phi)}\rH(\mu,\beta,\psi)= D\zeta.\]
\end{lemma}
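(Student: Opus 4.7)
The strategy is to unpack the definition $\cD_x = \ker(\rd\rH)$ in terms of representatives on the configuration space. Via the Riemann–Hilbert correspondence, the tangent space $\T_\rho\bfX(\sG)$ at $\rho = [D]$ is identified with the de Rham cohomology
\[
H^1_D \;=\; \ker\bigl(D:\Omega^1(\Sig,P(\gfrak))\to\Omega^2(\Sig,P(\gfrak))\bigr)\big/\rIm\bigl(D:\Omega^0(\Sig,P(\gfrak))\to\Omega^1(\Sig,P(\gfrak))\bigr),
\]
twisted by the flat connection $D=\rH(J,\Phi)$. The plan is to show that (i) under the holomorphicity hypothesis on $v=(\mu,\beta,\psi)$, the 1-form $\rd_{(J,\Phi)}\rH(v)\in\Omega^1(\Sig,P(\gfrak))$ given by Lemma \ref{lem deriv of H config} is automatically $D$-closed, (ii) its cohomology class in $H^1_D$ represents $\rd\rH([v])\in\T_\rho\bfX(\sG)$, and (iii) the kernel condition and uniqueness of $\zeta$ follow formally.

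For step (i), the cleanest route is to invoke Theorem \ref{thm:MetricvariationSlice}. Any $v$ satisfying the holomorphicity condition can, after the addition of an infinitesimal gauge vector $\delta_B^0(V)$ for a suitable $V\in\aut(P)$, be represented by a semiharmonic vector $v'$ at a Hitchin-solving lift. Along such a $v'$, the first-order variation of the harmonic metric vanishes, so $\rd_{(J,\Phi)}\rH(v')$ equals $\dot D_0$ for a curve of flat connections, hence is $D$-closed. Since $\rH$ is $\Aut_0(P)$-equivariant, $\rd_{(J,\Phi)}\rH(\delta_B^0(V))$ is $D$-exact; adding this back shows $\rd_{(J,\Phi)}\rH(v)$ itself is $D$-closed. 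Equivalently, if $\tau_t$ is the (uniquely gauged) family of harmonic metrics along any first-order variation of $(J,\Phi)$ in the direction $v$, and $\zeta_0\in\Omega^0(\Sig,P(\gfrak))$ encodes $\dot\tau_0$, then $\dot D_0=\rd_{(J,\Phi)}\rH(v)+D\zeta_0$; flatness of the $D_t$ gives $D\dot D_0=0$, and $D^2=0$ then forces $D(\rd_{(J,\Phi)}\rH(v))=0$.

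Step (ii) is now tautological: the Riemann–Hilbert class of $\rd\rH([v])$ in $H^1_D$ is represented by any $D$-closed lift of $\rd\rH$ applied to a representative of $[v]$, and the discussion above identifies this lift as $\rd_{(J,\Phi)}\rH(v)$. Therefore $[v]\in\cD_x=\ker(\rd\rH)$ if and only if this class is trivial in $H^1_D$, i.e.\ if and only if there exists $\zeta\in\Omega^0(\Sig,P(\gfrak))$ with $\rd_{(J,\Phi)}\rH(v)=D\zeta$. For uniqueness in step (iii), any two such $\zeta$ differ by a $D$-parallel section; but stability of $(J,\Phi)$ together with semisimplicity of $\sG$ implies that the stabilizer of $\rho$ is finite, so the only $D$-parallel section of $P(\gfrak)$ is zero. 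Hence $\ker(D:\Omega^0\to\Omega^1)=0$ and $\zeta$ is unique.

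The main subtle point is step (i): the holomorphicity condition linearizes only one of the two Hitchin equations, so $D$-closedness of $\rd_{(J,\Phi)}\rH(v)$ is not evident from $\delta_B^1(v)=0$ alone. The resolution is the metric-variation cancellation provided by Theorem \ref{thm:MetricvariationSlice}: the implicit first-order change in the harmonic metric along an arbitrary holomorphic deformation supplies precisely the $D\zeta_0$ term needed to convert $\rd_{(J,\Phi)}\rH(v)$ into a tangent vector to the space of flat connections, after which $D^2=0$ automatically yields $D\,\rd_{(J,\Phi)}\rH(v)=0$.
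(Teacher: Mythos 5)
Your proposal is correct and follows essentially the same route as the paper: identify $\T_\rho\bfX(\sG)$ with the de Rham cohomology $H^1_D$, observe that $[v]\in\ker(\rd\rH)$ precisely when $\rd_{(J,\Phi)}\rH(v)$ is $D$-exact, and deduce uniqueness of $\zeta$ from the vanishing of $D$-parallel sections for a stable (hence irreducible) flat connection. The paper's proof is simply far terser, asserting the equivalence in two lines; your step (i) on $D$-closedness via Theorem \ref{thm:MetricvariationSlice} makes explicit a point the paper leaves implicit (and invokes only afterwards, when deriving the explicit gauge equation).
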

\begin{proof}
	A tangent vector $(\mu,\beta,\psi)$ defines an isomonodromic vector if and only if the  $\rd_{(J,\Phi)}\rH(\mu, \beta,\psi)$ is zero when projected to the moduli space of flat connections. This is equivalent to being in the image of the flat connection $D.$ Uniqueness follows from the assumption that $(J,\Phi)$ is stable. 
\end{proof}
 From the computation of the derivative of $\rH$ in Lemma \ref{lem deriv of H config},
and using Theorem \ref{thm:MetricvariationSlice},
 the equation $\rd_{(J,\Phi)}\rH(\mu,\beta,\psi)=D\zeta$ is written explicitly as
	\begin{equation}
		\label{eq iso gauge
        explicit}\dbar_J\zeta+\left[\tfrac{1}{2i}\Phi,\zeta\right]+\partial_{A_J}\zeta+\left[\left(\tfrac{1}{2i}\Phi\right)^*,\zeta\right]=\tfrac{1}{2i}\left(\beta+\psi+\tfrac{1}{2i}\Phi\mu+\beta^\ast
        -\psi^\ast-\left(\tfrac{1}{2i}\Phi\mu\right)^*\right).
	\end{equation}
\begin{lemma}\label{lem semiharmonic iso gauge hermitian}
	Suppose $(\mu,\beta,\psi)$ is a semiharmonic isomonodromic vector, and
    let $\zeta$ be the unique solution to
    $\rd_{(J,\Phi)}\rH(\mu,\beta,\psi)=D\zeta$. Then $\zeta=\zeta^*$, and 
\begin{equation}
	 \label{eq D' isomono vect}\xymatrix{D'\zeta=\tfrac{1}{2i}\left(\beta+\psi-\left(\tfrac{1}{2i}\Phi\mu\right)^*\right)&\text{and}&D''\zeta=\tfrac{1}{2i}\left(\beta^*-\psi^*+\tfrac{1}{2i}\Phi\mu\right).}
	 \end{equation} 
\end{lemma}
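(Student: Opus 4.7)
The strategy exploits the $\tau$-involution structure on $P(\gfrak)$-valued forms. Setting $\sigma := \tfrac{1}{2i}(\Phi - \Phi^*)$, which is a $\tau$-self-adjoint $P(\gfrak)$-valued $1$-form, we have $D\zeta = d_{A_J}\zeta + [\sigma,\zeta]$ with $A_J$ being $\tau$-unitary. Combining $\tau$-unitarity of $A_J$ (which gives $(d_{A_J}\zeta)^* = d_{A_J}(\zeta^*)$) with the bracket sign rule $[\sigma,\zeta]^* = -[\sigma^*,\zeta^*]$ and $\sigma^* = \sigma$ yields the central identity
\[
D(\zeta^*) - (D\zeta)^* \ =\ 2[\sigma, \zeta^*].
\]

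The claim admits a clean reformulation. Writing $\theta = \psi + \tfrac{1}{2i}\Phi\mu$, the right-hand side of \eqref{eq iso gauge explicit} equals $\tfrac{1}{2i}(\beta + \beta^* + \theta - \theta^*)$; its $\tau$-self-adjoint piece is $\tfrac{1}{2i}(\theta - \theta^*)$ and its $\tau$-anti-self-adjoint piece is $\tfrac{1}{2i}(\beta + \beta^*)$. If $\zeta = \zeta^*$, then $d_{A_J}\zeta$ is $\tau$-self-adjoint while $[\sigma,\zeta]$ is $\tau$-anti-self-adjoint; matching with the decomposition of the right-hand side gives
\[
d_{A_J}\zeta \ =\ \tfrac{1}{2i}(\theta - \theta^*), \qquad [\sigma, \zeta] \ =\ \tfrac{1}{2i}(\beta + \beta^*).
\]
Splitting each further by Hodge type and regrouping the $(1,0)$ and $(0,1)$ pieces produces the four identities that assemble into the stated formulas for $D'\zeta$ and $D''\zeta$. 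Hence the lemma is equivalent to the pair (a) $\zeta = \zeta^*$ and (b) $[\sigma,\zeta] = \tfrac{1}{2i}(\beta+\beta^*)$.

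Given the isomonodromic equation, (a) and (b) are equivalent. A direct computation yields $\mathrm{RHS} - \mathrm{RHS}^* = \tfrac{1}{i}(\beta + \beta^*)$, so the central identity gives
\[
D(\zeta - \zeta^*) \ =\ \tfrac{1}{i}(\beta + \beta^*) - 2[\sigma, \zeta^*].
\]
Taking $*$ of (b) and using $\sigma^* = \sigma$ yields $[\sigma, \zeta^*] = [\sigma, \zeta] = \tfrac{1}{2i}(\beta + \beta^*)$, so the right-hand side vanishes. By the stability of $(J,\Phi)$, the flat connection $D$ has no nonzero parallel section, whence $\zeta = \zeta^*$. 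The reverse implication is immediate from the $\tau$-decomposition above.

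The technical heart of the proof is therefore to establish (b) using the semiharmonic hypotheses $D''(\beta+\psi) + D'(\tfrac{1}{2i}\Phi\mu) = 0$ and $D'(\beta+\psi) = 0$. The plan is to apply $D'$ and $D''$ to the isomonodromic equation $D\zeta = \mathrm{RHS}$, use the flatness-based identities $(D')^2 = (D'')^2 = 0$ and $D'D'' + D''D' = 0$ to derive equations for $D'\zeta$ and $D''\zeta$ separately, and exploit the Hitchin equation $F_{A_J} = -\tfrac{1}{4}[\Phi, \Phi^*]$ to convert curvature obstructions into algebraic brackets. The semiharmonic conditions then supply the cancellations needed to isolate the purely algebraic identity (b). Geometrically, Theorem \ref{thm:MetricvariationSlice} underlies this derivation: the stationarity $\dt\tau = 0$ of the Cartan involution along semiharmonic deformations is the infinitesimal manifestation of the $\tau$-self-adjoint nature of the gauge $\zeta$ generating the isomonodromic motion. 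The careful translation from this differential-geometric statement into the algebraic identity (b) constitutes the delicate part of the proof.
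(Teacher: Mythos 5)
Your reduction is correct and even somewhat cleaner than the paper's: the decomposition of the right-hand side of \eqref{eq iso gauge explicit} into its $\tau$-self-adjoint part $\tfrac{1}{2i}(\theta-\theta^*)$ and anti-self-adjoint part $\tfrac{1}{2i}(\beta+\beta^*)$, the identity $D(\zeta^*)-(D\zeta)^*=2[\sigma,\zeta^*]$, and the observation that (a) $\zeta=\zeta^*$ together with the isomonodromic equation yields both displayed formulas by splitting into Hodge types, all check out. But the argument stops exactly where the lemma lives. You prove that (a) and (b) are \emph{equivalent} to each other (assuming (b) you get $D(\zeta-\zeta^*)=0$ and invoke irreducibility; assuming (a) you get (b) by matching $\tau$-types), yet you never establish either one. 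The final paragraph is an announcement of a plan --- ``apply $D'$ and $D''$ to the isomonodromic equation, use flatness and the Hitchin equation, let the semiharmonic conditions supply the cancellations'' --- followed by the admission that this ``constitutes the delicate part of the proof.'' That delicate part is the entire content of the lemma; without it the proposal is a correct reformulation, not a proof.

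For comparison, the paper executes precisely the plan you sketch: it applies $D'$ to $D\zeta=D'\zeta+D''\zeta=\mathrm{RHS}$, uses $(D')^2=0$ and the gauge condition $D'(\beta,\psi)=0$ to reduce to $D'D''\zeta=D'(-\psi^*,\beta^*)+D'(\tfrac{1}{2i}\Phi\mu)-D'((\tfrac{1}{2i}\Phi\mu)^*)$, then uses the holomorphicity condition to show $D'(-\psi^*,\beta^*)=D'(\tfrac{1}{2i}\Phi\mu)^*$, concluding that $D'D''\zeta$ is $*$-symmetric, whence $D''D'(\zeta-\zeta^*)=0$ and $\zeta=\zeta^*$ by stability ($\ker D'=0$). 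If you carry out your plan you will essentially reproduce this computation, so the route is viable; but as written the key cancellation is asserted, not performed. A secondary point: your appeal to Theorem \ref{thm:MetricvariationSlice} is misplaced as a proof ingredient for $\zeta=\zeta^*$ --- in the paper that theorem is used only to justify that \eqref{eq iso gauge explicit} is the correct explicit form of $\rd\rH=D\zeta$ (no $\dot\tau$ term); the self-adjointness of $\zeta$ then comes from the semiharmonic conditions via the $D'D''$ computation, not from the stationarity of the metric per se.
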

\begin{proof}
	Suppose $(\mu,\beta,\psi)$ is a semiharmonic isomonodromic vector, then we have 
	\[D\zeta= D'\zeta+D''\zeta=\tfrac{1}{2i}\left(\beta+\beta^*+\psi-\psi^*+\tfrac{1}{2i}\Phi\mu-\left(\tfrac{1}{2i}\Phi\mu\right)^*\right).\]
    Using $(D')^2=0$ and $D'(\beta,\psi)=0$, we have
		\[D'D''\zeta=D'\left(-\psi^*,\beta^*\right)+D'\left(\tfrac{1}{2i}\Phi\mu\right)-D'\left(\left(\tfrac{1}{2i}\Phi\mu\right)^*\right).\]
	The term $D'\left(\tfrac{1}{2i}\Phi\mu\right)^*$ vanishes since
	\[D'\left(\tfrac{1}{2i}\Phi\mu\right)^*=\left[\left(\tfrac{1}{2i}\Phi\mu\right)^*,\left(\tfrac{1}{2i}\Phi\mu\right)^*\right]=0.\]
	For the term $D'\left(-\psi^*,\beta^*\right),$ we have 
	\[D'\left(-\psi^*,\beta^*\right)=-\partial_{A_J}+[\left(\tfrac{1}{2i}\Phi\right)^*,\beta^*]=-(\dbar_J\psi+\left[\tfrac{1}{2i}\Phi,\beta\right])^* =-(D''(\beta,\psi)^* =D'\left(\tfrac{1}{2i}\Phi\mu\right)^*,\]
	where we used the holomorphicity condition in the last equality.  Hence, we have
	\[D'D''\zeta = \tfrac{1}{2i}\left(D'\left(\tfrac{1}{2i}\Phi\mu\right)^*+D'\left(\tfrac{1}{2i}\Phi\mu\right)\right).\]
	From this expression and the fact that $D''D'=-D'D''$, we conclude 
\[D''D'\zeta= (D'D''\zeta)^*.\]

On the other hand, one checks that $(D'D''\zeta)^*=D'D''\zeta^*$ for any $\zeta\in \Omega^0(\Sig,P(\gfrak))$. Hence,
\[D'' D' (\zeta-\zeta^*)=0.\]
Since $\mathrm{Im}(D')=\mathrm{Im}(D'')^*$ and stability implies $\ker(D')=0$, we conclude $\zeta=\zeta^*$, as desired. 

We now deduce the expressions for $D'\zeta$ and $D''\zeta.$ The $(1,0)$ part of \eqref{eq iso gauge explicit} is
\[\partial_{A_J}\zeta+\left[\tfrac{1}{2i}\Phi,\zeta\right]=\tfrac{1}{2i}\left(\beta^*+\psi-\left(\tfrac{1}{2i}\Phi\mu\right)^*\right)\]
Using $\zeta=\zeta^*$, we have 
\[\partial_{A_J}\zeta-\left[\tfrac{1}{2i}\Phi,\zeta\right]=\partial_{A_J}\zeta^*-[\tfrac{1}{2i}\Phi,\zeta^*]=\left(\dbar_J\zeta+\left[\left(\tfrac{1}{2i}\Phi\right)^*,\zeta\right]\right)^*=-\tfrac{1}{2i}\left(\beta^*-\psi+\left(\tfrac{1}{2i}\Phi\mu\right)^*\right). \]
Combining these two equations gives 
\[\xymatrix{\partial_{A_J}\zeta=\tfrac{1}{2i}(\psi-\left(\tfrac{1}{2i}\Phi\mu\right)^*)&\text{and}&\left[\tfrac{1}{2i}\Phi,\zeta\right]=\tfrac{1}{2i}\beta^*}.\]
It follows that $\left[\left(\tfrac{1}{2i}\Phi\right)^*,\zeta\right]=\tfrac{1}{2i}\beta$, and we conclude 
\[D'\zeta=\tfrac{1}{2i}\left(\beta+\psi-\left(\tfrac{1}{2i}\Phi\mu\right)^*\right),\]
as desired. The formula for $D''\zeta$ follows from a similar computation. 
\end{proof}

\subsubsection{Relating the semiharmonic vectors}
We now use Lemmas \ref{lem D' and D'' eta} and \ref{lem semiharmonic iso gauge hermitian}  to relate the semiharmonic isomonodromic and horizontal vectors.

\begin{lemma}\label{lem key relation of iso}
 Let $(\mu,\beta,\psi)$ be a semiharmonic horizontal vector, and $(\mu,\beta_1,\psi_1)$ and $(i\mu,\beta_2,\psi_2)$ be semiharmonic isomonodromic vectors, then
$(\beta,\psi)=\tfrac{1}{2}\left((\beta_1,\psi_1)-i(\beta_2,\psi_2)\right).$ 
 In particular,
 \[(\mu,\beta,\psi)=\tfrac{1}{2}\left((\mu,\beta_1,\psi_1)-\I(i\mu,\beta_2,\psi_2)\right).\]
\end{lemma}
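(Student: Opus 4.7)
The plan is to identify, using Lemmas \ref{lem D' and D'' eta} and \ref{lem semiharmonic iso gauge hermitian}, the combination $\tfrac{1}{2}\bigl((\beta_1,\psi_1)-i(\beta_2,\psi_2)\bigr)$ as the $(\beta,\psi)$-part of the unique semiharmonic horizontal representative with Beltrami differential $\mu$, which by Lemma \ref{lem D' and D'' eta} must then coincide with $(\mu,\beta,\psi)$.

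First, I apply Lemma \ref{lem semiharmonic iso gauge hermitian} to obtain hermitian sections $\zeta_1,\zeta_2\in\Omega^0(\Sig,P(\gfrak))$ associated with the two semiharmonic isomonodromic vectors. The point is that the Beltrami differential of the second vector is $i\mu$, so $\tfrac{1}{2i}\Phi(i\mu)=\tfrac{1}{2}\Phi\mu$ and the formulas of Lemma \ref{lem semiharmonic iso gauge hermitian} specialize to
\[ D'\zeta_1 = \tfrac{1}{2i}(\beta_1+\psi_1) - \tfrac{1}{4}(\Phi\mu)^\ast, \qquad D'\zeta_2 = \tfrac{1}{2i}(\beta_2+\psi_2) - \tfrac{1}{4i}(\Phi\mu)^\ast. \]
Setting $\eta := i\zeta_1 + \zeta_2$, the coefficients of $(\Phi\mu)^\ast$ sum to $-\tfrac{i}{4}-\tfrac{1}{4i} = 0$, and the remaining terms combine to
\[ D'\eta = \tfrac{1}{2}\bigl((\beta_1,\psi_1)-i(\beta_2,\psi_2)\bigr), \]
which shows that the desired combination lies in $\mathrm{Im}(D')$.

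Second, I verify the holomorphicity condition. The semiharmonic condition for $(\mu,\beta_1,\psi_1)$ gives $D''(\beta_1,\psi_1)=-D'(\tfrac{1}{2i}\Phi\mu)$, while the semiharmonic condition for $(i\mu,\beta_2,\psi_2)$ gives $D''(\beta_2,\psi_2)=-iD'(\tfrac{1}{2i}\Phi\mu)$. A direct combination then yields
\[ D''\bigl(\tfrac{1}{2}((\beta_1,\psi_1)-i(\beta_2,\psi_2))\bigr) + D'(\tfrac{1}{2i}\Phi\mu) = 0. \]
Applying Lemma \ref{lem D' and D'' eta}, it follows that the triple $(\mu,\tfrac{1}{2}(\beta_1-i\beta_2),\tfrac{1}{2}(\psi_1-i\psi_2))$ is the semiharmonic representative of the horizontal lift $w_{[\mu]}$, which by uniqueness must equal $(\mu,\beta,\psi)$. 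The ``in particular'' statement then follows from the elementary identity $\I(i\mu,\beta_2,\psi_2)=(-\mu,i\beta_2,i\psi_2)$, which directly gives $\tfrac{1}{2}\bigl((\mu,\beta_1,\psi_1)-\I(i\mu,\beta_2,\psi_2)\bigr)=(\mu,\tfrac{1}{2}(\beta_1-i\beta_2),\tfrac{1}{2}(\psi_1-i\psi_2))$.

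The main obstacle is locating the correct complex combination $\eta = a\zeta_1 + b\zeta_2$ whose image under $D'$ cancels the $(\Phi\mu)^\ast$-contributions coming from the two different Beltrami differentials $\mu$ and $i\mu$; once the ansatz $\eta = i\zeta_1 + \zeta_2$ is identified, the rest reduces to a careful bookkeeping of factors of $i$ and $\tfrac{1}{2i}$ using the formulas of Lemma \ref{lem semiharmonic iso gauge hermitian} and the anticommutativity $D'D''=-D''D'$.
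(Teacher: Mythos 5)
Your proof is correct and follows essentially the same route as the paper's: both use Lemma \ref{lem semiharmonic iso gauge hermitian} to produce the hermitian sections $\zeta_1,\zeta_2$, observe that the $(\Phi\mu)^\ast$-terms cancel in $D'(i\zeta_1+\zeta_2)=\tfrac{1}{2}\bigl((\beta_1,\psi_1)-i(\beta_2,\psi_2)\bigr)$, and then invoke Lemma \ref{lem D' and D'' eta} to identify the combination as the semiharmonic horizontal representative. Your explicit verification of the holomorphicity condition and of the uniqueness step are details the paper leaves implicit, but they do not constitute a different argument.
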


\begin{proof}
	Let $(\mu,\beta_1,\psi_1)$ and $(i\mu,\beta_2,\psi_2)$ be semiharmonic isomonodromic vectors. Consider the  vector 
	\[w=(\mu,\tfrac{1}{2}(\beta_1-i\beta_2),\tfrac{1}{2}(\psi_1-i\psi_2))=\tfrac{1}{2}\left((\mu,\beta_1,\psi_1)-\I(i\mu,\beta_2,\psi_2)\right).\]
	 By Lemma \ref{lem semiharmonic iso gauge hermitian}, there are hermitian sections $\zeta_1,\zeta_2\in \Omega^0(\Sig,P(\gfrak))$ such that 
	\[\xymatrix{D'\zeta_1=\tfrac{1}{2i}\left(\beta_1+\psi_1-\left(\tfrac{1}{2i}\Phi\mu\right)^*\right)&\text{and}&D'\zeta_2=\tfrac{1}{2i}\left(\beta_2+\psi_2-\left(\tfrac{1}{2i}\Phi i\mu\right)^*\right)}.\]
	Hence 
	\[D'(i \zeta_1+\zeta_2)= \tfrac{1}{2}\left((\beta_1,\psi_1)-i(\beta_2,\psi_2)\right).\]
	Since $w$ is semiharmonic and $\tfrac{1}{2}\left((\beta_1,\psi_1)-i(\beta_2,\psi_2)\right)$ is in the image of $D'$, Lemma \ref{lem D' and D'' eta} implies $w$ is a semiharmonic horizontal vector. 
\end{proof}

\begin{remark}
	Using the same notation as the above lemma, note that  
	\begin{equation}\label{eq D'' horizontal}
		D''\left(i\zeta_1-\zeta_2\right)=-\beta^*+\psi^*.
	\end{equation}
	Indeed, $-\beta^*+\psi^*= \tfrac{1}{2}(\beta_1^*+i\beta_2^*+\psi_1^*+i\psi_2^*)$ and 
	\[\xymatrix{D''\zeta_1=\tfrac{1}{2i}\left(-\beta_1^*-\psi_1^*+\tfrac{1}{2i}\Phi\mu\right)&\text{and}&D''\zeta_2=\tfrac{1}{2i}\left(-\beta_2^*-\psi_2^*+\tfrac{1}{2i}\Phi i\mu\right)}.\] 
\end{remark}

\subsection{Proof of Theorem \ref{MainThmB}}
The following is equivalent to Theorem \ref{MainThmB} from the introduction.

\begin{theorem}\label{thm Non intro Thm B}
	Let $\cH$ and $\cD$ be the horizontal and isomonodromic distributions on the joint moduli space $\bfM(\sG)$, respectively, and $\Theta:\pi^*\T\bfT(\Sig)\to \V\bfM$ be the holomorphic bundle map from Theorem \ref{theo:ModuliSpace}. Then the hermitian form $h_0$ is nonpositive on $\cH$, and for each $x\in\bfM(\sG)$ the kernel $\cK_x$ of $h_0$ is given by
	\[\cK_x=\{w\in\cH~|~\Vert w\Vert ^2_{h_0}=0\}= \cD_x\cap \I(\cD_x)=\ker(\Theta),\]
	where in the last equality $\cH$ has been identified with $\pi^*\T\bfT(\Sig)$ via $\rd\pi$. 
	In particular, if $h_0$ is nondegenerate at $x$ then it has signature $(\dim\bfX(\sG),3g-3).$ 
\end{theorem}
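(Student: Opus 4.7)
The plan is to assemble the theorem from three ingredients already available in this section: the nonpositivity of $h_0$ on $\cH$ noted in the remark following Lemma \ref{lem D isotr}, the inclusion $\cD_x\cap\I(\cD_x)\subseteq\ker(h_0)$ of Lemma \ref{lem I-inv D in K}, and Proposition \ref{prop hor and iso dist properties} relating horizontal lifts, isomonodromic lifts, and the section $\Theta$. I will prove the four equalities $\cK_x=\{w\in\cH_x:\Vert w\Vert^2_{h_0}=0\}=\cD_x\cap\I(\cD_x)=\ker(\Theta_x)$ in turn and then read off the signature.

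First I observe that $\cK_x\subseteq\cH_x$: because $h_0$ is positive definite on the vertical bundle $\V\bfM$, any element of the kernel of $h_0$ must be $h_0$-orthogonal to $\V\bfM$. Since $h_0|_{\cH}\le 0$ by the cited remark, Cauchy--Schwarz applied to the positive semidefinite form $-h_0|_{\cH}$ yields $\cK_x=\{w\in\cH_x:\Vert w\Vert^2_{h_0}=0\}$. The identification with $\ker(\Theta_x)$ under the complex-linear isomorphism $\rd\pi:\cH_x\to\T_{\pi(x)}\bfT(\Sig)$ follows from Proposition \ref{prop hor and iso dist properties}(1), which gives $\Vert w_{[\mu]}\Vert^2_{h_0}=-\Vert\Theta_x([\mu])\Vert^2_{h_0}$, combined with positive definiteness of $h_0$ on the vertical bundle where $\Theta$ takes values: $w_{[\mu]}\in\cK_x$ iff $\Theta_x([\mu])=0$.

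The containment $\cD_x\cap\I(\cD_x)\subseteq\cK_x$ is Lemma \ref{lem I-inv D in K}. For the reverse inclusion, which I expect to be the main step, I take $w\in\cK_x$, set $[\mu]=\rd\pi(w)$ so that $w=w_{[\mu]}$, and write $v:=\ell_{[\mu]}-w\in\V\bfM$. Since $w\in\cH_x$ is $h_0$-orthogonal to $v$, expanding and using Lemma \ref{lem D isotr} gives
\[0=\Vert\ell_{[\mu]}\Vert^2_{h_0}=\Vert w\Vert^2_{h_0}+\Vert v\Vert^2_{h_0}=\Vert v\Vert^2_{h_0},\]
and positive definiteness of $h_0$ on $\V\bfM$ forces $v=0$, so $\ell_{[\mu]}=w\in\cD_x$. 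Since $\cK_x$ is the kernel of a Hermitian form it is $\I$-invariant, so $\I w\in\cK_x$; because $\pi$ is holomorphic $\I w$ is the horizontal lift of $[i\mu]$, and the identical argument yields $\ell_{[i\mu]}=\I w\in\cD_x$, hence $w\in\I(\cD_x)$.

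For the signature, when $h_0$ is nondegenerate at $x$ the $h_0$-orthogonal splitting $\T_x\bfM(\sG)=\V_x\bfM\oplus\cH_x$ diagonalizes $h_0$ into a positive definite form on $\V_x\bfM$ of complex dimension $\dim_\CBbb\bfX(\sG)$ and a negative definite form on $\cH_x$ of complex dimension $3g-3$, yielding the signature $(\dim_\CBbb\bfX(\sG),3g-3)$. The only substantive analytic input is Proposition \ref{prop hor and iso dist properties}, whose proof via the semiharmonic calculus of \S\ref{sec:semiharmonic_representatives} is where the real work lives; given that proposition, the theorem is a short exercise in linear algebra compatible with the Hermitian structure.
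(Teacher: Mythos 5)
Your proposal is correct, and for the key reverse inclusion $\cK_x\subseteq\cD_x$ it takes a genuinely different route from the paper. The paper proves $\cK_x\subseteq\cD_x$ by an explicit gauge-theoretic computation: starting from a null semiharmonic horizontal vector $(\mu,\beta,\psi)$ it produces, via Lemmas \ref{lem norm of min norm vect} and \ref{lem D' and D'' eta}, a section $\eta$ with $D'\eta=(\beta,\psi)$ and $D''\eta=\tfrac{1}{2i}\Phi\mu$, and then verifies directly that $\rd\rH(\mu,\beta,\psi)=D\left(\tfrac{1}{2i}(\eta-\eta^\ast)\right)$, invoking Lemma \ref{lem isomon vect on config}. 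You instead argue abstractly: writing $\ell_{[\mu]}=w_{[\mu]}+v$ with $v$ vertical, the isotropy of $\cD$ (Lemma \ref{lem D isotr}) and the $h_0$-orthogonality of $\cH$ and $\V$ give $0=\Vert w_{[\mu]}\Vert^2_{h_0}+\Vert v\Vert^2_{h_0}$, so $w\in\cK_x$ forces $v=0$ and $\ell_{[\mu]}=w_{[\mu]}$; this is exactly the mechanism behind the paper's remark following Lemma \ref{lem D isotr}, pushed one step further. Your argument is shorter and purely formal — it uses only isotropy of $\cD$, positive definiteness of $h_0$ on $\V\bfM$, and the orthogonal splitting — whereas the paper's computation additionally exhibits the explicit infinitesimal gauge transformation $\zeta=\tfrac{1}{2i}(\eta-\eta^\ast)$ realizing the isomonodromic deformation, information that is reused elsewhere (e.g., in Lemma \ref{lem semiharmonic iso gauge hermitian} and the energy computations). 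For the remaining assertions (nonpositivity on $\cH$, the Cauchy--Schwarz identification of $\cK_x$ with the null cone of $h_0|_{\cH}$, the equality with $\ker\Theta$ via Proposition \ref{prop hor and iso dist properties}(1), and the signature count) your treatment matches the paper's in substance, with the Cauchy--Schwarz step spelled out slightly more carefully than in the paper's own proof.
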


\begin{proof}
Fix $x\in\bfM(\sG)$ and a horizontal vector $w\in\cH$. Let $(J,\Phi)$ be a Higgs bundle solving the Hitchin equations with $x=[(J,\Phi)]$ and $(\mu,\beta,\psi)$ be a semiharmonic tangent vector for $(J,\Phi).$ By Item (2) of Lemma \ref{lem norm of min norm vect}, we have 
\[\Vert w\Vert ^2_{h_0}:=\Vert (\mu,\beta,\psi)\Vert ^2_{h_0}=-\Vert \pr_{\ker(D')}\left(\tfrac{1}{2i}\Phi\mu\right)\Vert ^2\leq 0.\]
Hence, $h_0$ is nonpositive on the horizontal distribution and 
\[\cK_x=\{w\in\cH~|~\Vert w\Vert ^2_{h_0}=0\}~.\]
The signature of $h_0$ at points where it is nondegenerate follows immediately.

It remains to prove $\cK_x= \cD_x\cap \I(\cD_x)$. The inclusion $\cD_x\cap \I(\cD_x)\subset \cK_x$ is proven in Lemma \ref{lem I-inv D in K}. We now prove the opposite inclusion. Since $\cK_x$ is complex, it suffices to prove $\cK_x\subset\cD_x.$
Suppose $w\in\cK_x$. By the above, $w$ is in the horizontal distribution and has $\Vert w\Vert ^2_{h_0}=0.$   
Hence, by Lemmas \ref{lem norm of min norm vect} and \ref{lem D' and D'' eta}, $\frac{1}{2i}\Phi\mu\in \mathrm{Im}(D'')$  and there is a unique $\eta\in \Omega^0(\Sig,P(\gfrak))$ such that 
\[\xymatrix{D'\eta=\beta+\psi&\text{and}& D''\eta=\tfrac{1}{2i}\Phi\mu}.\]
Since $\tfrac{1}{2i}\Phi\mu$ is a $(0,1)$-form, we have $\tfrac{1}{2i}\Phi\mu=\dbar_J\eta$ and $[\tfrac{1}{2i}\Phi,\eta]=0$. Hence,
\[D'(\eta^*)=\partial_{A_J}\eta^*+[\left(\tfrac{1}{2i}\Phi\mu\right),\eta^*]=(\dbar_J \eta)^*=\left(\tfrac{1}{2i}\Phi\mu\right)^*.\]
Similarly, since $\beta$ and $\psi$ have type $(0,1)$ and $(1,0)$, respectively, we have $\beta=[\left(\tfrac{1}{2i}\Phi\right)^*,\eta]$ and $\psi=\partial_{A_J}\eta$. 
Thus, $\beta^*=-[\frac{1}{2i}\Phi,\eta^*]$, and 
\[D''\eta^*+\beta^*=\bar\partial_J(\eta^*)=(\partial_{A_J}\eta)^*=\psi^*.\]

By the above calculations, we have
$$	\rd\rH_{(J,\Phi)}(\mu,\beta,\psi)=\tfrac{1}{2i}\left(\beta+\psi+\tfrac{1}{2i}\Phi\mu+\beta^*-\psi^*-\left(\tfrac{1}{2i}\Phi\mu\right)^*\right)=D\left(\tfrac{1}{2i}\left(\eta-\eta^*\right)\right)~.
$$
By Lemma \ref{lem isomon vect on config}, this implies the vector $w$ is in the isomonodromic distribution $\cD_x$, as desired. 
\end{proof}

\subsection{The energy function on $\bfM(\sG)$}
Recall that we have fixed a structure group reduction $P_\sK\subset P$ to the maximal compact subgroup. 
Let $(J,\Phi)$ be a stable $\sG$-Higgs bundle which solves the Hitchin
equation for the fixed reduction $P_K.$  The $L^2$-norm of the Higgs field
defines the \emph{energy function}
$\rE:\bfM(\sG)\to \R$. Explicitly,
\begin{equation}
	\label{eq energy function joint}\rE([J,\Phi])=\Vert
    \Phi\Vert^2=-2\int_\Sig\kappa_\gfrak\left(\Psi\wedge\Psi\circ j\right),
\end{equation}
where $\Psi=\tfrac{1}{2i}(\Phi-\Phi^*)$ and $j=\pi(J)$ is the induced complex structure on $\Sig.$  

\begin{definition}
	Fix a representation $\rho\in\bfX(\sG)$. Let $s_\rho:\bfT(\Sig)\to
    \bfM(\sG)$ be the section whose image is the isomonodromic leaf of
    $\rho.$ Then the energy $\cE_\rho$ of $\rho$ is defined to be the function
	\[\cE_\rho=\rE\circ s_\rho:\bfT(\Sig)\to \R.\]
\end{definition}

\begin{remark}
    As mentioned in the introduction, $\Ecal_\rho([X])$ is the energy of
    the (unique) $\rho$-equivariant harmonic map $\widetilde X\to \G/\K$
    (cf.\ \cite{canonicalmetrics,harmoicmetric}).
\end{remark}

 The first and second variation formula for harmonic maps is classical 
 (see \cite[\S 6]{EellsSampson:64}). Here we give a formulation of this for Higgs bundles (see
also \cite{Tosicpluri}).

Let $(J_t,\Phi_t)$ be a family of stable Higgs bundles which solve the Hitchin equations.  Denote the tangent vector at $t=0$ by $(\dt J, \dt \Phi)$ and let $m=\rd\pi_{\cJ}(\dt J)\in\T_j\J(\Sig)$.  The associated first and second variation of the energy is given by
\begin{equation}
 	\label{eq 1st variation of Energy}
 	\dt\rE(J_t,\Phi_t)=-2\int_\Sig\kappa_\gfrak\left(2\Psi\wedge\dt\Psi\circ j+\Psi\wedge\Psi\circ m\right),
 \end{equation}
 \begin{equation}
 	\label{eq 2nd variation of Energy}
 	\ddt\rE(J_t,\Phi_t)=-2\int_\Sig\kappa_\gfrak\left(2\Psi\wedge\ddt \Psi\circ j+2\dt \Psi\wedge\dt\Psi\circ j+4\Psi\wedge\dt \Psi\circ m +\Psi\wedge\Psi\circ\dt m\right)~.
 \end{equation}

The first variation of $\rE$ in semiharmonic directions is given by the following lemma. 
\begin{lemma}\label{lem general 1st variation}
	Let $(\mu,\beta,\psi)$ be a semiharmonic vector at $(J,\Phi)$, then the first variation of $\rE$ in the direction $(\mu,\beta,\psi)$ is given by 
	\[\dt\rE(\mu,\beta,\psi)=2\mathrm{Re}\left\langle
    \Phi,\psi-\left(\tfrac{1}{2i}\Phi\mu\right)^* \right\rangle +
    2\mathrm{Re}\left\langle \Phi,\left(\tfrac{1}{2i}\Phi\mu\right)^*
    \right\rangle= 2\mathrm{Re}\left\langle \Phi,\psi\right\rangle .\]
\end{lemma}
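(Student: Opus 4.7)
The plan is to substitute the semiharmonic data directly into the general first-variation formula \eqref{eq 1st variation of Energy} and exploit that, by Theorem \ref{thm:MetricvariationSlice}, the Cartan involution $\tau$ (and hence the $\ast$-operator on $P(\gfrak)$-valued forms) does not vary to first order along a semiharmonic slice. Consequently $\dt\Psi = \tfrac{1}{2i}(\theta-\theta^\ast)$ with $\theta = \psi + \tfrac{1}{2i}\Phi\mu$ and $\theta^\ast = \psi^\ast - \tfrac{1}{2i}(\Phi\mu)^\ast$. Note that the component $\beta$ will not appear, since $\Psi$ depends only on $\Phi$ and $\tau$ and not on the fiberwise complex structure.

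I will compute $\dt\Psi\circ j$ and $\Psi\circ m$ componentwise in the type decomposition, using $\alpha\circ j = \pm i\alpha$ on $(1,0)$- and $(0,1)$-forms respectively, together with the local identity $\Phi^\ast\bar\mu = (\Phi\mu)^\ast$, which gives $\Psi\circ m = -\tfrac{1}{2i}(\Phi\mu)^\ast + \tfrac{1}{2i}\Phi\mu$. On a surface only wedges of mixed type survive, so expanding $2\Psi\wedge\dt\Psi\circ j + \Psi\wedge\Psi\circ m$ reduces to bookkeeping a handful of nonzero terms. The key observation is a clean cancellation: the pieces $\tfrac{1}{4}\Phi\wedge\Phi\mu + \tfrac{1}{4}\Phi^\ast\wedge(\Phi\mu)^\ast$ coming from $2\Psi\wedge\dt\Psi\circ j$ are exactly cancelled by $\Psi\wedge\Psi\circ m = -\tfrac{1}{4}\bigl(\Phi\wedge\Phi\mu + \Phi^\ast\wedge(\Phi\mu)^\ast\bigr)$, leaving
\[
2\Psi\wedge\dt\Psi\circ j + \Psi\wedge\Psi\circ m \;=\; \tfrac{1}{2i}\bigl(\Phi\wedge\psi^\ast - \Phi^\ast\wedge\psi\bigr).
\]

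Inserting this into \eqref{eq 1st variation of Energy}, flipping the second wedge via $\Phi^\ast\wedge\psi = -\psi\wedge\Phi^\ast$, and applying Lemma \ref{lem signs on L2 innerproducts} to recognize $i\int_\Sig\kappa_\gfrak(\Phi\wedge\psi^\ast) = \langle\Phi,\psi\rangle$ and $i\int_\Sig\kappa_\gfrak(\psi\wedge\Phi^\ast) = \langle\psi,\Phi\rangle$, one obtains
\[
\dt\rE(\mu,\beta,\psi) \;=\; \langle\Phi,\psi\rangle + \langle\psi,\Phi\rangle \;=\; 2\,\mathrm{Re}\langle\Phi,\psi\rangle.
\]
The decomposition displayed in the statement then follows trivially by adding and subtracting $(\tfrac{1}{2i}\Phi\mu)^\ast$ inside the hermitian pairing $\langle\Phi,\,\cdot\,\rangle$. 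The only real obstacle is careful type-bookkeeping; the deep input—invariance of the harmonic metric along semiharmonic slices (Theorem \ref{thm:MetricvariationSlice})—is already available, and no further use of stability is required.
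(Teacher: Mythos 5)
Your proof is correct and follows essentially the same route as the paper: both rest on Theorem \ref{thm:MetricvariationSlice} to freeze the Cartan involution, substitute into \eqref{eq 1st variation of Energy}, and reduce via type bookkeeping and Lemma \ref{lem signs on L2 innerproducts}. The only difference is organizational — you combine $2\Psi\wedge\dt\Psi\circ j$ with $\Psi\wedge\Psi\circ m$ first and observe the cancellation of the $\Phi\mu$ terms, whereas the paper evaluates the two integrals separately as $\mathrm{Re}\langle\Phi,\psi-(\tfrac{1}{2i}\Phi\mu)^*\rangle$ and $\mathrm{Re}\langle\Phi,(\tfrac{1}{2i}\Phi\mu)^*\rangle$ and then adds; both yield $2\,\mathrm{Re}\langle\Phi,\psi\rangle$.
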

\begin{proof}
	By Theorem \ref{thm:MetricvariationSlice},
    the first variation of the metric in semiharmonic directions vanishes. So, 
	\[\xymatrix{\dt\Phi=\psi+\frac{1}{2i}\Phi\mu&\text{and}&\dt\Psi=\frac{1}{2i}\left(\psi+\frac{1}{2i}\Phi\mu-\psi^*-\left(\frac{1}{2i}\Phi\mu\right)^*\right)}.\] 
	Since $\psi$ and $\frac{1}{2i}\Phi\mu$ have types $(1,0)$ and $(0,1)$ respectively, we have
	\begin{align*}
		-\int_\Sig\kappa_\gfrak\left(2\Psi\wedge\dt\Psi\circ j\right)&=\frac{1}{2} \int\kappa_\gfrak \left(\left(\Phi-\Phi^*\right)\wedge \left(i\psi-i\tfrac{1}{2i}\Phi\mu+i\psi^*-i\left(\tfrac{1}{2i}\Phi\mu\right)^*\right)\right) \\
		& = \frac{i}{2}\int_\Sig\kappa_\gfrak\left(\Phi\wedge
        \left(\psi^*-\tfrac{1}{2i}\Phi\mu\right)\right)+\frac{i}{2}\int\kappa_\gfrak\left(\left(\psi-\left(\tfrac{1}{2i}\Phi\mu\right)^*\right)\wedge \Phi^*\right)\\
		&=\mathrm{Re}\left\langle \Phi,\psi-\left(\tfrac{1}{2i}\Phi\mu\right)^* \right\rangle
	\end{align*}
	Similarly, since $\Psi\circ m=\tfrac{1}{2i}\Phi\mu+\left(\tfrac{1}{2i}\Phi\mu\right)^*$ we have 
	\begin{align*}
		-\int_\Sig\kappa_\gfrak\left(\Psi\wedge\Psi\circ
        m\right)&=\frac{i}{2}\int_\Sig\kappa_\gfrak\left(\Phi-\Phi^*\right)\wedge\left(\tfrac{1}{2i}\Phi\mu+\left(\tfrac{1}{2i}\Phi\mu\right)^*  \right)\\
		&= \frac{i}{2}\int_\Sig\kappa_\gfrak\left(\Phi\wedge
        \tfrac{1}{2i}\Phi\mu\right)+\frac{i}{2}\int_\Sig\kappa_\gfrak\left(\left(\tfrac{1}{2i}\Phi\mu\right)^*\wedge\Phi^*\right)\\
		&=\mathrm{Re}\left\langle\Phi,\left(\tfrac{1}{2i}\Phi\mu\right)^*\right\rangle.
	\end{align*}
	Adding the two terms and using \eqref{eq 1st variation of Energy} completes the proof. 
\end{proof}
Along the horizontal and isomonodromic distributions we have the following.

\begin{lemma}\label{lem  1st var hor and iso}
	Let $x=[J,\Phi]\in \bfM(\sG)$ and let $w_{[\mu]}\in\cH_x$ and $\ell_{[\mu]}\in\cD_x$ be horizontal and isomonodromic vectors, respectively. Then the derivative of $\rE$ in the directions $w_{[\mu]}$ and $\ell_{[\mu]}$ are
	\[\xymatrix{\rd \rE\left(w_{[\mu]}\right)=0&\text{and}&\rd
    \rE(\ell_{[\mu]})=2\mathrm{Re}\left\langle\Phi,\left(\tfrac{1}{2i}\Phi\mu\right)^* \right\rangle}.\]
 \end{lemma}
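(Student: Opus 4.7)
The plan is to represent horizontal and isomonodromic vectors by their semiharmonic representatives and then apply Lemma \ref{lem general 1st variation}, which reduces both identities to a computation of $\langle \Phi,\psi\rangle$ for the $(1,0)$-component $\psi$ of each representative. The entire argument will then hinge on a single identity, which I will isolate first.

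\textbf{Key identity.} For any smooth section $\eta$ of $P(\gfrak)$, we have
\[
\langle \Phi,\partial_{A_J}\eta\rangle \ =\ 0\ .
\]
This is the formal adjoint statement $\partial_{A_J}^*\Phi=0$. By the K\"ahler identity (compare \eqref{eqn:kahler-simpson}),
$\partial_{A_J}^* = i[\Lambda,\dbar_{A_J}]$, together with $\dbar_{A_J}\Phi=0$ from the holomorphicity of $\Phi$ and $\Lambda\Phi=0$ since $\Phi$ is of type $(1,0)$, we obtain $\partial_{A_J}^*\Phi=0$.

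\textbf{Horizontal case.} Take a semiharmonic representative $(\mu,\beta,\psi)$ of $w_{[\mu]}$. By Lemma \ref{lem D' and D'' eta}, there exists a section $\eta$ of $P(\gfrak)$ with $(\beta,\psi)=D'\eta$. Decomposing $D'\eta=\partial_{A_J}\eta+[(\tfrac{1}{2i}\Phi)^*,\eta]$ into types, the $(1,0)$-part gives $\psi=\partial_{A_J}\eta$. Applying the key identity and Lemma \ref{lem general 1st variation},
\[
d\rE(w_{[\mu]})\ =\ 2\mathrm{Re}\langle \Phi,\psi\rangle\ =\ 2\mathrm{Re}\langle \Phi,\partial_{A_J}\eta\rangle\ =\ 0\ .
\]

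\textbf{Isomonodromic case.} Take a semiharmonic representative $(\mu,\beta,\psi)$ of $\ell_{[\mu]}$. By Lemma \ref{lem semiharmonic iso gauge hermitian}, there exists a hermitian section $\zeta$ with
\[
D'\zeta \ =\ \tfrac{1}{2i}\bigl(\beta+\psi-(\tfrac{1}{2i}\Phi\mu)^*\bigr)\ .
\]
Since $\beta$ is of type $(0,1)$ and $\psi,(\tfrac{1}{2i}\Phi\mu)^*$ are of type $(1,0)$, taking $(1,0)$-parts yields $\partial_{A_J}\zeta=\tfrac{1}{2i}\bigl(\psi-(\tfrac{1}{2i}\Phi\mu)^*\bigr)$, equivalently
\[
\psi\ =\ 2i\,\partial_{A_J}\zeta + (\tfrac{1}{2i}\Phi\mu)^*\ .
\]
Pairing against $\Phi$ and using the key identity $\langle\Phi,\partial_{A_J}\zeta\rangle=0$, we obtain $\langle\Phi,\psi\rangle=\langle\Phi,(\tfrac{1}{2i}\Phi\mu)^*\rangle$, and so Lemma \ref{lem general 1st variation} gives
\[
d\rE(\ell_{[\mu]})\ =\ 2\mathrm{Re}\langle \Phi,(\tfrac{1}{2i}\Phi\mu)^*\rangle\ .
\]

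There is essentially no obstacle beyond verifying the key identity, which is immediate from the K\"ahler identities and the holomorphicity of $\Phi$. The only subtle point is that, strictly speaking, one must know $d\rE$ is well-defined on classes of semiharmonic vectors; this is automatic because $\rE$ is $\Aut_0(P_\sK)$-invariant and semiharmonic vectors exponentiate via the slice of Proposition \ref{prop:slice} to genuine paths in $F^{-1}(0)$ representing the tangent vectors under consideration.
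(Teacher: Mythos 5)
Your proof is correct and follows essentially the same route as the paper: semiharmonic representatives, the characterizations in Lemmas \ref{lem D' and D'' eta} and \ref{lem semiharmonic iso gauge hermitian} to exhibit $\psi$ (resp.\ $\psi-(\tfrac{1}{2i}\Phi\mu)^*$) as lying in the image of $\partial_{A_J}$, and then the reduction via Lemma \ref{lem general 1st variation}. Your ``key identity'' $\partial_{A_J}^*\Phi=0$ is just the adjoint formulation of the Stokes'-theorem step the paper uses, so the two arguments coincide.
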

 \begin{proof}
 Let $(\mu,\beta,\psi)$ be a semiharmonic representative of the horizontal vector $w_{[\mu]}$. By Lemma \ref{lem D' and D'' eta}, $\psi^*=\dbar_J\eta$ for some $\eta\in\Omega^0(\Sig,P(\gfrak))$. By Lemma \ref{lem general 1st variation} and Stokes' theorem we have $\dt\rE(\mu,\beta,\psi)=0$. 

Similarly, let $(\mu,\beta,\psi)$ be a semiharmonic representative b of the isomonodromic vector $\ell_{[\mu]}$. By Lemma \ref{lem semiharmonic iso gauge hermitian}, $\psi^*-(\frac{1}{2i}\Phi\mu)=\dbar \zeta$ for some $\zeta\in \Omega^0(\Sig,P(\gfrak))$. Hence, by Stokes' theorem, along the isomonodromic distribution we have 
\begin{equation}
	\label{eq vert variation zero on iso} 
    -\int_\Sig \kappa_\gfrak\left(2\Psi\wedge\dt\Psi\circ j\right)=0.
\end{equation}
Lemma \ref{lem general 1st variation} now implies $\dt\rE(\mu,\beta,\psi)=2\mathrm{Re}\langle \Phi,\left(\tfrac{1}{2i}\Phi\mu\right)^*\rangle$. 
 \end{proof}
 \begin{remark}
     The second equation in Lemma \ref{lem  1st var hor and iso}
agrees with the well-known first variation of the energy $\cE_\rho$, 
     the formula for which goes back to Douglas (cf.\ \cite[Equation 12.29]{Douglas:39}).
 \end{remark}

 We will also need the second variation of $\rE$ along the isomonodromic distribution.

\begin{lemma}\label{lem 2nd variation}
	Let $(J_t,\Phi_t)$ be a path of Higgs bundles whose tangent vector is an isomonodromic semiharmonic vector $(\mu,\beta,\psi)$. Then the second variation of $\rE$ is given by 
	\[\ddt\rE(J_t,\Phi_t)=-2\int_\Sig\kappa_\gfrak \left(2\Psi\wedge\dot\Psi\circ
    m+\Psi\wedge\Psi\circ\dot m\right)~ = ~
    8\Real\left\langle\partial_{A_J}(i\zeta),\left(\tfrac{1}{2i}\Phi\mu\right)^*\right
    \rangle -2\int_\Sig\kappa_\gfrak\left(\Psi\wedge\Psi\circ\dot m\right),\]
	where $\zeta\in\Omega^0(\Sig,P(\gfrak))$ is given by Lemma \ref{lem semiharmonic iso gauge hermitian}.
\end{lemma}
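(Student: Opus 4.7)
My plan is to establish the two equalities separately: the first is a simplification of the general second-variation formula \eqref{eq 2nd variation of Energy} using isomonodromy, and the second is a direct local computation exploiting the clean identity $\dt\Psi = d_{A_J}\zeta$ coming from Lemma \ref{lem semiharmonic iso gauge hermitian}. For the first equality, the starting observation is that one can parameterize the path $(J_t,\Phi_t)\subset \cL_\rho$ so that its tangent is semiharmonic at every time $t$ (such a parameterization is obtained, e.g., by using the slices of Proposition \ref{prop:slice} along the isomonodromic leaf). By Lemma \ref{lem  1st var hor and iso} together with \eqref{eq vert variation zero on iso}, the identity $\int_\Sig\kappa_\gfrak(2\Psi_t\wedge\dt\Psi_t\circ j_t)=0$ then holds for every $t$ along the path. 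Differentiating at $t=0$, and using $\deriv j_t = m$, yields
\[\int_\Sig\kappa_\gfrak\!\left(2\dt\Psi\wedge\dt\Psi\circ j+2\Psi\wedge\ddt\Psi\circ j+2\Psi\wedge\dt\Psi\circ m\right)=0.\]
Substituting this back into \eqref{eq 2nd variation of Energy} collapses the four-term expression to the claimed two-term one.

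For the second equality, the key identity is $\dt\Psi=d_{A_J}\zeta$, which follows from combining Lemma \ref{lem semiharmonic iso gauge hermitian} (giving $\partial_{A_J}\zeta=\tfrac{1}{2i}(\psi-\alpha^*)$ and $\bar\partial_J\zeta=\tfrac{1}{2i}(\alpha-\psi^*)$, where $\alpha:=\tfrac{1}{2i}\Phi\mu$) with Theorem \ref{thm:MetricvariationSlice} (giving $\dt\tau=0$, so $\dt(\Phi^\ast) = (\dt\Phi)^\ast$). Since $m$ swaps $(1,0)$ and $(0,1)$ types, the top-form part of $\Psi\wedge \dt\Psi\circ m = \Psi\wedge d_{A_J}\zeta\circ m$ can be computed explicitly in local coordinates $\Phi=\phi\,dz$, $\mu=\nu\,\partial_z\otimes d\bar z$, yielding
\[\kappa_\gfrak(\Psi\wedge \dt\Psi\circ m)=\tfrac{1}{2i}\!\left[\nu\,\kappa_\gfrak(\phi,\partial_{A_J,z}\zeta)+\bar\nu\,\kappa_\gfrak(\phi^*,\bar\partial_{\bar z}\zeta)\right]dz\wedge d\bar z.\]
The two summands in the bracket will turn out to be complex conjugates of one another, up to the orientation $\overline{dz\wedge d\bar z}=-dz\wedge d\bar z$. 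To see this I would use the identities $(\partial_{A_J}\zeta)^*=\bar\partial_J\zeta^*=\bar\partial_J\zeta$ (the first being compatibility of the Chern--Singer connection with $\tau$, the second coming from $\zeta=\zeta^*$) together with $\overline{\kappa_\gfrak(X,Y)}=\kappa_\gfrak(X^*,Y^*)$. This identifies $-4\int_\Sig\kappa_\gfrak(\Psi\wedge \dt\Psi\circ m)$ with $-4\,\Imag(I)$, where $I=\int_\Sig \nu\,\kappa_\gfrak(\phi,\partial_{A_J,z}\zeta)\,dz\wedge d\bar z$. A parallel computation of $\langle\partial_{A_J}(i\zeta),\alpha^*\rangle$ via Lemma \ref{lem signs on L2 innerproducts} (using $(\alpha^\ast)^\ast=\alpha$) produces the same quantity, delivering the second equality once the factor of $8$ is accounted for.

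The main obstacle will be the sign and conjugation bookkeeping in the second equality, since the $*$-operation composes the complex antilinear Cartan involution $\tau$ with complex conjugation of scalar form coefficients, while the Killing form $\kappa_\gfrak$ is complex bilinear rather than hermitian. I expect to verify each intermediate sign in a local unitary frame, paying particular attention to the orientations of $dz\wedge d\bar z$ versus $d\bar z\wedge dz$ and to the factors of $\pm i$ introduced by Lemma \ref{lem signs on L2 innerproducts} when converting between $(1,0)$- and $(0,1)$-form pairings. I also expect some subtlety in justifying that a semiharmonic parameterization of the path exists globally in $t$; if not available, one can instead verify the first equality directly by a Stokes-theorem computation in the spirit of the proof of Lemma \ref{lem 1st var hor and iso}, using that the terms involving $\ddt\Psi$ and $\dt\Psi\wedge\dt\Psi\circ j$ are controlled by the exactness statements provided by Lemma \ref{lem semiharmonic iso gauge hermitian}.
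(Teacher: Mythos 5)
Your proposal is correct and follows essentially the same route as the paper: the first equality is obtained exactly as in the paper by differentiating \eqref{eq vert variation zero on iso} along the leaf and substituting into \eqref{eq 2nd variation of Energy}, and the second rests on the same key identity $\dt\Psi=\rd_{A_J}\zeta$ with $\zeta=\zeta^*$ coming from Lemmas \ref{lem isomon vect on config} and \ref{lem semiharmonic iso gauge hermitian}. The only difference is cosmetic: where you propose a local-coordinate conjugation/sign chase, the paper finishes in two lines globally by splitting $\Psi\circ m=\tfrac{1}{2i}\Phi\mu+(\tfrac{1}{2i}\Phi\mu)^*$ and using that $\zeta=\zeta^*$ forces $(\rd_{A_J}(i\zeta))^*=-\rd_{A_J}(i\zeta)$, so the two resulting integrals combine into $4\Real\langle\partial_{A_J}(i\zeta),(\tfrac{1}{2i}\Phi\mu)^*\rangle$ via Lemma \ref{lem signs on L2 innerproducts} — a manipulation you may prefer, as it avoids the orientation and conjugation bookkeeping you flag as the main obstacle.
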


\begin{proof}
	The second variation is given by Equation \eqref{eq 2nd variation of Energy}. By differentiating Equation \eqref{eq vert variation zero on iso}, along the isomonodromic distribution, we have
	\[0=-\int_\Sig\kappa_\gfrak\left(2\Psi\wedge\ddt \Psi\circ j+2\dt \Psi\wedge\dt\Psi\circ j+2\Psi\wedge\dt \Psi\circ m\right).\]
	Hence, for a path $(J_t,\Phi_t)$ which is tangent to the isomonodromic distribution at $t=0$, we have
	\[\ddt\rE(J_t,\Phi_t)= -2\int_\Sig\kappa_\gfrak\left(2\dt \Psi\wedge \Psi\circ m+\Psi\wedge\Psi\circ\dot m\right).\]

	To complete the proof, we compute $-\int_\Sig\kappa_\gfrak\left(2\dt \Psi\wedge \Psi\circ m\right)$. By Lemma \ref{lem isomon vect on config} and Lemma \ref{lem semiharmonic iso gauge hermitian}, there is a unique $\zeta\in\Omega^0(\Sig,P(\gfrak))$  such that $\zeta=\zeta^*$ and $\dt\Psi=\rd_{A_J}\zeta$. Hence, 
	\begin{align*}
		-\int_\Sig\kappa_\gfrak\left(2\dt \Psi\wedge \Psi\circ m\right)&=
        -2\int_\Sig\kappa_\gfrak\left(\rd_{A_J}\zeta\wedge \left(\tfrac{1}{2i}\Phi\mu+\left(\tfrac{1}{2i}\Phi\mu\right)^*\right)\right)\\ 
		&= 2i\int_\Sig\kappa_\gfrak\left(\rd_{A_J}(i\zeta)\wedge
        \tfrac{1}{2i}\Phi\mu\right)-2i \int_\Sig\kappa_\gfrak\left(\left(\tfrac{1}{2i}\Phi\mu\right)^*\wedge \rd_{A_J}(i\zeta)\right)\\
		&=2\langle \rd_{A_J}(i\zeta),\left(\tfrac{1}{2i}\Phi\mu\right)^*\rangle +2 \left\langle \left(\tfrac{1}{2i}\Phi\mu\right)^*,\rd_{A_J}(i\zeta)\right\rangle\\
		&=4\Real\left\langle \partial_{A_J}(i\zeta),\left(\tfrac{1}{2i}\Phi\mu\right)^*\right\rangle,
	\end{align*}
	where we used that $\zeta=\zeta^*$ implies $\left(\rd_{A_J}(i\zeta)\right)^*=-\rd_{A_J}(i\zeta)$.
\end{proof}

 \subsection{Proof of Theorem \ref{MainThmC}}
 Fix a representation $\rho\in\bfX(\sG)$. The complex Hessian of the energy function $\cE_\rho:\bfT(\Sig)\to \R$ of $\rho$ is defined by $\partial\dbar \cE_\rho.$ The following is equivalent to Theorem \ref{MainThmC} of the introduction. 

 \begin{theorem}\label{thm Non intro Thm C}
Fix a representation $\rho\in \bfX(\sG)$ and let $\cE_\rho:\bfT(\Sig)\to \R$ be the energy function of $\rho$. Fix a point $[j]\in\bfT(\Sig)$ and let $x\in\bfM(\sG)$ be the point in the isomonodromic leaf of $\rho$ in the fiber over $[j]$. 
Then for each tangent vector $[\mu]\in \T_{[j]}\bfT(\Sig)$, the complex Hessian of $\cE_\rho$ at $[j]$ along the complex line spanned by $[\mu]$ is
\[\partial_{[\mu]}\partial_{[\overline\mu]}\cE_\rho=-2\Vert w_{[\mu]}\Vert
     _{h_0}^2=2\Vert\Theta([\mu])\Vert^2,\]
where $w_{[\mu]}$ is the horizontal vector associated $[\mu],$ and $\Theta$ is the holomorphic section from Theorem \ref{theo:ModuliSpace}.
\end{theorem}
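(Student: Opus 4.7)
The plan is to compute the real Laplacian $(\partial_u^2 + \partial_v^2)\cE_\rho$ at $[j]$ along a local holomorphic disk $w = u + iv$ in $\Tbold(\Sigma)$ tangent to $[\mu]$, and then invoke $4\partial_{[\mu]}\partial_{[\bar\mu]}\cE_\rho = (\partial_u^2+\partial_v^2)\cE_\rho$. To set up, I would fix a stable Higgs bundle $(J,\Phi)$ solving the Hitchin equations with $[J,\Phi]=x$, represent $[\mu]$ by a smooth Beltrami differential $\mu$, and use the holomorphic exponential map of Definition \ref{def: exp_j} to parameterize the disk by $j_w = \exp_j(w\mu)\in \J(\Sigma)$. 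The two real coordinate paths $u\mapsto [j_u]$ and $v\mapsto [j_{iv}]$ lift via $s_\rho$ to isomonodromic paths in $\cC(P)$, with tangents $\ell_{[\mu]}$ and $\ell_{[i\mu]}$, represented semiharmonically by $(\mu,\beta_1,\psi_1)$ and $(i\mu,\beta_2,\psi_2)$, with associated hermitian sections $\zeta_1,\zeta_2$ from Lemma \ref{lem semiharmonic iso gauge hermitian}. A direct expansion of $\exp_j(w\mu)$ shows that $\ddot j_u|_0 + \ddot j_v|_0 = 0$ in $\End(\T\Sigma)$ (this is the holomorphicity of $\exp_j$), so when Lemma \ref{lem 2nd variation} is applied to each path and summed, the two acceleration integrals $-2\int_\Sigma \kappa_\gfrak(\Psi\wedge\Psi\circ \dot m_i)$ cancel exactly.

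For the non-acceleration piece, I would use $(\tfrac{1}{2i}\Phi(i\mu))^\ast = -i(\tfrac{1}{2i}\Phi\mu)^\ast$ together with the identity $-\Imag\langle X,Y\rangle = \Real\langle iX,Y\rangle$ (valid because $\langle\cdot,\cdot\rangle$ is $\C$-linear in the first slot) to combine the two second-variation formulas into
\[
(\partial_u^2+\partial_v^2)\cE_\rho\big|_{[j]} ~=~ 8\Real\langle \partial_{A_J}(i\zeta_1 - \zeta_2),\ (\tfrac{1}{2i}\Phi\mu)^\ast\rangle\ .
\]
Setting $\eta := i\zeta_1+\zeta_2$, Lemma \ref{lem key relation of iso} gives that $D'\eta$ is the horizontal semiharmonic representative of $w_{[\mu]}$, and Lemma \ref{lem D' and D'' eta} gives $D''\eta = \pr_{\mathrm{Im}(D'')}(\tfrac{1}{2i}\Phi\mu)$. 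The $(1,0)$-parts of the semiharmonic isomonodromic equations of Lemma \ref{lem semiharmonic iso gauge hermitian} read $\psi_j = 2i\,\partial_{A_J}\zeta_j + (\tfrac{1}{2i}\Phi\mu_j)^\ast$, which lets me rewrite $\partial_{A_J}(i\zeta_1 - \zeta_2) = \tfrac{1}{2}(\psi_1+i\psi_2) - (\tfrac{1}{2i}\Phi\mu)^\ast$ and thus split the Laplacian into a manifest $-8\|(\tfrac{1}{2i}\Phi\mu)^\ast\|^2$ term plus $4\Real\langle \psi_1+i\psi_2,(\tfrac{1}{2i}\Phi\mu)^\ast\rangle$.

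The delicate remaining step will be to show that this second piece equals $16\|\tfrac{1}{2i}\Phi\mu\|^2 - 8\|D''\eta\|^2$; combined with the orthogonal Hodge decomposition $\|\tfrac{1}{2i}\Phi\mu\|^2 = \|D''\eta\|^2 + \|\pr_{\ker(D')}(\tfrac{1}{2i}\Phi\mu)\|^2$, this yields
\[
(\partial_u^2+\partial_v^2)\cE_\rho\big|_{[j]} ~=~ 8\|\pr_{\ker(D')}(\tfrac{1}{2i}\Phi\mu)\|^2 ~=~ 8\|\Theta_x([\mu])\|^2_{h_0}
\]
by Proposition \ref{prop h0 norm of Theta}. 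Dividing by four gives the second equality of the theorem, and combining with Proposition \ref{prop hor and iso dist properties}(1) ($\|w_{[\mu]}\|_{h_0}^2 = -\|\Theta_x([\mu])\|_{h_0}^2$) gives the first. The principal obstacle is executing that final algebraic reduction: the tempting shortcut $\partial_{A_J}(\eta^\ast) = (\dbar_J\eta)^\ast$ that would collapse everything immediately is \emph{not} valid for sections of $P(i\kfrak)$ (as one sees already in local coordinates, since differentiating a hermitian section preserves hermiticity and so is fixed under $\ast$, contradicting the swap of types). Instead one must integrate the inner products against $\psi_j$ by parts using the K\"ahler identities \eqref{eqn:kahler-simpson}, and then exploit the holomorphicity $D''\Phi=0$ together with the Hitchin equation $F_{A_J}+\tfrac{1}{4}[\Phi,\Phi^\ast]=0$ to generate the compensating $\|\tfrac{1}{2i}\Phi\mu\|^2$ contribution.
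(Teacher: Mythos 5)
Your overall strategy is the paper's: express the complex Hessian as $\tfrac14(\partial_u^2+\partial_v^2)\cE_\rho$, apply Lemma \ref{lem 2nd variation} to the two isomonodromic paths tangent to $\ell_{[\mu]}$ and $\ell_{[i\mu]}$, combine the $\zeta_i$ via Lemma \ref{lem key relation of iso}, and integrate by parts. However, there is a genuine error at the first computational step that propagates through the rest of the plan.

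The claim that $\ddot{j}_u|_0+\ddot{j}_v|_0=0$ is false. Holomorphicity of $\exp_j$ is a statement about its differential intertwining the (point-dependent) complex structures on $\J(\Sigma)$; it does not make the second derivatives in the ambient linear space $\Omega^0(\End(\T\Sigma))$ sum to zero. A direct expansion of $\exp_j(tm)=\bigl(\Id+\tfrac t2 jm\bigr)j\bigl(\Id+\tfrac t2 jm\bigr)^{-1}$ gives $\ddot j|_0=jm^2$, and since $j(jm)^2=jm^2$ as well, the sum is $\dot m_u+\dot m_v=2jm^2=2|\mu|^2 j$ --- this is exactly Toledo's identity, equation \eqref{eq second var of j} in the paper. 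The two acceleration integrals therefore do \emph{not} cancel; they contribute $-2\int_\Sigma\kappa_\gfrak(\Psi\wedge\Psi\circ 2|\mu|^2j)=+8\Vert\tfrac{1}{2i}\Phi\mu\Vert^2$ to the Laplacian. This term is essential: it is precisely the positive summand in $-\Vert w_{[\mu]}\Vert^2_{h_0}=\Vert\tfrac{1}{2i}\Phi\mu\Vert^2-\Vert(\beta,\psi)\Vert^2$.

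Because of this, the ``delicate remaining step'' you pose is false as stated, so the proof cannot be completed along your lines. Carrying out the paper's integration by parts (Stokes twice, using $D'(\tfrac{1}{2i}\Phi\mu)=-D''(\beta,\psi)$ and $D''(i\zeta_1-\zeta_2)=-\beta^*+\psi^*$) gives $\Real\langle\partial_{A_J}(i\zeta_1-\zeta_2),(\tfrac{1}{2i}\Phi\mu)^*\rangle=-\Vert(\beta,\psi)\Vert^2=-\Vert D''\eta\Vert^2$, hence $4\Real\langle\psi_1+i\psi_2,(\tfrac{1}{2i}\Phi\mu)^*\rangle=8\Vert\tfrac{1}{2i}\Phi\mu\Vert^2-8\Vert D''\eta\Vert^2$, not $16\Vert\tfrac{1}{2i}\Phi\mu\Vert^2-8\Vert D''\eta\Vert^2$. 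The discrepancy is exactly the $8\Vert\tfrac{1}{2i}\Phi\mu\Vert^2$ you discarded from the acceleration term, so your two errors are consistent with the correct final answer but each is individually wrong; if you proved the remaining step correctly you would land on the wrong total. (As a side remark, your stated reason for rejecting the identity $\partial_{A_J}(\eta^*)=(\dbar_{A_J}\eta)^*$ is also mistaken: since the Cartan involution is parallel for the Chern--Singer connection, this identity does hold --- $*$ swaps types on forms, so there is no contradiction with hermiticity --- and the paper uses it in the proof of Theorem \ref{thm Non intro Thm B}. The genuine obstacle in your argument is the acceleration term, not this.)
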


\begin{remark}
Note that the complex Hessian of $\rE$ along the complex line spanned by
    the isomonodromic vector $\ell_\mu$ is different than what is computed
    above since the isomonodromic distribution is not complex. 
\end{remark}

Since $\Vert\Theta([\mu])\Vert_{h_0}^2\geq 0$,
the following corollary is immediate from Theorem \ref{thm Non intro Thm B}.

\begin{corollary} \label{cor:toledo}
	Let $\rho\in\bfX(\sG)$. Then the energy function $\cE_\rho$ is plurisubharmonic. 
	Moreover, for $[j]\in\bfT(\Sig)$, the set of directions $[\mu]$ in
    which $\cE_\rho$ is not strictly plurisubharmonic is identified with
    the intersection of $\ker(\Theta)$ and the tangent space to $s_\rho([j])$. 
\end{corollary}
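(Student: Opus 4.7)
The plan is to compute $\partial_{[\mu]}\partial_{[\bar\mu]}\cE_\rho$ by summing second variations of $\rE$ along two real curves in the isomonodromic leaf, and then simplify via the K\"ahler identities. Fix a Higgs bundle $(J,\Phi)$ representing $x$ and solving the Hitchin equations, and take the Ebin--Tromba straight lines $\gamma_\mu(t)=\exp_{[j]}(t\mu)$ and $\gamma_{i\mu}(t)=\exp_{[j]}(ti\mu)$ through $[j]$. Since the Ebin--Tromba chart is holomorphic (Lemma \ref{lem expo props j}),
\[
4\partial_{[\mu]}\partial_{[\bar\mu]}\cE_\rho = \left.\tfrac{d^2}{dt^2}\right|_{0}\cE_\rho(\gamma_\mu) + \left.\tfrac{d^2}{dt^2}\right|_{0}\cE_\rho(\gamma_{i\mu}).
\]
Lift each curve to the isomonodromic leaf via $s_\rho$, and let $(\mu,\beta_1,\psi_1)$, $(i\mu,\beta_2,\psi_2)$ be the semiharmonic isomonodromic representatives with associated hermitian sections $\zeta_1,\zeta_2$ from Lemma \ref{lem semiharmonic iso gauge hermitian}. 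Applying Lemma \ref{lem 2nd variation} to each path, and using $\eta_{i\mu}=-i\eta_\mu$ with $\eta_\mu:=(\tfrac{1}{2i}\Phi\mu)^*$, the two terms $8\Real\langle\partial_{A_J}(i\zeta_a),\eta_{\nu_a}\rangle$ combine into $8\Real\langle\partial_{A_J}(i\zeta_1-\zeta_2),\eta_\mu\rangle$.

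Next I would handle the $\dot m = \ddot j_0$ contributions, which is the main subtle point. Direct expansion of $\exp_j(t\nu)$ gives $\ddot j_0 = \nu^2 j$; the naive expectation is that the paths $\mu$ and $i\mu$ yield opposite $\dot m$ (from the scalar identity $(i)^2=-1$) and cancel, but in fact as a real endomorphism of $T\Sig$ every tangent Beltrami satisfies $\nu^2 = |\nu|^2\,\Id$. This is immediate on $(\partial_z,\partial_{\bar z})$: for $m=\mu+\bar\mu$ one computes $m^2(\partial_z)=\bar\mu\mu\,\partial_z=|\mu|^2\partial_z$, and the same holds for $m'=i\mu-i\bar\mu$. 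Hence both paths give $\dot m=|\mu|^2 j$, and the two contributions $-2\int_\Sig\kappa_\gfrak(\Psi\wedge\Psi\circ\dot m)$ \emph{add}. Using $\kappa(\Psi\wedge\Psi\circ j) = -\tfrac{i}{2}\kappa(\Phi\wedge\Phi^*)$ and $\|\Phi\mu\|^2=4\|\eta_\mu\|^2$, this sum evaluates to $+8\|\eta_\mu\|^2$, so
\[
4\partial_{[\mu]}\partial_{[\bar\mu]}\cE_\rho = 8\Real\langle\partial_{A_J}(i\zeta_1-\zeta_2),\eta_\mu\rangle + 8\|\eta_\mu\|^2.
\]

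To evaluate the remaining inner product, combine $\psi_a = 2i\partial_{A_J}\zeta_a + \eta_{\nu_a}$ (Lemma \ref{lem semiharmonic iso gauge hermitian}) with $\psi_\mathrm{hor} = \partial_{A_J}(i\zeta_1+\zeta_2)$ (Lemma \ref{lem key relation of iso}) to rewrite
\[
\partial_{A_J}(i\zeta_1-\zeta_2) = \psi_v - \eta_\mu,\qquad (\beta_v,\psi_v) := (\beta_1,\psi_1)-(\beta_\mathrm{hor},\psi_\mathrm{hor}),
\]
so that $(\beta_v,\psi_v)$ is the semiharmonic representative of the vertical component $v_1:=\ell_{[\mu]}-w_{[\mu]}$. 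Subtracting the semiharmonic conditions for the iso and horizontal vectors gives $(\beta_v,\psi_v)\in \ker D''\cap \ker D'$, so the K\"ahler identity $(D')^*=i\Lambda D''$ yields $(D')^*(\beta_v,\psi_v)=0$; together with the decomposition $(\beta_v,\psi_v)=D'(i\zeta_1-\zeta_2)+(0,\eta_\mu)$, a one-line integration by parts produces
\[
\langle\psi_v,\eta_\mu\rangle = \|(\beta_v,\psi_v)\|^2 = \|v_1\|^2,
\]
which in particular is real. Substituting,
\[
4\partial_{[\mu]}\partial_{[\bar\mu]}\cE_\rho = 8(\|v_1\|^2-\|\eta_\mu\|^2) + 8\|\eta_\mu\|^2 = 8\|v_1\|^2 = -8\|w_{[\mu]}\|^2_{h_0},
\]
where the last equality uses $\|\ell_{[\mu]}\|^2_{h_0}=0$ (Lemma \ref{lem D isotr}) and the $h_0$-orthogonality of the decomposition $\ell_{[\mu]}=w_{[\mu]}+v_1$. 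The second equality $-2\|w_{[\mu]}\|^2=2\|\Theta_x([\mu])\|^2$ is item (1) of Proposition \ref{prop hor and iso dist properties}.

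The main obstacle will be the correct identification of the $\dot m$ contribution: dropping it (by assuming it cancels) or using the wrong sign would yield a \emph{negative} Hessian $-8\|(\beta_\mathrm{hor},\psi_\mathrm{hor})\|^2$, contradicting plurisubharmonicity. Getting the extra $+8\|\eta_\mu\|^2$ from the observation $\nu^2=|\nu|^2\,\Id$ is what restores the correct formula and, together with the Hodge identity $\|\phi\|^2 = \|(\beta_\mathrm{hor},\psi_\mathrm{hor})\|^2 + \|v_1\|^2$, produces the clean identification with $\|w_{[\mu]}\|^2_{h_0}$.
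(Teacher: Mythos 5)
Your argument is correct and is essentially the paper's own route: the corollary is deduced from the Hessian formula of Theorem~\ref{thm Non intro Thm C}, whose proof in the paper uses exactly your decomposition of $4\partial_{[\mu]}\partial_{[\overline\mu]}\cE_\rho$ into the two second variations along the isomonodromic lifts of $\mu$ and $i\mu$, the same appeal to Lemma~\ref{lem 2nd variation}, and the same extra $+\Vert\tfrac{1}{2i}\Phi\mu\Vert^2$ coming from $\dot m=|\mu|^2 j$. Your only (harmless) deviations are that you get $\ddot j_0=|\nu|^2 j$ directly from the Ebin--Tromba chart rather than by differentiating the Cauchy--Riemann equations as in \eqref{eq second var of j}, and that you evaluate the cross term by recognizing $D'(i\zeta_1-\zeta_2)+\bigl(0,(\tfrac{1}{2i}\Phi\mu)^*\bigr)$ as the vertical component of $\ell_{[\mu]}$ and using Hodge orthogonality, where the paper instead integrates by parts against $D''(i\zeta_1-\zeta_2)=-\beta^*+\psi^*$ from \eqref{eq D'' horizontal}; both land on $-2\Vert w_{[\mu]}\Vert^2_{h_0}=2\Vert\Theta([\mu])\Vert^2$, from which plurisubharmonicity and the identification of the degenerate directions with $\ker(\Theta)$ are immediate.
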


\begin{remark}
    The first statement in Corollary \ref{cor:toledo} was proven by Toledo
    \cite{Toledo}, in a more general context, and later also by To\v si\'c
    \cite{Tosicpluri}. The formula in Theorem \ref{thm Non intro Thm C}
    above makes plurisubharmonicity manifest.  
    In Appendix \ref{sec:tosic-comparison}, we show that this formula agrees
    with the expression found in \cite[Theorem 1.10]{Tosicpluri}.    Additionally, in
\cite[Theorem 1.6]{Tosicpluri} 
   it is shown that $\mu$ is in the kernel of the complex Hessian of the energy function
    if and only there exists a $\xi_\mu\in\Omega^0(P(\gfrak))$ 
	\begin{equation}
	 	\label{eq :Tosic} 
	 	\xymatrix{\dbar_J\xi_\mu=\Phi\mu&\text{and}&[\xi_\mu,\Phi]=0.}
	 \end{equation}
This, too, is clearly  equivalent to being in the kernel of $\Theta$.
    Indeed,  by definition, $\mu\in\ker(\Theta_{(J,\Phi)})$ if and only if
    $\tfrac{1}{2i}\Phi\mu$ is in image of $D''$, which is equivalent to the
    existence of such a $\xi_\mu.$  In this way, 
    the holomorphicity of the degeneracy locus is also immediate. 
\end{remark}
\begin{proof}
 	 Let $j(s,t)$ be a two parameter family of complex structures on $\Sig$  which is holomorphic in the variable $s+it$ and satisfies  $j(0,0)=j$. If $m_s,m_t$ are first variations of $j(s,t)$ at $0$ with respect to $s,t$, respectively, then the complex Hessian of $\cE_\rho$ at $j$ in complex line $m_s+im_t$ is given 
 \begin{equation} \label{eqn:complex-hessian}
    \partial_\mu\partial_{\bar \mu} E_\rho=
        \frac{1}{4}\left(
    \frac{\rd^2\cE_\rho}{\rd s^2}+ \frac{\rd^2\cE_\rho}{\rd t^2}
        \right)~.
    \end{equation} 
  Let $\dot m_s,\dot m_t$ be the second variations at $0$ of $j(s,t)$ with respect $s$ and $t$, respectively. Then, as in \cite[Equation 17]{Toledo}, differentiating the Cauchy-Riemann equations implies 
    \begin{equation}\label{eq second var of j}
    	\dt m_s+\dt m_t= 2 j m_s^2 .
    \end{equation}

Hence, the complex Hessian is computed by adding the two second variations from Lemma \ref{lem 2nd variation} for the isomonodromic vectors $\ell_{[\mu]}$ and $\ell_{[i\mu]}$, where the sums of the two $\dot m$ terms satisfy \eqref{eq second var of j}. Furthermore, it is straight forward to check that $2jm^2=2|\mu|^2j.$ 

Let $\zeta_1,\zeta_2\in\Omega^0(\Sig,P(\gfrak))$ be the associated sections from Lemma \ref{lem isomon vect on config} for the semiharmonic isomonodromic vectors $(\mu,\beta_1,\psi_1)$ and $(i\mu,\beta_2,\psi_2)$, respectively. We have 
    \[ \tfrac{1}{2}\partial_{[\mu]}\partial_{[\overline\mu]}\cE_\rho =
    -\frac{1}{2}\int_\Sig\kappa_\gfrak\left(\Psi\wedge\Psi\circ |\mu|^2j\right)+\Real\left\langle \partial_{A_J}(i\zeta_1),\left(\tfrac{1}{2i}\Phi\mu\right)^*\right\rangle + \Real\left\langle \partial_{A_J}(i\zeta_2),\left(\tfrac{1}{2i}\Phi i \mu\right)^*\right\rangle.\]  

For the first term we have
\begin{align*}
	-\frac{1}{2}\int_\Sig\kappa_\gfrak\left(\Psi\wedge\Psi\circ
    |\mu|^2j\right)&=-\frac{1}{2}\int_\Sig|\mu|^2\kappa_\gfrak\left(-i\tfrac{1}{2i}\Phi\wedge \left(\tfrac{1}{2i}\Phi\right)^*+i\left(\tfrac{1}{2i}\Phi\right)^*\wedge  \tfrac{1}{2i}\Phi\right)
	\\&= i\int_{\Sig}|\mu|^2\kappa_\gfrak\left(\tfrac{1}{2i}\Phi\wedge \left(\tfrac{1}{2i}\Phi\right)^*\right)
	\\&=\Vert \tfrac{1}{2i}\Phi\mu\Vert ^2.
\end{align*}

Using Stokes' theorem and
    $D'\left(\tfrac{1}{2i}\Phi\mu\right)=\rd_{A_J}\left(\tfrac{1}{2i}\Phi\mu\right)$,
    we have 
\begin{align*}
    \tfrac{1}{2}
\partial_{[\mu]}\partial_{[\overline\mu]}\cE_\rho
    &= \Vert \tfrac{1}{2i}\Phi\mu\Vert ^2+
	\Real\langle \partial_{A_J}(i\zeta_1-\zeta_2)\wedge \left(\tfrac{1}{2i}\Phi\mu\right)^*\rangle \\ 
	&=\Vert \tfrac{1}{2i}\Phi\mu\Vert ^2+\Real \left\{i
    \int_\Sig\kappa_\gfrak\left(\partial_{A_J}(i\zeta_1-\zeta_2)\wedge \tfrac{1}{2i}\Phi\mu\right) \right\}\\&
	=\Vert \tfrac{1}{2i}\Phi\mu\Vert ^2-\Real \left\{i
    \int_\Sig\kappa_\gfrak\left((i\zeta_1-\zeta_2)\wedge D'\left(\tfrac{1}{2i}\Phi\mu\right)\right) \right\}~.
\end{align*}
Let $w_\mu=(\mu,\beta,\psi)$ be a semiharmonic horizontal vector associated to $\mu$. Recall that 
    $$D'\left(\tfrac{1}{2i}\Phi\mu\right)=-D''(\beta+\psi)=-\dbar\psi-[\Phi,\beta]\
    ,$$ and, by invariance of the Killing form, we have 
\[\kappa_\gfrak\left((i\zeta_1-\zeta_2)\wedge
    \left[\tfrac{1}{2i}\Phi,\beta\right]\right)= \kappa_\gfrak\left(\left[\tfrac{1}{2i}\Phi,(i\zeta_1-\zeta_2)\right]\wedge\beta\right).\] 
Using $D''=\dbar+\tfrac{1}{2i}\Phi$, Stokes' theorem and \eqref{eq D'' horizontal}, we have
\begin{align*}
	\tfrac{1}{2}\partial_{[\mu]}\partial_{[\overline\mu]}\cE_\rho
    &=\Vert\tfrac{1}{2i}\Phi\mu\Vert ^2+\Real \left\{i \int_\Sig\kappa_\gfrak\left((i\zeta_1-\zeta_2)\wedge D''(\beta+\psi)\right) \right\}\\
	& = \Vert\tfrac{1}{2i}\Phi\mu\Vert ^2-\Real \left\{i \int_\Sig\kappa_\gfrak\left(D''(i\zeta_1-\zeta_2)\wedge (\beta+\psi)\right) \right\}\\
	& =\Vert\tfrac{1}{2i}\Phi\mu\Vert ^2-\Real \left\{i \int_\Sig\kappa_\gfrak\left((-\psi^*+\beta^*)\wedge (\beta+\psi)\right) \right\}\\
	& =\Vert \tfrac{1}{2i}\Phi\mu\Vert ^2-\Vert \beta\Vert ^2-\Vert \psi\Vert ^2~.
\end{align*}
This completes the proof since $\Vert w_{\mu}\Vert ^2_{h_0}=\Vert \psi\Vert ^2+\Vert \beta\Vert ^2-\Vert \tfrac{1}{2i}\Phi\mu\Vert ^2.$
 \end{proof}

\section{Stratification and isomonodromic leaves}\label{sec:Stratification}

We now discuss the $\C^*$-action on the joint moduli space $\bfM(\sG)$, Higgs bundles for real forms of $\sG$ and the $\C^*$ and mapping class group invariant  stratification of $\bfM(\sG)$ defined by the rank of the kernel of the hermitian form $h_0$, or equivalently, the 2-form $\omega_0.$ We then prove Theorems \ref{nonintro Thm D} and \ref{nonintro Thm E} which are equivalent to Theorems D and E from the introduction, respectively.

 \subsection{$\C^*$-action}
Given a complex manifold $N$, we denote by $\CBbb^\ast[N]$ the group of nowhere
vanishing holomorphic functions on $N$. When $N=\J(\Sig)$, the group
$\CBbb^\ast[\Jcal(\Sigma)]$ acts on the configuration space $\cC(P)$ via
$$
\Xi:  \CBbb^\ast[\Jcal(\Sigma)]\times \cC(P)  \longrightarrow  \cC(P)  ~: ~
     (f,(J,\Phi))  \longmapsto  (J,f(\pi_{\cJ}(J))\Phi) \ .
$$
This action preserves the space $F^{-1}(0)^s$ of stable Higgs bundles. If
we denote by $p: \J(\Sig) \to \bfT(\Sig)$ the quotient map, the pullback
gives
$p^* : \CBbb^*[\bfT(\Sig)] \to\CBbb^*[\J(\Sig)]$.
We can thus define the action of an element $f\in \CBbb^*[\bfT(\Sig)]$ on $x\in \bfM(\sG)$ by
\[f\cdotp x := \left[\Xi(p^*f, x_0)\right]~,\]
where $x_0 \in F^{-1}(0)^s$ is any lift of $x$. 
 Restricting the action to constant functions yields a $\C^*$-action on $\bfM(\G)$ that
restricts to the usual $\C^*$-action of Hitchin on each fiber.

\begin{proposition}
    The action of $\CBbb^*[\bfT(\Sig)]$ on $\bfM(\sG)$ described above is holomorphic.
\end{proposition}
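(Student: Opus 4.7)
The plan is to lift the action to the configuration space $\Ccal(P)$, verify holomorphicity there, and then descend to $\Mbold(\G)$ using the slice construction from Proposition \ref{prop:slice}.

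First, I would work with the lifted action
$\widehat\Xi : \CBbb^\ast[\Jcal(\Sigma)] \times \Ccal(P) \to \Ccal(P)$
defined by $\widehat\Xi(\widehat f,(J,\Phi)) = (J, \widehat f(\pi_\Jcal(J))\Phi)$ for $\widehat f \in p^\ast\CBbb^\ast[\Tbold(\Sigma)]$. The condition $\Phi \circ J = i\Phi$ is preserved under scaling $\Phi$ by a constant scalar, so $\widehat\Xi$ lands in $\Ccal(P)$. It preserves $F^{-1}(0)^{st}$ since $\widehat f(\pi_\Jcal(J))$ is a scalar (independent of the point in $\Sigma$), hence commutes with $\dbar_J$, and stability is invariant under $\CBbb^\ast$-rescaling of the Higgs field.

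Next, I would verify that $\widehat\Xi$ commutes with the action of $\Aut_0(P)$: since $\widehat f$ is the pullback of a function on $\Tbold(\Sigma)$, it is $\Diff_0(\Sigma)$-invariant on $\Jcal(\Sigma)$, and the $\Aut_0(P)$-action on $\Jcal(\Sigma)$ factors through $\Diff_0(\Sigma)$. Therefore $\widehat\Xi$ descends to an action on $\Mbold(\G) = F^{-1}(0)^{st}/\Aut_0(P)$, which agrees with the definition of $f \cdot x$ given in the excerpt.

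For holomorphicity of $\widehat\Xi$ in the $\Ccal(P)$ variable, I would compute the differential at $(J,\Phi)$ in the direction $(M,\theta)$, which equals $(M,\, df_{\pi_\Jcal(J)}(d\pi_\Jcal(M))\,\Phi + f(\pi_\Jcal(J))\theta)$, where $f$ is the descent of $\widehat f$ to $\Tbold(\Sigma)$. A direct check using $\Phi \circ J = i\Phi$ and $\theta \circ J = i\theta - \Phi\circ M$ confirms this lies in $\T_{(J,f\Phi)}\Ccal(P)$. The $\I$-commutation $d\widehat\Xi \circ \I = \I \circ d\widehat\Xi$ then reduces to the identity $df(im) = i\,df(m)$, which holds because $f$ is holomorphic on $\Tbold(\Sigma)$ and $p \circ \pi_\Jcal$ is holomorphic. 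For joint holomorphicity, one observes that the evaluation map $\Ocal(\Tbold(\Sigma)) \times \Tbold(\Sigma) \to \CBbb$ is holomorphic (in the Fréchet sense), so the full action map $\CBbb^\ast[\Tbold(\Sigma)] \times \Ccal(P) \to \Ccal(P)$ is holomorphic in both variables.

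Finally, to conclude that the induced action on $\Mbold(\G)$ is holomorphic, I would use the holomorphic atlas of Theorem \ref{theo:ModuliSpace} provided by the slices $\Scal_x$ of Proposition \ref{prop:slice}. Since $\widehat\Xi$ is holomorphic and $\Aut_0(P)$-equivariant on $F^{-1}(0)^{st}$, composing with the holomorphic maps $\sigma$ from \eqref{eq sigma} and their local inverses realizes the descended action in local slice coordinates as a composition of holomorphic maps, proving that $\Xi$ is holomorphic on $\Mbold(\G)$. The main bookkeeping obstacle here is the rigorous treatment of the infinite-dimensional Fréchet-Lie group $\CBbb^\ast[\Tbold(\Sigma)]$ and joint holomorphicity of the action, but no genuinely new ideas beyond those already used to construct $\Mbold(\G)$ are required.
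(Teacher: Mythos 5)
Your proposal is correct and follows essentially the same route as the paper: verify holomorphicity of the lifted action $\Xi$ on the configuration space by computing its differential and checking complex linearity, then descend to $\Mbold(\G)$ via the holomorphic slices of Proposition \ref{prop:slice}. The only difference is that you spell out a few verifications (preservation of stability, $\Aut_0(P)$-equivariance, the chain-rule term $df_{\pi_\Jcal(J)}(d\pi_\Jcal(M))\,\Phi$ in the differential) that the paper leaves implicit.
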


\begin{proof}
For a Higgs bundle $x=(J,\Phi)\in \cC(P)$ and a vector $(\dot
    f,(\mu,\beta,\theta))\in \T_f\CBbb^*[\J(\Sig)] \times \T_x\cC(P)$,
    (where canonically, $\T_f\CBbb^*[\J(\Sig)]\simeq \CBbb[\J(\Sig)]$)  
    the differential of $\Xi$ is given by
\[\rd_{(f,x)}\Xi (\dot f,(\mu,\beta,\theta)) = (\mu,\beta,\dot f\Phi+f\theta) \]
and so $\Xi$ is holomorphic. 

The pullback map $p^*$ and the projection $\pi: F^{-1}(0)^s \to \bfM(\sG)$ are both holomorphic.
Around any point $y\in \bfM(\sG)$, one can find a holomorphic slice $\cS$
    in $F^{-1}(0)^s$ such that the restriction of $\pi$ to $\cS$ is a
    biholomorphism onto a neighborhood of $y$. The action of
    $\CBbb^*[\bfT(\Sig)]$ on $\bfM(\sG)$ is then locally given by the
    composition $\pi\circ\Xi\circ \pi|_{\cS}^{-1}$, which is thus holomorphic.
\end{proof}

\subsubsection{The horizontal distribution} We now show that the restriction of this action to $\sU(1)$ preserves the horizontal distribution $\cH$ from Definition \ref{def:horizontal dist}.

\begin{proposition}\label{prop:U(1)ActionPreservesHorizontal}
For $\lambda\in\sU(1)$ denote the multiplication map $m_\lambda:\bfM(\sG)\to\bfM(\sG)$. Then, the derivative $\rd m_\lambda$ preserves the horizontal distribution $\cH.$ 
\end{proposition}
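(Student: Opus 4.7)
The plan is to establish two invariance properties of $m_\lambda$ that together force $dm_\lambda$ to preserve $\cH$: it preserves the vertical bundle $\V\bfM$, and it preserves the hermitian form $h_0$. Since $\cH = \V\bfM^{\perp_{h_0}}$, this suffices. The first property is immediate: $m_\lambda$ only rescales the Higgs field and leaves the underlying complex structure $j=\pi_\cJ(J)$ fixed, hence it covers the identity on $\bfT(\Sig)$ and $dm_\lambda$ maps $\V\bfM$ to itself.

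For the second, I compute using semiharmonic representatives. If $(J,\Phi)$ solves the Hitchin equation, so does $(J,\lambda\Phi)$, since $|\lambda|^2=1$ yields $[\lambda\Phi,(\lambda\Phi)^\ast]=[\Phi,\Phi^\ast]$. A semiharmonic vector $v=(\mu,\beta,\psi)$ at $(J,\Phi)$ pushes forward to $(\mu,\beta,\lambda\psi)$ at $(J,\lambda\Phi)$. Writing $D''_\lambda, D'_\lambda$ for the Simpson operators \eqref{eqn:simpson-ops} at the new basepoint, and using that $[(\tfrac{1}{2i}\Phi)^\ast,\tfrac{1}{2i}\Phi\mu]$ is a wedge of two $(0,1)$-forms and hence vanishes on the surface, a direct computation gives
\begin{align*}
D''_\lambda(\beta,\lambda\psi)+D'_\lambda\bigl(\tfrac{1}{2i}\lambda\Phi\mu\bigr) &= \lambda\bigl(D''(\beta,\psi)+D'\bigl(\tfrac{1}{2i}\Phi\mu\bigr)\bigr)=0, \\
D'_\lambda(\beta,\lambda\psi) &= D'(\beta,\psi)=0,
\end{align*}
so $(m_\lambda)_\ast v$ is again semiharmonic. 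Applying Lemma \ref{lem omega_0 formula}(2) at $(J,\lambda\Phi)$ and using sesquilinearity of the $L^2$-pairings together with $|\lambda|^2=1$ then yields
\[ h_0\bigl((m_\lambda)_\ast v_1,(m_\lambda)_\ast v_2\bigr)=\langle\beta_1,\beta_2\rangle+\langle\psi_1,\psi_2\rangle-\tfrac{1}{4}\langle\Phi\mu_1,\Phi\mu_2\rangle=h_0(v_1,v_2). \]

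The whole argument reduces to a direct calculation; the only mildly delicate step is the preservation of semiharmonicity under pushforward, which hinges on the vanishing of a $(0,2)$-type wedge on the surface together with $|\lambda|^2=1$. I do not anticipate any substantive obstacle.
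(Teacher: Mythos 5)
Your proof is correct. The core computation — that $\rd_x m_\lambda$ sends $(\mu,\beta,\psi)$ to $(\mu,\beta,\lambda\psi)$ and that this preserves both the holomorphicity and gauge conditions of Definition \ref{def:semiharmonic}, using $|\lambda|^2=1$ and the vanishing of $(0,2)$-type wedges on a surface — is exactly the computation in the paper's proof. Where you diverge is in the last step: the paper invokes Lemma \ref{lem D' and D'' eta}, characterizing horizontal semiharmonic vectors as those with $(\beta,\psi)\in\rIm(D')$, and exhibits the potential explicitly via $D'_y(\lambda\eta)=\lambda\psi+\beta$; you instead observe that $m_\lambda$ preserves the vertical bundle and is an $h_0$-isometry (computed on semiharmonic lifts via Lemma \ref{lem omega_0 formula}), so it preserves $\cH=\V\bfM^{\perp_{h_0}}$. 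Both routes are sound — for yours, note that the orthocomplement argument needs $\rd m_\lambda$ to restrict to an isomorphism of $\V\bfM$, which holds since $m_{\lambda^{-1}}$ inverts it, and that the $L^2$-pairings at $(J,\lambda\Phi)$ coincide with those at $(J,\Phi)$ because the harmonic reduction is unchanged (as you note, the Hitchin equation is $\lambda$-invariant). Your version buys slightly more: it records that $\sU(1)$ acts by $h_0$-isometries, not merely that it preserves $\cH$; the paper's version is marginally more self-contained in that it never has to worry about taking orthocomplements with respect to a possibly degenerate form.
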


\begin{proof}
The map $m_\lambda$ is induced from the map $m_\lambda:\cC(P)\to\cC(P)$ defined by $m_\lambda(J,\Phi)=(J,\lambda\Phi)$. 
Let $x=(J,\Phi)$ be a stable Higgs bundle which solves the Hitchin equations. Then for $\lambda\in\sU(1)$, $y=(J,\lambda\Phi)$ also solves the Hitchin equations. 
It suffices to prove  $\rd_x m_\lambda:\T_{x}\cC(P)\to \T_{y}\cC(P)$ sends horizontal semiharmonic vectors at $x$ to horizontal semiharmonic vectors at $m_\lambda(x).$

For a tangent vector $v=(\mu,\beta,\psi)\in \T_x\cC(P)$, we have $\rd_xm_\lambda (v)=(\mu,\beta,\lambda\psi).$ 
By Lemma \ref{lem D' and D'' eta}, $v$ is semi-harmonic and horizontal if and only if there exists $\eta\in\Omega^0(P[\gfrak])$ such that 
\[\xymatrix{D'_x(\eta)=(\beta,\psi)&\text{and}&D''_x(\beta,\psi)+D'_x(\tfrac{1}{2i}\Phi\mu)=0,}\]
where $D'_x= \partial_{A_J} + (\tfrac{1}{2i} \Phi)^*$ and $D''_x = \overline\partial_{A_J} + \frac{1}{2i}\Phi$. 
Setting 
\[\xymatrix{D'_y= \partial_{A_J} + (\tfrac{1}{2i} \lambda\Phi)^* &\text{ and } &D''_y = \overline\partial_{A_J} + \tfrac{1}{2i}\lambda\Phi~,}\] we have
\begin{align*}
	 \lambda D''_x(\beta,\psi)+\lambda D'_x(\tfrac{1}{2i}\Phi\mu)&= \overline\partial_{A_J} \lambda\psi + \tfrac{1}{2i}[\lambda\Phi,\beta] + \tfrac{1}{2i}\partial_{A_J} \lambda\Phi\mu \\ &= D''_y(\beta,\lambda \psi) +D'_y(\lambda\tfrac{1}{2i}\Phi\mu).
\end{align*}
 Since $\lambda\in\sU(1)$,  we have
\[	D_y'(\lambda\eta)  =  \partial_{A_J}\lambda\eta
    +[(\tfrac{1}{2i}\lambda\Phi)^*,\lambda \eta] = \lambda
    \partial_{A_J}\eta +  [(\tfrac{1}{2i}\Phi)^*,\eta]= \lambda\psi+\beta. \]
Hence, $(\mu,\beta,\lambda\psi)$ is a semiharmonic horizontal vector in $\T_y\cC(P)$ if and only if $(\mu,\beta,\psi)$ is a semiharmonic horizontal vector in $\T_x\cC(P)$. This completes the proof.
\end{proof}

\subsubsection{$\C^*$-fixed points and cyclic Higgs bundles}
The fixed points of the $\C^*$-action on the moduli $\bfM_X(\sG)$ of Higgs bundles for a fixed Riemann surface $X$ are important from many perspectives. For example, 
Simpson showed that, under the nonabelian Hodge correspondence, $\C^*$-fixed points correspond to complex variations of Hodge structure \cite{SimpsonVHS,Simpson:94b}. 
In addition, Hitchin showed that $\C^*$-fixed points are the critical points of the energy function $\rE_X$ on $\bfM_X(\sG)$ \cite{liegroupsteichmuller}.  This also holds on the joint moduli space.
\begin{proposition}\label{prop crit points of E}
	A point $x\in\bfM(\sG)$ is a critical point of the energy function $\rE$ from \eqref{eq energy function joint} if and only if it is a $\C^*$-fixed point.
\end{proposition}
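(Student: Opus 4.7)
The plan is to reduce the claim to Hitchin's classical result on the fixed fibers by exploiting the fact that the derivative of $\rE$ vanishes on the entire horizontal distribution. First, note that the $\C^*$-action $(J,\Phi)\mapsto(J,\lambda\Phi)$ preserves the projection $\pi:\bfM(\sG)\to\bfT(\Sig)$, so it acts on each fiber $\pi^{-1}(X)=\bfM_X(\sG)$ and the set of $\C^*$-fixed points in $\bfM(\sG)$ coincides with the union of the sets of $\C^*$-fixed points in the fibers.

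Fix $x\in\bfM(\sG)$ with $X=\pi(x)$. Theorem \ref{theo:ModuliSpace} and Definition \ref{def:horizontal dist} give the $h_0$-orthogonal decomposition
\[
\T_x\bfM(\sG)=\V_x\bfM(\sG)\oplus \cH_x,
\]
where $\V_x\bfM(\sG)=\ker(\rd_x\pi)$ is tangent to $\bfM_X(\sG)$ and $\cH_x$ is the horizontal distribution. By the first half of Lemma \ref{lem  1st var hor and iso}, for every $[\mu]\in\T_X\bfT(\Sig)$ the horizontal lift $w_{[\mu]}\in\cH_x$ satisfies $\rd_x\rE(w_{[\mu]})=0$. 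Consequently, for any $u=v+w\in \V_x\bfM(\sG)\oplus\cH_x$,
\[
\rd_x\rE(u)=\rd_x\rE(v)+\rd_x\rE(w)=\rd_x\rE(v),
\]
so $x$ is a critical point of $\rE$ if and only if it is a critical point of the restriction $\rE\vert_{\bfM_X(\sG)}$.

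The restriction $\rE\vert_{\bfM_X(\sG)}$ is the classical Hitchin energy, namely the squared $L^2$-norm of the Higgs field of a solution to \eqref{eq Hitchin eq} on the fixed Riemann surface $X$. Hitchin \cite{selfduality} (for $\sSL_2\C$, with the extension to general semisimple $\sG$ by Simpson \cite{SimpsonVHS}) proved that this function is a moment map for the $\sU(1)$-action on $\bfM_X(\sG)$ and that its critical points coincide with the fixed points of the induced $\C^*$-action on $\bfM_X(\sG)$. Combining this with the reduction above completes both implications. The only step requiring care is the implicit appeal to the classical result on the fiber, which applies because, by Theorem \ref{theo:ModuliSpace}(2), the fiber $\pi^{-1}(X)$ is biholomorphic (as a K\"ahler orbifold) to the ordinary Higgs bundle moduli space on $X$ and the restriction of $\rE$ coincides with the classical energy there.
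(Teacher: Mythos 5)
Your proof is correct and follows essentially the same route as the paper: the derivative of $\rE$ vanishes on the horizontal distribution by Lemma \ref{lem  1st var hor and iso}, so criticality reduces to criticality along the fibers, where Hitchin's classical result identifies the critical points of the fiberwise energy with the $\C^*$-fixed points. The extra details you supply (fiberwise nature of the $\C^*$-action, identification of $\rE\vert_{\bfM_X(\sG)}$ with the classical energy) are implicit in the paper's two-line argument and do no harm.
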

\begin{proof}By Lemma \ref{lem  1st var hor and iso}, the first variation of the energy is always zero in the horizontal distribution. 
By the above discussion, the first variation of the energy in the vertical directions vanishes exactly at the $\C^*$-fixed points. This completes the proof. 
\end{proof}

In \cite{Simpson:94b}, Simpson proves that a Higgs bundle is a  $\C^*$-fixed point if and only if it is fixed by an infinite order element of $\sU(1)<\C^*$. As a result, we denote the set of $\C^*$-fixed points by
\[\bfY_\infty=\bfM(\sG)^{\C^*}~.\]  
The set of Higgs bundles fixed by a cyclic subgroup of $\C^*$ are also of interest.

\begin{definition} Let $k \in \N$ and $\Z_k<\C^*$ be the subgroup of $k^{th}$-roots of unity.  A \emph{$k$-cyclic Higgs bundle} is a point in $\bfM(\G)$ fixed by the action $\Z_k$. Denote the set of $k$-cyclic Higgs bundles by $\bfY_k$.
\end{definition}

We have the following proposition concerning the sets of fixed points.

\begin{proposition}\label{prop:CyclicSubbundle}
For any $k\in \N\cup \{\infty\}$, the set $\bfY_k$ is a holomorphic subbundle of $\pi : \bfM(\G) \to \bfT(\Sig)$ such that for any $x$ in $\bfY_k$, the horizontal space $\cH_x$ is contained in $\T_x\bfY_k$.
\end{proposition}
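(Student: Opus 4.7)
The proof naturally splits into two parts: showing $\bfY_k$ is a complex submanifold of $\bfM(\sG)$, and showing $\cH_x \subset \T_x\bfY_k$ for every $x \in \bfY_k$. Once both are in hand, the horizontal distribution yields a complement to the vertical tangent space of $\pi|_{\bfY_k}$, so $\pi|_{\bfY_k}$ is a holomorphic submersion, exhibiting $\bfY_k$ as a holomorphic subbundle of $\pi : \bfM(\sG) \to \bfT(\Sig)$.

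For the submanifold structure, the plan is to show that the Kuranishi slice $\cS_x$ from Proposition \ref{prop:slice} can be chosen $\Z_k$-equivariantly at a $\Z_k$-fixed point $x=[(J,\Phi)]$. The key observation is that the hermitian form $h_s$ on $\T_{(J,\Phi)}\cC(P)$ is $\rd m_\lambda$-invariant for $\lambda \in \sU(1)$: since $\rd m_\lambda(\mu,\beta,\psi)=(\mu,\beta,\lambda\psi)$ and $|\lambda|=1$, the formula \eqref{eq hs metric} (in particular the term $\tfrac{1}{4}\langle \Phi\mu_1,\Phi\mu_2\rangle$, using $|\lambda\Phi\mu|=|\Phi\mu|$) is unchanged. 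Since the holomorphicity condition also transforms equivariantly ($F(J,\lambda\Phi)=\lambda F(J,\Phi)$), the implicit function argument in Proposition \ref{prop:slice} produces a $\sU(1)$-invariant (and hence $\Z_k$-invariant) slice. The induced $\Z_k$-action on $\cS_x \cong \cH^1(\B^\bullet)$ is a holomorphic linear action, whose fixed subspace is a complex linear subspace. Thus $\bfY_k$ near $x$ locally agrees with this subspace, giving a complex submanifold structure with $\T_x\bfY_k = (\T_x\bfM(\sG))^{\Z_k}$. For $k=\infty$, I would invoke Simpson's theorem that $\C^\ast$-fixed points coincide with $\sU(1)$-fixed points, reducing to the action of the compact group $\sU(1)$ and running the same argument.

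For the inclusion $\cH_x \subset \T_x\bfY_k$, the plan relies on Proposition \ref{prop:U(1)ActionPreservesHorizontal}, which states that $\rd m_\lambda$ preserves $\cH$ for all $\lambda \in \sU(1)$. If $x \in \bfY_k$, then $m_\lambda(x)=x$ for $\lambda \in \Z_k$, so $\rd m_\lambda$ restricts to a linear automorphism of $\cH_x$. Given $v = w_{[\mu]} \in \cH_x$, the vector $\rd m_\lambda(v) \in \cH_x$ projects to $[\mu] \in \T_{\pi(x)}\bfT(\Sig)$ because $m_\lambda$ covers the identity on $\bfT(\Sig)$. Uniqueness of horizontal lifts forces $\rd m_\lambda(v)=v$, so $v$ lies in $(\T_x\bfM(\sG))^{\Z_k} = \T_x\bfY_k$ by the equivariant slice description from the previous step. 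For $k=\infty$, reducing $\C^\ast$ to $\sU(1)$ via Simpson's theorem again suffices.

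The main technical point is arranging the slice to be $\Z_k$-equivariant at a fixed point; everything else is formal given the $\sU(1)$-invariance of $h_s$ and the uniqueness of horizontal lifts. The case $k=\infty$ is handled uniformly by Simpson's identification of $\C^\ast$-fixed and $\sU(1)$-fixed points.
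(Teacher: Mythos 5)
Your proposal is correct and follows essentially the same route as the paper: the paper likewise deduces the inclusion $\cH_x\subset\T_x\bfY_k$ from Proposition \ref{prop:U(1)ActionPreservesHorizontal} together with the fact that $m_\zeta$ covers the identity on $\bfT(\Sig)$ and horizontal lifts are unique, and then concludes the subbundle property because $\rd_x\pi$ is an isomorphism on $\cH_x$. For the submanifold structure the paper simply invokes the general fact that the fixed-point set of a holomorphic (finite, or via Simpson's theorem $\sU(1)$-) action is a complex submanifold, whereas you supply the underlying mechanism by making the Kuranishi slice $\Z_k$-equivariant; this is a more detailed rendering of the same argument rather than a different one.
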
 
\begin{proof}
Let $\zeta$ be a unit norm complex number generating $\Z_k$. So 
\[\bfY_k = \{ x\in \bfM(\G)~ \vert ~\zeta x = x\}\]
is the set of fixed points  of a holomorphic action, hence is a holomorphic submanifold.
 Given $x\in \bfY_k$, we have
\[\T_x\bfY_k = \ker(\rd m_\zeta - \Id) ~.\]
By Proposition \ref{prop:U(1)ActionPreservesHorizontal}, the horizontal space $\cH_x$ is preserved by $\rd m_\zeta$. The fact $\rd m_\zeta$ is the identity on $\cH_x$ follows directly from $\pi(\zeta y)=\pi(y)$ for any $y\in\bfM(\G)$. In particular
\[\cH_x \subset \ker(m_\zeta-\Id) = \T_x \bfY_\zeta~.\]
Since $\rd_x\pi$ restricts to an isomorphism on $\cH_x$, $\pi$ restricts to a holomorphic submersion on $\bfY_\zeta$.
\end{proof}

Let us now describe a nice application of Proposition \ref{prop:CyclicSubbundle}. The uniformization theorem associates to any point $X\in \bfT(\Sig)$ a Fuchsian representation $\rho_X\in \bfX(\mathsf{PSL}_2\C)$, and hence a \emph{uniformizing Higgs bundle} defined by 
\[u(X) := \rH_X^{-1}(\rho_X) \in \bfM_X(\mathsf{PSL}_2\C)~.\]
We thus obtain the \emph{uniformizing section} $u: \bfT(\Sig) \to \bfM(\mathsf{PSL}_2\C)$.  

\begin{proposition}
The uniformizing section is holomorphic and horizontal.
\end{proposition}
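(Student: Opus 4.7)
The plan is to apply Proposition \ref{prop:CyclicSubbundle} to the $\CBbb^\ast$-fixed locus $\bfY_\infty$, after showing that $u$ lands in a component of $\bfY_\infty$ whose fibers over $\bfT(\Sigma)$ are zero-dimensional. First, I would verify that $u(X)\in\bfY_\infty$ for each $X$. The uniformizing Higgs bundle on $X$ admits the explicit presentation $(K_X^{1/2}\oplus K_X^{-1/2},\Phi_X)$, where $\Phi_X$ is the canonical off-diagonal Higgs field; rescaling $\Phi_X$ by $\lambda\in\CBbb^\ast$ is undone by the gauge transformation $\mathrm{diag}(\lambda^{1/2},\lambda^{-1/2})$, which is well-defined in $\mathsf{PSL}_2\CBbb$. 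Hence $u(X)$ is a $\CBbb^\ast$-fixed point.

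Next, I would identify the connected component $\bfY'\subset\bfY_\infty$ containing the image of $u$ and show that its fiber over each $X$ is discrete. By the standard classification of stable $\CBbb^\ast$-fixed $\mathsf{SL}_2\CBbb$-Higgs bundles, any such Higgs bundle splits as $(L\oplus L^{-1},\begin{pmatrix}0&0\\a&0\end{pmatrix})$ with $a\in H^0(L^{-2}\otimes K_X)\setminus\{0\}$; stability forces $\deg L>0$ while nonvanishing of $a$ forces $\deg L\leq g-1$. The uniformizing component is the extremal case $\deg L=g-1$, which imposes $L^2\cong K_X$ and forces $a$ to be a nonzero scalar. Passing to $\mathsf{PSL}_2\CBbb$ absorbs both the theta-characteristic ambiguity and the $\CBbb^\ast$-rescaling of $a$, so the fiber $\bfY'\cap\pi^{-1}(X)$ is a single point. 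Consequently $\dim_\CBbb\bfY'=3g-3=\dim_\CBbb\bfT(\Sigma)$, and by Proposition \ref{prop:CyclicSubbundle} the restriction $\pi|_{\bfY'}:\bfY'\to\bfT(\Sigma)$ is a holomorphic étale map.

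Holomorphicity of $u$ then follows immediately: $u$ is a continuous section of $\pi|_{\bfY'}$ (continuity of $\rH_X^{-1}(\rho_X)$ in $X$ is a consequence of the real-analyticity of the nonabelian Hodge map, Theorem \ref{thm:realanal}), so it locally coincides with a holomorphic inverse of $\pi|_{\bfY'}$. For horizontality, Proposition \ref{prop:CyclicSubbundle} gives $\cH_{u(X)}\subset\T_{u(X)}\bfY'$, and since both spaces have complex dimension $3g-3$ they coincide. Therefore $\rd_X u(\T_X\bfT(\Sigma))=\T_{u(X)}\bfY'=\cH_{u(X)}$, which is horizontality.

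The main obstacle in this plan is the classification/dimension count identifying $\bfY'$ as the component with $\deg L=g-1$ and verifying that its fibers collapse to points in $\mathsf{PSL}_2\CBbb$. An alternative, arguably cleaner, path would be to construct the uniformizing Higgs bundle directly as a relative holomorphic family $(K_{\Ccal/\bfT(\Sigma)}^{1/2}\oplus K_{\Ccal/\bfT(\Sigma)}^{-1/2},\Phi)$ over the universal curve $\Ccal\to\bfT(\Sigma)$, where the relative square root exists after a finite étale base change and descends in $\mathsf{PSL}_2\CBbb$; this gives holomorphicity of $u$ without invoking Theorem \ref{thm:realanal}, and then horizontality still follows from Proposition \ref{prop:CyclicSubbundle} via the dimension count above.
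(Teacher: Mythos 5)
Your proposal is correct and follows essentially the same route as the paper: the paper observes that $u(X)$ is an isolated $\CBbb^\ast$-fixed point in each fiber (citing Hitchin), so the image of $u$ is a connected component of $\bfY_\infty$, and then invokes Proposition \ref{prop:CyclicSubbundle} exactly as you do. You simply unwind the isolatedness claim into the explicit classification of stable $\CBbb^\ast$-fixed $\mathsf{SL}_2\CBbb$-Higgs bundles and make the étale/dimension-count step explicit, which the paper leaves implicit.
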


\begin{proof}
For any choice $X\in \bfT(\Sig)$, the uniformizing Higgs bundle $u(X)$ is an isolated $\C^*$-fixed point in $\bfM_X(\mathsf{PSL}_2\C)$, \cite{selfduality}. Hence, the image of the uniformizing section is a connected component of $\bfY_\infty$, and is thus holomorphic and horizontal by Proposition \ref{prop:CyclicSubbundle}.
\end{proof}

\subsection{Stratification} Recall from Theorem \ref{thm Non intro Thm B} that the kernel $\cK_x$ of the hermitian form $h_0$ at a point $x\in\bfM(\sG)$ is identified with the kernel of the holomorphic section $\Theta$ at $x$. The dimension of the kernel of $\Theta$ defines a stratification of the moduli space $\bfM(\sG)$ which is preserved by the actions both $\C^*$ and the mapping class group of $\Sig$.

\begin{theorem}\label{thm:stratification}Let 
$\bfM_d=\{x\in \bfM(\sG)~|~\dim(\cK_x)=d\}~.$
	Then 
	\begin{enumerate}
		\item $\bfM_d$ is a $\C^*$-invariant complex subvariety of $\bfM(\sG)$ which is mapping class group invariant and empty if $d>3g-3$,
	\item the closures of the subsets are nested, i.e., they satisfy
	$\overline\bfM_d\subset \coprod_{d\leq c}\bfM_c,$
	\item  $\bfM_0$ is nonempty, open and dense, and 
	\item $\bfM_{3g-3}$ is nonempty and closed.
	 \end{enumerate}
	 In particular, $\bfM(\sG)=\coprod\limits_{0\leq d\leq 3g-3}\bfM_d$ is a $\C^*$-invariant stratification. 
\end{theorem}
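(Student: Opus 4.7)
The plan is to leverage Theorem \ref{thm Non intro Thm B}, which identifies $\cK_x=\ker(\Theta_x)$, together with the fact that $\Theta$ is a holomorphic section of $\Hom(\pi^\ast \T\bfT(\Sig),\V\bfM(\sG))$ from Theorem \ref{theo:ModuliSpace}. The whole statement then reduces to a standard degeneracy-locus analysis for a holomorphic bundle map between finite-rank holomorphic vector bundles, combined with a nonemptiness check at the two extremes.

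First I would dispose of the easy features of (1). Since the domain of $\Theta$ has complex rank $3g-3$, one has $\dim\cK_x\leq 3g-3$, so $\bfM_d=\emptyset$ for $d>3g-3$. The $\sMod(\Sig)$-invariance of $\bfM_d$ is immediate from the $\sMod(\Sig)$-equivariance of $\Theta$ established in Theorem \ref{theo:ModuliSpace}. For the $\CBbb^\ast$-invariance, recall from Remark \ref{rem:eta in beta notation} that $\Theta_{(J,\Phi)}(\mu)=[(0,\tfrac{1}{2i}\Phi\mu,0)]$; under the $\CBbb^\ast$-action $\Phi\mapsto\lambda\Phi$ the section transforms as $\Theta\mapsto\lambda\Theta$, so $\ker(\Theta_x)$ is preserved and so is each $\bfM_d$. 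Next, set $\bfM_{\geq d}:=\{x\in\bfM(\sG) ~|~ \dim\cK_x\geq d\}$. In any local holomorphic trivialization of $\pi^\ast\T\bfT(\Sig)$ and $\V\bfM(\sG)$, the condition $\dim\ker(\Theta_x)\geq d$ is cut out by the simultaneous vanishing of all $(3g-3-d+1)\times(3g-3-d+1)$ minors of the matrix representing $\Theta_x$. This exhibits $\bfM_{\geq d}$ as a closed complex analytic subvariety, and $\bfM_d=\bfM_{\geq d}\setminus\bfM_{\geq d+1}$ is thus a locally closed complex analytic subvariety. The nesting of closures in (2) is then automatic: $\overline{\bfM_d}\subset\bfM_{\geq d}=\coprod_{c\geq d}\bfM_c$. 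Item (4) is immediate: $\bfM_{3g-3}=\bfM_{\geq 3g-3}$ is closed, and is nonempty because any stable bundle equipped with $\Phi=0$ gives a Higgs bundle on which $\Theta$ vanishes identically.

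It remains to prove (3). Openness is immediate since $\bfM_0=\bfM(\sG)\setminus\bfM_{\geq 1}$ and $\bfM_{\geq 1}$ is closed. Once $\bfM_0$ is known to be nonempty inside each connected component of $\bfM(\sG)$, density follows from the general principle that a proper complex analytic subvariety of a connected complex manifold is nowhere dense. The main obstacle is therefore to verify that $\bfM_0$ meets every connected component of $\bfM(\sG)$, i.e.\ that on no component is $\Theta$ identically of positive-dimensional kernel. I would address this in two steps. For components intersecting a Cayley higher-Teichm\"uller locus (in particular those containing Hitchin components for the split real form), Theorem \ref{MainThmD}, to be proved below, shows that whole isomonodromic leaves sit inside $\bfM_0$; this already certifies nonemptiness on a broad class of components. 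For the remaining topological types of principal $\sG$-bundle, I would work fiberwise: fixing any Riemann surface $X$, $\bfM_X(\sG)$ contains stable Higgs bundles for every topological type, and on each component of $\bfM_X(\sG)$ the restriction of $\Theta$ is a holomorphic bundle map. It then suffices to exhibit one Higgs bundle $(J,\Phi)$ in that component for which multiplication by $\tfrac{1}{2i}\Phi$ followed by projection modulo $\operatorname{Im}(D'')$ is injective on harmonic Beltrami differentials. This is a genericity statement about $\Phi$: for $\Phi$ avoiding a proper analytic subset of the fiber, no Beltrami differential $\mu$ admits a $\xi$ with $\dbar_J\xi=\tfrac{1}{2i}\Phi\mu$ and $[\xi,\Phi]=0$ (compare the characterization of $\ker\Theta$ in Remark following Corollary \ref{cor:toledo}).

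The technically delicate step of the plan is the last one, namely certifying that the generic Higgs field has injective $\Theta$ on every component of every fiber. I would handle this by a direct cohomological computation using the semiharmonic description of $\Theta$ from \S\ref{sec:semiharmonic_representatives}: the condition $\mu\in\ker\Theta_x$ is equivalent to the existence of $\xi\in\Omega^0(\Sig,P(\gfrak))$ with $\dbar_J\xi=\tfrac{1}{2i}\Phi\mu$ and $[\xi,\Phi]=0$, an overdetermined system which, by semicontinuity of cohomology in families, is solvable only on a proper analytic subset of $\bfM_X(\sG)$ provided a single Higgs bundle in that component falsifies it. Producing such a witness on each component is where one actually uses the nondegeneracy of $\Phi$ (e.g.\ for a regular element of the Hitchin base, or for a Higgs field whose centralizer in $P(\gfrak)$ is minimal), completing the proof of the stratification.
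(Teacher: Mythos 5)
Your proposal follows the paper's route for almost the entire statement: the paper likewise reduces everything to the identification $\cK_x=\ker(\Theta_x)$ from Theorem \ref{thm Non intro Thm B}, treats the sets $\{\dim\cK_x\ge d\}$ as degeneracy loci of the holomorphic bundle map $\Theta$ (so invariance, analyticity, the nesting of closures, the bound $d\le 3g-3$, and closedness of $\bfM_{3g-3}$ are all formal), and certifies nonemptiness of $\bfM_{3g-3}$ by the points with $\Phi=0$. Your checks of $\sMod(\Sig)$- and $\CBbb^\ast$-invariance via the (projective) equivariance of $\Theta$ are fine. The one place you diverge is the nonemptiness of $\bfM_0$, and this is also where your argument is not complete. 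The paper's witness is explicit: Proposition \ref{lem suff cond open strat} shows that a stable Higgs bundle whose Higgs field has a nowhere-vanishing component in a holomorphic line subbundle lies in $\bfM_0$, and Example \ref{example sl2 Slodowy} (Hitchin sections, and more generally Slodowy slices for arbitrary $P_\sC$) supplies such points. Your first prong --- invoking Theorem \ref{MainThmD} --- is logically admissible since its proof does not use the stratification, but it is the same witness in disguise: Theorem \ref{MainThmD} itself rests on Proposition \ref{lem suff cond open strat}. Your second prong is where the gap sits: semicontinuity and the characterization $\mu\in\ker\Theta_x \Leftrightarrow \exists\,\xi$ with $\dbar_J\xi=\Phi\mu$, $[\xi,\Phi]=0$ only show that $\{\dim\ker\Theta_x\ge 1\}$ is analytic; to conclude it is \emph{proper} in a given component you must still exhibit one Higgs bundle there with $\ker\Theta=0$, and your proposal defers exactly that ("producing such a witness \dots is where one actually uses the nondegeneracy of $\Phi$") without doing it. In the paper's setup this prong is not actually needed --- a single smooth bundle $P$ is fixed, and the Slodowy-slice construction produces a witness directly, after which density of the complement of a proper analytic subvariety follows as you say --- so your proof becomes complete once the unexecuted genericity step is replaced by the explicit witness of Example \ref{example sl2 Slodowy}.
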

\begin{proof}
The proof mainly follows from the identification between the kernel of the hermitian form $h_0$ the kernel of the holomorphic section $\Theta$ from Theorem \ref{MainThmB} (see Theorem \ref{thm Non intro Thm B}). 
The points that do not immediately follow from Theorem \ref{MainThmB} are the nonemptiness of $\bfM_0$ and $\bfM_{3g-3}.$  
For the nonemptiness of $\bfM_0$, the points in the so called Hitchin section are always in $\bfM_0$, see Example \ref{example sl2 Slodowy} below. For nonemptiness of $\bfM_{3g-3}$,  note that $x\in\bfM_{3g-3}$ for all points $x$ where the Higgs field $\Phi$ is zero since the map $\Theta_x$ is identically zero. 
 \end{proof}

Recall that Corollaries \ref{Cor open stratum} and \ref{cor closed stratum} give equivalent characterizations of the open and closed strata, respectively, in terms of the isomonodromic distribution. They both follow directly from Theorems \ref{thm Non intro Thm B}, \ref{thm Non intro Thm C} and \ref{thm:stratification}.

The following is Theorem \ref{MainThmE} from the introduction.
\begin{theorem}\label{nonintro Thm E}
For a representation $\rho\in\bfX(\sG),$ the following are equivalent: 
\begin{enumerate}
	\item The isomonodromic leaf $\cL_\rho$ is contained in the closed stratum $\bfM_{3g-3}$.
	\item The isomonodromic leaf $\cL_\rho$ is a holomorphic submanifold of $\bfM(\sG)$.
	\item The energy function $\cE_\rho$ is constant. 
\end{enumerate}
\end{theorem}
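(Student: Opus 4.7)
The plan is to combine Corollary \ref{cor closed stratum} (a pointwise statement about a single $x\in\bfM(\sG)$) with one first-variation computation for the energy along isomonodromic vectors. Two of the three implications will drop out directly from Corollary \ref{cor closed stratum}; only $(1)\Rightarrow(3)$ requires genuine work.

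For $(1)\Leftrightarrow(2)$, recall that $\cL_\rho = s_\rho(\bfT(\Sig))$ is a (real-analytic) submanifold of $\bfM(\sG)$ whose tangent space at any $x\in\cL_\rho$ coincides with the isomonodromic distribution $\cD_x$. Hence $\cL_\rho$ is a holomorphic submanifold of $\bfM(\sG)$ if and only if $\cD_x$ is a complex subspace of $\T_x\bfM(\sG)$ for every $x\in\cL_\rho$, which by Corollary \ref{cor closed stratum} is equivalent to $\cL_\rho\subset\bfM_{3g-3}$.

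The direction $(3)\Rightarrow(1)$ is equally immediate: if $\cE_\rho$ is constant then its complex Hessian vanishes at every $X\in\bfT(\Sig)$, and Corollary \ref{cor closed stratum} then forces $s_\rho(X)\in\bfM_{3g-3}$ pointwise. For the substantive direction $(1)\Rightarrow(3)$, fix $x=[J,\Phi]\in\cL_\rho\subset\bfM_{3g-3}$ and set $X=\pi(x)$. By Theorem \ref{thm Non intro Thm B}, $\ker\Theta_x$ has full dimension $3g-3$, so $\Theta_x\equiv 0$; the To\v si\'c equations recalled in the remark after Theorem \ref{thm Non intro Thm C} then provide, for every $\mu\in\T_X\bfT(\Sig)$, a section $\xi_\mu\in\Omega^0(\Sig,P(\gfrak))$ with $\dbar_J\xi_\mu=\Phi\mu$. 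By Lemma \ref{lem  1st var hor and iso},
\[
\rd\cE_\rho(\mu) \;=\; \rd\rE(\ell_\mu) \;=\; 2\Real\langle\Phi,(\tfrac{1}{2i}\Phi\mu)^*\rangle.
\]
Substituting $\Phi\mu=\dbar_J\xi_\mu$ rewrites $(\Phi\mu)^*$ as $\partial_{A_J}(\xi_\mu^*)$, and moving $\partial_{A_J}$ to its adjoint reduces the expression to a scalar multiple of $\Real\langle\partial_{A_J}^*\Phi,\xi_\mu^*\rangle$. The K\"ahler identity $\partial_{A_J}^* = i[\Lambda,\dbar_{A_J}]$ together with $\Lambda\Phi=0$ (since $\Phi$ is a $(1,0)$-form) and the holomorphicity $\dbar_{A_J}\Phi=0$ give $\partial_{A_J}^*\Phi=0$. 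Hence $\rd\cE_\rho$ vanishes at $X$; since $X$ was arbitrary, $\cE_\rho$ is constant.

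The main obstacle is $(1)\Rightarrow(3)$: pluriharmonicity of $\cE_\rho$, which follows immediately from the vanishing of the complex Hessian (Theorem \ref{thm Non intro Thm C}), is not by itself sufficient to force constancy of a function on $\bfT(\Sig)$. One must use the explicit To\v si\'c description of $\ker\Theta_x$ to produce the sections $\xi_\mu$ and then exploit the holomorphicity of $\Phi$ via the K\"ahler identities to evaluate and kill the first variation. Interestingly, only the first To\v si\'c equation $\dbar_J\xi_\mu = \Phi\mu$ is used in this argument; the compatibility $[\xi_\mu,\Phi]=0$ plays no role here.
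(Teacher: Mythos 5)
Your proof is correct and follows essentially the same route as the paper: $(1)\Leftrightarrow(2)$ via the complex-tangency characterization of $\cD_x$, $(3)\Rightarrow(1)$ via the complex Hessian formula of Theorem \ref{thm Non intro Thm C}, and $(1)\Rightarrow(3)$ by killing the first variation of $\cE_\rho$ along the leaf. The only cosmetic difference is in the last step: the paper observes that on $\bfM_{3g-3}$ one has $\cD_x=\cK_x\subset\cH_x$, so isomonodromic lifts \emph{are} horizontal lifts and $\rd\rE(w_{[\mu]})=0$ applies directly, whereas you redo that same Stokes/K\"ahler-identity computation by hand starting from $\dbar_J\xi_\mu=\Phi\mu$ — both arguments are the same integration by parts in different clothing.
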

\begin{proof}
By Theorem \ref{thm Non intro Thm B}, the isomonodromic leaf $\cL_\rho$ is contained in the closed stratum if and only if its tangent space $\T_x\cL_\rho=\cD_x$ is a complex subspace of $\T_x\bfM(\sG)$. So (1) and (2) are equivalent. 
If the energy function $\cE_\rho$ is constant, then, at every $x\in\cL_\rho$, its complex Hessian is zero at every point. So, (3) implies (1) by Theorem \ref{thm Non intro Thm C}. 
Finally, if $\cL_\rho\subset\bfM_{3g-3}$, then the first variation of the energy function $\cE_\rho$ is zero for all $x\in\cL_\rho$ by Lemma \ref{lem  1st var hor and iso}, so (1) implies (3).
\end{proof}

The following corollary is immediate from Proposition \ref{prop crit points of E}.
 \begin{corollary}
 	If an isomonodromic leaf is fixed pointwise by the $\C^*$-action then it is holomorphic.
 \end{corollary}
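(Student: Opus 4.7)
The plan is to chain together the two previously established results. Suppose $\Lcal_\rho$ is fixed pointwise by the $\CBbb^\ast$-action. Then every $x\in\Lcal_\rho$ is a $\CBbb^\ast$-fixed point, so by Proposition \ref{prop crit points of E}, every $x\in \Lcal_\rho$ is a critical point of the energy function $\rE:\Mbold(\G)\to\RBbb$.

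Next I would transfer this critical point property from $\rE$ on $\Mbold(\G)$ to $\Ecal_\rho$ on $\Tbold(\Sigma)$. Recall that $\Ecal_\rho=\rE\circ s_\rho$, where $s_\rho:\Tbold(\Sigma)\to\Mbold(\G)$ is the section whose image is $\Lcal_\rho$. For any $[X]\in \Tbold(\Sigma)$ and any tangent vector $[\mu]\in \T_{[X]}\Tbold(\Sigma)$, the chain rule gives
\[
\rd_{[X]}\Ecal_\rho([\mu]) = \rd_{s_\rho([X])}\rE\bigl(\rd s_\rho([\mu])\bigr) = 0,
\]
since $\rd\rE$ vanishes at the critical point $s_\rho([X])\in\Lcal_\rho$. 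As $\Tbold(\Sigma)$ is connected, $\Ecal_\rho$ is constant.

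Finally, invoking the equivalence (3)$\Leftrightarrow$(2) in Theorem \ref{nonintro Thm E}, the constancy of $\Ecal_\rho$ implies that $\Lcal_\rho$ is a holomorphic submanifold of $\Mbold(\G)$. There is no real obstacle here since the two ingredients have already done all the work; the only thing to check is the trivial differential-calculus step passing from pointwise criticality of $\rE$ along $\Lcal_\rho$ to constancy of $\Ecal_\rho$.
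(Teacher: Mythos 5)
Your proof is correct and follows exactly the route the paper intends: the paper states the corollary is ``immediate from Proposition \ref{prop crit points of E},'' meaning precisely the chain you spelled out ($\CBbb^\ast$-fixed $\Rightarrow$ critical points of $\rE$ $\Rightarrow$ $\Ecal_\rho=\rE\circ s_\rho$ constant $\Rightarrow$ holomorphic by Theorem \ref{nonintro Thm E}). The only difference is that you make the trivial chain-rule step explicit where the paper leaves it implicit.
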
 

\subsection{Points in $\bfM_0$ and Proof of Theorem \ref{MainThmD}}
 We start with the following sufficient condition for a Higgs bundle $[J,\Phi]$ to be in the open stratum $\bfM_0$. 
\begin{proposition}
    \label{lem suff cond open strat}
	Suppose $\sH<\sG$ is a complex reductive subgroup such that the Lie algebra $\gfrak$ has an $\sH$-invariant splitting $\gfrak=U_1\oplus U_2$ with $\dim(U_1)=1$. Let $(J,\Phi)$ be a stable $\sG$-Higgs bundle such that $(P,J)$ admits a holomorphic reduction $P_\sH$ to $\sH$, and write 
	\[\Phi=(\Phi_1,\Phi_2)\in H^0(P_\sH(U_1)\otimes K)\oplus H^0(P_\sH(U_2)\otimes K)\cong H^0(P(\gfrak)\otimes K).\]
	If $\Phi_1$ is nowhere vanishing, then $x=[J,\Phi]$ is in the open stratum $\bfM_0$. 
\end{proposition}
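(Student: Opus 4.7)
The plan is to apply Corollary \ref{Cor open stratum}, which reduces the problem to showing that $\ker\Theta_x=0$. By the To\v{s}i\'{c} characterization \eqref{eq :Tosic} discussed in the remark following Corollary \ref{cor:theta}, a Beltrami differential $\mu$ represents a class in $\ker\Theta_x$ if and only if there exists $\xi\in\Omega^0(\Sig,P(\gfrak))$ satisfying both $\dbar_J\xi=\Phi\mu$ and $[\xi,\Phi]=0$. The strategy is to show that the first equation alone already forces $[\mu]=0$ in $\T_{[j]}\bfT(\Sig)=H^1(X,T_X)$.

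The key input is that the holomorphic $\sH$-reduction $P_\sH\subset P$, together with the $\sH$-invariance of $\gfrak=U_1\oplus U_2$, yields a holomorphic direct sum decomposition $P(\gfrak)=P_\sH(U_1)\oplus P_\sH(U_2)$, and in particular $\dbar_J$ preserves this splitting. Writing $\xi=\xi_1+\xi_2$ and $\Phi=\Phi_1+\Phi_2$ accordingly, the equation $\dbar_J\xi=\Phi\mu$ separates into $\dbar_J\xi_i=\Phi_i\mu$ for $i=1,2$; I will only use the $i=1$ component.

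Setting $L:=P_\sH(U_1)$, the hypothesis that $\Phi_1\in H^0(L\otimes K)$ is nowhere vanishing produces a holomorphic trivialization of $L\otimes K$, and hence a holomorphic isomorphism $L\cong K^{-1}=T^{1,0}\Sig$ canonically determined by $\Phi_1$ itself. Under this identification, $\xi_1$ corresponds to a smooth section $v\in\Omega^0(T^{1,0}\Sig)$, and a short local coordinate computation shows that the tensor $\Phi_1\mu\in\Omega^{0,1}(L)$ is carried precisely to $\mu\in\Omega^{0,1}(T^{1,0}\Sig)$; this is because ``multiplication by $\Phi_1$'' is by construction the isomorphism $L\cong K^{-1}$ tensored with the identity on $T^{0,1}\Sig$. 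Consequently $\dbar_J\xi_1=\Phi_1\mu$ becomes $\dbar v=\mu$, exhibiting $\mu$ as $\dbar$-exact, so $[\mu]=0$ in Teichm\"uller space and $\ker\Theta_x=0$, as desired. The only mildly delicate step is this local identification $\Phi_1\mu\leftrightarrow\mu$; in particular, the second To\v{s}i\'{c} equation $[\xi,\Phi]=0$ and any finer structure of the subalgebras $U_i$ are never used.
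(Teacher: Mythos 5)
Your proof is correct and follows essentially the same route as the paper: the paper likewise reduces to the single equation $\dbar_J\xi_1=\tfrac{1}{2i}\Phi_1\mu$ via the holomorphic splitting, and then observes that $\dbar_j(2i\,\xi_1\otimes\Phi_1^{-1})=\mu$, which is exactly your identification $L\cong K^{-1}$ via the nowhere-vanishing $\Phi_1$. The paper also never uses the commutator equation $[\xi,\Phi]=0$, just as you note.
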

\begin{proof}
Let $[J,\Phi]$ be a stable $\sG$-Higgs bundle  and let $P_\sH\subset P$ be
    a holomorphic reduction to $\sH$, as in the statement. Consider $\mu$ a Beltrami differential and a $\xi\in\Omega^0(P(\gfrak))$ such that $\frac{1}{2i}\Phi\mu=\dbar_J\xi$. Since the reduction $P_\sH\subset P$ is holomorphic, we have  $\xi=(\xi_1,\xi_2)$ for $\xi_j\in \Omega^0(P_\sH(U_j))$ and 
	\[\dbar_J\xi=(\dbar_J\xi_1,\dbar_J\xi_2)=(\tfrac{1}{2i}\Phi_1\mu,\tfrac{1}{2i}\Phi_2\mu).\]
Assume that $\Phi_1$ is a nowhere vanishing section of the line bundle $P_\sH(U_1)\otimes K$, hence
    $$\dbar_j(2i\xi_1\otimes\Phi_1^{-1})=\mu.$$
It follows that $\mu$ defines a zero cohomology class, that is $[\mu]=0\in \T_{[j]}\bfT(\Sig)$. So $[J,\Phi]\in\bfM_0$.
\end{proof}

\subsubsection{Examples of points in the open stratum}
There are many examples of Higgs bundles that satisfy the assumptions of
Proposition \ref{lem suff cond open strat}.
Below we describe two  families of interest.

\begin{example}\label{ex:VectbundleOpen} An $\sSL_n\C$-Higgs bundle $(J,\Phi)$ 
    can be viewed equivalently as a pair $(E,\Phi)$, where $E$ is a rank
    $n$ holomorphic vector bundle with fixed trivial determinant, and
    $\Phi:E\to E\otimes K$ is a traceless holomorphic bundle map. 
    Suppose $E$ decomposes holomorphically as
\[E=E_1\oplus\cdots\oplus E_\ell.\]
With respect to this decomposition a Higgs field $\Phi$
decomposes as $\Phi_{ab}:E_a\to E_b\otimes K$. 
If $E_a\cong E_b\otimes K$ for some $a,b$ and $\Phi_{ab}$ is an
    isomorphism, then the Higgs bundle $(E,\Phi)$ satisfies the assumptions
    of Proposition \ref{lem suff cond open strat}. 
    To see this, it suffices to consider the case when $E=E_1\oplus E_2$ with $E_2= E_1\otimes K^{-1}$ and $\Phi_{12}=\Id$. In this situation the structure group of the bundle reduces to 
\[\sH=\left\{\begin{pmatrix}A&\\&\lambda A
\end{pmatrix}\in\sSL_{2k}\C~|~A\in\sGL_k\C\text{ and }\lambda=\det(A)^{-2}\right\}\] 
This subgroup preserves the splitting $\slfrak_n\C=U_1\oplus U_2$, where 
\[U_1=\left\langle \begin{pmatrix}
	0&0\\\Id&0
\end{pmatrix}\right\rangle\ \ \ \ \text{and}\ \ \ \  U_2=\left\{\begin{pmatrix}
	X&Y\\Z&W
\end{pmatrix}~|~\Tr(Z)=0\right\}.\]
\end{example}
\begin{remark}\label{rem:cyclicVBopen}
Let $(E,\Phi)$ be a stable $\sSL_n\C$-Higgs bundle which is $k$-cyclic and not a $\C^*$-fixed point, i.e., a point in $\bfY_k\setminus\bfY_\infty.$
    By a result of Simpson \cite{KatzMiddleInvCyclicHiggs},
    $E$ decomposes as 
\[E=E_1\oplus\cdots\oplus E_k,\]
and, with respect to  this decomposition,  the components of the Higgs field $\Phi_{ab}:E_a\to E_b\otimes K$ are zero when $b-a\neq 1 \mod~ k.$ 
In particular, if $(E,\Phi)\in\bfY_k\setminus\bfY_\infty$, and one of the nonzero maps 
    \[\Phi_{ab}:E_a\xrightarrow{\hspace*{.4cm}\cong\quad}E_{b}\otimes K\]
 is an isomorphism, then $(E,\Phi)$ is in the open stratum $\bfM_0.$ 
	See \S \ref{ss:CyclicHB} for many examples of Higgs bundles that are
    special cases of Example \ref{ex:VectbundleOpen} and  which have been studied in the literature. 
\end{remark}
The second class of examples come from the so called Slodowy slice construction for an $\slfrak_2\C$ subalgebra of $\gfrak.$ The Higgs bundle analogue below  generalizes Hitchin's description \cite{liegroupsteichmuller} of the Hitchin section, and was introduced in \cite{ColSandGlobalSlodowy}. 
\begin{example}\label{example sl2 Slodowy}
	Suppose $\{f,h,e\}\subset\gfrak$ is a subalgebra isomorphic to $\slfrak_2\C$, where 
	\[\xymatrix{[h,e]=2e,& [h,f]=-2f,&\text{and}&[e,f]=h.}\] Let $\sT<\sG$ be the subgroup defined by exponentiating $\langle h\rangle$ and $\sC<\sG$ be the centralizer of $\{f,h,e\}$. Finally, let $\sH<\sG$ be the subgroup generated by $\sC$ and $\sT.$ Then $\gfrak$ admits a $\sH$-invariant splitting 
	\[\gfrak=W\oplus \langle f\rangle \oplus \bigoplus_{j=0}^NV_j,\]
	where $V_j=\{v\in\gfrak~|~ [e,v]=0 \text{ and } [h,v]=j v\}$. The (Lie algebra) Slodowy slice of $\{f,h,e\}$ is 
	\[\mathfrak{s}_f=\{f+v_0+\cdots+v_N|~v_j\in V_j\}\subset\gfrak.\]

	We now recall a Higgs bundle analogue. 
	Since $\sT$ and $\sC$ are commuting subgroups, the multiplication map $\sC\times\sT\to\sG$ is a group homomorphism. 
	Let $P_\sT$ and $P_\sC$ be holomorphic principal $\sT$ and $\sC$ bundles, respectively, and $P_\sH$ be the holomorphic principal $\sH$ obtained by extending the structure group with the multiplication map. 
	If $P_\sT$ is the holomorphic frame bundle of a square root of $K$, then $P_\sH(\langle f\rangle)\cong K^{-1}$. In particular, $f$ defines a section
	\[f\in H^0\left(P_{\sH}\left(\langle f\rangle\right)\otimes K\right).\]
	The \emph{(Higgs bundle) Slodowy slice} of the $\slfrak_2$-triple $\{f,h,e\}$ is given by 
\[\widehat\cS_f=\{(P,\Phi)=(P_\sH(\sG), f+\psi_0+\cdots \psi_N)~|~\psi_j\in H^0(P_\sH(V_j)\otimes K)\}~.\]

The subset of such Higgs bundles which are stable satisfy the
    assumptions of Proposition \ref{lem suff cond open strat},  and so define a mapping class group invariant subset  of the open stratum $\bfM_0$. 
 For principal $\slfrak_2\C$-triples, the components of the Higgs bundle Slodowy slice are called  Hitchin sections. All Higgs bundles in the Hitchin sections are stable \cite{liegroupsteichmuller}. 
\end{example}

\subsubsection{ Proof of Theorem \ref{MainThmD}}
Fix an $\slfrak_2\C$-triple $\{f,h,e\}\subset\gfrak$, and let $\cS_f(X)\subset\bfM_0$ denote the Higgs bundle in the Slodowy slice from Example \ref{example sl2 Slodowy} on a fixed Riemann surface $X$ which are stable. 
Furthermore, define $\cS_f^0(X)\subset\cS_f(X)$ to be the subset where the holomorphic section $\psi_0$ vanishes
\[\cS_f^0(X)=\{[J,\Phi]\in\cS_f(X)~|~\psi_0=0\}.\]

Applying the nonabelian Hodge map \eqref{eq NAH map}, defines subsets of the character variety
\[\rH(\cS_f^0(X)) \subset \rH(\cS_f(X))\subset \bfX(\sG).\]
 In general, it is not clear how these subsets depend on the choice Riemann surface $X$. 
However, for the special class of \emph{magical $\slfrak_2$-triples} introduced in \cite{BCGGOgeneralCayley}, we have the following. 
\begin{theorem}[\cite{BCGGOgeneralCayley}]\label{thm:BCGGOmag}
 	Let $\{f,h,e\}\subset\gfrak$ be a magical $\slfrak_2$-triple and $X\in\bfT(\Sig)$ be a Riemann surface. Then $\rH(\cS_f^0(X))\subset\bfX(\sG)$ is independent of $X.$
 \end{theorem}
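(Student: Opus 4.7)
My plan is to prove the theorem by identifying $\rH(\cS_f^0(X))$ intrinsically as a subvariety of the $\sG$-character variety that does not depend on $X$, namely as a Cayley component of $\bfX(\sG^\R) \hookrightarrow \bfX(\sG)$ for the real form $\sG^\R$ determined by the magical $\slfrak_2$-triple $\{f,h,e\}$. The independence of $X$ would then be automatic, since $\bfX(\sG^\R)$ and its connected components are defined purely in terms of $\pi_1\Sig$ and $\sG^\R$.

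First, I would recall from \cite{BCGGOgeneralCayley} that a magical $\slfrak_2$-triple $\{f,h,e\}\subset \gfrak$ distinguishes a noncompact real form $\sG^\R<\sG$ whose $\theta$-positive structure is governed by the $\slfrak_2$-triple (the four families come from the four classifications of such triples). The Higgs bundle Slodowy slice $\cS_f(X)$ parametrizes a Cayley component $\bfC_f(X)\subset \bfM_X(\sG)$, and under nonabelian Hodge $\rH_X(\bfC_f(X))$ is a union of connected components of $\bfX(\sG^\R)$ consisting of $\theta$-positive representations (so in particular, a higher rank Teichm\"uller component).

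Next I would analyze the role of $\psi_0\in H^0(P_\sH(V_0)\otimes K)$. The summand $V_0$ is the $0$-eigenspace of $\ad h$ on the non-centralizer part and, in the Higgs bundle language, parametrizes the ``extra'' deformations transverse to the locus of Higgs bundles with $\sG^\R$-reduction. The crucial structural input (from \cite{BCGGOgeneralCayley}) is that $\psi_0 = 0$ cuts out precisely those Slodowy slice Higgs bundles whose associated flat connection $\rH(J,\Phi) = A_J + \tfrac{1}{2i}(\Phi - \Phi^\ast)$ preserves a reduction of structure to $\sG^\R$; equivalently, the harmonic metric realizes the Cartan involution defining $\sG^\R$. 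So $\rH(\cS_f^0(X))$ is contained in $\bfX(\sG^\R)$ as a subset of $\bfX(\sG)$. Dimension counting for the Cayley component, together with continuity of $\rH$ and connectedness of $\cS_f^0(X)$, identifies $\rH(\cS_f^0(X))$ with the full Cayley component of $\theta$-positive representations.

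Finally, since the Cayley component of $\bfX(\sG^\R)\subset\bfX(\sG)$ is defined only in terms of $\pi_1\Sig$ and the real form $\sG^\R$, it carries no dependence on the Riemann surface structure, and so $\rH(\cS_f^0(X))$ is the same subvariety of $\bfX(\sG)$ for every $X\in\bfT(\Sig)$. The main obstacle is the identification in the second step: verifying that vanishing of $\psi_0$ is equivalent to the harmonic reduction landing in $\sG^\R$, which requires the explicit Cayley correspondence machinery of \cite{BCGGOgeneralCayley} relating the $V_j$-decomposition of the Higgs field to the real structure on solutions of the Hitchin equations.
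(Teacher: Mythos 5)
Your proposal is correct and is essentially the same argument the paper relies on: Theorem \ref{thm:BCGGOmag} is quoted from \cite{BCGGOgeneralCayley}, and the justification given there (and recalled in the paragraph following the theorem) is precisely that the polystable locus of $\widehat\cS_f^0(X)$ is identified via the Cayley correspondence with a union of connected components of the $\sG^\R_f$-Higgs moduli space, whose image in $\bfX(\sG)$ is a union of components of $\bfX(\sG^\R_f)$ and hence independent of $X$. (One small slip: $V_0=\{v:\,[e,v]=0,\ [h,v]=0\}$ is the Lie algebra of the centralizer $\sC$ of the triple, not a ``non-centralizer part,'' but this does not affect the argument.)
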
 

More precisely, for each magical $\slfrak_2$-triple $\{f,h,e\}$ it was shown that there is a canonical real form $\sG^\R_f<\sG$ such that the polystable points of $\widehat\cS_f^0(X)$ define a union of connected components of the  $\sG^\R_f$-Higgs moduli space on $X$.
In \cite{BCGGOgeneralCayley}, the connected components of the $\sG^\R_f$-Higgs bundle moduli space defined by the polystable points of $\widehat\cS_f^0(X)$ (and their image in the $\sG^\R$-character variety) are called \emph{Cayley components}. As a result, we make the following definition.
 
 \begin{definition}
 	We will say that a representation $\rho\in\bfX(\sG)$ lies in a Cayley component if there is a magical $\slfrak_2$-triple $\{f,h,e\}$  such that $\rho\in \rH(\cS_f^0(X))$ for some (and hence any) Riemann surface $X.$
 \end{definition}
 
 Recall that $\omega_0$ defines a symplectic form on the open stratum $\bfM_0$ which is compatible with $\I$, and that the hermitian form $h_0$ is nondegenerate with signature $(\dim\bfX(\sG),3g-3)$. 
Writing $h_0=2(g_0+i\omega_0)$, $g_0$  is a pseudo-K\"ahler metric on $\bfM_0$. 
  The following theorem is equivalent to Theorem \ref{MainThmD} from the introduction.
\begin{theorem}\label{nonintro Thm D}
	Let $\rho\in\bfX(\sG)$ be  in a Cayley component and $\cL_\rho\subset\bfM(\sG)$ be its isomonodromic leaf. Then, with the notation above, we have
	\begin{enumerate}
	 	\item $\cL_\rho$ is contained in the open stratum $\bfM_0,$
	 	\item $\cL_\rho$ is a $\omega_0$-symplectic submanifold $\bfM_0$ which is totally real and $g_0$-isotropic, and 
	 	\item $h_0$ has signature $(3g-3,3g-3)$ on $\T\cL_\rho\oplus\I(\T\cL_\rho)$.
	 \end{enumerate} 
\end{theorem}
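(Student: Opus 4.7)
The strategy is to reduce all three parts to the characterizations of the open stratum already established (Corollary \ref{Cor open stratum}), the isotropy lemma (Lemma \ref{lem D isotr}), and the Slodowy-slice parameterization of Cayley components (Theorem \ref{thm:BCGGOmag} together with Proposition \ref{lem suff cond open strat}). Once (1) is proved, parts (2) and (3) follow from essentially formal considerations.

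First I would prove (1). Since $\rho$ lies in a Cayley component, there is a magical $\slfrak_2$-triple $\{f,h,e\}\subset\gfrak$ with $\rho\in\rH(\cS_f^0(X))$ for every $X\in\bfT(\Sig)$ (Theorem \ref{thm:BCGGOmag}). The leaf $\cL_\rho$ is the image of the section $s_\rho:\bfT(\Sig)\to\bfM(\sG)$, so for each $X$ the point $s_\rho(X)$ is represented by a stable Higgs bundle $(J,\Phi)\in\cS_f^0(X)$, with $\Phi=f+\psi_0+\cdots+\psi_N$ where $\psi_j\in H^0(P_\sH(V_j)\otimes K)$ and $f$ is the canonical nowhere-vanishing section of $P_\sH(\langle f\rangle)\otimes K$. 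Applying Proposition \ref{lem suff cond open strat} to the $\sH$-invariant splitting $\gfrak=\langle f\rangle\oplus(W\oplus V_0\oplus\cdots\oplus V_N)$, with $U_1=\langle f\rangle$ and $\Phi_1=f$ nowhere vanishing, yields $s_\rho(X)\in\bfM_0$. Since $X$ was arbitrary, $\cL_\rho\subset\bfM_0$.

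Given (1), part (2) is immediate from the  characterizations of the open stratum: at every $x\in\cL_\rho$ one has $\T_x\cL_\rho=\cD_x$, and  Corollary \ref{Cor open stratum} tells us that $\cD_x$ is totally real and that $\omega_0|_{\cD_x}$ is nondegenerate (so $\cL_\rho$ is $\omega_0$-symplectic); the $g_0$-isotropy is exactly Lemma \ref{lem D isotr}. For (3), the computation is a tidy piece of linear algebra. Choose a real basis $e_1,\dots,e_{6g-6}$ of $\cD_x$; because $\cD_x$ is totally real, the same vectors form a $\C$-basis of the $\I$-invariant complex subspace $\cD_x\oplus\I(\cD_x)$. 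Since $g_0=\tfrac{1}{2}\Real(h_0)$ vanishes on $\cD_x$ by Lemma \ref{lem D isotr},
\[
h_0(e_a,e_b)=2\bigl(g_0(e_a,e_b)+i\omega_0(e_a,e_b)\bigr)=2i\,\omega_0(e_a,e_b)\ ,
\]
so the hermitian matrix of $h_0$ in this basis is $i$ times the real antisymmetric invertible matrix $\bigl(\omega_0(e_a,e_b)\bigr)$. Bringing the latter to Darboux normal form $\bigoplus_{k=1}^{3g-3}\bigl(\begin{smallmatrix}0&1\\-1&0\end{smallmatrix}\bigr)$, each $2\times 2$ block contributes the hermitian matrix $\bigl(\begin{smallmatrix}0&i\\-i&0\end{smallmatrix}\bigr)$, which has eigenvalues $\pm 1$ and hence signature $(1,1)$; summing over the $3g-3$ blocks gives the total signature $(3g-3,3g-3)$.

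No serious obstacle is expected here: the entire argument is a matter of combining previously proved results in the right order. The only point requiring modest care is verifying that the Slodowy-slice data of Example \ref{example sl2 Slodowy} genuinely satisfies the hypotheses of Proposition \ref{lem suff cond open strat}, in particular that $U_1=\langle f\rangle$ is $\sH$-invariant and that the tautological section $f\in H^0(P_\sH(\langle f\rangle)\otimes K)$ is nowhere vanishing; both are built into the construction (the bundle $P_\sH(\langle f\rangle)\otimes K$ is canonically trivial and $f$ is the constant section).
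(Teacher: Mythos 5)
Your proposal is correct and follows essentially the same route as the paper: part (1) via Theorem \ref{thm:BCGGOmag} together with the Slodowy-slice membership $\cS_f^0(X)\subset\bfM_0$ from Proposition \ref{lem suff cond open strat}, and parts (2)--(3) from the characterization of $\bfM_0$ and Lemma \ref{lem D isotr}. The only difference is that you spell out the Darboux-block signature computation for (3), which the paper leaves implicit; your computation is correct.
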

\begin{proof}
	By Theorem \ref{thm:BCGGOmag}, $\rho$ being in a Cayley component implies $\rho\in\rH(\cS_f^0(X))$ for every $X\in\bfT(\Sig)$.
	Since, $\cS_f^0(X)\subset\bfM_0$ for all $X\in\bfT(\Sig)$, the isomonodromic leaf $\cL_\rho$ is contained in $\cM_0.$ 
	Items (2) and (3) now follow from the definition of $\bfM_0$ and Lemma \ref{lem D isotr}.
	\end{proof}

\section{Hitchin map, minimal surfaces and cyclic locus}\label{sec:MinimalSurfaces}

In this section, we construct the Hitchin map in the joint setting and give a different interpretation of the section $\Theta$. We then focus on
the moduli spaces of equivariant minimal surfaces and cyclic Higgs bundles, and prove Theorems
\ref{MainThmF} and \ref{MainThmG} and \ref{MainThmH}.

\subsection{The Hitchin map} 

\subsubsection{The bundle of holomorphic $k$-differentials}
The bundle of holomorphic $k$-differentials over Teichm\"uller space is
defined by taking direct images of the $k$-th power of the relative
cotangent sheaf of the universal curve. A direct construction of this
bundle from the point of view of the Ahlfors-Bers complex structure is
given, for example,  by Bers \cite{Bers:75}. In this section, we briefly describe a construction
in manner of the previous sections.  The result is  the following.
\begin{theorem}
    Fix $k\geq 1$. There is a complex manifold $\Bbold^{(k)}$ and
    holomorphic submersion $\pi_\Bbold:\Bbold^{(k)}\to \Tbold(\Sigma)$ 
    such that the points of the  fiber $\Bbold_X^{(k)}=\pi_\Bbold^{-1}(X)$ 
    consist precisely of the holomorphic $k$-differentials on $X$.
    Moreover, there is an action of $\Mod(\Sigma)$ on $\Bbold^{(k)}$
    by biholomorphisms for which
    $\pi_\Bbold$ is equivariant.
\end{theorem}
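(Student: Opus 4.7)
The plan is to adapt, in a dramatically simplified setting, the gauge-theoretic construction of $\Mbold(\G)$ carried out in Section \ref{s:ConstructingModuliSpace}. First I would introduce the \emph{configuration space of $k$-differentials}
\[
\cC^{(k)} := \left\{(j, q) \in \J(\Sigma) \times C^\infty(\Sigma, \Sym^k T^*\Sigma \otimes \C) \ :\ q(jX_1, X_2,\ldots, X_k) = i\, q(X_1,\ldots,X_k) \right\},
\]
whose fiber over $j\in \J(\Sigma)$ is the complex vector space of smooth $(k,0)$-tensors on $(\Sigma, j)$. Exactly as in Section \ref{sec:differential_geometry_of_configuration_spaces}, one checks that $\cC^{(k)}$ is a complex Fr\'echet manifold holomorphically fibered over $\J(\Sigma)$, with an analogue of Lemma \ref{lem: tangent vector decomp} writing tangent vectors as pairs $(\mu, \dot q)$ consisting of a Beltrami differential $\mu\in \T_j \J(\Sigma)$ and a smooth $(k,0)$-tensor $\dot q$, the remaining type components of the naive derivative being determined algebraically by $\mu$ and $q$. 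The holomorphicity constraint is then encoded as the zero set of the holomorphic map
\[
F^{(k)}:\cC^{(k)} \lra \text{$(k,1)$-tensors} \ :\ (j,q) \longmapsto \dbar_j q,
\]
whose holomorphicity is verified as in Lemma \ref{lem:DifferentialOfF}.

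Next, $\Diff(\Sigma)$ acts on $\cC^{(k)}$ by simultaneous pull-back of $j$ and $q$, preserving $(F^{(k)})^{-1}(0)$ and covering its usual action on $\J(\Sigma)$. I would define
\[
\Bbold^{(k)} := (F^{(k)})^{-1}(0) \big/ \Diff_0(\Sigma),
\]
so that the mapping class group $\Mod(\Sigma) = \Diff(\Sigma)/\Diff_0(\Sigma)$ automatically acts on $\Bbold^{(k)}$, covering the standard action on $\Tbold(\Sigma) = \J(\Sigma)/\Diff_0(\Sigma)$. To endow $\Bbold^{(k)}$ with a complex manifold structure, I would mimic the slice construction of Proposition \ref{prop:slice} using the two-term elliptic deformation complex at $(j,q)$,
\[
0 \lra \Omega^0(\Sigma, T\Sigma) \xrightarrow{\,\delta^0\,} T_{(j,q)} \cC^{(k)} \xrightarrow{\,\delta^1\,} \Omega^{0,1}(X, K_X^k) \lra 0,
\]
where $\delta^0$ is the derivative of the $\Diff_0(\Sigma)$-action and $\delta^1 = \rd_{(j,q)} F^{(k)}$. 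Since $\Diff_0(\Sigma)$ acts freely on $\J(\Sigma)$ when $g \geq 2$ (Earle--Eells), one has $H^0 = 0$, and the slices will produce a genuine complex manifold rather than an orbifold. Ellipticity supplies finite-dimensional harmonic representatives of $H^1$, and the exponential map of Definition \ref{def: exp_j} in the $j$-direction, together with the obvious linear exponential in the $q$-direction, will yield holomorphic slices via an implicit function theorem argument.

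The main step requiring care is the properness and freeness of the appropriate Sobolev-completed $\Diff_0(\Sigma)$-action on $\cC^{(k)}$, but both follow directly from the corresponding properties for the action on $\J(\Sigma)$ (\cite[Theorems 2.2.1 and 2.3.1]{Tromba}), together with the observation that a diffeomorphism fixing $j$ must equal the identity and therefore also fixes $q$. The local slices then patch into a holomorphic atlas; the submersion $\pi_\Bbold$ is induced from $\pi_\cJ$, and its fiber over $[j] \in \Tbold(\Sigma)$ is canonically $\ker(\dbar_j) = H^0(X, K_X^k)$. Finally, $\Diff(\Sigma)$ acts on $\J(\Sigma)$ (and hence on $\cC^{(k)}$) by biholomorphisms, since for any $\phi\in \Diff(\Sigma)$ one has $\I(\phi^* m) = (\phi^*j)(\phi^* m) = \phi^*(jm) = \phi^* \I(m)$; this biholomorphicity descends to the $\Mod(\Sigma)$-action on $\Bbold^{(k)}$ and gives the required equivariance.
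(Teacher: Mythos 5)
Your proposal is correct and follows essentially the same route as the paper: a configuration space of pairs $(j,q)$ with $q$ of type $(k,0)$ for $j$ (the paper phrases this via the top eigenspace of the operator $j^{(k)}$ on $\Sym^k(T^\ast_\CBbb\Sigma)$, which is the same condition you impose), holomorphicity cut out by the zero set of $(j,q)\mapsto \dbar_j q$, and the quotient of that zero set by $\Diff_0(\Sigma)$, with the complex manifold structure obtained by running the slice construction of Section \ref{s:ConstructingModuliSpace} using the two-term elliptic deformation complex and the freeness/properness of the $\Diff_0(\Sigma)$-action from Tromba. The paper itself only sketches this and defers the details to the Higgs bundle case, so your level of detail is comparable and no step is missing.
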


Since many of the technical details follow very closely the discussion in
\S \ref{s:ConstructingModuliSpace}, in the following  we shall only give the set-up for the
construction and omit explicit proofs. 

Given $j\in \Jcal(\Sigma)$, let $j^{(k)}\in
\End((T^\ast_\CBbb\Sigma)^{\otimes k})$ be defined by:
$$
j^{(k)}(\alpha_1\otimes\cdots\otimes\alpha_k)=\sum_{p=1}^k \alpha_1\otimes
\cdots\otimes (\alpha_p\circ j)\otimes \cdots\otimes \alpha_k.
$$
Then $j^{(k)}$ preserves $\Sym^k(T^\ast_\CBbb\Sigma)\subset
\End((T^\ast_\CBbb\Sigma)^{\otimes k})$, and there is an eigenspace decomposition
$$
\Sym^k(T^\ast_\CBbb\Sigma) = \bigoplus_{p=0}^k E^{(k)}_{-k+2p}[j],
$$
where $j^{(k)}$ acts by $ip$ on  $E_p^{(k)}[j]$. Let 
$$
\Ebold^{(k)}_p:=\left\{ (j,\alpha)\in \Jcal(\Sigma)\times\Omega^0(\Sigma,
\Sym^k(T^\ast_\CBbb\Sigma))
\mid \alpha\in \Omega^0(\Sigma,E_p^{(k)}[j])\right\}.
$$
This is clearly the total space of a $C^\infty$ bundle on $\Jcal(\Sigma)$
with the Fr\'echet
topology. Moreover, it carries a natural complex structure.
We call $\Ebold^{(k)}_k$ the \emph{bundle of $k$-differentials} on $\Jcal(\Sigma)$.

Given a smooth family $(j(t))_{t\in (-\epsilon,\epsilon)}$ in $\J(\Sig)$ tangent to $\mu \in \T_j\J(\Sig)$, and $\ z_t$ is a local holomorphic 1-form on $(\Sig,j(t))$, then $\dt{\wideparen{dz^k}}=k dz^{k-1}\mu(\partial_z)$. In particular
$$\dt{\wideparen{j^{(k)}}}: E^{(k)}_{k}[j]\lra E^{(k)}_{k-2}[j]~.$$
As a result, if $(j(t),q(t))_{t\in(-\epsilon,\epsilon)}$ is a smooth family in $\Ebold^{(k)}_k$, then $\dt q=(\dt q)_k+
(\dt q)_{k-2}$, where $(\dt q)_k$ is a $k$-differential, and $(\dt q)_{k-2}$ is a
$(k-2)$-differential given by:
$$
(\dt q)_{k-2} =-\tfrac{i}{2}\dt{\wideparen{j^{(k)}}} q~.
$$
The holomorphicity condition is given by the zeros of the map:
$$
F:  \Ebold^{(k)}_k\lra \Ebold^{(k+1)}_k
: q_k\longmapsto \dbar_j q_k
$$
induced by the exterior derivative on forms, and
where we regard 
$$\Lambda^2 T^\ast_\CBbb\Sigma\simeq T^{1,0}\Sigma\otimes T^{0,1}\Sigma\subset
\Sym^2(T^\ast_\CBbb\Sigma)\ .$$
\begin{proposition} 
    The zero set $Z^{(k)}$ of $F$ is a smooth submanifold
    of $\Ebold^{(k)}_k$, and $\Diff_0(\Sigma)$ acts properly
    discontinuously on $Z^{(k)}$.   
\end{proposition}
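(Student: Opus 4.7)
The plan is to follow very closely the Kuranishi slice construction from Section~\ref{s:ConstructingModuliSpace}. First I would introduce Sobolev completions $\Ebold^{(k),s}_k$ and $\Jcal^{s+1}(\Sigma)$ (with $s$ large) so that $F$ extends to a smooth map between Hilbert manifolds. Fix a point $(j_0,q_0)\in Z^{(k)}$ and let $X_0=(\Sigma,j_0)$. Using the exponential map on $\Jcal(\Sigma)$ (Definition~\ref{def: exp_j}), together with a trivialization of $\Ebold^{(k)}_k$ around $(j_0,q_0)$ (for instance via parallel transport of harmonic $k$-differentials along a path in $\Jcal(\Sigma)$, or by identifying nearby fibers using the exponential of the Chern-Singer type connection on $\Sym^k(T^\ast_\CBbb\Sigma)$), express $F$ locally as a holomorphic map from a neighborhood of the origin in $T_{j_0}\Jcal(\Sigma)\oplus \Omega^0(X_0,K_{X_0}^{k})$ to $\Omega^{0,1}(X_0,K_{X_0}^{k})$. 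Its linearization at $0$ will take the form
\[
\rd F_{(j_0,q_0)}(\mu,\dot q)\;=\;\dbar_{j_0}\dot q+L_{q_0}(\mu),
\]
where $L_{q_0}$ is a first-order operator built from contraction of $\mu$ with $\partial q_0$ (mirroring the role of $\Phi\mu$ in the Higgs setting).

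Next I would invoke Hodge theory for the elliptic operator $\dbar$ on $K_{X_0}^{k}$: by Serre duality, $H^{1}(X_0,K_{X_0}^{k})=H^{0}(X_0,K_{X_0}^{1-k})^\ast=0$ for $k\geq 2$, so $\dbar_{j_0}$ is surjective with finite-dimensional kernel $H^0(X_0,K_{X_0}^{k})$; for $k=1$ the same argument applies since $L_{q_0}$ surjects onto the one-dimensional cokernel (it recovers the classical fact that $\dbar q_0=0$ together with a Beltrami differential yields a closed form under the conformal class variation). In either case, standard implicit function theorem arguments in the Hilbert setting produce a holomorphic submanifold structure on $Z^{(k),s}$ near $(j_0,q_0)$ of finite codimension, with tangent space the first cohomology of the short exact sequence of complexes
\[
0\to \left(0\to \Omega^0(K_{X_0}^{k})\xrightarrow{\dbar_{j_0}} \Omega^{0,1}(K_{X_0}^{k})\right)
\to \Bscr^\bullet \to
\left(T_{j_0}\Jcal(\Sigma)\to 0\right)\to 0.
\]
Elliptic regularity applied to $\dbar_j q=0$, together with smoothness of the chosen family of complex structures, will then show that all points of $Z^{(k),s}$ are smooth tensors and that the Sobolev regularity is independent of $s$, so $Z^{(k)}$ inherits the structure of a smooth Fr\'echet submanifold of $\Ebold^{(k)}_k$.

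For the action of $\Diff_0(\Sigma)$, I would first note that the lifted action on the total space $\Ebold^{(k)}_k$ is smooth and covers the standard action on $\Jcal(\Sigma)$, and that $Z^{(k)}$ is preserved because the holomorphicity condition is diffeomorphism-invariant. Properness of the action on $Z^{(k)}$ reduces to properness on the base: given sequences $(j_n,q_n),(\varphi_n\cdot j_n,\varphi_n\cdot q_n)$ converging in $Z^{(k)}$, the projected sequences $j_n$ and $\varphi_n\cdot j_n$ converge in $\Jcal(\Sigma)$, so by Tromba \cite[Thm.~2.3.1]{Tromba} the sequence $\varphi_n$ subconverges in $\Diff_0(\Sigma)$. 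Freeness of the action on $Z^{(k)}$ follows immediately from freeness on $\Jcal(\Sigma)$ in genus $g\geq 2$ \cite[Thm.~2.2.1]{Tromba}: any stabilizer of $(j,q)$ fixes $j$ and must therefore be the identity. Since the action is smooth, proper and free, it is in particular properly discontinuous in the required sense.

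The main technical obstacle is the rigorous verification that $\rd F$ is surjective with complemented kernel. The Higgs-bundle analogue (Proposition~\ref{prop:harmonics}) used the interplay between the $D'$- and $D''$-operators and the K\"ahler identities; here the analogue is simpler because there is no Higgs field, but one still needs to handle the $L_{q_0}(\mu)$ term carefully. The cleanest route is probably to show that the cokernel of $\rd F$ vanishes by pairing with a putative obstruction in $H^0(K_X^{1-k})^\ast$ via Serre duality and using the holomorphicity of $q_0$ to integrate by parts; this will make the whole argument essentially formal, parallel to the semiharmonic slice construction earlier in the paper.
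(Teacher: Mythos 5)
The paper deliberately omits a proof of this proposition (``we shall only give the set-up for the construction and omit explicit proofs''), referring instead to the slice construction of \S\ref{s:ConstructingModuliSpace}; your proposal fleshes out exactly that intended argument (Sobolev completion, linearization of $F$, Hodge theory/Serre duality for surjectivity, implicit function theorem, elliptic regularity to return to the Fr\'echet category, and reduction of properness and freeness to Tromba's results on $\Jcal(\Sigma)$). For $k\geq 2$ the argument is correct as you describe it: $H^1(X,K_X^{\otimes k})\cong H^0(X,K_X^{\otimes(1-k)})^\ast=0$ since $\deg K_X^{\otimes(1-k)}<0$, so $\dbar_{j_0}$ is already surjective and the $L_{q_0}(\mu)$ term plays no role in surjectivity. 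Two small corrections: the zero set has infinite (not finite) codimension, since $\rd F$ is a submersion onto the infinite-dimensional space $\Omega^{0,1}(K_{X_0}^{\otimes k})$ --- what matters for the implicit function theorem is that the kernel splits, which Hodge theory provides.

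The one genuine gap is your treatment of $k=1$. You claim that $L_{q_0}$ surjects onto the one-dimensional cokernel of $\dbar_{j_0}$; this is false at $q_0=0$ (where $L_{q_0}=0$), and the zero section certainly lies in $Z^{(1)}$, so at those points $\rd F$ is \emph{not} surjective. The correct repair is different: for a $(1,0)$-form $q$ on a surface one has $\dbar_j q = dq$ (the $(2,0)$-part vanishes), so $\int_X \dbar_j q=0$ by Stokes, meaning $F$ takes values in the closed hyperplane annihilated by the harmonic representative of $H^{0,1}(K_X)^\ast\cong H^0(\Ocal_X)$. Restricting the target to that subbundle, $\dbar_{j_0}$ \emph{is} surjective and the rest of your argument goes through verbatim. (This case is immaterial for the Hitchin map, which only uses $k=m_i+1\geq 2$, but the proposition is stated for all $k\geq 1$.) The properness and freeness arguments are correct and are precisely the reduction to \cite[Theorems 2.2.1 and 2.3.1]{Tromba} that the paper's Proposition \ref{prop:ProperFreeAction} uses in the Higgs setting.
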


We then set: $\Bbold^{(k)}=Z^{(k)}/\Diff_0(\Sigma)$.
The structure of a complex manifold then follows along the lines of \S \ref{s:ConstructingModuliSpace}.

\subsubsection{The Hitchin map}
Let us first briefly recall the definition of the Hitchin map on $\bfM_X(\G)$.
For a fixed Riemann surface $X$, let $\Bbold^{(k)}_X=H^0(X,K_X^{\otimes k})$.
Given a semisimple complex Lie group  $\G$ of rank $\ell$, the
\emph{Hitchin base} is defined by
\begin{equation} \label{eqn:hitchin-base}
\Bbold_X(\G) =\bigoplus_{i=1}^\ell \Bbold^{(m_i+1)}_X
\end{equation}
 where $m_1,\ldots, m_\ell$ are the exponents of $\G$.
  For each exponent $m_i$ of $\G$, let
$p_i$ denote a choice of nonzero invariant polynomial on $\gfrak$ of
degree $m_i+1$. The \emph{Hitchin map} is then
$$
\varpi_X:\Mbold_X(\G)\lra \Bbold_X(\G) :
[\Ecal,\Phi]\mapsto(p_1(\Phi),\ldots,p_\ell(\Phi))
~.$$

Now 
let $\Bbold(\G)$ denote the fiber product over
$\Tbold(\Sigma)$ of the spaces $\Bbold^{(m_i+1)}$,
$i=1,\ldots, \ell$, from the previous section. 
Let us continue to denote the projection by $\pi_\Bbold$. 
Then $\pi_\Bbold:\Bbold(\G)\to \Tbold(\Sigma)$ is called the \emph{joint Hitchin
base}. 

We can then define:
\begin{equation} \label{eqn:hitchin-map}
\varpi([J,\Phi])=[(j,p_1(\Phi),\ldots,p_\ell(\Phi))]
\end{equation}
Using the holomorphicity condition on $\Phi$, the map $\varpi$ takes value in $\Bbold(\G)$ and is holomorphic since the $p_\alpha$ are. This is the desired relative Hitchin map. 
We summarize with the following.

\begin{theorem}
There is a holomorphic map $\varpi$ making the diagram
$$
    \begin{tikzcd}
        \Mbold(\G) \arrow[rr,"\varpi"]\arrow[dr,"\pi"'] && \Bbold(\G)\arrow[dl,"\pi_{\Bbold}"] \\
        & \Tbold(\Sig) &
    \end{tikzcd}
    $$
    commute. Moreover,  for every $X\in \Tbold(\Sigma)$, the restriction of
    $\varpi$ to $\pi^{-1}(X)$ is
    the Hitchin map $\varpi_X$.
\end{theorem}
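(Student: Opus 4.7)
The plan is to verify that the displayed formula
\[\varpi([J,\Phi])=[(j,p_1(\Phi),\ldots,p_\ell(\Phi))],\qquad j=\pi_\J(J),\]
defines a holomorphic map with the required properties. Four items need to be checked: (i) each $p_i(\Phi)$ lies in $\Bbold^{(m_i+1)}$; (ii) the tuple is $\Aut_0(P)$-invariant so that $\varpi$ descends to $\Mbold(\G)$; (iii) the descended map is holomorphic; and (iv) commutativity of the diagram and fiberwise agreement with $\varpi_X$ hold.

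For (i), note that $\Phi$ is a basic $(1,0)$-form with respect to $J$ valued in $P(\gfrak)$, and $p_i$ is an $\Ad$-invariant polynomial of degree $m_i+1$, so $p_i(\Phi)$ is a basic complex $(m_i+1,0)$-form on $P$, giving a section of $K_X^{\otimes(m_i+1)}$ lying in the top eigenspace of $j^{(m_i+1)}$. Applying the Leibniz rule to $\dbar_J\Phi=0$ together with $\Ad$-invariance yields $\dbar_j p_i(\Phi)=0$, so $p_i(\Phi)$ lies in the zero set $Z^{(m_i+1)}$ used to define $\Bbold^{(m_i+1)}$. This produces a well-defined lift $\widehat\varpi: F^{-1}(0)^{st}\to \prod_i Z^{(m_i+1)}$ commuting with the projections to $\J(\Sig)$.

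For (ii), I would use the exact sequence \eqref{eqn:gauge-group}. The $\Ad$-invariance of $p_i$ gives $p_i(g\cdot\Phi)=p_i(\Phi)$ for $g\in\cG(P)$, while for $\varphi\in\Diff_0(\Sig)$ lifted to $\Aut_0(P)$, naturality transports $(j,p_i(\Phi))$ to $(\varphi^\ast j,\varphi^\ast p_i(\Phi))$, which represents the same point in $\Bbold(\G)$. Hence $\widehat\varpi$ descends to $\varpi:\Mbold(\G)\to\Bbold(\G)$. For (iii), holomorphicity can be checked locally on the Kuranishi slices $\cS_x$ of Proposition \ref{prop:slice}: on $\cS_x$ the composition $(J,\Phi)\mapsto(j,p_i(\Phi))$ is holomorphic since $\pi_\J|_{\cS_x}$ is holomorphic and $p_i$ is polynomial in $\Phi$. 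Because the quotient map $F^{-1}(0)^{st}\to\Mbold(\G)$ is a holomorphic (orbifold) submersion by Theorem \ref{theo:ModuliSpace} and the quotient defining $\Bbold^{(m_i+1)}$ is a holomorphic submersion, the induced map $\varpi$ is holomorphic. Item (iv) is immediate from the construction: $\pi_\Bbold\circ\varpi=\pi$, and restricting to $\pi^{-1}(X)$ recovers \eqref{eqn:hitchin-map} verbatim.

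I do not anticipate a substantial obstacle. The only mildly delicate step is confirming that the assignment $[J,\Phi]\mapsto[(j,p_i(\Phi))]$ is holomorphic in the joint setting rather than merely fiberwise, but this is transparent from the local slice model of $\Mbold(\G)$ and the parallel local description of $\Bbold^{(m_i+1)}$ sketched before the theorem statement; beyond that, every assertion reduces to $\Ad$-invariance of $p_i$ and the definition of the target complex structure.
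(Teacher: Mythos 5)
Your proposal is correct and follows essentially the same route as the paper, which simply defines $\varpi$ by \eqref{eqn:hitchin-map} and observes that it lands in $\Bbold(\G)$ by the holomorphicity condition on $\Phi$ and is holomorphic because the invariant polynomials $p_i$ are; your write-up just makes the $\Aut_0(P)$-equivariance and the slice-level holomorphicity check explicit. No gap.
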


\subsubsection{The quadratic part of $\varpi$}

Note that for any semisimple $\G$, we have $m_1=1$. In particular, the
Hitchin map has a quadratic piece $\varpi^{(2)}: \bfM(\G) \to
\Bbold^{(2)}$. This quadratic part is closely related to $\Theta$ as we now
explain\footnote{We thank Nigel Hitchin for pointing out this interpretation.} (see also \cite{Chen:12}). Here we normalize $\varpi^{(2)}$ by
\[\varpi^{(2)}([J,\Phi]) =
\left(\pi(J),\tfrac{1}{4}\kappa_\gfrak(\Phi\otimes \Phi)\right)~.\]

Using the identification between $\Bbold_X^{(2)}$ and $\T_X^*\bfT(\Sig)$, any $\mu \in \T_X\bfT(\Sig)$ defines a function $f_\mu$ on $\bfM_X(\G)$ by
\[f_\mu([J,\Phi]) = \left\langle \varpi^{(2)}_X([J,\Phi]) , \mu \right\rangle~.\]
Here $\langle\cdotp,\cdotp\rangle$ is the natural pairing between $H^0(K^2_X)$ and $H^1(X, \T X)$ given by
\[\langle q_2, \mu\rangle = \int_\Sig q_2\mu~,\]
where $q_2\mu$ is seen as a $(1,1)$-form on $\Sig$.

Recall that $\bfM_X(\G)$ is equipped with a holomorphic symplectic form $\omega_I^\C$ defined by

\[\omega_I^\C \left((\beta_1,\psi_1),(\beta_2,\psi_2) \right) = i \int_\Sig
\kappa_\gfrak(\psi_2\wedge \beta_1-\psi_1\wedge\beta_2)~.\]

\begin{proposition}\label{prop:dimCoker}
    For any $X\in \bfT(\Sig)$ and $[\mu] \in \T_X\bfT(\Sig)$, the vector
    field $\Theta([\mu])$ on $\bfM_X(\G)$ is the Hamiltonian vector field
    of $f_\mu$ with respect to $\omega_I^\C$. In particular, for any $x\in \bfM(\G)$ we have 
    \[\dim \coker \rd_x\varpi^{(2)}=\dim\ker\Theta_x.\]
\end{proposition}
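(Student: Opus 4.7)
The plan is to establish the Hamiltonian claim by a direct computation from the explicit formulas for $\omega_I^\CBbb$, $\Theta$, and $f_\mu$; the dimension equality then follows formally via Serre duality combined with a snake-lemma comparison of the relative and fiberwise derivatives of $\varpi^{(2)}$.

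First I would fix a point $x=[J,\Phi]$ with $X=\pi(x)$, a semiharmonic vertical class $(\beta_1,\psi_1)\in \T_x\bfM_X(\sG)$ (so $D''(\beta_1,\psi_1)=0$), and $\mu\in \T_X\bfT(\Sig)$. By definition, $\Theta_x(\mu)$ is the vertical tangent vector represented by $(\tfrac{1}{2i}\Phi\mu,0)$, which is automatically $D''$-closed since $[\Phi,\Phi]=0$. Substituting into the stated formula for $\omega_I^\CBbb$ yields
\[
\omega_I^\CBbb\bigl(\Theta_x(\mu),(\beta_1,\psi_1)\bigr)=i\!\int_\Sig\kappa_\gfrak\!\left(\psi_1\wedge\tfrac{1}{2i}\Phi\mu\right)=\tfrac{1}{2}\int_\Sig\kappa_\gfrak(\psi_1\wedge \Phi\mu).
\]
On the other hand, along any path in $\bfM_X(\sG)$ tangent to $(\beta_1,\psi_1)$ the Higgs field varies as $\dot\Phi=\psi_1$, so by symmetry of the Killing form
\[
\rd f_\mu((\beta_1,\psi_1))=\tfrac{1}{2}\int_\Sig\kappa_\gfrak(\Phi,\psi_1)\,\mu.
\]
A local-coordinate expansion with $\Phi=\phi\,dz$, $\psi_1=s\,dz$, and $\mu=m\,\partial_z\otimes d\bar z$ shows that both quantities reduce to $\tfrac{1}{2}\int \kappa_\gfrak(\phi,s)\,m\,dz\wedge d\bar z$, which establishes the Hamiltonian identity $\iota_{\Theta(\mu)}\omega_I^\CBbb = \rd f_\mu$ on $\bfM_X(\sG)$.

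For the dimension assertion, the nondegeneracy of $\omega_I^\CBbb$ on $\bfM_X(\sG)$ implies $\Theta_x(\mu)=0$ if and only if $\rd f_\mu|_x=0$, equivalently, if and only if $\mu$ lies in the annihilator of $\Imag\,\rd_x\varpi^{(2)}_X$ under the Serre-duality pairing $H^1(X,\T X)\times H^0(X,K_X^{\otimes 2})\to \CBbb$. Since this pairing is perfect and both spaces have dimension $3g-3$,
\[
\dim\ker\Theta_x=\dim H^0(X,K_X^{\otimes 2})-\dim\Imag\,\rd_x\varpi^{(2)}_X=\dim\coker \rd_x\varpi^{(2)}_X.
\]
To pass from the fiberwise to the total derivative, I would apply the snake lemma to the commutative diagram whose rows are the tangent sequences associated to the submersions $\pi$ and $\pi_\Bbold$, with vertical arrows $\rd_x\varpi^{(2)}_X$, $\rd_x\varpi^{(2)}$, and the identity on $\T_X\bfT(\Sig)$; the last equality holds because $\pi_\Bbold\circ\varpi^{(2)}=\pi$. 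Since $\ker(\id)=\coker(\id)=0$, the snake sequence yields $\coker\rd_x\varpi^{(2)}_X\cong\coker\rd_x\varpi^{(2)}$, completing the proof.

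The only delicate point is the Hamiltonian identity itself, since $\omega_I^\CBbb$ is written in terms of wedge products and the Killing form while $f_\mu$ is built from the Serre-duality pairing; these are compared at a single fixed point using local coordinates. Once this sign bookkeeping is settled, the remainder of the argument is purely formal.
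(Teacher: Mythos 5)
Your proof is correct and follows essentially the same route as the paper's: a direct computation identifying $\rd f_\mu(\beta,\psi)$ with $\omega_I^\C(\Theta(\mu),(\beta,\psi))$ (up to a normalization constant), followed by nondegeneracy of $\omega_I^\C$ together with Serre duality to equate $\dim\ker\Theta_x$ with $\dim\coker\rd_x\varpi^{(2)}_X$. Your snake-lemma step relating $\coker\rd_x\varpi^{(2)}_X$ to $\coker\rd_x\varpi^{(2)}$ is just a more explicit rendering of the paper's one-line observation that evaluating $\varpi^{(2)}$ along horizontal vectors identifies the two cokernels.
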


\begin{proof}
For $x=[J,\Phi]\in \bfM(\G)$ we have
\[\rd_x \varpi_X^{(2)}(\beta,\psi) = \tfrac{1}{2}\kappa_\gfrak(\Phi\otimes\psi)~.\]
Thus
\[\rd_x f_\mu(\beta,\psi)  = \frac{1}{2} \int_\Sig
    \kappa_\gfrak(\Phi\otimes\psi)\mu \\
 = i\int_\Sig \kappa_\gfrak\left(\psi\wedge \tfrac{1}{2i}\Phi\mu\right) \\
 = 4 \omega_I^\C \left(\Theta(\mu),(\beta,\psi)\right)~.\]
This proves the first statement. To show the second statement, first observe that, evaluating $\varpi^{(2)}$ along horizontal vectors, one obtains $\coker \rd_x \varpi^{(2)} = \coker \rd \varpi_X^{(2)}$. The result follows from the duality between $\rd_x f_\mu$ and $\Theta_x(\mu)$.
\end{proof}

\subsubsection{The nilpotent cone} 
We shall not discuss the rich geometry of the Hitchin map, other than to point to
an essential consequence.
For a fixed $X\in\T(\Sig)$, the \emph{nilpotent cone $\bfC_X(\G)$} is the preimage of
$0\in \bfB_X(\G)$ by the Hitchin map $\varpi_X$. In the joint setting, we define
\emph{the (relative) nilpotent cone $\bfC(\G)$} as the preimage by $\varpi$ of the 
zero section in $\bfB(\G)$. 
We record the following:

\begin{proposition}
The nilpotent cone $\bfC(\G)$ is a complex analytic subvariety of 
    $\Mbold(\G)$.
\end{proposition}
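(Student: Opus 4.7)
The plan is to show that $\Cbold(\G)$ is locally the zero set of a finite collection of holomorphic functions on $\Mbold(\G)$, so that it is a complex analytic subvariety essentially by definition.

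First I would verify that $\Bbold(\G) \to \Tbold(\Sigma)$ is a holomorphic vector bundle of finite rank. For each $k \geq 2$ and $X \in \Tbold(\Sigma)$, Riemann-Roch together with the hypothesis $g \geq 2$ gives $\dim H^0(X, K_X^{\otimes k}) = (2k-1)(g-1)$ independent of $X$, so the sheaf of holomorphic $k$-differentials on the fibers of the universal curve is locally free of constant rank. Combined with the complex structure on $\Bbold^{(k)}$ and the holomorphic submersion $\pi_\Bbold:\Bbold^{(k)}\to\Tbold(\Sigma)$ established above, this identifies $\Bbold^{(k)}$ with the total space of a holomorphic vector bundle of rank $(2k-1)(g-1)$. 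Consequently the fiber product $\Bbold(\G) = \bigoplus_{i=1}^\ell \Bbold^{(m_i+1)}$ is again a holomorphic vector bundle of finite rank.

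Next, fix $x \in \Cbold(\G)$ and set $X = \pi(x)$. I would trivialize $\Bbold(\G)$ holomorphically over a sufficiently small open neighborhood $U$ of $X$, writing $\pi_\Bbold^{-1}(U) \cong U \times V$ for a finite-dimensional complex vector space $V$. Under this trivialization the zero section is cut out by the vanishing of the $V$-coordinate, which is a holomorphic $V$-valued function on $\pi_\Bbold^{-1}(U)$. Composing with the holomorphic map $\varpi$ from the commutative diagram above produces a holomorphic $V$-valued function on $\pi^{-1}(U)\subset \Mbold(\G)$ whose zero locus is precisely $\Cbold(\G) \cap \pi^{-1}(U)$. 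In the orbifold charts built in Section \ref{s:ConstructingModuliSpace}, this is the common zero locus of finitely many holomorphic functions on a complex manifold, exhibiting $\Cbold(\G)$ as a complex analytic subvariety of $\Mbold(\G)$.

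There is no genuine obstacle here: the proposition is essentially a formal consequence of the holomorphicity of $\varpi$ (already established) together with the vector bundle structure on $\Bbold(\G)$. The only mild subtlety is to arrange that trivializing the base bundle and pulling back by $\varpi$ can be carried out compatibly with the orbifold charts on $\Mbold(\G)$, which is immediate since $\varpi$ is defined on the $\Aut_0(P)$-invariant level and descends holomorphically through the slices of Proposition \ref{prop:slice}.
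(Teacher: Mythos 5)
Your proof is correct and follows the route the paper intends: the paper records this proposition without proof, as it is an immediate consequence of the holomorphicity of $\varpi$ together with the fact that the zero section of $\Bbold(\G)$ is a complex submanifold, which is exactly what you verify. The Riemann--Roch computation confirming that $\Bbold^{(k)}\to\Tbold(\Sigma)$ has constant fiber dimension $(2k-1)(g-1)$ for $k\geq 2$ (which covers all $k=m_i+1$) and the local trivialization are the right details to supply.
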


Recall the definition of the set $\Ucal_s$ from Theorem
\ref{theo:ModuliSpace}.

\begin{theorem} \label{thm:nilpotent-kahler}
There is $s_0\geq 0$ such that for each $s>s_0$, the set $\Ucal_s$ is an open
    neighborhood of $\Cbold(\G)$.
\end{theorem}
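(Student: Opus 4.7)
The plan is the following. Openness of $\Ucal_s$ is immediate from the continuous dependence of $h_s$ on both $s$ and the base point, so the substance of the theorem is the set-theoretic inclusion $\bfC(\G)\subset\Ucal_s$ for every $s\geq s_0$. Fix $x\in\bfC(\G)$ with lift $(J,\Phi)$ solving the Hitchin equations. Decomposing a harmonic tangent vector into its vertical piece (where $\mu=0$) and its horizontal lift $w_\mu$, the vertical part contributes only positive $L^2$-norms to $h_s$, so positivity depends entirely on what happens on the horizontal distribution. Item (2) of Lemma \ref{lem norm of min norm vect} yields
\[
h_s(w_\mu,w_\mu)\ =\ s\,\|\mu\|_{WP}^2\ -\ \bigl\|\pr_{\ker(D')}\bigl(\tfrac{1}{2i}\Phi\mu\bigr)\bigr\|^2 ,
\]
and combining the nonexpansiveness of the orthogonal projection with the pointwise estimate $|\Phi\mu|^2\leq|\Phi|^2|\mu|^2$ (with the hyperbolic metric on $T\Sig$ and the harmonic metric on $P(\gfrak)$) gives
\[
h_s(w_\mu,w_\mu)\ \geq\ \bigl(s-\tfrac{1}{4}\|\Phi\|_{L^\infty}^2\bigr)\|\mu\|_{WP}^2 .
\]
Hence the theorem reduces to the following uniform estimate: there exists a constant $C=C(\G)>0$ such that every stable solution $(J,\Phi)$ of the Hitchin equations on any $X\in\bfT(\Sig)$ with $\Phi$ nilpotent satisfies $|\Phi|^2\leq C$ pointwise. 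Taking $s_0=C/4$ then completes the proof.

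The required $L^\infty$ bound is to be obtained by a Bochner-Weitzenbock argument. At each point $p\in X$ where $\Phi(p)\neq 0$, combining the Weitzenbock identity for the holomorphic $(1,0)$-form $\Phi$ with the Hitchin equation $F_{A_J}+\tfrac{1}{4}[\Phi,\Phi^*]=0$ produces a differential inequality of the form
\[
\Delta_\rho\log|\Phi|^2\ \geq\ c_1|\Phi|^2\ -\ c_2 ,
\]
for positive constants $c_1,c_2$ depending only on $\G$ and on the (constant) scalar curvature of the hyperbolic metric. The nilpotency of $\Phi$ is essential here: it ensures that the cubic term $\langle[[\Phi,\Phi^*],\Phi],\Phi^*\rangle$ arising from the Weitzenbock identity has the correct sign and magnitude, which can be made explicit using a Jacobson-Morozov $\mathfrak{sl}_2$-triple through $\Phi(p)$. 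The maximum principle, applied to the regularization $\log(|\Phi|^2+\varepsilon)$ and followed by $\varepsilon\to 0^+$ to handle the zero set of $\Phi$, yields $|\Phi|^2\leq c_2/c_1=:C$ uniformly, and crucially, independently of the Riemann surface structure $X\in\bfT(\Sig)$.

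The main obstacle is carrying out this Bochner estimate with the sharp sign exploitation of nilpotency: for a generic point of $\bfM(\G)$ no such universal bound exists, so the argument must genuinely use membership in the nilpotent cone. Fiberwise, compactness of $\bfC_X(\G)\subset\bfM_X(\G)$ together with continuity already provides a bound $s_0(X)$; the content of the theorem is that the universal character of the hyperbolic scalar curvature, coupled with the algebraic rigidity of nilpotent elements in $\gfrak$, produces a constant $s_0$ that is insensitive to $X$ and hence works over the entire joint moduli space.
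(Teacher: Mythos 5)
Your reduction is the same as the paper's: both arguments come down to a sup-norm bound on $|\Phi|_{\rho,\tau}$ that holds uniformly over the nilpotent cone, independent of the point of $\Tbold(\Sigma)$, and your estimate $h_s(w_\mu,w_\mu)\geq (s-\tfrac14\Vert\Phi\Vert_{L^\infty}^2)\Vert\mu\Vert_{WP}^2$ (via Lemma \ref{lem norm of min norm vect} and nonexpansiveness of $\pr_{\ker D'}$) is a correct, even slightly sharper, way to convert that bound into positivity of $h_s$. Where you diverge is in how the bound is obtained: the paper simply invokes Simpson's estimate (Lemma \ref{lem:uniform}, i.e.\ \cite[Lemma 2.7]{localsystems}), which bounds $|\Phi|$ in terms of $\Vert\varpi_X(J,\Phi)\Vert_\infty$ and hence yields an open neighborhood $\Mbold^c(\G)$ of $\Cbold(\G)$ inside $\Ucal_s$ in one stroke, whereas you prove openness of $\Ucal_s$ separately from continuity of $h_s$ and then sketch a direct proof of the sup bound on the cone itself via Bochner--Weitzenb\"ock and the maximum principle. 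Your sketch is essentially the standard proof of the cited lemma, and the ingredients are all correct; the one imprecision is your description of the cubic term: $\langle[[\Phi,\Phi^*],\Phi],\Phi^*\rangle=|[\Phi,\Phi^*]|^2\geq 0$ holds for \emph{any} Higgs field, so the "correct sign" is automatic. What nilpotency actually buys is coercivity, namely $|[\Phi,\Phi^*]|^2\geq c_1|\Phi|^4$ with $c_1=c_1(\gfrak)>0$, which follows because a nonzero nilpotent $N$ cannot satisfy $[N,N^*]=0$ (normal plus nilpotent forces $N=0$) together with compactness of the unit sphere in the nilpotent cone of $\gfrak$; that, plus the universality of the hyperbolic curvature term, is what makes the constant independent of $X$. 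With that clarification your argument is complete and recovers the paper's statement without appealing to Simpson's lemma as a black box, at the cost of only treating the cone itself rather than the larger sets $\Mbold^c(\G)$.
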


Now let $\vec q\in \Bbold_X(\G)$. Then $\vec q=(q_{k_1},\ldots, q_{k_\ell})$,
where $k_i=m_i+1$ and $q_{k_i}$ is a
holomorphic $k_i$-differential on $X$.
Define the pointwise norm
$$
|\vec q|_\rho^2(z) :=\sum_{i=1}^\ell |q_{k_i}|_\rho^2(z)
$$
where $\rho$ is the hyperbolic metric on $X$ (see Appendix
\ref{sec:app-kahler}).
We set
$$
\Vert\vec q\Vert_{\infty}:= \sup_{z\in X} |\vec q|_\rho^2(z)
$$
Finally, note that given a Cartan involution $\tau$
we can define a pointwise norm for the Higgs field
$|\Phi|_{\rho,\tau}(z)$ as in Appendix \ref{sec:app-kahler}. With this
understood, we have the following.

\begin{lemma} \label{lem:uniform}
    Fix $c>0$. Then there is $K(c)>0$ with the following
    significance. If $(\Ecal,\Phi)$ is a solution to the Hitchin equations
    on a Riemann surface $X$ with harmonic metric $\tau$, and
    $\Vert \varpi_X(J,\Phi)\Vert_\infty\leq c$, then
    $\displaystyle\sup_{z\in X}
    |\Phi|_{\rho,\tau}(z)\leq
    4K(c)$. 
\end{lemma}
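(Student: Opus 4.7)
My plan is to apply the maximum principle to the function $u := |\Phi|^2_{\rho,\tau}$ on $X$, using the Hitchin equations to convert the Bochner curvature term into a non-negative algebraic expression in $\Phi$ which is controlled at the max by bounds on the Hitchin base.

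First, I would derive a Bochner-type inequality. Since $\overline\partial_{A_J}\Phi=0$ and $(A_J,\tau)$ solves $F_{A_J}+\tfrac{1}{4}[\Phi,\Phi^\ast]=0$, a direct computation using the K\"ahler identities (of the sort in \cite{selfduality, SimpsonVHS}) gives, pointwise on $X$,
$$\Delta_\rho u \;\geq\; \tfrac{1}{2}\,\bigl|[\Phi,\Phi^\ast]\bigr|^2_{\rho,\tau} \;-\; 2u,$$
where the $-2u$ term reflects the fact that the Gauss curvature of $\rho$ is $-1$.

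Second, I would invoke an algebraic lemma, finite-dimensional and $z$-independent: there exist constants $C_1,C_2>0$ and a continuous function $\Lambda$ on the GIT quotient $\gfrak\doublequotient\sG$ such that for every $\phi\in\gfrak$,
$$|\phi|^4_\tau \;\leq\; C_1\,\bigl|[\phi,\phi^\ast]\bigr|^2_\tau \;+\; C_2\,\Lambda\bigl(p_1(\phi),\ldots,p_\ell(\phi)\bigr).$$
The geometric content is that the Kempf--Ness-type function $|[\,\cdot\,,\,\cdot^\ast]|^2$ detects nonclosedness of the $\sG$-orbit of $\phi$ on $\gfrak$: on the slice $\{\phi:[\phi,\phi^\ast]=0\}$, $\phi$ is semisimple, its orbit is closed, and the $\sK$-invariant norm $|\phi|$ is controlled by the invariant polynomials alone. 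A compactness / \L{}ojasiewicz argument, together with the Jordan decomposition of $\phi$ into commuting semisimple and nilpotent parts, extends this to the stated polynomial inequality on all of $\gfrak$.

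Finally, let $z_0\in X$ be a point at which $u$ attains its maximum. Then $\Delta_\rho u(z_0)\leq 0$, so combining the Bochner inequality with the algebraic lemma at $z_0$ and using the hypothesis $\Vert\varpi_X(J,\Phi)\Vert_\infty\leq c$ to bound $\Lambda$ uniformly by some $\Lambda(c)$, I obtain a quadratic inequality in $u(z_0)$ of the form
$$\tfrac{1}{2C_1}\,u(z_0)^2 - 2\,u(z_0) - \tfrac{C_2}{2C_1}\,\Lambda(c) \;\leq\; 0,$$
which yields a bound $u(z_0)\leq 4K(c)$ for an explicit $K(c)>0$ depending only on $c$, the group $\sG$, the hermitian metric on $\gfrak$, and the surface normalization (not on the particular choice of $X$, $A_J$, $\Phi$, or $\tau$). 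Since $z_0$ is the maximum of $u$ on $X$, this proves the desired estimate $\sup_{z\in X}|\Phi|_{\rho,\tau}(z)\leq 4K(c)$.

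The main obstacle is the algebraic second step: obtaining a clean, explicit polynomial inequality relating $|\phi|^2$, $|[\phi,\phi^\ast]|^2$, and the invariant polynomials uniformly in $\phi\in\gfrak$. This is a purely finite-dimensional statement about the $\sK$-action on $\gfrak$, provable via Kempf--Ness / Kirwan--Ness stratification or the Jordan decomposition; no analytic input is required. The Bochner step and the maximum principle then assemble the bound essentially for free.
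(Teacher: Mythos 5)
Your argument is correct and is essentially the proof the paper relies on: the paper does not prove Lemma \ref{lem:uniform} itself but cites Simpson \cite[Lemma 2.7]{localsystems}, whose proof is exactly your scheme of a Bochner inequality for $|\Phi|^2_{\rho,\tau}$, the maximum principle, and an algebraic inequality bounding $|\phi|^4$ by $|[\phi,\phi^\ast]|^2$ plus the invariant polynomials. The one step you leave soft, the algebraic lemma, does not need \L{}ojasiewicz or the stratification: since $|[\phi,\phi^\ast]|^2$ and each $|p_i(\phi)|^{4/\deg p_i}$ are homogeneous of degree four in $\phi$, it suffices to check that their sum is bounded below on the unit sphere of $\gfrak$, which holds because $[\phi,\phi^\ast]=0$ forces $\phi$ to be semisimple, and a nonzero semisimple element cannot have all invariant polynomials vanishing.
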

The key point is that $K(c)$ is uniform independent of $X$. 
Lemma \ref{lem:uniform} is due to Simpson \cite[Lemma 2.7]{localsystems}
(see also the proof of \cite[Proposition 4.2.21]{Wentworth:16}).

\begin{proof}[Proof of Theorem \ref{thm:nilpotent-kahler}]
    For $c>0$, it is clear that
$$
    \Mbold^c(\G)=
    \left\{ [(J,\Phi)]\in \Mbold(\G) \mid \Vert \varpi_X(J,\Phi)\Vert_\infty< c \right\}
$$
    is an open neighborhood of the nilpotent cone. If we  take
    $s_0=\inf_{c>0} K(c)$, then for any $s>s_0$, there is $c>0$ such that
    $K(c)<s$, and so
    $\Mbold^c(\G)\subset\Ucal_s$. This completes the proof.
\end{proof}

\subsection{Equivariant minimal surfaces}

Any point $x=(J,\Phi)$ in $\bfM(\G)$ corresponds to an equivariant harmonic map $u_x$ 
from the universal cover of $X=\pi(x)$ to the symmetric space of $\sG$. The
following is classical.

\begin{proposition}
    For any point $x$ in $\bfM(\G)$, the holomorphic quadratic differential $\varpi^{(2)}(x)$ is
    proportional to the Hopf differential  of the associated equivariant
    harmonic map. 
\end{proposition}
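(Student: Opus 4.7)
The strategy is simply to unpack the definition of the Hopf differential in Higgs bundle terms. Fix a representative $(J,\Phi)\in\Ccal(P)$ of $x$ that solves the Hitchin equations for a reduction $P_\sK\subset P$ with associated Cartan involution $\tau$. As recalled in Section 4, the Hitchin--Simpson theorem yields an equivariant harmonic map $u_x:\widetilde X\to \G/\K$ whose differential, via the canonical identification of $T_{e\K}(\G/\K)$ with the $(-1)$-eigenspace $\mathfrak{p}=\ker(\tau+\Id)$, corresponds to the $\mathfrak{p}$-valued $1$-form
\[
\Psi=\tfrac{1}{2i}(\Phi-\Phi^\ast)=\tfrac{1}{2i}(\Phi+\tau(\Phi))\ .
\]
This is the content that makes the flat $\sG$-connection $\rH(x)=A_J+\Psi$ from \eqref{eq NAH map} the associated connection of the harmonic bundle.

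The first step is to express the pullback of the $\G$-invariant Riemannian metric on $\G/\K$ by $u_x$ in terms of $\Psi$. Since this metric is the restriction of $\kappa_\gfrak$ to $\mathfrak{p}$ (positive definite there), one obtains
\[
u_x^\ast g_{\G/\K}\ =\ \kappa_\gfrak\!\left(\Psi\otimes\Psi\right)\ ,
\]
viewed as a real symmetric $2$-tensor on $X=\pi(x)$. By definition, the Hopf differential of $u_x$ is the $(2,0)$-part of this tensor with respect to the complex structure $j$.

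The second step is the type decomposition. Since $\Phi$ is of type $(1,0)$ and $\Phi^\ast$ of type $(0,1)$ with respect to $j$, we have $\Psi^{(1,0)}=\tfrac{1}{2i}\Phi$ and $\Psi^{(0,1)}=-\tfrac{1}{2i}\Phi^\ast$, so that
\[
\bigl(u_x^\ast g_{\G/\K}\bigr)^{(2,0)}\ =\ \kappa_\gfrak\!\left(\tfrac{1}{2i}\Phi\otimes\tfrac{1}{2i}\Phi\right)\ =\ -\tfrac{1}{4}\,\kappa_\gfrak(\Phi\otimes\Phi)\ .
\]
Comparing with the normalization $\varpi^{(2)}(x)=\bigl(\pi(J),\tfrac{1}{4}\kappa_\gfrak(\Phi\otimes\Phi)\bigr)$ chosen in the previous subsection yields proportionality with constant $-1$, independent of the representative $(J,\Phi)$ and of the harmonic metric.

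There is no real obstacle here; the statement is a bookkeeping exercise once one has the identification of $du_x$ with $\Psi$. The only subtlety is sign and normalization conventions for $\kappa_\gfrak$ and the factor $1/2i$ relating Hitchin's $\Phi$ to the derivative of the harmonic map, which is why we have emphasized these in the calculation above.
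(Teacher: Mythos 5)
Your proof is correct: the paper states this proposition without proof (labelling it ``classical''), and the argument you give --- identifying $du_x$ with $\Psi=\tfrac{1}{2i}(\Phi-\Phi^\ast)$ via the nonabelian Hodge correspondence, extracting the $(2,0)$-part of $\kappa_\gfrak(\Psi\otimes\Psi)$, and matching it against the normalization $\varpi^{(2)}(x)=\tfrac14\kappa_\gfrak(\Phi\otimes\Phi)$ --- is exactly the standard computation being invoked. The type decomposition and the resulting proportionality constant $-1$ check out under the paper's conventions, and since only proportionality is claimed, the remaining ambiguity in normalizing the invariant metric on $\G/\K$ is harmless.
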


Recall that $u_x$ is a \emph{branched minimal immersion} if and only if its Hopf differential vanishes
(cf.\ \cite{GOR:73,SacksUhlenbeck:81}.
As a result, the set
\[\bfW(\G) = \{ (J,\Phi) \in \bfM(\G)\mid p_1(\Phi)=0\}\]
is \emph{the space of equivariant minimal immersions}.

\begin{proposition}\label{prop:SpaceOfMinSurf}
The set $\bfW(\G)$ is a complex analytic subvariety of $\pi: \bfM(\G) \to \bfT(\Sig)$.
    The restriction $\Wbold_0:=\Wbold(\G)\cap \Mbold_0$ is a smooth complex
    manifold.
\end{proposition}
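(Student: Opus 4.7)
The plan is straightforward once one has the Hitchin map and Proposition \ref{prop:dimCoker} in hand. For the first assertion, I would simply observe that $\Wbold(\G)$ is realized as the preimage of the zero section under the holomorphic map $\varpi^{(2)} : \bfM(\sG) \to \Bbold^{(2)}$ constructed in the previous subsection. Since $\pi_\Bbold : \Bbold^{(2)} \to \bfT(\Sig)$ is a holomorphic vector bundle, its zero section is a closed holomorphic submanifold, and hence $\Wbold(\G) = (\varpi^{(2)})^{-1}(0(\bfT(\Sig)))$ is a complex analytic subvariety of $\bfM(\sG)$. That is the entire content of the first statement.

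The smoothness of $\Wbold_0$ then reduces to a transversality check made effective by Proposition \ref{prop:dimCoker}. Fix $x \in \Wbold_0 = \Wbold(\G) \cap \Mbold_0$. By Corollary \ref{cor:theta} combined with Theorem \ref{MainThmB}, the characterization of the open stratum is precisely $\cK_x = \ker \Theta_x = 0$. Proposition \ref{prop:dimCoker} therefore gives $\coker(\rd_x \varpi^{(2)}) = 0$. Because $\varpi^{(2)}$ covers the identity on $\bfT(\Sig)$ via the projections $\pi$ and $\pi_\Bbold$, the horizontal part of $\rd_x \varpi^{(2)}$ automatically surjects onto $\T_{\pi(x)}\bfT(\Sig)$, and so this cokernel coincides with the cokernel of the restriction $\rd_x \varpi^{(2)}|_{\V_x \Mbold} : \V_x \Mbold \to \Bbold^{(2)}_{\pi(x)}$ (this is precisely the identification used in the proof of Proposition \ref{prop:dimCoker}). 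Its vanishing means $\varpi^{(2)}$ is transverse to the zero section of $\Bbold^{(2)}$ at $x$, and the holomorphic implicit function theorem shows that $\Wbold_0$ is a smooth complex submanifold of $\Mbold_0$ near $x$, of complex codimension $\dim_\C \Bbold^{(2)}_{\pi(x)} = 3g - 3$.

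Neither step is really hard: the main subtlety is the bookkeeping that turns "cokernel of the full differential" into "cokernel of the vertical differential," but this is precisely the content of Proposition \ref{prop:dimCoker} as already indicated in the paper. No further analytic input (such as ellipticity or slice constructions) is required beyond what has already been set up.
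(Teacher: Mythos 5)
Your proposal is correct and follows essentially the same route as the paper: the first claim is the preimage of the zero section under the holomorphic map $\varpi^{(2)}$, and the second follows from the vanishing of $\ker\Theta_x$ on $\Mbold_0$ together with Proposition \ref{prop:dimCoker}, which makes $\varpi^{(2)}$ a submersion there. The only cosmetic difference is that you invoke $\cK_x=\ker\Theta_x=0$ directly from Theorem \ref{MainThmB} and the definition of the stratum, whereas the paper routes through strict plurisubharmonicity of the energy via Theorem \ref{thm Non intro Thm C}; both rest on the same identification.
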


\begin{proof}
If $Z$ denotes the image of the zero section in $\T^*\bfT(\Sig)$, then $\bfW(\G)= (\varpi^{(2)})^{-1}(Z)$ and the first statement follows by holomorphicity of $\varpi^{(2)}$. 

For the second statement, since points in $\bfM_0$ correspond to points on
    which the energy is strictly plurisubharmonic (by Corollary \ref{Cor
    open stratum}) and that the complex Hessian of the energy is related to the norm of $\Theta$ by Theorem \ref{thm Non intro Thm C}, we get that $\Theta$ (seen as a section of $\Hom(\pi^*\T\bfT(\Sig),\V \bfM(\G)))$ is injective at every point of $\bfM_0$. The equality between $\dim \coker \rd_x \varpi^{(2)}$ and $\dim \ker \Theta$ of Proposition \ref{prop:dimCoker} then implies that $\varpi^{(2)}$ is a submersion on $\bfM_0$. The result follows.
\end{proof}

We now prove Theorem \ref{MainThmF} of the introduction.

\begin{theorem}\label{thm:NewThmF}
The restriction of $\omega_0$ to $\bfW_0$ is equal to the pullback of the Atiyah-Bott-Goldman symplectic form via (the restriction of) the nonabelian Hodge map.
\end{theorem}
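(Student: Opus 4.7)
The plan is to reduce the theorem to the vanishing of the $\I$-anti-invariant part of $\rH^*\omega_{ABG}$ on $\T\bfW_0$. By Definition \ref{def:omega0} together with Lemma \ref{lem I-inv closed} applied to \eqref{eq pullbackABG},
\[
(\rH^*\omega_{ABG}-\omega_0)(v_1,v_2) = -\tfrac{1}{4}\int_\Sigma \kappa_\gfrak\bigl(\theta_1\wedge\theta_2 + \theta_1^*\wedge\theta_2^*\bigr),
\]
with $v_a=(\mu_a,\beta_a,\theta_a)$ and $\theta_a=\psi_a+\tfrac{1}{2i}\Phi\mu_a$. It suffices to show this integral vanishes whenever $v_1,v_2\in \T_x\bfW_0$.

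The key step is to pin down the tangent space $\T_x\bfW_0$ via the fiber-component of $\rd_x\varpi^{(2)}$. Following Proposition \ref{prop:dimCoker}, this fiber-component sends $(\mu,\beta,\psi)$ to $\tfrac{1}{2}\kappa_\gfrak(\Phi,\psi)$. A short computation using $\dbar\Phi=0$, the vanishing of $\partial_{A_J}\Phi$ and $[\Phi,\Phi]$ on the surface, and the linearized holomorphicity equation from \eqref{eq hol eq of F} shows that $\kappa_\gfrak(\Phi,\psi)$ is $\dbar$-closed, so it defines an element of $H^0(X,K_X^2)$. Since Proposition \ref{prop:SpaceOfMinSurf} tells us $\varpi^{(2)}$ is a submersion on $\bfM_0$, a vector $v\in\T_x\bfM_0$ lies in $\T_x\bfW_0$ if and only if this cohomology class vanishes; as a zero element of $H^0(X,K_X^2)$ is the zero section, this forces the pointwise identity $\kappa_\gfrak(\Phi,\psi)\equiv 0$ on $\Sigma$.

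The final step is a type decomposition of the integrand. Because $\Sigma$ is two real-dimensional, wedges of two forms of equal type vanish identically. Using that $\psi_a$ has type $(1,0)$ and $\Phi\mu_a$ has type $(0,1)$, only cross terms survive:
\[
\kappa_\gfrak(\theta_1\wedge\theta_2) = \tfrac{1}{2i}\bigl(\kappa_\gfrak(\psi_1\wedge\Phi\mu_2)+\kappa_\gfrak(\Phi\mu_1\wedge\psi_2)\bigr).
\]
In any local holomorphic coordinate each cross term equals a pointwise multiple of $\kappa_\gfrak(\Phi,\psi_a)\,\mu_b\, dz\wedge d\bar z$, and hence vanishes by the previous step. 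The same analysis for $\kappa_\gfrak(\theta_1^*\wedge\theta_2^*)$---noting that $\theta_a^*$ has $(0,1)$-component $\psi_a^*$ and $(1,0)$-component $-\tfrac{1}{2i}(\Phi\mu_a)^*$---produces complex conjugates of the same pointwise pairings (using $\kappa_\gfrak(\tau X,\tau Y)=\overline{\kappa_\gfrak(X,Y)}$), and so likewise vanish. Integrating over $\Sigma$ then yields the identity $\rH^*\omega_{ABG}=\omega_0$ on $\T\bfW_0$.

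The main obstacle I foresee is the tangent-space identification in the second paragraph: establishing that $\kappa_\gfrak(\Phi,\psi)$ is genuinely $\dbar$-closed, so that membership of its class in $H^0(X,K_X^2)$ makes sense and vanishing there is equivalent to pointwise vanishing. This reduces, via the linearized holomorphicity equation and Ad-invariance of $\kappa_\gfrak$, to verifying that $\kappa_\gfrak(\Phi,\partial_{A_J}(\Phi\mu))$ equals $\partial\kappa_\gfrak(\Phi\wedge\Phi\mu)$, which vanishes on $\bfW$ because $\kappa_\gfrak(\Phi,\Phi)\equiv 0$ there. Once this is pinned down the remainder is routine bookkeeping in types.
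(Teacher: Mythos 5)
Your proposal is correct and follows essentially the same route as the paper: reduce to the $\I$-anti-invariant part $-\tfrac{1}{4}\int\kappa_\gfrak(\theta_1\wedge\theta_2+\theta_1^*\wedge\theta_2^*)$, use $\kappa_\gfrak(\Phi\otimes\Phi)=0$ on $\bfW$ to show that tangency to $\bfW_0$ forces the pointwise identity $\kappa_\gfrak(\Phi\otimes\psi)\equiv 0$, and observe that the type decomposition leaves only cross terms proportional to that quantity. The paper gets the tangent-space characterization directly from the defining equation $\kappa_\gfrak(\Phi\otimes\Phi)=0$ of $\bfW_0$ (so your $\dbar$-closedness worry is already absorbed into Proposition \ref{prop:dimCoker} and the fact that $\varpi^{(2)}$ lands in holomorphic differentials), but this is a cosmetic rather than substantive difference.
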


\begin{proof}
Since
$\bfW_0= \left\{(J,\Phi)\in \bfM_0~\vert~\kappa_\gfrak(\Phi\otimes\Phi)=0 \right\}$,
for $x=(J,\Phi) \in \bfW(\sG)$, we have
\[\T_x \bfW_0 = \big\{ (\mu,\theta,\beta) \in \T_x
    \bfM_0~\vert~\kappa_\gfrak(\Phi\otimes\theta)=0\big\}~. \]
Since $\kappa_\gfrak(\Phi\otimes \Phi)=0$, we have for any $(\mu,\theta,\beta) \in \T_x\bfM_0$ that
    $\kappa_\gfrak(\Phi\otimes\Phi\mu)=0$. Writing 
    $\theta=\psi+\frac{1}{2i}\Phi\mu$ with $\psi$ of type
    $(1,0)$, we get that $\kappa_\gfrak(\Phi\otimes \psi)=0$. In particular, for any Beltrami differential $\nu$ on $X=\pi(x)$,  we get we have $\kappa_\gfrak(\psi\wedge\Phi\nu)=0$ (this can be easily seen in local coordinates).
    
From equation (\ref{eq pullbackABG}), the $(2,0)$-part of $\rH^*\omega_{ABG}$ is given by
\[\left(\rH^*\omega_{ABG}\right)^{2,0}_x\big((\mu,\theta_1,\beta_1),(\mu,\theta_2,\beta_2)\big)
    = -\frac{1}{4}\int_\Sig \kappa_\gfrak(\theta_1\wedge \theta_2)  =
    -\frac{1}{8i}\int_\Sig \kappa_\gfrak(\psi_1\wedge\Phi\mu_2 - \psi_2\wedge\Phi\mu_1)~,\]
and so vanishes on $\T_x\bfW_0$. The result follows.
\end{proof}

Since $\bfM_0$ is pseudo-K\"ahler and not K\"ahler, the restriction of $h_0$ to $\bfW_0$ is not necessarily nondegenerate (equivalently, the restriction of $\rH$ to $\bfW_0$ is not necessarily a local symplectomorphism). Nevertheless, we can prove the result on the cyclic locus $\bfY_k\cap \bfW_0$ for $k>2$. This is the content of Theorem \ref{MainThmG} of the introduction.

\begin{theorem}\label{thm:NewThmG}
Let $k>2$ and $x$ be a point in the $k$-cyclic locus $\bfY_k\cap \bfW_0$ of $\bfW_0$. Then the restriction of the nonabelian Hodge map $\rH$ to $\bfW_0$ is a local symplectomorphism around $x$. In particular, $h_0$ restricts to a pseudo-K\"ahler structure on a neighborhood of $x$ in $\bfW_0$.
\end{theorem}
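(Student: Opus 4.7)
By Theorem \ref{thm:NewThmF}, on $\bfW_0$ we have $\omega_0=\rH^*\omega_{ABG}$, so nondegeneracy of $h_0$ on $\T_x\bfW_0$ is equivalent to $\rH|_{\bfW_0}$ being a local symplectomorphism at $x$. Since $\dim_\CBbb \bfW_0 = \dim_\CBbb \bfX(\sG)$ (from Proposition \ref{prop:SpaceOfMinSurf} and a dimension count) and $\omega_{ABG}$ is nondegenerate, it suffices to establish that $h_0|_{\T_x\bfW_0}$ is nondegenerate; this is what I will do, using the $k$-cyclic symmetry to decompose the tangent space.

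Choose a lift $(J,\Phi)$ of $x$ solving the Hitchin equations with compatible Cartan involution $\tau$. Using uniqueness of the harmonic metric at a cyclic Higgs bundle, take a $\tau$-compatible holomorphic gauge automorphism $g$ with $\Ad(g)\Phi = \zeta\Phi$, $\zeta=e^{2\pi i/k}$. This produces a $\Z/k\Z$-grading $P(\gfrak)=\bigoplus_j P(\gfrak)_j$ with $\tau(P(\gfrak)_j)=P(\gfrak)_{-j}$, $\Phi\in\Omega^{1,0}(P(\gfrak)_1)$, and $\Phi^*\in\Omega^{0,1}(P(\gfrak)_{-1})$. A direct calculation shows that $\rd m_\zeta$ acts on a semiharmonic representative $(\mu,\beta_j,\psi_j)$ by $(1,\zeta^{-j},\zeta^{1-j})$, giving an eigenspace decomposition $\T_x\bfM_0=\bigoplus_{l\in\Z/k\Z}E_l$. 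Since $h_0$ is gauge-invariant and $\T_x\bfW_0$ is $\CBbb^*$-invariant, $h_0$ is block-diagonal with respect to this decomposition, and $\T_x\bfW_0$ splits compatibly.

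The crucial observation is that the defining condition $\kappa_\gfrak(\Phi\otimes\psi)=0$ for $\T_x\bfW_0$, by $\Ad$-invariance of $\kappa_\gfrak$, reduces to $\kappa_\gfrak(\Phi\otimes\psi_{-1})=0$ and so only constrains the component $\psi_{-1}\in E_2$. For $k>2$, one has $E_2\neq E_0$; in particular, any horizontal vector $w_\mu\in \cH_x\subset E_0$ has its $\psi$-component in $P(\gfrak)_1$ (forced by $\rd m_\zeta$-invariance), and $\kappa_\gfrak(\Phi\otimes \psi_1)$ lies in the weight-$2$ part of $\CBbb$, which vanishes because $k>2$. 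Hence $\cH_x\subset \T_x\bfW_0$, and more generally $E_l\subset \T_x\bfW_0$ for every $l\neq 2$, while $E_2\cap \T_x\bfW_0$ is a codimension-$(3g-3)$ subspace of $E_2$.

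To conclude, for every $l\neq 0$ the eigenspace $E_l$ consists of purely vertical vectors (no $\mu$-component), so by Lemma \ref{lem omega_0 formula} the form $h_0|_{E_l}$ reduces to $\langle\beta,\beta\rangle+\langle\psi,\psi\rangle$ and is positive definite; its restriction to $E_l\cap \T_x\bfW_0$ is therefore still nondegenerate. On $E_0$, the form $h_0|_{E_0}$ splits $h_0$-orthogonally into a positive definite piece on $E_0\cap \V_x$ and a negative definite piece on $\cH_x$ (by Theorem \ref{thm Non intro Thm B} together with $x\in \bfM_0$), hence is nondegenerate of signature $(\dim(E_0\cap\V_x),3g-3)$. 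Assembling the blocks yields nondegeneracy of $h_0|_{\T_x\bfW_0}$. The main obstacle will be to justify the $\tau$-compatibility of $g$ (so that the grading interacts cleanly with the hermitian structure), which ultimately rests on uniqueness of the harmonic metric on a cyclic Higgs bundle.
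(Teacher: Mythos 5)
Your proposal is correct and, despite the extra machinery, follows essentially the same route as the paper's proof: the two key facts in both arguments are that the quadratic invariant $\kappa_\gfrak(\Phi\otimes\psi)$ defining $\T_x\bfW_0$ sits in weight $2\not\equiv 0 \bmod k$ for the cyclic symmetry while the horizontal space $\cH_x$ sits in weight $0$ (this is Proposition \ref{prop:CyclicSubbundle}, which is what your eigenspace computation re-derives), so $\cH_x\subset \T_x\bfW_0$ and $\T_x\bfW_0=\V_x\bfW_0\oplus\cH_x$ splits $h_0$-orthogonally into a positive-definite vertical piece and the negative-definite $\cH_x$ (nondegenerate since $x\in\bfM_0$). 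The finer $\Z/k\Z$-block decomposition and the $\tau$-compatibility of $g$ that you flag are not needed in the paper's version, which instead invokes the $\C^\ast$-equivariance of the Hitchin map $\varpi$ and Proposition \ref{prop:CyclicSubbundle} directly, but your blocks $E_l$ with $l\neq 0$ are all vertical and positive definite, so the assembly gives the same conclusion.
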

\begin{remark}\label{remark:SlodowyCyclic}
	Recall from Example \ref{example sl2 Slodowy}, that the points of any Slodowy slice $\cS_f$ lie in $\bfM_0.$ 
	Hence, Theorem \ref{thm:NewThmG} applies to all $k$-cyclic Higgs bundle, with $k\geq 3,$ which lie in some Slodowy slice. For example, all $k$-cyclic Higgs bundles in Hitchin components for $k\geq 3.$
\end{remark}
\begin{proof}

Denote by $\V_x \bfY_k$ and $\V_x \bfW_0$ the intersection of the vertical space $\V_x \bfM$ with $\T_x \bfY_k$ and $\T_x\bfW_0$ respectively. The restriction of $h_0$ to $\bfW_0$ and $\bfY_k$ yields orthogonal decompositions
\[\T_x \bfW_0 = \V_x \bfW_0 \oplus \left(\V_x \bfW_0 \right)^\perp~\ \text{ and }~\ \T_x \bfW_0 = \V_x \bfY_k \oplus \left(\V_x \bfY_k \right)^\perp~.\]

The joint Hitchin map $\varpi$ is $\C^*$-equivariant, where $\C^*$ acts with weight $k$ on the bundle $\Bbold^{(k)}$ of holomorphic $k$-differential. In particular, if $\alpha$ is not a multiple of $k$, then $\varpi^{(\alpha)}(x)=0$ for any $x\in \bfY_k$. If follows that $\bfY_k\cap \bfM_0$ is a submanifold of $\bfW_0$, we get that $\V_x \bfY_k \subset \V_x\bfW_0$ and so since the dimension coincide, we get $\left(\V_x \bfW_0 \right)^\perp=\left(\V_x \bfY_k \right)^\perp$. Proposition \ref{prop:CyclicSubbundle} the implies
\[\cH_x = \left(\V_x\bfM_0\right)^\perp = \left(\V_x \bfW_0 \right)^\perp=\left(\V_x \bfY_k \right)^\perp~.\]
But by definition, $x\in \bfM_0$ if and only if $h_0$ is non-degenerate on $\cH_x$. The result follows.
\end{proof}

\subsection{Cyclic Higgs bundles}\label{ss:CyclicHB}
We now describe many examples  of cyclic Higgs bundles that appeared in the literature. 
One can show that each of these examples are in some Slodowy slice for the relevant complex group. 
As a result, they are all in $\bfW_0$, see Remark \ref{remark:SlodowyCyclic} (alternatively, since these examples are described in terms of holomorphic vector bundles, and one can also apply Proposition \ref{lem suff cond open strat} and Remark \ref{rem:cyclicVBopen}). Theorem \ref{thm:NewThmG} then gives the following result.

\begin{theorem}\label{thm:FamilyOfCyclicHB}
Let $\bfZ$ be one of the complex submanifolds of $\bfY_k\cap\bfM_0$ described in the following list. Then the nonabelian Hodge map defines a symplectic immersion to the character variety. In particular, real part of $h_0$ restricts to a nondegenerate pseudo-K\"ahler metric on $\bfZ$.
\end{theorem}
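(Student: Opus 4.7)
The plan is to reduce Theorem \ref{thm:FamilyOfCyclicHB} to a direct application of Theorem \ref{thm:NewThmG}. For each family $\bfZ$ in the list I will verify three properties: (i) $\bfZ \subseteq \bfY_k$ for some integer $k \geq 3$; (ii) $\bfZ \subseteq \bfM_0$; and (iii) $\bfZ$ is a holomorphic subbundle of $\pi : \bfM(\sG) \to \bfT(\Sig)$, so in particular the horizontal distribution $\cH$ is tangent to $\bfZ$ along $\bfZ$.

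Property (i) should be built into the definition of each family, since each is constructed from an explicit $\Z_k$-weight decomposition of the underlying holomorphic bundle with $\Phi$ respecting the grading. Property (iii) will follow, in each case, from the observation that the holomorphic conditions cutting $\bfZ$ out of $\bfY_k$ are fiberwise conditions on the underlying bundle/Higgs pair, combined with Proposition \ref{prop:CyclicSubbundle} applied to $\bfY_k \supseteq \bfZ$. The main obstacle will be property (ii), which must be checked case by case; the plan for each example is either to realize $\bfZ$ as a subfamily of a Slodowy slice and invoke Example \ref{example sl2 Slodowy} together with Remark \ref{remark:SlodowyCyclic}, or to identify a nowhere vanishing component $\Phi_{ab} : E_a \xrightarrow{\;\cong\;} E_b \otimes K$ of the Higgs field between consecutive weight spaces of the cyclic decomposition and invoke Remark \ref{rem:cyclicVBopen} (equivalently Proposition \ref{lem suff cond open strat}).

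Once (i)--(iii) are in place, the remainder of the argument will be formal. Since $k \geq 3$, the $\CBbb^\ast$-equivariance of the Hitchin map $\varpi$ forces $\varpi^{(2)}$ to vanish identically on $\bfY_k$, and hence on $\bfZ$; combined with (ii) this gives $\bfZ \subseteq \bfY_k \cap \bfW_0$. Theorem \ref{thm:NewThmG} then implies that the restriction of $\rH$ to $\bfW_0$ is a local symplectomorphism near each point of $\bfZ$, so $\rH|_\bfZ$ is automatically a holomorphic immersion into $\bfX(\sG)$.

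The remaining point is nondegeneracy of $\omega_0|_\bfZ$. Using (iii), at every $x \in \bfZ$ one has the $h_0$-orthogonal splitting
\[
\T_x \bfZ \;=\; \V_x \bfZ \,\oplus\, \cH_x,
\]
where $\V_x \bfZ := \T_x \bfZ \cap \V_x \bfM$. Since $\V_x \bfZ \subseteq \V_x \bfM$, the restriction of $h_0$ to $\V_x \bfZ$ is positive definite; and since $x \in \bfM_0$, Corollary \ref{Cor open stratum} guarantees that $h_0$ is nondegenerate on $\cH_x$. Thus $h_0|_{\T_x \bfZ}$ is nondegenerate, and invoking Theorem \ref{thm:NewThmF} to identify $\omega_0|_{\bfW_0}$ with $\rH^\ast \omega_{ABG}$ shows that $\rH|_\bfZ$ is a symplectic immersion and that the real part of $h_0$ descends to a nondegenerate pseudo-K\"ahler metric on $\bfZ$, as claimed.
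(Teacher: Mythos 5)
Your proposal is correct and follows essentially the same route as the paper: place each family in $\bfM_0$ via Slodowy slices (Example \ref{example sl2 Slodowy}, Remark \ref{remark:SlodowyCyclic}) or via a nowhere-vanishing component of $\Phi$ (Proposition \ref{lem suff cond open strat}, Remark \ref{rem:cyclicVBopen}), use $\CBbb^\ast$-equivariance of the Hitchin map to get $\bfY_k\subset\bfW(\sG)$ for $k\geq 3$, and then invoke Theorem \ref{thm:NewThmG}. You are in fact slightly more explicit than the paper on the one point it glosses over — why nondegeneracy of $h_0$ on $\T_x\bfW_0$ descends to the submanifold $\T_x\bfZ$, which you correctly reduce to $\cH_x\subseteq\T_x\bfZ$ plus the orthogonal splitting $\T_x\bfZ=\V_x\bfZ\oplus\cH_x$ — though note two small wording issues: being a holomorphic subbundle of $\pi$ does not by itself force $\cH$ to be tangent to $\bfZ$ (one really needs $\bfZ$ to be a union of components of $\bfY_k\cap\bfM_0$, or an analogous argument), and $\rH$ is only real-analytic, not holomorphic, so the immersion you obtain is not a holomorphic one.
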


\begin{enumerate}
\item \textbf{Hitchin cyclic.} The first extensive study of cyclic Higgs bundles was carried out by Baraglia in his thesis \cite{g2geometry}, and followed by Labourie in \cite{cyclicSurfacesRank2}. These works focused on the $k$-cyclic Higgs bundles associated to Hitchin components of the split real form of $\sG,$ when $k-1$ is the length of the longest root in $\gfrak$. In this case, Theorem \ref{thm:FamilyOfCyclicHB} recovers Labourie's result that $\rH$ restricts to an immersion \cite[Theorem 1.5.1]{cyclicSurfacesRank2}.

\item \textbf{Maximal representations in rank $2$.} This  second example was considered by Tholozan and the first two authors in \cite{CTTmaxreps}. The $4$-cyclic $\sSO_{n+2}\C$-Higgs bundles which give rise to maximal representation into $\sSO_{2,n}$.
Such cyclic Higgs bundles are a Gauss lift of maximal surfaces in the pseudo-hyperbolic space $\mathbf{H}^{2,n-1}$. By Theorem \ref{thm:FamilyOfCyclicHB}, the corresponding component $\bfZ \subset \bfW(\sG)$ is locally symplectomorphic to the components of maximal representations in $\bfX(\sG)$.

\item \textbf{Alternating holomorphic curves in $\mathbf{H}^{4,2}$.} In \cite{CollierToulisse}, the first two authors described a one-to-one correspondence between some $6$-cyclic Higgs bundles for the exceptional Lie group $\sG_2'$ and some alternate holomorphic curves into the almost-complex pseudo-hyperbolic space $\mathbf{H}^{4,2}$. The corresponding Higgs bundles satisfy the condition of  Proposition \ref{lem suff cond open strat}, hence Theorem \ref{thm:FamilyOfCyclicHB} provides a stronger result than \cite[Theorem C]{CollierToulisse}.

\item \textbf{A-surfaces in $\bf H^{n,n-1}$.} In \cite{NieASurfaces}, Nie describes a correspondence between a certain classes of $2n$-cyclic Higgs bundles for $\SO_{n,n+1}$ and  alternating maximal surfaces in $\mathbf H^{n,n}$ or $\mathbf H^{n-1,n+1}$, depending on the parity of $n$. These  Higgs bundles again satisfy the hypothesis of Proposition \ref{lem suff cond open strat}, and so Theorem \ref{thm:FamilyOfCyclicHB} recovers Nie's infinitesimal rigidity \cite[Theorem C]{NieASurfaces}.

\item \textbf{Non-maximal in $\sSO_{2,3}$.} A special class of $4$-cyclic Higgs bundles for $\sSO_{2,3}$ was considered in \cite{CTTmaxreps} in relation to geometric structures. These Higgs bundles were shown to define Anosov representations by Filip \cite{filip} and Zhang \cite{Zhang:2024}. These results can also be generalized to $\sSO_{2,n}$, see \cite[Remark 5.7]{Zhang:2024}. Theorem \ref{thm:FamilyOfCyclicHB} applies in this situation and yields a new result on this class of cyclic Higgs bundles. 

\item \textbf{Non-Hitchin in $\SL_3(\R)$.} In a recent paper
    \cite{BronsteinDavalo}, Bronstein and Davalo considered a special class
        of 3-cyclic Higgs bundles for $\sG=\SL_3(\R)$ and prove the
        corresponding representations are Anosov. By Theorem
        \ref{thm:FamilyOfCyclicHB}, these cyclic Higgs bundles define
        symplectic immersions into the character variety.

\item \textbf{Non-Hitchin in $\SL_{2n+1}(\R)$.} In \cite{TamburelliRungi},
    Tamburelli and Rungi gives a correspondence between some
        $(2n+1)$-cyclic Higgs bundles for $\SL_{2n+1}(\R)$ and some special
        surfaces in the para-complex hyperbolic space. Theorem
        \ref{thm:FamilyOfCyclicHB} also applies in that case and defines
        symplectic immersions into the character variety.
\end{enumerate}

Finally, as a corollary of Theorem \ref{thm:FamilyOfCyclicHB}, we obtain Theorem \ref{MainThmH} of the introduction.

\begin{corollary}\label{thm:NewThmH}
Let $\bfY\subset\bfX(\sG)$ either be the submanifold Hitchin representations into the split real form when $\mathrm{rk}(\sG)=2$ or the submanifold of maximal $\sSO_{2,n}$-representations when  $\sG=\sSO_{2+n}\C$. 
Let $\bfZ\subset\bfW_0$ be the submanifold of minimal surfaces equivariant with respect to the points of $\bfY$. 
Then  
\[\rH:\bfZ\lra\bfY\] is a global symplectomorphism.
In particular,  $h_0$ defines a mapping class group invariant pseudo-K\"ahler metric on $\bfY$ which has signature $\left((\dim\G-3)(g-1),3g-3\right)$ and is compatible with the Atiyah-Bott-Goldman form.
\end{corollary}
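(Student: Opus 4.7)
The plan is to combine Theorem \ref{thm:FamilyOfCyclicHB}, which establishes $\rH|_{\bfZ}$ as a symplectic immersion, with Labourie-type existence and uniqueness of equivariant minimal surfaces (known in both families under consideration) to upgrade this to a global symplectomorphism, and then read off the signature from Theorem \ref{thm:NewThmB}.

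First I would observe, as noted in the text immediately preceding Theorem \ref{MainThmH}, that for $\rho$ either a Hitchin representation with $\rank(\sG)=2$ or a maximal $\sSO_{2,n}$-representation, any Higgs bundle in $\rH^{-1}(\rho)\cap \bfW(\sG)$ is automatically $4$-cyclic. Hence $\bfZ$ lies in the $4$-cyclic locus, and since $k=4\ge 3$, Theorem \ref{thm:FamilyOfCyclicHB} applies and immediately yields item (1): $\omega_0$ restricts to a nondegenerate form on $\bfZ$ and $\rH|_{\bfZ}$ is a symplectic immersion into $\bfX(\sG)$ (the pullback of $\omega_{ABG}$ is $\omega_0|_{\bfZ}$ by Theorem \ref{thm:NewThmF}).

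Next I would upgrade this to a bijection. By construction $\rH(\bfZ)\subseteq \bfY$. Existence of a $\rho$-equivariant minimal immersion for each $\rho\in \bfY$ is classical in these cases: for rank-2 Hitchin representations it goes back to \cite{AnosovFlowsLabourie,cyclicSurfacesRank2}, and for maximal $\sSO_{2,n}$-representations it is proved in \cite{CTTmaxreps}. Uniqueness---Labourie's conjecture---is the content of \cite{cyclicSurfacesRank2} in the rank-2 Hitchin case (with earlier contributions \cite{flatmetriccubicdiff,CubicDifferentialsRP2,schoen} for low-rank subcases) and again of \cite{CTTmaxreps} for the maximal $\sSO_{2,n}$ case. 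Together these show $\rH|_{\bfZ}:\bfZ\to \bfY$ is a bijection. Since $\rH|_{\bfZ}$ is a symplectic immersion between manifolds of equal (real) dimension, it is a local diffeomorphism, hence a global diffeomorphism, and therefore a global symplectomorphism. Transporting $(h_0,\I)|_{\bfZ}$ via $(\rH|_{\bfZ})^{-1}$ endows $\bfY$ with a pseudo-K\"ahler metric compatible with $\omega_{ABG}$; mapping class group invariance follows from the equivariance of $\rH$, $h_0$ and $\omega_{ABG}$ under $\sMod(\Sigma)$.

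For the signature, I would apply Theorem \ref{thm:NewThmB} together with Proposition \ref{prop:U(1)ActionPreservesHorizontal} and Proposition \ref{prop:CyclicSubbundle}. The latter imply that the horizontal distribution $\cH$ is tangent to the $4$-cyclic subbundle $\bfY_4$, and a short check using the $\ZBbb_4$-grading of $\Phi$ (weight $1$) and of $\kappa_\gfrak$ (weight $0$) shows that the horizontal lift's tangent data $(\mu,\beta,\psi)$ satisfies $\kappa_\gfrak(\Phi\otimes\psi)=0$, so that $\cH$ is tangent to $\bfW_0$ along $\bfZ$. Thus $\cH|_{\bfZ}\subset \T\bfZ$ contributes exactly $3g-3$ complex dimensions on which $h_0$ is negative definite (by Theorem \ref{thm:NewThmB} and the fact that $\bfZ\subset \bfM_0$), while the $h_0$-orthogonal complement of $\cH|_{\bfZ}$ in $\T\bfZ$ lies in the vertical bundle $\V\bfM$ on which $h_0$ is positive definite. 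A dimension count using $\dim_\RBbb\bfY=2(g-1)\dim\sG$ in both families gives $\dim_\CBbb\bfZ=(g-1)\dim\sG$, so the vertical contribution has complex dimension $(\dim\sG-3)(g-1)$, yielding the stated signature $\bigl((\dim\sG-3)(g-1),\,3g-3\bigr)$.

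The main obstacle in this plan is the subtle step verifying that $\cH$ is tangent to $\bfZ$, i.e., that a horizontal deformation of a $4$-cyclic Higgs bundle preserves the minimal-surface condition. I expect this to be a clean weight-counting argument using Proposition \ref{prop:U(1)ActionPreservesHorizontal} and the description of $\T\bfW_0$ from the proof of Theorem \ref{thm:NewThmF}, but it must be carried out carefully case by case (rank-2 Hitchin for $\sSL_3\RBbb$, $\Sp_4\RBbb$, $\sG_2'$, and maximal $\sSO_{2,n}$) to confirm that the vertical complex dimension of $\bfZ$ over $X$ indeed equals $(\dim\sG-3)(g-1)$. All other ingredients are immediate consequences of results already established in the paper or cited verbatim from the literature.
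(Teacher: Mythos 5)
Your proposal is correct and follows essentially the same route as the paper: apply Theorem \ref{thm:FamilyOfCyclicHB} to the cyclic locus to get a local symplectomorphism (the paper notes the cyclic order is $3$, $4$ or $6$ according to whether $\gfrak$ is $\slfrak_3\C$, $\sofrak_{n+2}\C$ or $\gfrak_2$, but in all cases $k\geq 3$ as you need), then invoke Labourie's existence–uniqueness theorem \cite[Theorem 8.1.1]{cyclicSurfacesRank2} (and \cite{CTTmaxreps} for the maximal $\sSO_{2,n}$ case) to upgrade the local symplectomorphism to a global one via a dimension count. The tangency of $\cH$ to $\bfZ$ that you flag as the main obstacle is already established uniformly in the proof of Theorem \ref{thm:NewThmG} via the $\C^*$-equivariance of the Hitchin map, so no case-by-case check is needed.
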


\begin{proof}
Let $x\in \bfM(\G)$ be such that $\rH(x)$ is a maximal representation into $\SO_{2,n}$ or a Hitchin representation in rank $2$ (hence $x\in \bfM_0$).  In that case, the condition $x\in \bfW(\sG)$ is equivalent to being cyclic of order $3,$ $4$ or $6$ when $\gfrak$ is $ \slfrak_3\C$, $\sofrak_{n+2}\C$ or $\gfrak_2$, respectively. 
Such cyclic Higgs bundles are described in items $(1)$ and $(2)$ above, and so we can apply Theorem \ref{thm:FamilyOfCyclicHB}.

Since the dimensions of $\bfW_0$ and $\bfX(\sG)$ coincide, $\rH$ restricts
    to a local symplectomorphism from $\bfZ$ to $\bfY$. The proof that
    $\rH$ is a global symplectomorphism follows from applying \cite[Theorem 8.1.1]{cyclicSurfacesRank2}.
\end{proof}

\appendix \section{K\"ahler identities and inner products}
\label{sec:app-kahler}
 
Here, we summarize the conventions for hermitian geometry on Riemann
surfaces that we use throughout the paper.
Let $X=(\Sigma,j)$ be a Riemann surface with its hyperbolic conformal metric,
expressed as 
$ds^2=\rho(dx^2+dy^2)$ in local conformal coordinates $z=x+iy$.
We shall refer to the metric simply as $\rho$.
The area (K\"ahler) form is
$$
\nu_{\rho}:=\tfrac{i}{2}\rho\, dz\wedge d\bar z
$$
By definition, the pointwise norms are given by:
$$
|\partial_x|_\rho^2= |\partial_y|_\rho^2= \rho\quad ,\quad
|dx|_\rho^2= |dy|_\rho^2=\rho^{-1}
$$
Hence, $\star dx=dy$, $\star dy=-dx$, so that
$
dx\wedge \star dx=|dx|_\rho^2 \rho\, dx\wedge dy
$,
etc.
Now  we have  $dz=dx+idy$, and $\star dz=dy-idx=-idz$;
$\star d\bar z=+id\bar z$.
This gives us:
$$
\bar\star dz=id\bar z\quad ,\quad \bar\star d\bar z=-idz\ .
$$
Notice that $\bar\star^2=-1$. We also have, of course,
$\bar\star\nu_\rho=1$, $\bar\star
1=\nu_\rho$.
Under the real isomorphism of the tangent space with Riemannian metric and
$(1,0)$ vectors with the induced hermitian metric,
$\partial_x\mapsto \partial_z$, $\partial_y\mapsto i\partial_z$,
we have $|\partial_z|_\rho^2=\rho/2$.
Then
$$
dz\wedge \bar\star dz= i dz\wedge d\bar z =
|dz|_\rho^2\, \tfrac{i}{2}\rho\, dz\wedge d\bar z=|dz|_\rho^2 \, \nu_\rho
$$

We define the $L^2$-hermitian inner product on the space  $\Omega^{p,q}(X)$
of complex valued $(p,q)$-forms by:
\begin{equation} \label{eqn:form-inner-product}
\langle \alpha_1,\alpha_2\rangle  :=\int_X \alpha_1\wedge\bar\star\alpha_2
\end{equation}
More explicitly, we have
\begin{align}
\langle \psi_1,\psi_2\rangle  =i\int_X \psi_1\wedge\overline\psi_2
    \quad ,\quad \psi_i\in \Omega^{1,0}(X)\label{eqn:10-inner-product}\\
\langle \beta_1,\beta_2\rangle  =-i\int_X \beta_1\wedge\overline\beta_2
    \quad ,\quad \beta_i\in \Omega^{0,1}(X)\label{eqn:01-inner-product}
\end{align}
It follows immediately from \eqref{eqn:form-inner-product} 
that the formal adjoint of $\dbar$ with respect to the pairing
\eqref{eqn:01-inner-product} is
$\dbar^\ast=-\bar\star\dbar \bar\star$.
Define $\Lambda :\Omega^2(X)\to \Omega^0(X)$ by extending
$\Lambda(\nu_\rho)=1$ complex linearly. Then we have
 the K\"ahler identities:
\begin{equation} \label{eqn:kahler-identities}
\dbar^\ast=-i[\Lambda, \partial] \quad ,\quad
\partial^\ast=-i[\Lambda, \partial]
\end{equation}

The hermitian structure on sections of the holomorphic tangent bundle
$\Omega^0(TX)$ is given by:
\begin{equation} \label{eqn:vector-inner-product}
    \langle v_1,v_2\rangle =\int_X\langle v_1, v_2\rangle_\rho\,
    \nu_\rho
    \quad ,\quad v_i\in \Omega^{0}(TX)
\end{equation}
This is dual to the inner product on $(1,0)$-forms. Indeed,
writing \eqref{eqn:10-inner-product} locally, we see that 
$$
\langle \psi_1,\psi_2\rangle  =\int_X
\langle\psi_1,\psi_2\rangle_\rho
\,\nu_\rho
    \quad ,\quad \psi_i\in \Omega^{1,0}(X)
$$

Let $q_k$ be a $k$-differential on $X$, i.e.\ a section of $K_X^{\otimes k}$. 
In a  local conformal coordinate $z$ on $X$, we write $q_k=q_k(z)dz^k$ and
 define the pointwise norm:
$$
|q_k|^2_\rho(z):= |q_k(z)|^2 (\rho(z)/2)^{-k}
$$
This is independent of the choice of conformal coordinate.

Finally, given a Cartan involution $\tau$, we define a pointwise norm on
Higgs fields as follows. In local conformal coordinates, write
$\Phi=\Phi(z)dz$. Then
$$
|\Phi|_{\rho,\tau}^2(z) := \kappa_\gfrak(\Phi(z),\Phi^\ast(z))(\rho(z)/2)^{-1}
\ .
$$

\section{Link with algebraic geometry}  \label{sec:app-alg-geom}
For the purposes of this paper, we have not needed the fact that
$\Mbold(\G)$ is locally isomorphic to
the analytification of the moduli space  of stable relative $\G$-Higgs bundles
constructed, for example, in \cite{Simpson:94b} and 
\cite{Faltings:93}. 
This assertion  can be shown by producing local universal families on
$\Mbold(\G)$ and then using the modular properties of the algebraic space.
Such an argument is by now standard, but here we omit the details.  
It will be useful, however, to show that the deformation theory agrees, and
that is the purpose of this section.

Fix a smooth complex  projective curve $X$ and a complex reductive group
$\G$. 
Let $\pi:\Pcal\to X$ be an algebraic principal $\G$-bundle.
Recall that the \emph{Atiyah algebra} $\At(\Pcal)$  of $\Pcal$ 
is the  sheaf (coherent analytic on $X$) of invariant holomorphic vector fields on $\Pcal$. 
We have a tautological sequence:
\begin{equation} \label{eqn:atiyah}
    0\lra \ad(\Pcal)\lra \At(\Pcal)\xrightarrow{\hspace*{.3cm}\sigma\quad}T_X\lra 0
\end{equation}
where $\sigma$ is projection to $X$.
We have a homomorphism 
$$\At(\Pcal)\lra \At(\ad(\Pcal)) ~:~ V\longmapsto D_V$$ where $\At(\ad(\Pcal))$ 
is identified with first order differential operators on $\ad(\Pcal)$ with
scalar symbol:
 $\sigma(D_V)=\sigma(V)$.  Explicitly, given $V\in\At(\Pcal)$, and regarding a
 local section $s$ of $\ad(\Pcal)$ over $U\subset X$ as an equivariant map
 $s:\pi^{-1}(U)\to \gfrak$, we define $D_V(s):= V(s)$.  
We also define a  homomorphism 
$$
\At(\Pcal)\lra \At(\ad(\Pcal)\otimes K_X)
$$
($K_X=T^\ast_X$) 
as follows.  For local sections $s$ of $\ad(\Pcal)$  and $\alpha$ of
$K_X$, let
$$
D_V(s\otimes \alpha):= D_V(s)\otimes\alpha+s\otimes d\alpha(\sigma(V))
$$
One checks that this is a well-defined differential operator with
$\sigma(D_V)=\sigma(V)$. 
With this understood, we have the following
\begin{lemma}[cf.\ Welters \cite{Welters:83}] \label{lem:welters}
    Let $(\Pcal,\Phi)$ be a $\G$-Higgs bundle on $X$.
Consider the complex
$$
  \Bscr^\bullet:  \At(\Pcal)\xrightarrow{\hspace*{.3cm} \ad_\Phi\quad} \ad(\Pcal)\otimes
    K_X
$$
    where $\ad_\Phi(V):=[\Phi,D_V]$.
    Then the hypercohomology $\HBbb^1(\Bscr^\bullet)$ parametrizes first
    order joint deformations of $(X, \Pcal,\Phi)$.
\end{lemma}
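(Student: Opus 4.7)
The plan is to compute $\HBbb^1(\Bscr^\bullet)$ in terms of \v{C}ech hypercohomology with respect to an affine open cover $\{U_i\}$ of $X$ over which $\Pcal$ is trivial, and to match the cocycle data directly with first-order joint deformations of the triple $(X,\Pcal,\Phi)$ over $\Spec \CBbb[\epsilon]/(\epsilon^2)$. A class in $\HBbb^1(\Bscr^\bullet)$ is represented by a pair $(\{V_{ij}\},\{\phi_i\})$ where $V_{ij}\in \At(\Pcal)(U_{ij})$ is a \v{C}ech $1$-cocycle and $\phi_i\in (\ad(\Pcal)\otimes K_X)(U_i)$ satisfies $\phi_j-\phi_i=\ad_\Phi(V_{ij})$; two such pairs are equivalent when they differ by $(\delta\{W_i\}, \{-\ad_\Phi(W_i)\})$ for some $W_i\in \At(\Pcal)(U_i)$.

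First, I would use the Atiyah sequence \eqref{eqn:atiyah}: applying $\sigma$ to a \v{C}ech cocycle $V_{ij}$ gives a class in $H^1(X,T_X)$, which is a first-order deformation of $X$; and the lift to $\At(\Pcal)$ encodes the extension of the bundle $\Pcal$ to this deformed curve. Concretely, using $V_{ij}$ one builds the infinitesimally deformed gluings of the trivializations of $\Pcal$ compatible with the deformed transitions of $X$, producing a joint deformation $(X_\epsilon, \Pcal_\epsilon)$. This is the standard interpretation of $H^1(X,\At(\Pcal))$, which I would recall from Atiyah's original paper and from Welters \cite{Welters:83}.

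Next, given such $(\{V_{ij}\})$ and the local fields $\phi_i\in (\ad(\Pcal)\otimes K_X)(U_i)$, define a Higgs field on $\Pcal_\epsilon$ by $\Phi_i^\epsilon:=\Phi|_{U_i}+\epsilon\,\phi_i$. The condition that these local Higgs fields glue on overlaps to a well-defined section of $\ad(\Pcal_\epsilon)\otimes K_{X_\epsilon}$ unravels to exactly $\phi_j-\phi_i=[\Phi,D_{V_{ij}}]=\ad_\Phi(V_{ij})$, using that $D_V$ is the derivation on $\ad(\Pcal)\otimes K_X$ canonically associated to $V$ in \eqref{eqn:atiyah}. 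Coboundary data $(\delta W_i,-\ad_\Phi(W_i))$ corresponds to a gauge change by the automorphism $\exp(\epsilon W_i)$ of $(\Pcal_\epsilon,\Phi_\epsilon)$ on $U_i$, hence to a trivial joint deformation. Conversely, any first-order joint deformation, expressed in terms of trivializations of $\Pcal$ over $U_i$ and coordinates on $X$ compatible with those trivializations, produces such a pair $(\{V_{ij}\},\{\phi_i\})$ satisfying the cocycle identities.

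The main obstacle, really a bookkeeping step, is to give a clean description of the operator $D_V$ on $\ad(\Pcal)\otimes K_X$ and to verify that the compatibility of the local Higgs fields under the deformed gluings is exactly $\ad_\Phi(V_{ij})$ rather than some twisted or corrected version. This is where one uses that the symbol of $D_V$ on $K_X$ is $\sigma(V)\in T_X$, so that the deformation of $K_X$ induced by the deformation of $X$ is absorbed correctly. Once this identification is in place, the bijection between $\HBbb^1(\Bscr^\bullet)$ and the set of isomorphism classes of first-order joint deformations is canonical, and it is straightforward to verify functoriality and compatibility with the Dolbeault description used in the body of the paper (see Appendix on the comparison with $(\B^\bullet,\delta_\B^\bullet)$).
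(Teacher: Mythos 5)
The paper does not actually prove this lemma: it is stated with a ``cf.\ Welters'' citation and used as known input to Proposition \ref{prop:hypercohomology}. Your \v{C}ech-hypercohomology argument is the standard proof of precisely this kind of statement and is correct as written --- representing a class by $(\{V_{ij}\},\{\phi_i\})$ on an affine trivializing cover, using the classical interpretation of $H^1(X,\At(\Pcal))$ for joint deformations of $(X,\Pcal)$, and matching the gluing condition on the deformed local Higgs fields with $\phi_j-\phi_i=\ad_\Phi(V_{ij})$. The one point worth spelling out in your ``bookkeeping step'' is the identity $\ad_\Phi(V)=[\Phi,D_V]=-\ad_{D_V\Phi}$, i.e.\ that the commutator of the zeroth-order operator $\ad_\Phi$ with the first-order operator $D_V$ is zeroth order (the symbols cancel) and is identified, via injectivity of $\ad$ for semisimple $\gfrak$, with the Lie derivative $D_V\Phi\in \ad(\Pcal)\otimes K_X$; this is exactly what makes your compatibility condition coincide, up to sign conventions, with the differential of $\Bscr^\bullet$.
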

Next, let $P$ denote the underlying smooth $\G$-bundle, and $J\in\Jcal(P)$
such that $\Pcal\simeq (P,J)$. Let $\dbar_{\ad(P)}$ denote the
$\dbar$-operator on $\ad(P)$ defining the holomorphic bundle $\ad(\Pcal)$. 
We have the following $C^\infty$ resolution of $\Bcal^\bullet$:
\begin{equation}  \label{eqn:resolution}
    \begin{tikzcd}
        \At(\Pcal) \arrow[d, "\ad_\Phi\quad"] \arrow[r]  &\aut(P)\arrow[r,
        "L"]
        \arrow[d,"\ad_\Phi\quad"]
        & T_J\Jcal(P)\arrow[r]\arrow[d,"\ad_\Phi\quad"] &0 \\
        \ad(\Pcal)\otimes K_X \arrow[r] & A^{1,0}(X,\ad(P))
       \arrow[r, "\quad \dbar_{\ad(P)}\quad "] & 
        A^{1,1}(X,\ad(P))\arrow[r] &  0
    \end{tikzcd}
\end{equation}
Here, the map $L$ is given by $V\mapsto L_VJ$, for a smooth invariant
vector field $V$ on $P$. At a solution to the Hitchin equations, 
the diagram commutes if we define $\ad_\Phi$ on $T_J\Jcal(P)$ by:
$$
(\mu,\beta)\mapsto -\tfrac{1}{2i}(\partial_A(\Phi\mu)+[\Phi,\beta]) 
$$
Hence, the complex $B^\bullet$ from \S \ref{sec:deformation} gives a smooth resolution of
$\Bscr^\bullet$.
The next result is an immediate consequence.
\begin{proposition}\label{prop:hypercohomology}
We have an isomorphism:
$\HBbb^i\left( \At(\Bscr^\bullet)\right) \simeq H^i(B^\bullet)$,
    $i=0,1,2$.
\end{proposition}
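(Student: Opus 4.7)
The strategy is to recognize the commutative diagram \eqref{eqn:resolution} as the \v Cech--Dolbeault-type double complex that computes the hypercohomology of $\Bscr^\bullet$, and then identify its associated total complex with $B^\bullet$. First, I would regard \eqref{eqn:resolution} as a double complex $(K^{p,q}, d_h, d_v)$ with rows the two-term complexes $\mathrm{ad}_\Phi$ and columns the smooth resolutions of the two terms of $\Bscr^\bullet$. The bottom row is the Dolbeault resolution of $\ad(\Pcal)\otimes K_X$, which is a standard fine (hence acyclic) resolution, so the $q=1$ column computes $R\Gamma(X,\ad(\Pcal)\otimes K_X)$ and its $H^0$ vanishes for global sections in the right degrees. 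The content of Proposition B.2 is therefore that the top column $\aut(P)\xrightarrow{L} T_J\Jcal(P)\to 0$ is a soft resolution of $\At(\Pcal)$ sitting in cohomological degree zero.

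For the top column, I would invoke the Atiyah short exact sequence \eqref{eqn:atiyah} together with the splittings from \S \ref{sec auto group} and Proposition \ref{prop:SingerConnection}. Concretely, the short exact sequence of smooth sheaves
\[
    0\lra \Omega^0(\Sigma,P(\gfrak))\lra \aut(P)\lra \Omega^0(\Sigma,T\Sigma)\lra 0
\]
and the analogous sequence for $T_J\Jcal(P)\cong \Omega^{0,1}(X,\ad(P))\oplus T_j\Jcal(\Sigma)$ decompose the column into the direct sum of the two Dolbeault resolutions: the Dolbeault resolution of $\ad(\Pcal)$ for the vertical bundle factor, and the Dolbeault resolution of $T_X$ (with $T_j\Jcal(\Sigma)\cong \Omega^{0,1}(X,T_X)$ via the Beltrami identification) for the horizontal factor. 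Both are fine resolutions, and the snake lemma identifies their common kernel sheaf with $\At(\Pcal)$. Thus each column of \eqref{eqn:resolution} resolves the corresponding term of $\Bscr^\bullet$ by global sections of soft sheaves, so by the standard double-complex argument the hypercohomology $\mathbb H^\bullet(\Bscr^\bullet)$ is computed by the cohomology of the total complex of \eqref{eqn:resolution}.

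It then remains to match the total complex with $B^\bullet$. In degree $0$ both yield $\aut(P)$; in degree $2$ the bottom-right corner is $A^{1,1}(X,\ad(P))$, which, via the basic-form identification, is $\Omega^2_b(P,\gfrak)$; and in degree $1$ one obtains $T_J\Jcal(P)\oplus A^{1,0}(X,\ad(P))$, which by Lemma \ref{lem: tangent vector decomp} is canonically identified with $T_{(J,\Phi)}\Ccal(P)$ via $(M,\psi)\mapsto (M,\psi+\tfrac{1}{2i}\Phi\mu)$ where $\mu$ is the Beltrami differential attached to $M$. Under these identifications, the total differentials of \eqref{eqn:resolution} become $\delta_B^0$ and $\delta_B^1$: the degree-zero differential is precisely the linearization of the gauge action computed in Lemma \ref{lem:d0-simplified} (combining the Lie derivative $L$ with the Dolbeault pieces $\dbar_{\ad(P)}\eta_V$ and $L_V\Phi$), and the degree-one differential reproduces the holomorphicity variation computed in Lemma \ref{lem delta 1 B}.

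The one step that genuinely requires care is the last one: verifying that the total differential of \eqref{eqn:resolution} agrees on the nose with $(\delta_B^0,\delta_B^1)$ and not merely up to a sign. This is the only real obstacle, since all the sheaf-theoretic input is standard; once the signs and factors of $1/2i$ coming from the splitting $\theta=\psi+\tfrac{1}{2i}\Phi\mu$ are accounted for, the isomorphism $\mathbb H^i(\Bscr^\bullet)\simeq H^i(B^\bullet)$ for $i=0,1,2$ follows immediately.
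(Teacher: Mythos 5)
Your proposal is correct and follows essentially the same route as the paper: the paper simply exhibits the commuting diagram \eqref{eqn:resolution} as a $C^\infty$ (fine) resolution of $\Bscr^\bullet$ whose total complex is $B^\bullet$, and declares the isomorphism an immediate consequence, which is precisely the double-complex argument you spell out. Your extra detail on resolving $\At(\Pcal)$ via the Atiyah sequence and on matching the total differential with $(\delta_\B^0,\delta_\B^1)$ through Lemmas \ref{lem delta 1 B} and \ref{lem:d0-simplified} is exactly the content the paper leaves implicit.
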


\section{The algebraic definition of $\Theta$} \label{sec:theta}
In this section we give the algebro-geometric definition of $\Theta$ (see
also \cite{Chen:12}).
Continuing with the notation of the previous section, let
 $\Ascr^\bullet$ denote the complex 
$$\Ascr^\bullet : \ad(\Pcal)\xrightarrow{\hspace*{.3cm} \ad_\Phi\quad}
\ad(\Pcal)\otimes K_X
    $$
    Then $\HBbb^1(\Ascr^\bullet)$ parameterizes the tangent space to
    $\Mbold_X(\G)$ at a stable Higgs bundle $(\Ecal,\Phi)$ on $X$
    \cite{NitsureModuli}.
    The  exact sequence  of complexes 
$$
    \begin{tikzcd}
        0\arrow[r]&
        \ad(\Pcal) \arrow[d, "\ad_\Phi\quad"] \arrow[r]&
        \At(\Pcal) \arrow[d, "\ad_\Phi\quad"] \arrow[r]  &
        T_X \arrow[d]\arrow[r]
        &0 \\
        0\arrow[r]& \ad(\Pcal)\otimes K_X \arrow[r]
        & \ad(\Pcal)\otimes K_X \arrow[r]
        &0\arrow[r]&0
    \end{tikzcd}
$$
gives the tangent sequence of $\Mbold(\G)\to \Tbold(\Sigma)$ at a stable
Higgs bundle:
$$
0\lra \HBbb^1(\Ascr^\bullet)\lra \HBbb^1(\Bscr^\bullet)\lra H^1(T_X)\lra 0
$$
On the other hand, notice that  $\Phi$ defines a map of complexes 
$$
\begin{tikzcd}
    T_X \arrow[r, "\frac{1}{2i}\Phi"]\arrow[d] & \ad(\Pcal)  \arrow[d, "\ad_\Phi\quad"]  \\
    0\arrow[r] & \ad(\Pcal)\otimes K_X 
\end{tikzcd}
$$
since $\Phi$ commutes with itself.  Let $\Theta^{alg}$ be  the induced map
$H^1(T_X)\to \HBbb^1(\Ascr^\bullet)$. The following is then
straightforward.

\begin{proposition}
    In terms of the isomorphism from Proposition \ref{prop:hypercohomology},
    the holomorphic section $\Theta^{alg}$ defined above
    satisfies 
    $\Theta^{alg}([\mu])=[(0,\frac{1}{2i}\Phi\mu,0)]$ for all $[\mu]\in
    H^1(T_X)$.
\end{proposition}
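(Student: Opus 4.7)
The plan is to unwind the two definitions through the Dolbeault resolution of the complex $\Ascr^\bullet$ and to identify the resulting Dolbeault representative with the class $[(0,\tfrac{1}{2i}\Phi\mu,0)]$. The crucial input is the translation between \v{C}ech hypercohomology of $\Ascr^\bullet$ and the cohomology of the \emph{smooth} deformation complex $\A^\bullet$ (i.e.\ $\B^\bullet$ with Beltrami part frozen), which is established in the Appendix on Dolbeault and \v{C}ech deformation complexes.

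First, I would verify that the vertical map
\[
\tfrac{1}{2i}\Phi\thickcolon T_X\lra \ad(\Pcal)
\]
genuinely defines a morphism of complexes $T_X[0]\to\Ascr^\bullet$: one needs $\ad_\Phi\circ(\tfrac{1}{2i}\Phi)=0$, which follows from $[\Phi,\Phi]=0$ in $\Omega^2(X,\ad(\Pcal))$ (a $(1,0)$-form wedged with itself). This step is essentially automatic but is the reason the algebraic $\Theta^{alg}$ is well defined.

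Next, I would pass to the Dolbeault resolution. The complex $T_X[0]$ is resolved by the two-term Dolbeault complex $\Omega^0(T_X)\xrightarrow{\dbar} \Omega^{0,1}(T_X)$ (placed in degrees $0$ and $1$, with $0$ in degree $1$ of $T_X[0]$), while $\Ascr^\bullet$ is resolved by the double complex
\[
\begin{tikzcd}
    \Omega^{0}(\ad(P))\arrow[r,"\dbar"]\arrow[d,"\ad_\Phi"'] & \Omega^{0,1}(\ad(P))\arrow[d,"\ad_\Phi"] \\
    \Omega^{1,0}(\ad(P))\arrow[r,"\dbar"] & \Omega^{1,1}(\ad(P))\ ,
\end{tikzcd}
\]
whose associated total complex is precisely the smooth resolution of $\Ascr^\bullet$ appearing in the top row of \eqref{eqn:resolution}. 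The induced map of total complexes sends $\mu\in\Omega^{0,1}(T_X)$ to $\tfrac{1}{2i}\Phi\mu\in \Omega^{0,1}(\ad(P))$, with the degree-$0$ and $(1,0)$-components both equal to zero.

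Given a class $[\mu]\in H^1(T_X)$, represented in Dolbeault by a $\dbar$-closed $\mu\in\Omega^{0,1}(T_X)$, its image under $\Theta^{alg}$ is therefore the hypercohomology class of $\tfrac{1}{2i}\Phi\mu$, which in the total complex is represented by the chain
\[
\bigl(\psi,\beta,0\bigr)=\bigl(0,\,\tfrac{1}{2i}\Phi\mu,\,0\bigr)\in \Omega^{1,0}(\ad(P))\oplus\Omega^{0,1}(\ad(P))\oplus\Omega^{1,1}(\ad(P))\ .
\]
Under the identification of Proposition~\ref{prop:hypercohomology}, this is precisely the class $[(0,\tfrac{1}{2i}\Phi\mu,0)]\in H^1(\A^\bullet)\subset H^1(\B^\bullet)$, which matches the definition of $\Theta$ from \S\ref{section hol bundle map} (see Remark \ref{rem:eta in beta notation}). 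The main technical point — and the only place one must be slightly careful — is to ensure that the Dolbeault-to-\v{C}ech dictionary is applied consistently to a \emph{two-term} complex with a nontrivial differential, so that the degree-$0$ contribution is correctly set to zero; this is handled by the appendix comparing the two deformation complexes.
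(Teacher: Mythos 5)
Your proposal is correct and is exactly the unwinding the paper has in mind: the paper declares the proposition "straightforward" precisely because the vertical map $\tfrac{1}{2i}\Phi$ of complexes, read through the Dolbeault resolution \eqref{eqn:resolution} and the identification of Proposition \ref{prop:hypercohomology}, visibly sends a Dolbeault representative $\mu$ to the cocycle with $\beta=\tfrac{1}{2i}\Phi\mu$ and all other components zero. The only cosmetic quibble is your display of the representative as a triple in $\Omega^{1,0}\oplus\Omega^{0,1}\oplus\Omega^{1,1}$, which mixes the degree-one and degree-two pieces of the total complex; the degree-one cocycle is just $(\psi,\beta)=(0,\tfrac{1}{2i}\Phi\mu)$, closed because $[\Phi,\Phi\mu]=0$, and augmenting by $0$ in the Beltrami slot gives the stated class in $H^1(\A^\bullet)\subset H^1(\B^\bullet)$.
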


\section{Comparison with the formula of To\v{s}i\'c} \label{sec:tosic-comparison}
We explain here why our formula \eqref{eqn:hessian} for the complex Hessian of the energy agrees
with the one found by To\v{s}i\'c in \cite[Theorem 1.10]{Tosicpluri}.
Fix a Beltrami differential $\mu$ and a harmonic bundle $(J,\Phi)$. For
simplicity, set $(\mu,\beta,\psi)$ to be the minimal norm (horizontal)
tangent vector, and let $(\mu,\beta_1,\psi_1)$, $(i\mu, \beta_2,\psi_2)$ be
the isomonodromic tangent vectors. Recall that  from Lemma \ref{lem key relation
of iso},  
$$
\beta=\tfrac{1}{2}(\beta_1-i\beta_2)\quad ,\quad
\psi=\tfrac{1}{2}(\psi_1-i\psi_2)
$$
and from Lemma \ref{lem D isotr},  $\Vert (\beta_i,\psi_i)\Vert=\Vert\Phi\mu/2i\Vert$ for $i=1,2$. 
Finally, we have also proven that (see Lemma \ref{lem semiharmonic iso
gauge hermitian})
\begin{equation}\label{eqn:zeta}
    \dbar_A\zeta_i=\tfrac{1}{2i}(-\psi_i^\ast+\tfrac{1}{2i}\Phi\mu)\quad ,\quad
[(\tfrac{1}{2i}\Phi\mu)^\ast,\zeta_i]=\tfrac{1}{2i}\beta_i
\end{equation}
for $i=1,2$. 

Now there are two differences in the  normalizations used here compared
with those in \cite{Tosicpluri}. First, our
derivatives are computed with respect to $\mu$, whereas in
\cite{Tosicpluri} they are computed
with respect to $\tfrac{1}{2i}\mu$. This means that the (complex) variation of the harmonic
metric, which To\v{s}i\'c calls $w$, is $2i$ times the variation that we obtain.
So,
\begin{equation} \label{eqn:w}
w= 2i\times \tfrac{1}{2}(\zeta_1-i\zeta_2)= i\zeta_1+\zeta_2
\end{equation}
The second difference is that the Higgs field, which To\v{s}i\'c calls $\partial f_0$,
corresponds to our $\tfrac{1}{2i}\Phi$. Hence, in the notation of
\cite[Theorem 1.10]{Tosicpluri},
\begin{equation} \label{eqn:phi}
    \tfrac{1}{2i} \Phi\mu=\dbar_A\varphi+\theta
\end{equation}
where $\theta\perp \imag\dbar_A$.

With this understood, we proceed to calculate To\v{s}i\'c's expression for
the Hessian $\Delta_\mu E$:
\begin{equation} \label{eqn:tosic}
\Delta_\mu E = 8\Vert\theta\Vert^2+ 8\Vert \dbar_A(w-\varphi)\Vert^2
-8i\int_X \langle R(w,\partial f_0\wedge\dbar f_0,w\rangle
\end{equation}
where $R$ is the  curvature of the (appropriately normalized) invariant
metric on $\G/\K$.
Using \eqref{eqn:w} and \eqref{eqn:zeta}, we have
$$
\dbar_A w=-\tfrac{1}{2i}(\psi_1+i\psi_2)^\ast +\tfrac{1}{2i}\Phi\mu
$$
Now  from \eqref{eqn:phi} and the fact that $\theta\perp \imag\dbar_A$,
\begin{align*}
    \dbar_A(w-\varphi)&=\dbar_Aw+\theta-\tfrac{1}{2i} \Phi\mu \\
    \Vert \dbar_A(w-\varphi)\Vert^2 &=\Vert\dbar_A
    w\Vert^2+\Vert\theta\Vert^2+\Vert\tfrac{1}{2i}\Phi\mu\Vert^2-2\Real\langle\dbar_Aw+\theta,
    \tfrac{1}{2i}\Phi\mu\rangle
\end{align*}
But $\tfrac{1}{2i}\langle\theta,\Phi\mu\rangle=\Vert\theta\Vert^2$, so
$$
 \Vert \dbar_A(w-\varphi)\Vert^2  =\Vert\dbar_A
    w\Vert^2-\Vert\theta\Vert^2+\Vert\tfrac{1}{2i}\Phi\mu\Vert^2-2\Real\langle\dbar_Aw,\tfrac{1}{2i}
    \Phi\mu\rangle
$$
which we can write as
$$\Vert\theta\Vert^2 +
 \Vert \dbar_A(w-\varphi)\Vert^2  =\Vert\dbar_A w-\tfrac{1}{2i}\Phi\mu\Vert^2
 =\tfrac{1}{4}(\Vert\psi_1\Vert^2+\Vert\psi_2\Vert^2+2\Real\langle\psi_1,i\psi_2\rangle)
 $$
For the last term in \eqref{eqn:tosic}, we have
\begin{align*}
-i\int_X \langle R(w,\partial f_0\wedge\dbar f_0,w\rangle
    &=\Vert [\tfrac{1}{2i}\Phi\mu, i\zeta_1+\zeta_2]\Vert^2
    =\Vert [(\tfrac{1}{2i}\Phi\mu)^\ast, -i\zeta_1+\zeta_2]\Vert^2\\
    &=\Vert \tfrac{1}{2}(\beta_1+i\beta_2)\Vert^2
    =
    \tfrac{1}{4}(\Vert\beta_1\Vert^2+\Vert\beta_2\Vert^2+2\Real\langle\beta_1,i\beta_2\rangle)
\end{align*}
It follows that
\begin{equation} \label{eqn:tosic2}
\Delta_\mu
E=4(\Vert\Phi\mu/2i\Vert^2+\Real\langle(\beta_1,\psi_1),(i\beta_2,i\psi_2)\rangle)
\end{equation}
On the other hand,
\begin{align*}
    \Vert(\beta,\psi)\Vert^2&=
    \tfrac{1}{4}(\Vert(\beta_1,\psi_1)\Vert^2+\Vert(\beta_1,\psi_1)\Vert^2
    -2\Real\langle\beta_1,i\beta_2\rangle) \\
    &=\tfrac{1}{2}(\Vert\tfrac{1}{2i}\Phi\mu\Vert^2
    -\Real\langle(\beta_1,\psi_1),(i\beta_2,i\psi_2)\rangle)
    \\
\Real\langle(\beta_1,\psi_1),(i\beta_2,i\psi_2)\rangle)
    &=
    \Vert\tfrac{1}{2i}\Phi\mu\Vert^2 -2\Vert(\beta,\psi)\Vert^2
\end{align*}
Finally, plugging this into   \eqref{eqn:tosic2}, we obtain 
$$
\Delta_\mu E =8(
\Vert\tfrac{1}{2i}\Phi\mu\Vert^2 -\Vert(\beta,\psi)\Vert^2)=-8\Vert
w_\mu\Vert^2_{h_0} \ ,
$$
which agrees with \eqref{eqn:hessian} (see also Lemma \ref{lem norm of min norm
vect}).

\bibliography{paper1bib}{}
\bibliographystyle{plain}
\end{document}